\documentclass[12pt]{amsart}
\usepackage[a4paper,hmargin=2cm,vmargin=3cm]{geometry}
\usepackage[T1]{fontenc}
\usepackage[utf8]{inputenc}
\usepackage{color}
\usepackage{amsmath,bm,bbm}
\usepackage{amsthm}
\usepackage{amssymb}
\usepackage{graphicx}
\usepackage[style=alphabetic,maxnames=10,giveninits=true,backend=biber,doi=false,url=false]{biblatex}
\usepackage[shortlabels]{enumitem}
\usepackage{hyperref}
\usepackage{amsaddr}
\usepackage{mathrsfs}
\usepackage[normalem]{ulem}

\usepackage{marginnote}

\definecolor{notefontcolor}{rgb}{0.800781, 0.800781, 0.800781}
\definecolor{grey30}{rgb}{0.7,0.7,0.7}

\hypersetup{unicode=true,
  colorlinks=true, citecolor=blue, linkcolor=blue,
  pdftitle={Fluctuations of the ground state in the spiked SSK model},
  pdfauthor={D. Belius, L. Fröber}}

\DeclareSourcemap{
  \maps[datatype=bibtex]{
    \map{
      \step[fieldset=issn, null]
      }
    }
  }
\addbibresource{triv.bib}

\numberwithin{equation}{section}

\theoremstyle{plain}
\newtheorem{theorem}{Theorem}[section]
\newtheorem{lemma}[theorem]{Lemma}
\newtheorem{proposition}[theorem]{Proposition}
\newtheorem{corollary}[theorem]{Corollary}

\theoremstyle{remark}
\newtheorem{remark}[theorem]{Remark}

\newcommand{\IN}[1]{s_{}(#1)}

\newcommand{\GN}[2]{s_{{#1,u}}(#2)}
\newcommand{\INk}[2]{s^{(#1)}_{}(#2)}
\newcommand{\HNk}[3]{s^{(#1)}_{{#2}}(#3)}
\newcommand{\GNk}[3]{s^{(#1)}_{{#2,u}}(#3)}
\newcommand{\INd}[1]{s'_{}(#1)}
\newcommand{\INdd}[1]{s''_{}(#1)}
\newcommand{\INddd}[1]{s'''_{}(#1)}

\newcommand\numberthis{\addtocounter{equation}{1}\tag{\theequation}\numberwithin{equation}{section}}
\DeclareMathOperator*{\argmax}{arg\,max}

\title[Fluctuations of the ground state of the spiked SSK model]{Fluctuations of the ground state of the spiked spherical Sherrington-Kirkpatrick model}
\author{David Belius$^{*,**}$, Leon Fr\"ober$^{*}$}
	
	\email{david.belius@cantab.net}
	\email{leon.froeber@unibas.ch}
	\thanks{* Supported by SNSF grant 176918.}
        \thanks{** Supported by SNSF grant 206148.} 
    \address{Universit\"at Basel, Departement Mathematik und Informatik}

\begin{document}

\maketitle

\begin{abstract}    
    The Sherrington-Kirkpatrick Hamiltonian is a random quadratic function on the high-dimensional sphere. This article studies the ground state (i.e. maximum) of this Hamiltonian with external field, or more generally with a non-linear
    ``spike'' term. We
    compute the level of the maximum to leading order, and under appropriate
    condition its first- and second-order fluctuations. The equivalent results are also derived for the maximum of the model's TAP free energy on the ball.
\end{abstract}

\section{Introduction}\label{section: introduction}

This article studies the maximum of a natural random quadratic optimization problem in $N$ variables over the sphere or ball in $\mathbb{R}^N$, in the presence of a possibly non-linear ``spike'' term. We prove a leading order law of large numbers as $N\to \infty$, and study the fluctuations around the limit. In the context of spin glasses \cite{firstSKmodel1975,mezard1987spin, talagrand2010mean, panchenko2013sherrington} the maximum on the sphere that we study is precisely the ground state of the spherical Sherrington-Kirkpatrick  Hamiltonian \cite{kosterlitz1976spherical}  with external field, or more generally with a non-linear ``spike''. Our result on maximum on the ball applies to the TAP  free energy \cite{thouless1977solution, crisanti1995thouless, belius-kistler} of this Hamiltonian.

The random quadratic optimization problem $\sup_{\sigma \in \mathbb{R}^N:|\sigma|=1}\{ \sigma^{T}J\sigma+\sigma \cdot v \}$ for an $N \times N$ random matrix $J$ and vector $v\in \mathbb{R}^N$
constitutes arguably the most basic yet interesting high-dimensional
random optimization problem and merits special attention. The case where $J$ is a GOE random matrix is representative. The large deviations of this maximum has been studied in \cite{fyodorov2014topology, dembo2015matrix}. A natural generalization is
to replace the linear ``external field'' term $\sigma\cdot v$ with $f\left(\sigma \cdot v\right)$
for some non-linear ``spike'' function $f$ \cite{richard2014statistical, lesieur2017constrained, lelarge2019fundamental, arous2019landscape}. 
The present paper determines the leading order of the maximum for general $f$, and gives a precise description of its fluctuations (i.e. its ``typical deviations''). In particular Theorem \ref{thm: sphere} provides both a law of large numbers that computes the order $N$ asymptotic of the maximum, and under appropriate
assumptions on $f$ also determines first- and second-order subleading
fluctuation terms of order $N^{1/2}$ and $1$ respectively.

Our main motivation comes from mean-field spin glasses, and concerns
the maximum of the TAP free energy, which is a function of the form
$m\mapsto m^{T}Jm+f(m\cdot v)+g(\left|m\right|)$ defined on the unit ball,
for a certain function $g$ that we recall below. Theorem \ref{thm: ball} computes the leading order and fluctuations of the maximum of such a function on the ball, for a general $g$. Below we discuss the spin-glass motivation in more detail.

To formally state our results, define the Sherrington-Kirkpatrick 
\textit{Hamiltonian}
\begin{equation}\label{def:HN}
    H_N(\sigma) = \sqrt{N} \sigma^T J \sigma \text{ for }\sigma \in \mathbb{R}^N
\end{equation}    
where $J$ is an $N\times N$ GOE random matrix, i.e. a symmetric matrix with centered Gaussian entries $J_{i,j}$ mutually independent for $i \le j$, and ${\rm{Var}}( J_{i,j} ) = \frac{1}{2}(1 + \delta_{i=j})$. Let $f : [-1,1] \to \mathbb{R}$ be a real function, $\beta >0$ a constant which we call the \textit{inverse temperature} and 
$v \in \mathbb{R}^N,|v|=1,$ a unit vector giving the direction of the spike.
The \textit{ground state} is the maximum
\begin{equation}\label{def: L_N}
    L_N = \sup_{ |\sigma| =1 } \{ \beta H_N(\sigma) + N f(v \cdot \sigma) \}
\end{equation}
over the unit sphere. Let $\overset{\mathbb{P}}{\to}$ denote convergence in probability,  $\overset{d}{\to}$ convergence in distribution, and $\mathcal{N}(\mu,\sigma^2)$ the Gaussian distribution with mean $\mu$ and variance $\sigma^2$. Our result about the maximum on the sphere is the following.

\begin{theorem}[Maximum on sphere]\label{thm: sphere}
    Let $f \in C^{0}([-1,1])$ and
    \begin{equation}\label{def: B(alpha)}
        \mathcal{B}(\alpha) = f(\alpha) + \beta\sqrt{2(1-\alpha^2)}.
    \end{equation}
    \textbf{(a)} (Leading order) It holds that
    \begin{equation}\label{eq:leading_order_sphere}
        \frac{1}{N} L_N \overset{\mathbb{P}}{\longrightarrow}
        \sup_{\alpha\in [-1,1]}\mathcal{B}(\alpha).
    \end{equation}
    \\
    \textbf{(b)} (Fluctuations)
    If additionally $f\in C^3([-1,1])$ and $\mathcal{B}(\alpha)$ has a unique global maximizer $\hat{\alpha}\neq 0$ with $\mathcal{B}''(\hat{\alpha}) < 0$,
    then there exist a constant $\kappa$ and a matrix $G$ such that
    \begin{equation}\label{eq: theorem1-equation}
        L_N 
        -
            N \mathcal{B}(\hat{\alpha}) 
            - \sqrt{N}\kappa {U}_{N}
            - \left( \kappa\Lambda_N 
                -\frac{1}{2}\begin{pmatrix}{U}_{N}\\
                {U}_{N}'
                \end{pmatrix}^{T}G\begin{pmatrix}{U}_{N}\\
                {U}_{N}'
                \end{pmatrix}
                \right)
                \overset{\mathbb{P}}{\longrightarrow}0,
    \end{equation}
    where $U_N,U_N',\Lambda_N$ are stochastically bounded random variables defined by
    \[    
           {U}_{N}=\sqrt{N}\left(v^{T} G_N v-\frac{{\rm Tr}G_{N}}{N}\right),\ {U}_{N}^{'}=-\sqrt{N}\left(v^{T}G_{N}^{2}v-\frac{{\rm Tr}G_{N}^{2}}{N}\right),\ 
 \Lambda_{N}=\frac{1}{N}\sum_{i=1}^{N}\frac{1}{\hat{l}-\lambda_{i}}-\hat{z},
    \]
    for $\hat{z}=\sqrt{2(1-\hat{\alpha})^{2}}$, $\hat{l}=\frac{2-\hat{\alpha}^{2}}{\hat{z}}$, $G_{N}=\left(\hat{l} \cdot I-\frac{J}{\sqrt{N}}\right)^{-1}$. \\
    The random variables satisfy
    $$
        ({{U}}_{N}, {{U}}_{N}', \Lambda_{N})
        \stackrel{d}{\longrightarrow}
        ({{U}},{{U'}},\Lambda),
    $$
    where 
    \begin{equation}\label{eq: theorem1-variables}
        U \sim \mathcal{N}\left(0, \frac{\hat{z}^4}{\hat{\alpha}^2}\right)
        , \quad
        U' \sim \mathcal{N}\left(0, \frac{\hat{z}^6(2+\hat{\alpha}^2+\hat{\alpha}^4)}{\hat{\alpha}^{10}}\right)
        , \quad 
        \Lambda \sim \mathcal{N}\left(\frac{\hat{z}^3}{2\hat{\alpha}^4}
            , \frac{\hat{z}^4}{\hat{\alpha}^8}\right)
    \end{equation}
    with $({{U}},{{U}}')$ and $\Lambda$ independent and
    \begin{equation}
        \text{Cov}(U,U')  = 
            -\frac{\hat{z}^5 (1+\hat{\alpha}^2)}{\hat{\alpha}^6}.
    \end{equation}
    The constant and matrix are given by
    \[
        \kappa=\frac{\beta\hat{\alpha}^{2}}{\hat{z}^{2}},\quad\quad 
        G=
        \beta\left(
        \frac{8 \beta \hat{\alpha}^2}{\hat{z}^8 \mathcal{B}''(\hat{\alpha})}
        \begin{pmatrix}
            2
            &
            \frac{\hat{\alpha}^4}{\hat{z}}
            \\
            \frac{\hat{\alpha}^4}{\hat{z}}
            &
            \frac{\hat{\alpha}^{8}}{2\hat{z}^2}
        \end{pmatrix}
        +\left(\begin{matrix}\frac{2\hat{\alpha}^{2}}{\hat{z}^{3}} & 0\\
        0 & 0
        \end{matrix}\right)\right).
    \]

    The same holds if $\mathcal{B}(\alpha)$ has a pair of global unique maximizers $\pm\hat{\alpha} \ne 0$ with $\mathcal{B}(\hat{\alpha})=\mathcal{B}(-\hat{\alpha}),\mathcal{B}''(\hat{\alpha})=\mathcal{B}''(-\hat{\alpha})<0$.
\end{theorem}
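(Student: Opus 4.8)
The plan is to convert the $N$‑dimensional optimization into an \emph{exact} two‑dimensional one and then expand. Writing $M=J/\sqrt N$, whose spectral measure converges to the semicircle law $\mu_{\mathrm{sc}}$ on $[-\sqrt2,\sqrt2]$ with $\lambda_{\max}(M)\to\sqrt2$, part (a) would be obtained by the classical route: parametrize $\sigma=\alpha v+\sqrt{1-\alpha^2}\tau$ with $\tau\perp v,\ |\tau|=1$; for fixed $\alpha$ the supremum over $\tau$ of $\beta\sqrt N\sigma^TJ\sigma$ is a quadratic‑plus‑linear problem on a sphere in $v^\perp$ which, solved with a Lagrange multiplier and using $v^T\phi(M)v\approx\int\phi\,d\mu_{\mathrm{sc}}$, has value $N\beta\sqrt{2(1-\alpha^2)}+o(N)$ uniformly on compacts of $(-1,1)$ (the endpoints and a neighbourhood of $0$ handled separately); then $\sup_\alpha$ gives \eqref{eq:leading_order_sphere}, with the lower bound from plugging the near‑optimal $\sigma$ back in. For part (b) the starting point is the first‑order condition at the maximizer $\hat\sigma$, namely $(\hat\mu I-2\beta\sqrt N J)\hat\sigma=Nf'(\hat\alpha_N)v$ with $\hat\alpha_N=v\cdot\hat\sigma$, so that $\hat\sigma=Nf'(\hat\alpha_N)(\hat\mu I-2\beta\sqrt N J)^{-1}v$ is \emph{determined by the two scalars $(\hat\alpha_N,\hat\mu)$}. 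Setting $\hat\mu=2\beta N\hat l_N$ and $G_N=(\hat l_N I-M)^{-1}$, the constraints $v\cdot\hat\sigma=\hat\alpha_N,\ |\hat\sigma|=1$ become the random system
\[
    f'(\hat\alpha_N)\,v^TG_Nv=2\beta\hat\alpha_N,\qquad f'(\hat\alpha_N)^2\,v^TG_N^2v=4\beta^2,
\]
and using the first‑order condition to compute $\hat\sigma^TJ\hat\sigma$ yields the \emph{exact} identity $L_N=N\Psi(\hat\alpha_N,\hat l_N)$ with $\Psi(\alpha,l)=\beta l-\tfrac12\alpha f'(\alpha)+f(\alpha)$.

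\textbf{Controlling the reduction.} I would then show that the global maximizer is indeed an interior critical point of this kind, localized by part (a) near the deterministic solution $(\hat\alpha,\hat l)$ of the system obtained by replacing $v^T\phi(M)v$ by $\int\phi\,d\mu_{\mathrm{sc}}$ (with $g(l)=l-\sqrt{l^2-2}$ the semicircle Stieltjes transform): one checks $\hat l=(2-\hat\alpha^2)/\hat z$, $g(\hat l)=\hat z$, $\Psi(\hat\alpha,\hat l)=\mathcal B(\hat\alpha)$, and — crucially using $\hat\alpha\neq0$ — that $\hat l>\sqrt2$, so that $G_N$ is bounded on an event of probability $1-o(1)$ (the second‑order/bordered‑Hessian condition forces $\hat\mu>2\beta\sqrt N\lambda_{\max}(J)$, i.e.\ $\hat l_N$ above the spectrum). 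One must also verify that the random $2\times2$ system has a unique solution near $(\hat\alpha,\hat l)$ on the good event and that no competing critical point or boundary point attains the maximum.

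\textbf{Expansion.} Next I would expand. Decompose $v^TG_Nv-g(\hat l)=\tfrac1{\sqrt N}U_N+\tfrac1N\Lambda_N$ and $v^TG_N^2v-(-g'(\hat l))=-\tfrac1{\sqrt N}U_N'+\tfrac1N\Lambda_N'$, separating the centered bilinear resolvent statistics $U_N,U_N'$ from the linear eigenvalue statistics $\Lambda_N=\mathrm{Tr}\,G_N-N\hat z$ and $\Lambda_N'$. The hypothesis $\mathcal B''(\hat\alpha)<0$ is equivalent to invertibility of the Jacobian $D$ of the deterministic system at $(\hat\alpha,\hat l)$, so the implicit function theorem gives $(\hat\alpha_N-\hat\alpha,\ \hat l_N-\hat l)=-D^{-1}(f'(\hat\alpha)(\tfrac1{\sqrt N}U_N+\tfrac1N\Lambda_N),\ f'(\hat\alpha)^2(-\tfrac1{\sqrt N}U_N'+\tfrac1N\Lambda_N'))+\tfrac1N Q(U_N,U_N')+o(\tfrac1N)$, with $Q$ an explicit quadratic form from the second‑order terms. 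Substituting into $L_N=N\Psi(\hat\alpha_N,\hat l_N)$ and Taylor‑expanding $\Psi$ to second order about $(\hat\alpha,\hat l)$, the decisive algebraic fact — checked directly from $f'(\hat\alpha)=2\beta\hat\alpha/\hat z$, $g(\hat l)=\hat z$, $-g'(\hat l)=\hat z^2/\hat\alpha^2$ — is that $\nabla\Psi(\hat\alpha,\hat l)$ is proportional to the first row of $D$. Since $(\text{first row of }D)\,D^{-1}=(1,0)$, all dependence on $U_N'$ and $\Lambda_N'$ cancels from the terms involving $\nabla\Psi$, leaving exactly $\sqrt N\,\kappa U_N$ at order $\sqrt N$ and $\kappa\Lambda_N$ at order $1$ with $\kappa=\beta\hat\alpha^2/\hat z^2$, while $(U_N,U_N')$ survive at order $1$ only through the Hessian term $\tfrac N2(\hat\alpha_N-\hat\alpha,\hat l_N-\hat l)^T\mathrm{Hess}\,\Psi(\hat\alpha_N-\hat\alpha,\hat l_N-\hat l)$, producing the quadratic form $-\tfrac12(U_N,U_N')^TG(U_N,U_N')$; matching coefficients gives the stated $G$, whose $1/\mathcal B''(\hat\alpha)$ factor comes from $D^{-1}$. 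This is \eqref{eq: theorem1-equation}.

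\textbf{The joint CLT, and the main obstacle.} Finally I would supply the joint weak limit of $(U_N,U_N',\Lambda_N)$. Since $U_N=\sqrt N(v^TG_Nv-\tfrac1N\mathrm{Tr}\,G_N)$ and $U_N'=-\sqrt N(v^TG_N^2v-\tfrac1N\mathrm{Tr}\,G_N^2)$ are centered bilinear statistics of a GOE resolvent at the fixed direction $v$ and the fixed out‑of‑bulk point $\hat l$, their joint asymptotic normality follows from the isotropic CLT for GOE resolvents (or a direct moment computation), with covariances that reduce via the semicircle identities to those in \eqref{eq: theorem1-variables}; $\Lambda_N$ is a linear eigenvalue statistic whose classical CLT — deterministic $O(1)$ bias from the $1/N$‑correction of $\mathbb E\tfrac1N\mathrm{Tr}\,G_N$, variance from the trace‑of‑resolvent CLT — gives $\Lambda_N\overset{d}{\to}\Lambda\sim\mathcal N(\hat z^3/2\hat\alpha^4,\hat z^4/\hat\alpha^8)$; and independence of $(U,U')$ from $\Lambda$ is structural, the GOE eigenbasis being Haar‑distributed and independent of the eigenvalues, so that the bilinear statistics (seeing the eigenbasis only through $v$, and centered by the trace) decouple from the eigenvalue‑only $\Lambda_N$. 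The two‑maximizer case is the same: the $\sigma\mapsto-\sigma$ symmetry under $\mathcal B(\hat\alpha)=\mathcal B(-\hat\alpha)$, $\mathcal B''(\hat\alpha)=\mathcal B''(-\hat\alpha)$ makes both local problems yield the same $U_N,U_N',\Lambda_N,\kappa,G$, so $L_N$ matches either up to $o(1)$. I expect the main obstacle to be the second step: proving with the required $o(1)$ precision that the supremum is captured by the two‑dimensional critical‑point system — controlling $G_N$ and the solution of the random $2\times2$ system (existence, correct branch, its expansion) on an event of probability $1-o(1)$, and excluding spurious critical points and boundary effects — whereas the CLT inputs, though computational, are essentially standard.
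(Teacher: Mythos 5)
Your proposal is correct in outline but takes a genuinely different route from the paper, at least for part~(b). The paper diagonalizes $J$, reduces the inner spherical optimization to $\inf_{l>\lambda_N}\{l-\alpha^2/s_{\lambda,u}(l)\}$ via Lagrange duality (Lemma~\ref{lem: simplify}) — an \emph{unconditional} identity for the optimal value — and then expands the resulting minimax $\sup_\alpha\inf_l h(\alpha,l,s_{\lambda,u}(l))$ quadratically in $(\alpha,l,s_{\lambda,u})$, solving the quadratic minimax explicitly (Lemmas~\ref{lem:h_quad_expansion}--\ref{lem:localize_more}, Proposition~\ref{prop:max_fluct_gen}). You instead work with the first-order condition at the maximizer $\hat\sigma$ itself, solve it for $\hat\sigma$ in terms of the resolvent $G_N$, derive the exact scalar identity $L_N=N\Psi(\hat\alpha_N,\hat l_N)$ together with a random $2\times2$ constraint system for $(\hat\alpha_N,\hat l_N)$, and expand that system by the implicit function theorem. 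Your key algebraic facts check out: the identity $L_N=N\Psi(\hat\alpha_N,\hat l_N)$ with $\Psi(\alpha,l)=\beta l-\tfrac12\alpha f'(\alpha)+f(\alpha)$ is correct, and $\nabla\Psi(\hat\alpha,\hat l)$ is indeed proportional (with factor $-\hat\alpha/(2\hat z)$) to the first row of the Jacobian of the limiting constraint system, which is what makes the $\sqrt N$ term $\kappa U_N$ come out with only $U_N$ and not $U_N'$. What each route buys: your approach makes the envelope-theorem structure of the linear fluctuation term very transparent, at the price of having to justify that the global maximizer over the sphere really is the particular critical point with $\hat l_N$ above the spectrum and near $\hat l$ (which you correctly flag as the main obstacle); the paper's minimax reformulation sidesteps this entirely since Lemma~\ref{lem: simplify} represents the \emph{value} $L_N$ directly and convexity in $l$ (Lemma~\ref{lem: phi convexity}, Corollary~\ref{cor: l uniqueness}) yields uniqueness for free, and the expansion of a general $h(y,l,g)$ is then reused verbatim for Theorem~\ref{thm: ball}, whereas your route would require a separate critical-point system for the ball.

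One caveat on the mechanics of your expansion: the sentence claiming that ``$(U_N,U_N')$ survive at order 1 only through the Hessian term'' is not quite right. Writing $\Delta=(\hat\alpha_N-\hat\alpha,\hat l_N-\hat l)=-D^{-1}\delta+\tfrac1N Q+o(N^{-1})$, the order-one contribution to $N\Psi(\hat\alpha_N,\hat l_N)$ contains both $\tfrac N2\Delta^T(\mathrm{Hess}\,\Psi)\Delta$ \emph{and} $\nabla\Psi\cdot Q$; the latter equals $-c\cdot r_1$ where $r_1$ is the second-order remainder of the first constraint, and $r_1$ is a genuine quadratic in $(U_N,U_N')$ (it contains cross terms like $f''(\hat\alpha)\epsilon_\alpha U_N/\sqrt N$ and $f'(\hat\alpha)\epsilon_l U_N'/\sqrt N$ with $\epsilon=-D^{-1}\delta$). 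So the stated $G$ is the sum of two quadratic forms, not just the one from $\mathrm{Hess}\,\Psi$. This does not change the final answer — you announce $G$ from the theorem statement and would ``match coefficients'' — but it is an imprecision that would surface if one tried to compute $G$ from scratch by your route. Finally, small bookkeeping point in your favour: your $\Lambda_N=\mathrm{Tr}\,G_N-N\hat z$ is the normalization actually used in the paper's proof (Section~\ref{section: subleading order fluctuations}); the theorem statement as printed has a spurious $1/N$ factor.
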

Part (a) for a linear spike functions $f(x) = hx, h\in\mathbb{R},$ appears in \cite[Lemma 20]{belius-kistler} and is implicit in \cite[Theorem 1.3]{dembo2015matrix}. In that case the maximizer $\hat{\alpha}$ is unique and $\mathcal{B}(\hat{\alpha}) = \sqrt{2\beta^2+h^2}$. The first-order fluctuation result of part (b) in the same linear-spike case,  namely the convergence in law of $N^{-1/2}(L_{N}-\sqrt{2\beta^{2}+h^{2}})$
to a centered Gaussian, is implied also by \cite[Theorem 5]{chen2017parisi} as explained in Remark \ref{rem:CS17_remark}. This corresponds to the first-order fluctuation term $\sqrt{N} \kappa U_N$ in \eqref{eq: theorem1-equation}.

Part (b) of the theorem covers the regime where the fluctuations are determined by the central limit-type behavior of sums over eigenvalues and entries of the spike vector $v$, and for this reason requires $\hat{\alpha} \ne 0$. When $\hat{\alpha}=0$ the fluctuations should instead be determined by the fluctuations of the extreme eigenvalues of $J$ (indeed for $f=0$ the maximum is exactly the largest eigenvalue, which has non-Gaussian fluctuations \cite{tracey1996orthogonal}).

In Section \ref{section: example leading order} and \ref{section: example fluctuations} we give more explicit formulas for leading order and fluctuations for monomial spike functions $f$, and for these determine critical inverse temperatures $\beta$ where the behavior of the ground state changes.

\bigskip
Our second main results concerns the fluctuations of the maximum on the ball of combinations of $H_N$ with a spike and a deterministic radial function. For functions $f: [-1,1] \to \mathbb{R}$ and $g: [0,1] \to \mathbb{R}$ define
\begin{equation}\label{def: L tilde}
    \tilde{L}_N = \sup_{m \in B_N(\mathcal{R})} \{ \beta H_N(m) + N f(v\cdot m) + N g( |m|) \},
\end{equation} 
where $B_N(\mathcal{R}) = \{m\in\mathbb{R}^N: |m|\in\mathcal{R}\}$ and $\mathcal{R}\subset [0,1]$.
The prototypical example is the maximum of the TAP free energy, where the function $g$ takes a particular form and the maximum is taken only over $m$ with $|m|^2$ in a certain range, which is why we include the set $\mathcal{R}$ in the formulation (see the discussion after the theorem).

\begin{theorem}[Maximum on ball]\label{thm: ball}
    For $f \in C^0([-1,1])$, $\mathcal{R} \subset [0,1]$ closed and $g \in C^0(\mathcal{R})$ let
    \begin{equation}\label{def: tilde B_r}
        \tilde{\mathcal{B}}(\alpha,r) = f(r \alpha) + g(r) + \beta r^2 \sqrt{2(1-\alpha^2)}.
    \end{equation}  
    \textbf{(a)}  (Leading order)
    It holds that
    \begin{equation}\label{eq: L tilde leading order}
        \frac{1}{N}\tilde{L}_N \overset{\mathbb{P}}{\longrightarrow}
        \sup_{r\in\mathcal{R},\alpha\in[-1,1]} \tilde{\mathcal{B}}(\alpha,r).       
    \end{equation}
    \textbf{(b)}  (Fluctuations)
    If additionally $f \in C^3([-1,1]),g \in C^3(\mathcal{R})$ and 
    $\tilde{\mathcal{B}}(\alpha,r)$ has a unique global maximizer $(\hat{\alpha},\hat{r})$ in the interior of $\mathcal{R}\times[-1,1]$ with $\hat{\alpha} \neq 0,\hat{r} \neq 0$, and the Hessian matrix $\nabla^2\tilde{\mathcal{B}}(\hat{\alpha},\hat{r})$ is negative definite, then there is a matrix $\tilde{G}$ such that
    \begin{equation}\label{eq: ball result equation}
        \tilde{L}_N -
        \tilde{\mathcal{B}}(\hat{\alpha},\hat{r}) 
        -
        \sqrt{N} \kappa {{U}}_{N}
        - \left( \kappa \Lambda_N 
        - \frac{1}{2} 
            \begin{pmatrix}U_{N}\\
                U_{N}^{'}
                \end{pmatrix}^{T}\tilde{G}\begin{pmatrix}U_{N}\\
                U_{N}^{'}
                \end{pmatrix}
        \right)
       \overset{\mathbb{P}}{\longrightarrow} 0,
    \end{equation} 
    where $U_N, U'_N, \Lambda_N, \kappa$ are as in Theorem \ref{thm: sphere}.
    The matrix is given in terms of $\hat{z}=\sqrt{2(1-\hat{\alpha}^{2})}$ by
    \begin{align*}
        \tilde{G}
        &=
        K^{T}\left(\nabla^{2}\mathcal{B}(\hat{\alpha},\hat{r})\right)^{-1}K
        +
        \begin{pmatrix}
            2\beta\frac{\hat{r}^{2}\hat{\alpha}^{2}}{\hat{z}^{3}}
            & 0 \\ 0 & 0
        \end{pmatrix}
        \ \text{ where }\ 
        K = 
        \frac{2\beta\hat{r}\hat{\alpha}}{\hat{z}^2}
        \begin{pmatrix}
            \frac{2\hat{r}}{\hat{z}^2}
            &  \frac{\hat{r}\hat{\alpha}^4}{\hat{z}^3}
            \\ \hat{\alpha}
            & 0
        \end{pmatrix}
        .
    \end{align*}
    
    The same holds if $\tilde{\mathcal{B}}(\alpha,r)$ has a pair of global unique maximizers $(\pm\hat{\alpha},\hat{r})$ in the interior of $\mathcal{R}\times[-1,1]$ with $\hat{\alpha} \ne 0$, $\tilde{\mathcal{B}}(\hat{\alpha},\hat{r})=\tilde{\mathcal{B}}(-\hat{\alpha},\hat{r})$ and $\nabla^2\tilde{\mathcal{B}}(\hat{\alpha},\hat{r})=\nabla^2\tilde{\mathcal{B}}(-\hat{\alpha},\hat{r})$ negative-definite.
\end{theorem}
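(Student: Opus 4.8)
I would pass to polar coordinates and reduce to Theorem \ref{thm: sphere}. Writing $m=r\sigma$ with $r=|m|\in\mathcal R$, $|\sigma|=1$, and using $H_N(r\sigma)=r^2H_N(\sigma)$, one has the exact identity
\begin{equation*}
  \tilde L_N=N\sup_{\alpha\in[-1,1],\,r\in\mathcal R}\Psi_N(\alpha,r),\qquad \Psi_N(\alpha,r):=\beta r^2\phi_N(\alpha)+f(r\alpha)+g(r),
\end{equation*}
where $\phi_N(\alpha):=\tfrac1N\sup\{H_N(\sigma):|\sigma|=1,\ v\cdot\sigma=\alpha\}$ is the slice maximum analysed in the proof of Theorem \ref{thm: sphere}; crucially $\phi_N$ does not depend on $r$. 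For fixed $r$ one has $N\sup_\alpha\Psi_N(\alpha,r)=Ng(r)+L_N(r)$, where $L_N(r):=\sup_{|\sigma|=1}\{\beta r^2H_N(\sigma)+Nf(r\,v\cdot\sigma)\}$ is precisely the maximum of Theorem \ref{thm: sphere} with inverse temperature $\beta r^2$ and spike $x\mapsto f(rx)$, the corresponding function \eqref{def: B(alpha)} being $\alpha\mapsto\tilde{\mathcal B}(\alpha,r)-g(r)$. Thus the statement follows by feeding these substitutions into the analysis behind Theorem \ref{thm: sphere} and optimising over the extra parameter $r$.

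\medskip

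\emph{Part (a).} For each fixed $r$, Theorem \ref{thm: sphere}(a) gives $g(r)+\tfrac1NL_N(r)\overset{\mathbb{P}}{\to}\sup_\alpha\tilde{\mathcal B}(\alpha,r)$. To interchange $\sup_{r\in\mathcal R}$ with the limit I would use equicontinuity: the $r$-derivative of $\beta r^2H_N(\sigma)+Nf(r\,v\cdot\sigma)$ is at most $2\beta r|H_N(\sigma)|+N\|f'\|_\infty\le 2\beta\sqrt N\|J\|_{\mathrm{op}}+N\|f'\|_\infty$ uniformly in $|\sigma|=1$, and $N^{-1/2}\|J\|_{\mathrm{op}}=O_{\mathbb P}(1)$, so $r\mapsto\tfrac1NL_N(r)$ is Lipschitz on the compact set $\mathcal R$ with an $O_{\mathbb P}(1)$ constant; together with the uniform continuity of $g$ this makes $r\mapsto g(r)+\tfrac1NL_N(r)$ stochastically equicontinuous, and equicontinuity plus pointwise convergence on a countable dense subset upgrades to uniform convergence in probability on $\mathcal R$. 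This gives \eqref{eq: L tilde leading order}.

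\medskip

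\emph{Part (b).} By part (a) and uniqueness of the maximiser, the supremum defining $\tilde L_N$ is, with probability $\to 1$, attained with $(\alpha,r)$ in a fixed neighbourhood of $(\hat\alpha,\hat r)$. The key input is the refined expansion of $\phi_N$ produced by the Lagrange/secular-equation analysis in the proof of Theorem \ref{thm: sphere}(b): uniformly for $\alpha$ near $\hat\alpha$ and $C^1$ in $\alpha$,
\begin{equation*}
  \phi_N(\alpha)=\sqrt{2(1-\alpha^2)}+\tfrac1{\sqrt N}A_N(\alpha)+\tfrac1N B_N(\alpha)+o_{\mathbb P}(N^{-1}),
\end{equation*}
with $A_N,B_N$ stochastically bounded and explicit in $U_N,U_N',\Lambda_N$; in particular $A_N(\hat\alpha)=\tfrac{\hat\alpha^2}{\hat z^2}U_N$, $B_N(\hat\alpha)=\tfrac{\hat\alpha^2}{\hat z^2}\Lambda_N+(\text{quadratic in }U_N,U_N')$, and $A_N'(\hat\alpha)$ is to leading order a deterministic linear combination of $U_N$ and $U_N'$ — $U_N'$ entering because $\tfrac{d}{d\alpha}$ acts through the shifting secular variable and $\tfrac{d}{d\ell}\big(v^\top(\ell I-J/\sqrt N)^{-1}v-\tfrac1N\mathrm{Tr}(\ell I-J/\sqrt N)^{-1}\big)\big|_{\ell=\hat l}=U_N'/\sqrt N$ by definition of $U_N'$. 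Substituting, $\Psi_N(\alpha,r)=\tilde{\mathcal B}(\alpha,r)+\tfrac1{\sqrt N}\beta r^2A_N(\alpha)+\tfrac1N\beta r^2B_N(\alpha)+o_{\mathbb P}(N^{-1})$, uniformly near $(\hat\alpha,\hat r)$ and $C^1$ in $(\alpha,r)$. Now a two-dimensional Laplace argument: writing $x=(\alpha-\hat\alpha,r-\hat r)^\top$, Taylor-expand around $(\hat\alpha,\hat r)$ (no linear term for $\tilde{\mathcal B}$, which has an interior maximum there) and maximise $x\mapsto\tfrac12x^\top\nabla^2\tilde{\mathcal B}(\hat\alpha,\hat r)x+\tfrac1{\sqrt N}\xi_N\cdot x$ over $x$ — possible since $\nabla^2\tilde{\mathcal B}(\hat\alpha,\hat r)$ is negative definite — with $\xi_N:=\nabla\big(\beta r^2A_N(\alpha)\big)\big|_{(\hat\alpha,\hat r)}$ stochastically bounded and, to leading order, $=-K(U_N,U_N')^\top$ for a deterministic matrix $K$. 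The maximiser sits at distance $O_{\mathbb P}(N^{-1/2})$ from $(\hat\alpha,\hat r)$, and the value equals $\tilde{\mathcal B}(\hat\alpha,\hat r)+\tfrac1{\sqrt N}\beta\hat r^2A_N(\hat\alpha)+\tfrac1N\big(\beta\hat r^2B_N(\hat\alpha)-\tfrac12\xi_N^\top(\nabla^2\tilde{\mathcal B})^{-1}\xi_N\big)+o_{\mathbb P}(N^{-1})$; multiplying by $N$ yields the expansion \eqref{eq: ball result equation}, with the coefficient of $\sqrt N\,U_N$ and of $\Lambda_N$ equal to $\beta\hat r^2\hat\alpha^2/\hat z^2$ (i.e.\ $\kappa$ after the rescaling $\beta\mapsto\beta\hat r^2$) and
\begin{equation*}
  \tilde G=K^\top\big(\nabla^2\tilde{\mathcal B}(\hat\alpha,\hat r)\big)^{-1}K+\mathrm{diag}\!\big(2\beta\hat r^2\hat\alpha^2/\hat z^3,\,0\big),
\end{equation*}
the diagonal term coming from $\beta\hat r^2$ times the quadratic part of $B_N(\hat\alpha)$ (equivalently the non-Laplace part of the matrix $G$ of Theorem \ref{thm: sphere} rescaled by $\beta\mapsto\beta\hat r^2$), and the first term being the Laplace gain $-\tfrac12\xi_N^\top(\nabla^2\tilde{\mathcal B})^{-1}\xi_N$ rewritten via $\xi_N=-K(U_N,U_N')^\top$.

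\medskip

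What remains is routine. The linear algebra identifying $\tilde G$ with the closed form in the statement rests on the fact that $\big(\nabla^2\tilde{\mathcal B}(\hat\alpha,\hat r)\big)^{-1}$ splits into a rank-one ``$\alpha$-direction'' piece weighted by $1/\partial_\alpha^2\tilde{\mathcal B}$ — which, conjugated by $K$, reproduces the $1/\mathcal B''(\hat\alpha)$-term of the matrix $G$ of Theorem \ref{thm: sphere} (with $\beta$ rescaled) — plus a rank-one ``Schur-complement'' piece weighted by $1/\big(\partial_r^2\tilde{\mathcal B}-(\partial_{\alpha r}\tilde{\mathcal B})^2/\partial_\alpha^2\tilde{\mathcal B}\big)$, the genuinely new contribution of the $r$-optimisation; computing $\xi_N$ from $A_N(\hat\alpha)$ and $A_N'(\hat\alpha)$ then produces exactly the stated $K$. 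The joint convergence $(U_N,U_N',\Lambda_N)\overset{d}{\to}(U,U',\Lambda)$ is imported verbatim from Theorem \ref{thm: sphere}. The two-maximiser case follows because $\phi_N(\alpha)=\phi_N(-\alpha)$ identically — the Lagrange conditions determining the slice maximum depend on $\alpha$ only through $\alpha^2$ — so the local expansions at $(\hat\alpha,\hat r)$ and $(-\hat\alpha,\hat r)$ agree up to $o_{\mathbb P}(1)$ under the assumed equalities of $\tilde{\mathcal B}$ and $\nabla^2\tilde{\mathcal B}$, whence $\tilde L_N$ equals either one up to $o_{\mathbb P}(1)$. I expect the main obstacle to be the key input of part (b): obtaining the order-$N^{-1}$ expansion of $\phi_N(\alpha)$ with enough uniformity and $C^1$-control in $\alpha$ to run the two-dimensional Laplace step — i.e.\ extracting, and if necessary slightly strengthening, the relevant estimates in the proof of Theorem \ref{thm: sphere}(b); once $\phi_N$ is controlled, adjoining $r$ as a second optimisation variable costs nothing, precisely because $\phi_N$ is $r$-independent.
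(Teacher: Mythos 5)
Your proposal is essentially correct and rests on the same structural observation as the paper: once one writes $m=r\sigma$ and fixes the overlap $\alpha=v\cdot\sigma$, the randomness enters only through the $r$-independent slice maximum $\phi_N(\alpha)$. The paper, however, organizes the calculation differently: rather than eliminating the Lagrange multiplier $l$ to produce an expansion of $\phi_N(\alpha)$ and then running a two-dimensional Laplace argument, it keeps $l$ explicit, proves a single quadratic expansion of $h(y,l,s_{\lambda,u}(l))$ around the deterministic point $(\hat y,\hat l)$ (Lemma \ref{lem:h_quad_expansion}) for a general $y\in\mathbb{R}^n$, computes the minimax of that quadratic in closed form (Lemma \ref{lem:max_fluct_quad}), and shows the actual minimax agrees with the quadratic minimax via localization (Lemmas \ref{lem:localize}--\ref{lem:localize_more}, Proposition \ref{prop:max_fluct_gen}). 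Both Theorem \ref{thm: sphere}(b) ($y=\alpha$) and Theorem \ref{thm: ball}(b) ($y=(\alpha,r)$) then drop out of the same proposition, so the ``extension to the ball is free'' in the paper for exactly the reason you identify, but realized through the general-$y$ minimax rather than by feeding rescaled parameters into the sphere theorem.

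Two concrete points where the paper's organization is leaner than yours. (1) For part (a), you do not need equicontinuity in $r$: Proposition \ref{prop:max_slice_term} already gives $\sup_{\alpha}\lvert S_N(\alpha)-\sqrt{2(1-\alpha^2)}\rvert\to 0$ in probability, and plugging this into \eqref{eq: fix overlap ball} and using $r^2\le 1$ gives $\lvert\frac{1}{N}\tilde L_N-\sup_{r,\alpha}\tilde{\mathcal B}\rvert\le\beta\sup_\alpha\lvert S_N(\alpha)-\sqrt{2(1-\alpha^2)}\rvert\to 0$ in one line; your pointwise-plus-equicontinuity route is valid but does more work. (2) For part (b), the step you flag as the ``main obstacle'' --- a $C^1$-in-$\alpha$, $o_\mathbb{P}(N^{-1})$-accurate expansion of $\phi_N(\alpha)$ --- is precisely what the paper avoids having to formulate by working in $(\alpha,r,l)$ jointly: the Taylor expansion is around a \emph{deterministic} point and the $l$-elimination is done only for the explicit quadratic $p_N$, so no regularity of a random implicit function $l^*(\alpha)$ is required. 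If you insist on the two-stage formulation you would in practice re-derive the same minimax lemmas. One small correction to the statement as printed that your computation actually confirms: the coefficient of $\sqrt N\,U_N$ and of $\Lambda_N$ is $\beta\hat r^2\hat\alpha^2/\hat z^2$, i.e.\ $\kappa$ from Theorem \ref{thm: sphere} only after the rescaling $\beta\mapsto\beta\hat r^2$ (the paper's own proof computes $E_2=\beta\hat r^2\hat\alpha^2/\hat z^2$). Your remaining linear algebra, including the block-inverse split of $(\nabla^2\tilde{\mathcal B})^{-1}$ and the observation that the first row of $K$ reproduces the sphere $K$ with $\beta\mapsto\beta\hat r^2$, is correct and matches what falls out of the paper's $K$, $H$, $G$ computations in the proof of Theorem \ref{thm: ball}(b).
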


In the Thouless-Andersson-Palmer (TAP) \cite{thouless1977solution} approach to spin glasses one aims to extract important information about spin glass models from their {\emph{TAP free energy}}, which is a random
function arising from the Hamiltonian $H_{N}$ of the model. For the
spiked spherical Sherrington-Kirkpatrick model of this article it
is given by \cite{crisanti1995thouless,belius-kistler}
\begin{equation}\label{def:FTAP}
    F_{{\rm TAP}}(m)=\beta H_{N}(m)+Nf(v\cdot m)+\frac{N}{2}\log(1-|m|^{2}) + \frac{N}{2} \beta^{2}(1-|m|^{2})^{2}, \ \left|m\right|<1.
\end{equation}
Only $m$ satisfying certain conditions are believed to be ``relevant''
\cite{thouless1977solution, plefka1, plefka1982convergence, subag2018landscapes, belius-kistler}. For the spherical Sherrington-Kirkpatrick model the only needed condition is {\emph{Plefka's condition}}, requiring that $\sqrt{2}\beta(1-|m|^{2})\le1$. \cite{belius-kistler}. The maximal TAP free energy over $m$
that satisfy Plefka's condition is of the form \eqref{def: L tilde} with $g(r)=\frac{1}{2}\left(\log(1-r^2) + \beta^{2}(1-r^2)^{2}\right)$
and $\mathcal{R}=\{r:r^2\ge1-\frac{1}{\sqrt{2}\beta}\}$. In Sections \ref{section: example leading order} and \ref{section: example fluctuations} we determine more concretely for this $g$ and monomial $f$  when the conditions of Theorem \ref{thm: sphere} and \ref{thm: ball} are satisfied and what the resulting formulas for leading order and fluctuations are.

\subsection{Fluctuations and the TAP approach}

The SK model and its variants consist of a high-dimensional spin space such the
sphere $\{ \sigma \in \mathbb{R}^N: |\sigma|=1 \}$ and a random energy such as $\beta H_{N}(\sigma)+Nf(\sigma\cdot v)$
associated to each spin configuration vector $\sigma$, where $H_{N}(\sigma)$
is a high-dimensional Gaussian field of which $H_{N}(\sigma)$ from \eqref{def:HN} is
a special case. From this energy one constructs the Gibbs measure,
which in the case of a spherical spin space is the probability measure with density proportional to the Gibbs factor $\exp(\beta H_{N}(\sigma)+Nf(\sigma\cdot v)$) with respect to the uniform measure on the sphere. The normalizing factor of the measure is known as the partition function and usually denoted by $Z_{N}$. The vector $\sigma$ sampled according to the Gibbs measure models
the spins of exotic magnet materials, or other complex phenomena in related models \cite{mezard1987spin, mezard2009information}. The ultimate goal of the area is to describe the behavior of $\sigma$ sampled according to the Gibbs measure.

For the general class of mixed $p$-spin Hamiltonians $H_{N}$ \cite{derridaRandomEnergyModelLimit1980,grossSimplestSpinGlass1984,talagrand2000multiple, auffinger2013complexity} this
is a formidable task that is far from being accomplished. In the general
case the ``geometry'' of the random landscape $H_{N}$ is extraordinarily
complex \cite{fyodorovHighDimensionalRandomFields2013,auffinger2013complexity, auffinger2013random, subag2017complexity}, and this is expected to be reflected in the behavior of the
Gibbs measure. The Sherrington-Kirkpatrick Hamiltonian \eqref{def:HN} is the special case of a
{\emph{$2$-spin}} Hamiltonian, which when combined with a spherical spin space
has significantly simpler behavior, and is much easier to study due
to the spherical symmetry and quadratic nature of the Hamiltonian
allowing many explicit calculations that are impossible in general. As such the $2$-spin setting provides a valuable testing ground
for new ideas and techniques. The motivation for this paper is to
use the $2$-spin spherical Hamiltonian as a starting point to explore fluctuations in spin
glasses via a TAP approach.

A first step in understanding the Gibbs measure is computing the \emph{free
energy} which is the limit of $\frac{1}{N}\log Z_{N}$ as $N\to\infty$,
i.e. the rate of exponential growth of the partition function. Knowledge
of the free energy morally speaking corresponds to knowledge of which
regions of the spin space have probability at least $e^{-o(N)}$ under
the Gibbs measure, rather than exponentially small probability. Finer
estimates for the free energy, such as lower order corrections and fluctuations, morally
correspond to finer knowledge of the Gibbs measure. There are several approaches to computing the free energy \cite{parisi1980sequence,guerra, ASS, tsphere, talagrand, contucci2013dmitry, panchenko2014parisi, CASS}. In the TAP approach one expects that the free energy is roughly speaking given by the maximum of the TAP free energy $F_{\rm TAP}(m)$ of the model. The final term of $F_{\rm TAP}(m)$ is called the ``Onsager term'', and the $F_{\rm TAP}(m)$ of general  mixed $p$-spin spherical models coincides with \eqref{def:FTAP} but with a more general Onsager term. The TAP approach for general models is under active investigation \cite{bolthausen2014iterative,bolthausenMoritaTypeProof2018,brenneckeNoteReplicaSymmetric2021,subag2017geometry,chenGeneralizedTAPFree2022,subag2021free,BeliusUpperBound} and the correspondence between free energy and maximal TAP free energy is proven mathematically rigorously without appealing to powerful machinery like the Parisi formula only in a few cases \cite{subag2021free,belius-kistler,tapvectorspin}. One of these is the spherical $2$-spin case of this paper, where the free energy was computed completely within a TAP approach in \cite{belius-kistler}.

From the point of view of the TAP approach the fluctuations of the free energy should arise on the one hand from the fluctuations of the maximum of $F_{{\rm TAP}}$, and on the other hand from the fluctuations of certain ``local'' integrals (over ``slices'' in the terminology of \cite{belius-kistler,tapvectorspin,BeliusUpperBound} and over ``bands'' in the terminology of \cite{subag2017geometry,subag2018landscapes,chenGeneralizedTAPFree2022}; the Onsager term of $F_{\rm{TAP}}$ describes the leading order behavior of these integrals). In this article we completely determine the former kind of fluctuations for the spherical $2$-spin model, to the highest degree of precision that is plausibly relevant for the study of the fluctuations of the free energy and Gibbs measure. The analysis of the latter type of fluctuations, and consequences for the fluctuations of the free energy, are left to future work.

See for instance \cite{aizenman1987some, bovier2002fluctuations, chatterjee2009disorder, baik-lee, chen2017parisi, subag2017extremal,baikSphericalSpinGlass2020,landonFreeEnergyFluctuations2020,landonFluctuations2spinSSK2020,banerjee2021fluctuations,bovier2022fluctuations} for work on fluctuations in spin glasses from a non-TAP point of view.

\subsection{Sketch of proof}\label{section: Sketch of proof}
In this subsection we give a brief sketch of our arguments. To prove Theorem \ref{thm: sphere} we diagonalize the matrix $J$ and obtain that
\begin{equation}\label{eq:sketch_first_step}
    \frac{1}{N}L_{N}=\sup_{|\sigma|=1}\left\{ \beta\frac{1}{N}H_{N}(\sigma)+f(v\cdot\sigma)\right\} \stackrel{d}{=}\sup_{|\sigma|=1}\left\{ \beta\sum_{i=1}^{N}\lambda_{i}\sigma_{i}^{2}+f\left(\sum_{i=1}^{N}u_{i}\sigma_{i}\right)\right\} ,
\end{equation}
where $\lambda_1<\ldots<\lambda_N$ are the eigenvalues of $\frac{1}{\sqrt{N}}J$
and $u$ is the spike vector $v$ written in the diagonal
basis. By the orthogonal invariance of $J$ the vector $u$ is uniform
on the sphere and independent of the $\lambda_i$. Next we decompose the maximization in \eqref{eq:sketch_first_step} according to the value of $\sum_{i=1}^{N}u_{i}\sigma_{i}$
to obtain 
\begin{equation}
\frac{1}{N}L_{N}\stackrel{d}{=}\sup_{\alpha\in[-1,1]}\left\{ f\left(\alpha\right)+\beta\sup_{\substack{|\sigma|=1\\
\sigma\cdot u=\alpha
}
}\sum_{i=1}^{N}\lambda_{i}\sigma_{i}^{2}\right\} .\label{eq:LN_fix_overlap_intro}
\end{equation}
In the proof of Theorem \ref{thm: ball} we use the similar identity \eqref{eq: fix overlap ball} for $\frac{1}{N}\tilde{L}_{N}$
where the outer supremum is also over $r$.

We then solve the constrained optimization problem in (\ref{eq:LN_fix_overlap_intro})
using Lagrange multipliers, and obtain the
identity
\begin{equation}\label{eq:inf_l_s_lambda_u_intro}
    \sup_{\substack{|\sigma|=1\\
    \sigma\cdot u=\alpha
    }
    }\sum_{i=1}^{N}\lambda_{i}\sigma_{i}^{2}=\inf_{l>\lambda_{N}}\left\{l-\frac{\alpha^{2}}{s_{\lambda,u}(l)}\right\}\quad\text{ for }\quad s_{\lambda,u}(l)=\sum_{i=1}^{N}\frac{u_{i}^{2}}{\lambda_{i}-l},
\end{equation}
provided $\left|\alpha\right|\ge |u_{N}|$ over $l  > \lambda_N$. This reduces the high-dimensional optimization over $\sigma\in\mathbb{R}^N$ to a low-dimensional one. We recognize the random
function $s_{\lambda,u}(l)$ as the Stieltjes transform of the empirical
spectral distribution of $J$ weighted by $u_{i}^{2}$. It is easy
to see that it converges to the Stieltjes transform of the semi-circle
law $s(l)$. In our normalization it is given by $s(l)=l-\sqrt{l^{2}-2}$,
and also $\lambda_{N}\to\sqrt{2}$ in probability. We thus obtain
from (\ref{eq:inf_l_s_lambda_u_intro}) a limiting optimization problem
which is explicitly solvable:
\begin{equation}\label{eq:inf_l_s_intro}
    \inf_{l>\sqrt{2}}\left\{l-\frac{\alpha^{2}}{s(l)}\right\}=\sqrt{2(1-\alpha^{2})},
\end{equation}
cf. \eqref{def: B(alpha)}. To prove the leading order
results Theorem \ref{thm: sphere} (a) and Theorem \ref{thm: ball} (a) it suffices to
approximate the infimum in (\ref{eq:inf_l_s_lambda_u_intro}) by that
in (\ref{eq:inf_l_s_intro}). For this purpose we obtain in Section \ref{section: leading order} sufficiently
uniform estimates for the convergence of $s_{\lambda,u}(l)$ to $s(l)$,
and combine these with a simple ad-hoc argument for $\left|\alpha\right|\le |u_{N}|$
to prove Theorem \ref{thm: sphere} (a) and Theorem \ref{thm: ball} (a).

For the fluctuation result Theorem \ref{thm: sphere} (b) the assumption that $\hat{\alpha}\ne0$
makes the identity (\ref{eq:inf_l_s_lambda_u_intro}) hold in a neighborhood $[ \hat{\alpha}-\varepsilon,\hat{\alpha}+\varepsilon]$
of the unique maximizer $\hat{\alpha}$ with high probability, and using this the maximum can be written exactly as the minimax
\[
\frac{1}{N}L_{N} =\sup_{\alpha\in[ \hat{\alpha}-\varepsilon,\hat{\alpha}+\varepsilon]}\inf_{l>\lambda_{N}}h(\alpha,l,s_{\lambda,u}(l))\quad\text{ for }\quad h(\alpha,l,g)=f(\alpha)+\beta\left(l-\frac{\alpha^{2}}{g}\right).
\]
A similar function $h((\alpha,r),l,g)$ gives a similar ``high
probability'' identity for $\frac{1}{N}\tilde{L}_{N}$ (see \eqref{eq: high probability equality tilde L_N}). Therefore
both Theorem \ref{thm: sphere} (b) and Theorem \ref{thm: ball} (b) can be proved by
studying fluctuations of
\begin{equation}
\sup_{y\in\mathcal{Y}}\inf_{l\in\mathcal{L}}h(y,l,s_{\lambda,u}(l)),\label{eq:gen_minimax_intro}
\end{equation}
for a general function $h(y,l,g)$ where $y\in\mathcal{Y}\subset\mathbb{R}^{n},n\ge1$
and $\mathcal{L}\subset(\sqrt{2},\infty$), under the assumption that
the limiting minimax $\sup_{y\in\mathcal{Y}}\inf_{l\in\mathcal{L}}h(y,l,s(l))$
has a unique optimizer $(\hat{y},\hat{l})$. In Section \ref{section: leading order} we study the fluctuations of $s_{\lambda,u}(l)$ around
$s(l)$ using a combination of central limit theorems for sums over
eigenvalues and the entries of the spike vector $v$. We then expand $h(y,l,s_{\lambda,u}(l))$
quadratically in these fluctuations and in $y,l$, around the point
$\hat{y},\hat{l},s(\hat{l})$. The first- and second-order fluctuations
of (\ref{eq:gen_minimax_intro}) are obtained by solving the minimax
optimization for this approximating quadratic, leading to the proof of \eqref{eq: theorem1-equation} and \eqref{eq: ball result equation}.

\subsection{Organization}
In the preliminary Section \ref{section: preliminaries} we recall some useful results about the GOE random matrix and its eigenvalues. In Section \ref{section: problem reduction} we use Lagrange multipliers to reduce the optimizations over $\sigma$ in $L_N$ and $\tilde{L}_N$ to low-dimensional optimization as described in the sketch above. In Section \ref{section: leading order} we prove uniform leading order estimates for the convergence of $s_{\lambda,u}(l)$ to $s(l)$, and deduce from these the leading order estimates Theorem \ref{thm: sphere} (a) and Theorem \ref{thm: ball} (a). Then in Section~\ref{section: example leading order} we provide some concrete examples of $f$ and $g$ to which the leading order results apply. In Section \ref{section: subleading order fluctuations} we study the fluctuations of $s_{\lambda,u}$, and use this and the quadratic expansion described in the sketch to prove the fluctuation results Theorem \ref{thm: sphere} (b) and Theorem \ref{thm: ball} (b). Finally in Section~\ref{section: example fluctuations} we apply these to study the fluctuation for the examples of Section~\ref{section: example leading order}.

\subsection{Notation}
We use the following notations, in addition to those already introduced before Theorem \ref{thm: sphere}. The unit sphere is denoted $\mathcal{S}_{N-1}=\{ \sigma \in \mathbb{R}^N:|\sigma|=1\}$. Furthermore we write $O_{\mathbb{P}}$ and $o_{\mathbb{P}}$ for probabilistic versions of the standard  notation for the order of quantities as $N\to\infty$. More precisely we write $X_N={{O}}_{\mathbb{P}}(T(N))$ if $X_N/T(N)$ is stochastically bounded, i.e. if
\begin{equation}\label{eq: big O}
    \lim_{x\to\infty} \limsup_{N\to\infty} \mathbb{P}\left( \frac{|X_N|}{T(N)} \ge x \right) = 0,
\end{equation}
and $X_N = o_{\mathbb{P}}(T(N))$ if
\begin{equation}\label{eq: small o}
    \frac{|X_N|}{T(N)} \overset{\mathbb{P}}{\longrightarrow} 0.
\end{equation}

\section{Random matrix preliminaries}\label{section: preliminaries}
In this section we recall some standard results about the eigenvalues of the GOE. We denote the semi-circle law on $[-\sqrt{2},\sqrt{2}]$ by
\begin{equation}\label{eq:semicircle_law_def}
    \mu_{\text{sc}}(dx) = \frac{\sqrt{2 - x^2}}{\pi} dx.
\end{equation}

Let $\theta_{1/N},...,\theta_{N/N} \in [-\sqrt{2},\sqrt{2}]$ be given by
\begin{equation}\label{eq: classical locations}
    \int_{-\sqrt{2}}^{\theta_{k/N}} \mu_{\text{sc}}(dx) = \frac{k}{N},
\end{equation}
which are sometimes called the \textit{classical locations} of the eigenvalues of $J$.
From e.g. \cite[Theorem 2.2]{ev_rigidity} we know that the eigenvalues concentrate around these, i.e.:

\begin{lemma}\label{lem: rigidity of eigenvalues}
    For any $\varepsilon > 0$ and all $k \in\{1,...,N\}$
    $$
        |\lambda_k - \theta_{k/N}| \le N^{-\frac{2}{3}+\varepsilon} 
            \min\left\{ k^{-\frac{1}{3}}, (N-k)^{-\frac{1}{3}} \right\}
    $$
    with probability tending to one as $N\rightarrow\infty$.
\end{lemma}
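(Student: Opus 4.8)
The statement is the standard local rigidity estimate for Wigner matrices, so the plan is to deduce it from \cite[Theorem 2.2]{ev_rigidity} by matching normalizations rather than reproving it. First I would record that $W:=N^{-1/2}J$ is a real symmetric matrix with independent (up to symmetry) centered Gaussian entries satisfying $\mathbb{E}[W_{ij}^2]=\tfrac1{2N}$ for $i<j$ and $\mathbb{E}[W_{ii}^2]=\tfrac1N$, so that the row sums of its variance profile equal $1$ and all entry moments are finite; this places $W$ within the class of matrices to which \cite{ev_rigidity} applies, and its empirical spectral distribution converges to $\mu_{\mathrm{sc}}$ on $[-\sqrt2,\sqrt2]$ from \eqref{eq:semicircle_law_def}.

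The cited result is phrased for the normalization in which the limiting law is the semicircle supported on $[-2,2]$, so I would apply it to $\sqrt2\,W$: its ordered eigenvalues are $\sqrt2\lambda_1<\dots<\sqrt2\lambda_N$, and its classical locations are exactly $\sqrt2\,\theta_{k/N}$, as follows from the substitution $x\mapsto\sqrt2 x$ in the defining identity \eqref{eq: classical locations}. The theorem of \cite{ev_rigidity} then yields, for every fixed $\varepsilon>0$ and $D>0$ and all large $N$, a bound of the form $|\sqrt2\lambda_k-\sqrt2\,\theta_{k/N}|\le N^{-2/3+\varepsilon}$ times the usual index-dependent rigidity factor, for all $k$ simultaneously, with probability at least $1-N^{-D}$; this factor equals $1$ at the two spectral edges and is of smaller order in the bulk, so the right-hand side interpolates between the edge scale $N^{-2/3}$ and the bulk scale $N^{-1}$. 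Dividing through by $\sqrt2$, absorbing the constant $2^{-1/2}$ into an arbitrarily small power of $N$ (equivalently, slightly shrinking $\varepsilon$), matching the index-dependent factor to the form appearing in the statement, and using $1-N^{-D}\to1$, gives the claimed inequality with probability tending to one.

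I do not expect a genuine obstacle here: the content is contained entirely in the cited rigidity theorem, and the remaining work is the routine bookkeeping of (i) checking that the GOE meets the hypotheses of \cite{ev_rigidity} — which it does, being a Wigner matrix with a constant variance profile and Gaussian, hence subexponential, entries — and (ii) tracking the linear change of variables relating the $[-2,2]$ and $[-\sqrt2,\sqrt2]$ normalizations at the level of the eigenvalues, the classical locations, and the error scale.
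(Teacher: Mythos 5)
Your proposal is correct and takes exactly the route the paper does: the paper offers no proof beyond the citation of \cite[Theorem 2.2]{ev_rigidity}, and your argument supplies the routine bookkeeping (checking the GOE satisfies the hypotheses, rescaling by $\sqrt{2}$ to pass between the $[-\sqrt{2},\sqrt{2}]$ and $[-2,2]$ conventions for eigenvalues and classical locations alike, and absorbing the constant into the $\varepsilon$) that the paper leaves implicit. One small remark: the index-dependent factor in the lemma as printed, $\min\{k^{-1/3},(N-k)^{-1/3}\}$, appears to be a typo for $\bigl(\min\{k,N+1-k\}\bigr)^{-1/3}=\max\{k^{-1/3},(N+1-k)^{-1/3}\}$, which is what the cited rigidity theorem actually gives and what your phrase ``equals $1$ at the two spectral edges'' correctly describes; the printed $\min$ would give $N^{-1+\varepsilon}$ at the edges, stronger than Tracy--Widom permits.
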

In particular
\begin{equation}\label{eq: extremal evals}
    \lambda_N \overset{\mathbb{P}}{\to} \sqrt{2} \text{ and } \lambda_1 \overset{\mathbb{P}}{\to} -\sqrt{2}.
\end{equation}
It is elementary to estimate sums of the classical locations with integrals over the semi-circle law. The next lemma records this.
\begin{lemma}\label{lem: sums}
    For all $w\in {C^{1}}\left([-\sqrt{2}-\varepsilon,\sqrt{2}+\varepsilon]\right)$
    it holds that 
    \begin{equation}\label{eq: sum class loc int}
        \left|
        \frac{1}{N}\sum_{i=1}^{N}w\left(\theta_{i/N}\right)
            -
        \int_{-\sqrt{2}}^{\sqrt{2}}w\left(x\right)\mu_{sc}\left(dx\right)
        \right| 
        \le \frac{2\sqrt{2}|w'|_\infty}{N},
    \end{equation}

\end{lemma}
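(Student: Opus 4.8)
\emph{Proof proposal.} The plan is to compare the normalized sum with the integral by a telescoping argument that exploits the defining feature of the classical locations: they partition $[-\sqrt2,\sqrt2]$ into $N$ consecutive cells, each carrying mass exactly $1/N$ under $\mu_{sc}$. Concretely, I would set $\theta_{0/N}:=-\sqrt2$, so that \eqref{eq: classical locations} gives $\int_{\theta_{(i-1)/N}}^{\theta_{i/N}}\mu_{sc}(dx)=\frac1N$ for each $i\in\{1,\dots,N\}$, with the intervals $[\theta_{(i-1)/N},\theta_{i/N}]$ having pairwise disjoint interiors and union $[-\sqrt2,\sqrt2]$.

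The first step is then to write $\frac1N w(\theta_{i/N})=\int_{\theta_{(i-1)/N}}^{\theta_{i/N}}w(\theta_{i/N})\,\mu_{sc}(dx)$ and subtract the corresponding piece of the integral, obtaining
\begin{equation*}
    \frac1N\sum_{i=1}^{N}w(\theta_{i/N})-\int_{-\sqrt2}^{\sqrt2}w(x)\,\mu_{sc}(dx)
    =\sum_{i=1}^{N}\int_{\theta_{(i-1)/N}}^{\theta_{i/N}}\bigl(w(\theta_{i/N})-w(x)\bigr)\,\mu_{sc}(dx).
\end{equation*}
Next, for $x$ in the $i$-th cell the mean value theorem (valid since $w\in C^1$ on a neighbourhood of $[-\sqrt2,\sqrt2]$) bounds the integrand by $|w'|_\infty(\theta_{i/N}-\theta_{(i-1)/N})$, so the $i$-th term is at most $|w'|_\infty(\theta_{i/N}-\theta_{(i-1)/N})\cdot\frac1N$ in absolute value. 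Summing over $i$ the increments telescope,
\begin{equation*}
    \left|\frac1N\sum_{i=1}^{N}w(\theta_{i/N})-\int_{-\sqrt2}^{\sqrt2}w(x)\,\mu_{sc}(dx)\right|
    \le\frac{|w'|_\infty}{N}\sum_{i=1}^{N}\bigl(\theta_{i/N}-\theta_{(i-1)/N}\bigr)
    =\frac{|w'|_\infty}{N}\bigl(\theta_{N/N}-\theta_{0/N}\bigr)=\frac{2\sqrt2\,|w'|_\infty}{N},
\end{equation*}
using $\theta_{N/N}=\sqrt2$ and $\theta_{0/N}=-\sqrt2$, which is exactly the claimed bound.

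I do not expect any genuine obstacle: the estimate is elementary, amounting to a left-endpoint Riemann-sum comparison. The only points deserving minor care are the bookkeeping at the endpoint $\theta_{0/N}=-\sqrt2$ and the observation that each cell carries precisely mass $1/N$, so that no error from the measure itself enters and the resulting constant depends on $w$ only through $|w'|_\infty$.
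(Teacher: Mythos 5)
Your proof is correct and is essentially the same argument the paper gives: decompose the integral into the $N$ cells $[\theta_{(i-1)/N},\theta_{i/N}]$ each of $\mu_{sc}$-mass $1/N$, compare $w(\theta_{i/N})$ with $w(x)$ on each cell via the mean value theorem, and telescope $\sum_i(\theta_{i/N}-\theta_{(i-1)/N})=2\sqrt2$. (Your cell-by-cell bound is in fact marginally cleaner than the paper's phrasing, which pulls out a single shared factor $\xi(w)\in[-|w'|_\infty,|w'|_\infty]$, but the content is the same.)
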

\begin{proof}
    It follows from \eqref{eq: classical locations} that
    \begin{align*}
        \int_{-\sqrt{2}}^{\sqrt{2}}w(x)\mu_{sc}(dx)
        & =\sum_{i=1}^{N}\int_{\theta_{(i-1)/N}}^{\theta_{i/N}}w(x)\mu_{sc}(dx)
        \\
        & = \sum_{i=1}^{N}w(\theta_{i/N})\frac{1}{N}
        + \xi(w)\frac{1}{N}\sum_{i=1}^{N}(\theta_{i/N}-\theta_{(i-1)/N}),
    \end{align*}
    where $\xi(w) \in [-|w'|_{\infty},|w'|_{\infty}]$.
\end{proof}

The next lemma is concerned with fluctuations of sums over the eigenvalues.
\begin{lemma}\label{lem: ev fluctuation}
    If $\varepsilon > 0$ and $w \in C^1([-\sqrt{2}-\varepsilon, \sqrt{2}+\varepsilon])$
        $$
            \sum_{i=1}^N w(\lambda_i) - N \int_{-\sqrt{2}}^{\sqrt{2}} w(x) \mu_{\text{sc}}(dx)
            \stackrel{d}{\longrightarrow} 
            \mathcal{N}\left(m(w), v(w) \right),
        $$
    where
        \begin{align*}
            m(w) = & 
                    \frac{w(\sqrt{2}) + w(-\sqrt{2})}{4} -
                    \frac{1}{2\pi} \int_{-\sqrt{2}}^{\sqrt{2}} 
                    w(x) \frac{1}{\sqrt{2-x^2}} dx
            \\
            v(w) = & 
                    \frac{1}{2\pi^2}
                    \int_{-\sqrt{2}}^{\sqrt{2}}\int_{-\sqrt{2}}^{\sqrt{2}}
                    \left(\frac{w(x)-w(y)}{x-y}\right)^2
                    \frac{2 - x y}{\sqrt{2-x^2}\sqrt{2-y^2}}
                    dx dy.
        \end{align*}
\end{lemma}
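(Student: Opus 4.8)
This is the classical central limit theorem for linear eigenvalue statistics of the GOE, and the most economical option is to cite it (e.g. from Lytova--Pastur, Bai--Yao, or Johansson) after matching their normalization to ours via the change of variables $\lambda\mapsto\sqrt2\,\lambda$. For a self-contained argument the plan is threefold: (i) reduce to polynomial test functions; (ii) establish the CLT for polynomials by a moment/cumulant computation, most conveniently organised in the rescaled Chebyshev basis; (iii) identify the resulting constants with the integral formulas for $m(w)$ and $v(w)$.

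For step (i) the key inputs are uniform-in-$N$ first- and second-moment bounds. Since $J\mapsto\sum_i w(\lambda_i)=\operatorname{Tr}w(J/\sqrt N)$ is Lipschitz in the entries of $J$ with constant $O(|w'|_\infty)$, Gaussian concentration for the GOE gives $\operatorname{Var}\big(\sum_i w(\lambda_i)\big)\le C|w'|_\infty^2$; and integrating by parts against the $O(1/N)$ Kolmogorov-distance bound between the mean spectral measure and $\mu_{sc}$ (which follows from Lemma~\ref{lem: rigidity of eigenvalues}) gives $\big|\mathbb{E}\sum_i w(\lambda_i)-N\int w\,d\mu_{sc}\big|\le C|w'|_\infty$. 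Hence for any polynomial $p$ the ``error'' statistic $\sum_i(w-p)(\lambda_i)-N\int(w-p)\,d\mu_{sc}$ has, uniformly in $N$, mean and standard deviation bounded by $C|w'-p'|_\infty$. Given $\varepsilon>0$ one can, by applying Weierstrass to $w'$ on $[-\sqrt2-\varepsilon,\sqrt2+\varepsilon]$ and adjusting an additive constant, choose $p$ with $|w-p|_\infty$ and $|w'-p'|_\infty$ both arbitrarily small on that interval; since by Lemma~\ref{lem: rigidity of eigenvalues} all eigenvalues lie in it with high probability, and since $m(\cdot)$ is continuous for the sup norm while $v(\cdot)$ is a nonnegative quadratic form with $v(w-p)\le|w'-p'|_\infty^2$, it suffices to prove the statement for $w$ a polynomial and then let $p\to w$.

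For step (ii) I would expand the rescaled Chebyshev polynomials $\tau_k(x)=\cos\!\big(k\arccos(x/\sqrt2)\big)$ and show, by Wick's theorem for the Gaussian entries and the standard genus expansion (equivalently, the loop equations for the resolvent), that the centred traces $\operatorname{Tr}\tau_k(J/\sqrt N)-\mathbb{E}\operatorname{Tr}\tau_k(J/\sqrt N)$ converge jointly over $k\ge1$ to independent centred Gaussians with variances proportional to $k$, while $\mathbb{E}\operatorname{Tr}\tau_k(J/\sqrt N)-N\int\tau_k\,d\mu_{sc}$ tends to $\tfrac12$ for even $k$ and to $0$ for odd $k$ --- this even-index mean shift is the $\beta=1$ signature and is exactly what reconstructs the edge term $\tfrac14(w(\sqrt2)+w(-\sqrt2))$. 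Writing the polynomial as $w(\sqrt2\cos\theta)=\tfrac12 c_0+\sum_{k\ge1}c_k\cos(k\theta)$, linearity then yields convergence of $\sum_i w(\lambda_i)-N\int w\,d\mu_{sc}$ to a Gaussian with mean $\sum_{k\ge1}(\mathrm{mean}_k)\,c_k$ and variance $\sum_{k\ge1}(\mathrm{var}_k)\,c_k^2$, and step (iii) is the elementary Chebyshev/Fourier identity verifying that these equal $m(w)$ and $v(w)$ --- the kernel $\tfrac{2-xy}{\sqrt{2-x^2}\sqrt{2-y^2}}$ is precisely the one diagonalised in the $\cos(k\theta)$ basis with eigenvalues proportional to $k$. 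The main obstacle is step (ii): carrying out the genus-expansion (or loop-equation) computation cleanly enough to extract simultaneously the diagonal Gaussian covariance and the GOE-specific $1/N$ density-of-states correction with the correct constants; steps (i) and (iii) are routine by comparison.
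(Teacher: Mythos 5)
Your primary suggestion — cite a known GOE linear-statistics CLT (Bai--Yao) and rescale $\lambda\mapsto\sqrt 2\lambda$ — is exactly what the paper does. One caveat, though: you describe the remaining work as ``matching their normalization via the change of variables,'' but in the paper the bulk of the proof is not a rescaling. Bai--Yao state the variance as
$$\frac{1}{2\pi^2}\int\!\!\int w'(s)w'(t)\log\!\left(\frac{4-st+\sqrt{4-s^2}\sqrt{4-t^2}}{4-st-\sqrt{4-s^2}\sqrt{4-t^2}}\right)ds\,dt,$$
whereas the lemma states it as a divided-difference kernel $\left(\frac{w(x)-w(y)}{x-y}\right)^2\frac{2-xy}{\sqrt{2-x^2}\sqrt{2-y^2}}$, and the paper establishes the equivalence by integrating by parts twice (using the closed form of $\partial_y$ and $\partial_x$ of that log kernel); the rescaling is then the easy final step. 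Your alternative self-contained route via the Chebyshev basis is a genuine and plausible different path — it would derive both $m(w)$ and $v(w)$ directly in diagonalized form and avoid the kernel-matching computation — but as you note it requires carrying out a genus/loop-equation computation that is considerably more work than simply invoking Bai--Yao and doing the two integrations by parts.
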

\begin{proof}
    Let 
    $\tilde{\lambda}_1,...,\tilde{\lambda}_N \stackrel{d}{=} \sqrt{2}\lambda_1,...,\sqrt{2}\lambda_N$ and 
    $\tilde{\mu}_{\text{sc}}$ the measure of the semi-circle law on the interval $[-2,2]$.
    By \cite[Theorem 1.1]{bai-yao} with $\kappa = \sigma^2 = 2$ and $\beta = 0$ it holds that for differentiable $w$
    $$
        \sum_{i=1}^N w(\tilde{\lambda}_i) - N \int_{-2}^{2} w(x) \tilde{\mu}_{\text{sc}}(dx)
        \stackrel{d}{\longrightarrow} 
            \mathcal{N}\left(\tilde{m}(w), \tilde{v}(w) \right)
    $$
    with expectation
    $$
        \tilde{m}(w) =
        \frac{w(2) + w(-2)}{4}
        - \frac{1}{2\pi} \int_{-1}^1 w(2t) \frac{1}{\sqrt{1-t^2}} dt
    $$
    and variance
    $$
        \tilde{v}(w) =
                    \frac{1}{2\pi^2}
                    \int_{-2}^{2}\int_{-2}^{2}
                    w'(s)w'(t)
                    \log\left(\frac{4-ts + \sqrt{4-s^2}\sqrt{4-t^2}}{4-ts - \sqrt{4-s^2}\sqrt{4-t^2}}\right)
                    ds dt
    $$
    By a change of variables we immediately get $m(w)$ from $\tilde{m}(w(\tfrac{1}{\sqrt{2}} \cdot ))$. For the variance we can show that the expressions match by using integration by parts twice. Note that
    \begin{align*}
        \frac{\partial}{\partial y}
        \log\left(\frac{4-xy + \sqrt{4-x^2}\sqrt{4-y^2}}{4-xy - \sqrt{4-x^2}\sqrt{4-y^2}}\right)
            =
            2 \frac{\sqrt{4-x^2}}{\sqrt{4-y^2}(x-y)}
    \end{align*}
    and
    \begin{align*}
        \frac{\partial}{\partial x}
        \frac{\sqrt{4-x^2}}{\sqrt{4-y^2}(x-y)}
            =
            -\frac{4 - xy}{\sqrt{4-x^2}\sqrt{4-y^2}(x-y)^2},
    \end{align*}
    which gives
    \begin{align*}
        & \int_{-2}^{2}\int_{-2}^{2}
                    \left(w(x)-w(y)\right)^2
                    \tfrac{4 - x y}{(x-y)^2\sqrt{4-x^2}\sqrt{4-y^2}}
                    dx dy
        \\ = &
        \int_{-2}^{2}
        \left(
            \left[
            \left(w(x)-w(y)\right)^2
            \tfrac{-\sqrt{4-x^2}}{\sqrt{4-y^2} (y-x)}
            \right]_{x=-2}^{2}
                -
            \int_{-2}^{2}
                    2 \left(w(x)-w(y)\right) w'(x) 
                    \tfrac{-\sqrt{4-x^2}}{\sqrt{4-y^2} (y-x)}
                    dx 
        \right) dy
        \\ = &
        \int_{-2}^{2}
        \bigg(
            \left[
            \left(w(x)-w(y)\right) w'(x)
            \log\left(\tfrac{4-xy + \sqrt{4-x^2}\sqrt{4-y^2}}{4-xy - \sqrt{4-x^2}\sqrt{4-y^2}}\right)
            \right]_{y=-2}^{2}
        \\ & {\color{white}++++++++}
                +
            \int_{-2}^{2}
                    w'(y) w'(x) 
                    \log\left(\tfrac{4-xy + \sqrt{4-x^2}\sqrt{4-y^2}}{4-xy - \sqrt{4-x^2}\sqrt{4-y^2}}\bigg)
                    dy 
        \right) dx
        \\ = &
        \int_{-2}^{2}
        \int_{-2}^{2}
                    w'(y) w'(x) 
                    \log\left(\tfrac{4-xy + \sqrt{4-x^2}\sqrt{4-y^2}}{4-xy - \sqrt{4-x^2}\sqrt{4-y^2}}\right)
                    dy dx.
    \end{align*}
    By a change of variables we thus get the expression $v(w)$ from $v(w(\tfrac{1}{\sqrt{2}}\cdot))$.
\end{proof}

\section{Reduction to a low-dimensional optimization}\label{section: problem reduction}
In this section we start the proof of Theorem \ref{thm: sphere} and Theorem \ref{thm: ball}
by applying the method of Lagrange multipliers to the original high-dimensional optimization problem and as a result reduce it to a low-dimensional optimization problem.

Recall from \eqref{def: L_N} that
\begin{equation}
    L_N = \sup_{ |\sigma| =1 } \{ \beta H_N(\sigma) + N f(v \cdot \sigma) \}
\end{equation}
where $v$ is a fixed unit vector. We have
\begin{equation}\label{eq: LN in diagonalizing basis}
    \frac{1}{N} L_N =
        \sup_{|\sigma| = 1}\left\{
            \beta \sum_{i=1}^N \lambda_i \sigma_i^2
            + f\left(\sum_{i=1}^N \sigma_i u_i \right)
        \right\},
\end{equation}
where $\lambda_1 \le \lambda_2 \le ... \le \lambda_N$ are the eigenvectors of $\tfrac{1}{\sqrt{N}} J$ and $u = (u_1,...,u_N)$ is $v$ in the diagonalizing basis of $J$. Note that $u$ is a random unit vector uniform on the sphere, independent of $\lambda_1,...,\lambda_N$. We can rewrite \eqref{eq: LN in diagonalizing basis} as
\begin{equation}\label{eq: fix overlap}
    \frac{1}{N} L_N =
    \sup_{\alpha \in [-1,1]} \left\{
        f(\alpha) + \beta
        \sup_{\substack{|\sigma| = 1 \\ \sigma\cdot u = \alpha}} \sum_{i=1}^N \lambda_i \sigma_i^2
    \right\}.
\end{equation}
Similarly, using the substitution $m = r \sigma$ with $r=|m|$ for $|\sigma|=1$ in \eqref{def: L tilde},
\begin{equation}\label{eq: fix overlap ball}
    \frac{1}{N} \tilde{L}_N =
    \sup_{r \in \mathcal{R}, \alpha \in [-1,1]} \left\{
        f(\alpha r ) + g(r) + \beta r^2
        \sup_{\substack{|\sigma| = 1 \\ \sigma\cdot u = \alpha}} \sum_{i=1}^N \lambda_i \sigma_i^2
    \right\}.
\end{equation}

The next lemma will in turn rewrite the inner supremum of \eqref{eq: fix overlap}, \eqref{eq: fix overlap ball} in terms of the Stieltjes transform of the weighted empirical spectral measure
\begin{equation}\label{eq:mu_lambda_u_def}
    \mu_{\lambda,u} = \sum_{i=1}^N u_i \delta_{\lambda_{i}}.
\end{equation}
Recall that the Stieltjes transform of a measure $\mu$ on $\mathbb{R}$ is given by 
\begin{equation}\label{eq:stieltjes_def}
    s_\mu(l)=\int_{\mathbb{R}} \frac{1}{l-\lambda} \mu(d\lambda),
\end{equation}
for $l$ outside the support of $\mu$, so that
\begin{equation}\label{def: GNk}
    s_{\mu_{\lambda,u}}(l)  = \sum_{i=1}^N \frac{u_i^2}{l-\lambda_i}.
\end{equation}
In the interest of compact notation we drop the $\mu$ and write
\begin{equation}\label{eq:s_lambda_u_def}
    s_{\lambda,u} = s_{\mu_{\lambda,u}}.
\end{equation}
We can now formulate our result on the inner optimization in \eqref{eq: fix overlap}, which is an exact identity if $|\alpha| \ge |u_N|$ and a bound that is sufficient for our purposes if $|\alpha| < |u_N|$. This and all further results in this section hold deterministically for any $u \in \mathcal{S}_{N-1}$ with $u_1^2,...,u_N^2 \in (0,1)$ and $-\infty <\lambda_1 < ... < \lambda_N <\infty $.
\begin{lemma}\label{lem: simplify}
    For any $\lambda_1 < ... < \lambda_N$ and $u_1,...,u_N \in (-1,1)\setminus\{0\}$ with $\sum_{i=1}^N u_i^2 = 1$ 
    it holds that if $1>\left|\alpha\right|\ge\left|u_{N}\right|$ then
    \begin{equation}\label{eq: simplify main case}
        \sup_{{\substack{|\sigma|=1 \\\sigma\cdot u=\alpha}}} \sum_{i=1}^{N}\lambda_{i}\sigma_{i}^{2} =\inf_{l>\lambda_{N}}\left\{ l-\frac{\alpha^{2}}{  \GN{\lambda}{l} } \right\}.
    \end{equation}
    If $\alpha\in [-|u_{N}|,|u_{N}|]$ then
    \begin{equation}\label{eq: simplify alpha small}
        \lambda_{N}-\frac{2 u_{N}^{2} }{\sqrt{1-u_{N}^{2}}}\left(\lambda_{N}-\lambda_{1}\right)       
        \le\sup_{{\substack{|\sigma|=1 \\ \sigma\cdot u=\alpha}}} \sum_{i=1}^{N}\lambda_{i}\sigma_{i}^{2} \le\lambda_{N}.
    \end{equation}
\end{lemma}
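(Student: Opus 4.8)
The plan is to solve the constrained quadratic optimization $\sup\{\sum_i \lambda_i \sigma_i^2 : |\sigma|=1, \sigma\cdot u = \alpha\}$ by Lagrange multipliers, introducing one multiplier $l$ for the norm constraint and one multiplier (say $2\mu$) for the linear constraint $\sigma\cdot u = \alpha$. At a critical point, $\lambda_i \sigma_i = l\sigma_i + \mu u_i$, i.e. $\sigma_i = \frac{\mu u_i}{\lambda_i - l}$ (assuming $l \neq \lambda_i$ for all $i$). The two constraints $|\sigma|^2 = 1$ and $\sigma \cdot u = \alpha$ then read $\mu^2 \sum_i \frac{u_i^2}{(\lambda_i-l)^2} = 1$ and $\mu \sum_i \frac{u_i^2}{\lambda_i - l} = \alpha$. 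Using the notation from the excerpt, with $s(l) = s_{\lambda,u}(l) = \sum_i \frac{u_i^2}{l-\lambda_i}$ and $s'(l) = \sum_i \frac{u_i^2}{(l-\lambda_i)^2}$, these become $\mu^2 s'(l) = 1$ and $-\mu s(l) = \alpha$. The objective at such a critical point simplifies: $\sum_i \lambda_i \sigma_i^2 = \sum_i (l + (\lambda_i - l))\sigma_i^2 = l|\sigma|^2 + \sum_i (\lambda_i - l)\sigma_i^2 = l + \mu \sum_i (\lambda_i - l)\frac{u_i}{\lambda_i - l}\sigma_i$ — wait, more directly $\sum_i (\lambda_i-l)\sigma_i^2 = \mu\sum_i u_i \sigma_i = \mu\alpha$, and from $-\mu s(l) = \alpha$ one gets $\mu\alpha = -\alpha^2/s(l)$, so the objective equals $l - \alpha^2/s(l)$. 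This is exactly the expression inside the infimum in \eqref{eq: simplify main case}, so the remaining work is to show that the sup over $\sigma$ equals the infimum of this over the correct range of $l$.

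The key structural point I would establish is a minimax/Lagrangian duality statement: $\sup_{|\sigma|=1, \sigma\cdot u = \alpha}\sum_i \lambda_i\sigma_i^2 = \inf_{l > \lambda_N} \big(l - \tfrac{\alpha^2}{s(l)}\big)$. The direct (weak-duality) inequality ``$\le$'' should come from a completion-of-squares / Schur-complement argument: for any $l > \lambda_N$, the matrix $lI - \Lambda$ (with $\Lambda = \mathrm{diag}(\lambda_i)$) is positive definite, and for any $\sigma$ with $\sigma\cdot u = \alpha$ one can write $\sum_i \lambda_i \sigma_i^2 = l - \sum_i (l-\lambda_i)\sigma_i^2$ and bound $\sum_i(l-\lambda_i)\sigma_i^2$ from below using Cauchy--Schwarz: $\alpha^2 = (\sum_i u_i \sigma_i)^2 = \big(\sum_i \tfrac{u_i}{\sqrt{l-\lambda_i}}\cdot\sqrt{l-\lambda_i}\,\sigma_i\big)^2 \le s(l)\sum_i (l-\lambda_i)\sigma_i^2$, giving $\sum_i(l-\lambda_i)\sigma_i^2 \ge \alpha^2/s(l)$ and hence the objective is $\le l - \alpha^2/s(l)$; taking the infimum over $l$ gives ``$\le$''. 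The reverse inequality ``$\ge$'' follows by exhibiting the critical point above: I would show that when $1 > |\alpha| \ge |u_N|$ the function $l \mapsto l - \alpha^2/s(l)$ on $(\lambda_N, \infty)$ attains its infimum at an interior point $\hat l$ where $\frac{d}{dl}\big(l - \alpha^2/s(l)\big) = 1 + \alpha^2 s'(l)/s(l)^2 = 0$, i.e. where $\alpha^2 s'(\hat l) = s(\hat l)^2$; then setting $\mu = -\alpha/s(\hat l)$ and $\sigma_i = \mu u_i/(\lambda_i - \hat l)$ gives a feasible $\sigma$ (both constraints hold by the critical-point equation and $\alpha^2 s'(\hat l) = s(\hat l)^2$, noting $\mu^2 s'(\hat l) = \alpha^2 s'(\hat l)/s(\hat l)^2 = 1$) achieving value $\hat l - \alpha^2/s(\hat l)$.

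The condition $|\alpha| \ge |u_N|$ is exactly what is needed for the infimum to be attained in the interior of $(\lambda_N,\infty)$ rather than at the boundary $l \downarrow \lambda_N$. As $l \downarrow \lambda_N$, $s(l) \to +\infty$ like $\frac{u_N^2}{l - \lambda_N}$ and $s'(l) \to +\infty$ like $\frac{u_N^2}{(l-\lambda_N)^2}$, so $\alpha^2 s'(l)/s(l)^2 \to \alpha^2/u_N^2 \ge 1$ when $|\alpha| \ge |u_N|$, which makes the derivative $1 + \alpha^2 s'(l)/s(l)^2$ — wait, I must be careful with signs: $\frac{d}{dl}(l - \alpha^2/s(l)) = 1 + \alpha^2 \frac{s'(l)}{s(l)^2}$ is always positive since $s(l) > 0$ and $s'(l) > 0$ for $l > \lambda_N$ (here $s'(l) = \sum u_i^2/(l-\lambda_i)^2 > 0$ denotes $\frac{d}{dl}s$, which is positive; note $\frac{d}{dl}\sum \frac{u_i^2}{l-\lambda_i}$ has sign... let me just record that one should track this sign carefully). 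The correct statement is that the relevant stationarity condition and monotonicity need the behavior near the boundary checked: near $l \downarrow \lambda_N$ the bracketed quantity $l - \alpha^2/s(l) \to \lambda_N$ (since $s\to\infty$), while for large $l$ it also behaves like $l \to \infty$, so there is an interior minimizer, and the condition $|\alpha|\ge|u_N|$ ensures this minimizer is a genuine critical point with the feasibility computation going through. For the complementary regime $\alpha \in [-|u_N|, |u_N|]$ one cannot attain the infimum this way, so the second bound \eqref{eq: simplify alpha small} is proved by a cruder argument: the upper bound $\sup \sum \lambda_i \sigma_i^2 \le \lambda_N$ is trivial since $\sum \lambda_i\sigma_i^2 \le \lambda_N\sum\sigma_i^2 = \lambda_N$; the lower bound comes from constructing an explicit feasible $\sigma$ close to $e_N$ — e.g. take $\sigma = \sqrt{1-t^2}\,e_N + t\,w$ where $w \perp e_N$ is chosen in the span of $\{e_i\}$ with $w\cdot u$ large enough to hit the constraint $\sigma\cdot u = \alpha$ after accounting for the $\sqrt{1-t^2}u_N$ contribution; a short estimate bounding the lost objective by $t^2(\lambda_N - \lambda_1)$ with $t^2 \le \frac{2u_N^2}{1-u_N^2}$ (to reach any $|\alpha| \le |u_N|$, the worst case being $\alpha$ of sign opposite to $u_N$) yields the stated bound.

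The main obstacle I anticipate is the careful sign and boundary-behavior bookkeeping in the ``$\ge$'' direction: one must verify that $l \mapsto l - \alpha^2/s(l)$ is (strictly) convex or at least has a unique interior critical point on $(\lambda_N,\infty)$, identify it correctly, check that the associated $\mu$ and $\sigma$ are real and satisfy both constraints with the right signs, and confirm that the borderline case $|\alpha| = |u_N|$ (where $\hat l$ may be pushed to the boundary) still works — or is covered by continuity. The weak-duality ``$\le$'' direction and the crude bounds for small $\alpha$ are routine. One should also note that strict positivity $u_i^2 \in (0,1)$ and $\lambda_1 < \cdots < \lambda_N$ guarantee $s(l)$ is well-defined, smooth, strictly positive and strictly decreasing — wait, strictly increasing or decreasing — on $(\lambda_N, \infty)$ with the stated blow-up at the left endpoint, which is what makes the whole argument deterministic and valid for every admissible $(\lambda, u)$.
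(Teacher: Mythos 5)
Your approach is fundamentally the same as the paper's---Lagrange multipliers for the constrained quadratic, with the identity $\sum_i\lambda_i\sigma_i^2 = l - \alpha^2/s_{\lambda,u}(l)$ at critical points---but your weak-duality (``$\le$'') step via Cauchy--Schwarz is a genuine and welcome simplification. For any $l>\lambda_N$, writing $\alpha^2=\bigl(\sum_i\tfrac{u_i}{\sqrt{l-\lambda_i}}\cdot\sqrt{l-\lambda_i}\,\sigma_i\bigr)^2 \le s_{\lambda,u}(l)\sum_i(l-\lambda_i)\sigma_i^2$ gives the upper bound directly, with no need to argue (as the paper does) that the Lagrangian minimax is attained in order to extract equality from the formal weak-duality inequality. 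The ``$\ge$'' direction, by exhibiting a feasible $\sigma$ from the critical point of $l\mapsto l-\alpha^2/s_{\lambda,u}(l)$, is the same as the paper's, and your construction for the small-$\alpha$ bound \eqref{eq: simplify alpha small} matches the paper's in spirit.

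Three points need fixing before this is a proof. First, the sign of $s_{\lambda,u}'(l)$: since $s_{\lambda,u}(l)=\sum_i u_i^2/(l-\lambda_i)$, its derivative is $-\sum_i u_i^2/(l-\lambda_i)^2$, which is \emph{negative}; you repeatedly conflate $s_{\lambda,u}'(l)$ with the positive quantity $\sum_i u_i^2/(l-\lambda_i)^2$, and this makes your stated stationarity condition ``$1 + \alpha^2 s'(l)/s(l)^2 = 0$'' with ``$s'>0$'' unsolvable. With the correct sign the derivative $1+\alpha^2 s_{\lambda,u}'(l)/s_{\lambda,u}(l)^2$ tends to $1-\alpha^2/u_N^2\le0$ as $l\downarrow\lambda_N$ (this is where $|\alpha|\ge|u_N|$ enters) and to $1-\alpha^2>0$ as $l\to\infty$, which places the minimizer. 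Second, to know the minimizer is unique and is indeed the critical point from which you build $\sigma$, you need convexity of $l\mapsto l-\alpha^2/s_{\lambda,u}(l)$ on $(\lambda_N,\infty)$; the paper proves strict convexity in a separate lemma, and you must either cite or reprove it---without it, the ``$\ge$'' direction only shows the supremum dominates the value at \emph{some} critical point, not the infimum. Third, the boundary case $|\alpha|=|u_N|$ is not covered by the interior-critical-point construction (the minimizer sits at $l=\lambda_N$, where the formula $\sigma_i=\mu u_i/(\lambda_i-l)$ degenerates); there one should observe directly that $\sigma=\pm e_N$ is feasible and achieves $\lambda_N$, which coincides with the infimum, so equality still holds but needs a separate sentence.
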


In the proof and later we will consider the function
$\varphi_N: [\lambda_N,\infty) \mapsto \mathbb{R}^+$ given by
\begin{equation}\label{eq: phi def}
    \varphi_N(l)
    =
    -\frac{1}{ \GN{\lambda}{l}} 
    \overset{\eqref{def: GNk}}{=}
    - \left(\sum_{i=1}^N \frac{u_i^2}{l-\lambda_i}\right)^{-1}
    \text{ for }l>\lambda_N,
\end{equation}
which satisfies
\begin{equation}\label{eq: varphi limit}
    \varphi_N(l) \to 0 \text{ as }l\downarrow \lambda_N
\end{equation}
when $u_N\ne0$, so that defining $\varphi_N(\lambda_N)=0$ makes $\varphi_N$ a continuous function. Note that for $l > \lambda_N$
\begin{equation}\label{eq: phi derivative in l}
    \varphi_N'(l)
        =
        \frac{\GNk{1}{\lambda}{l}}{\GN{\lambda}{l}^2}
        \overset{\eqref{def: GNk}}{=}
        -\frac{\sum_{i=1}^N \frac{u_i^2}{(l-\lambda_i)^2}}{\left(\sum_{i=1}^N \frac{u_i^2}{l-\lambda_i}\right)^2}
        ,
\end{equation}
so if $u_N \ne 0$
\begin{equation}\label{eq: phi derivative in l limit}
    \varphi_N'(l) \to -\frac{1}{u_N^2} \text{ for }l\downarrow \lambda_N,
\end{equation}
so that $\varphi_N$ is also differentiable on $[\lambda_N,\infty)$.

\begin{proof}[Proof of Lemma \ref{lem: simplify}]
    Starting with the main case \eqref{eq: simplify main case}, note that introducing Lagrange multipliers we have
    \begin{equation}
    \sup_{\substack{    
                    |\sigma| = 1
                    \\
                    \sigma\cdot u=\alpha
                }} \sum_{i=1}^N\lambda_{i}\sigma_{i}^{2}\le\inf_{l,r\in\mathbb{R}}\sup_{\sigma\in\mathbb{R}^{N}}\mathscr{L}\left(\sigma,l,r\right),\label{eq: lagrange UB}
    \end{equation}
    where
    \[
    \begin{array}{rcl}
    \mathscr{L}(\sigma,l,r) & = & \sum_{i=1}^{N}\lambda_{i}\sigma_{i}^{2}-l\left(\sum_{i=1}^{N}\sigma_{i}^{2}-1\right)-r\left(\sum_{i=1}^{N}\sigma_{i}u_{i}-\alpha\right)\\
     & = & \sum_{i=1}^{N}\left((\lambda_{i}-l)\sigma_{i}^{2}-ru_{i}\sigma_{i}\right)+l+\alpha r.
    \end{array}
    \]
    Furthermore if there are some $l,r\in\mathbb{R},\sigma\in\mathbb{R}^{N}$
    achieving the minimax on the r.h.s. of (\ref{eq: lagrange UB}), then these $(\sigma,l,r)$ are a  critical point of $\mathscr{L}$ and in fact \eqref{eq: lagrange UB} with equality.
    
    When $l<\lambda_{N}$ then $\sup_{\sigma\in\mathbb{R}^{N}}\mathscr{L}\left(\sigma,l,r\right)=\infty$.
    The same is true for $l=\lambda_{N}$ and $r\ne0$. For $l\ge\lambda_{N},r=0$
    we have $\sup_{\sigma\in\mathbb{R}^{N}}\mathscr{L}\left(\sigma,l,0\right)=\lambda_{N}$.
    Thus
    \begin{equation}
    \inf_{l,r: l\le\lambda_{N}\text{ or }r=0}\sup_{\sigma\in\mathbb{R}^{N}}\mathscr{L}\left(\sigma,l,r\right)=\lambda_{N}.\label{eq: other cases}
    \end{equation}
    
    Now consider the remaining case $l>\lambda_{N},r\ne0$. In
    this case $\sup_{\sigma\in\mathbb{R}^{N}}\mathscr{L}\left(\sigma,r,l\right)$
    is maximized by 
    \[
    \sigma_{i}^{*}\left(l,r\right)=\frac{1}{2}\frac{ru_{i}}{\lambda_{i}-l},
    \]
    for which 
    \[
    \mathscr{L}\left(\sigma^{*}\left(r,l\right),r,l\right)=l+\alpha r+\frac{1}{4}\sum_{i=1}^{N}\frac{r^{2}u_{i}^{2}}{l-\lambda_{i}}.
    \]
    Since $\alpha\ne0$ by assumption we have
    \[
    \inf_{r\ne0}\sup_{\sigma\in\mathbb{R}^{N}}\mathscr{L}\left(\sigma,l,r\right)=\inf_{r\ne0}\left\{ l+\alpha r+\frac{r^{2}}{4}\sum_{i=1}^{N}\frac{u_{i}^{2}}{l-\lambda_{i}}\right\} =l-\frac{\alpha^{2}}{\sum_{i=1}^{N}\frac{u_{i}^{2}}{l-\lambda_{i}}},
    \]
    where the infimum is attained at 
    $r^{*}\left(l\right)=-2\frac{\alpha}{\sum_{i=1}^{N}\frac{u_{i}^{2}}{l-\lambda_{i}}}\ne0$,
    for which
    \begin{equation}\label{eq: lambda at r star} 
        \mathscr{L}\left(\sigma^{*}\left(l,r^{*}\right),l,r^{*}\left(l\right)\right)=l+\alpha^{2} \varphi_N(l).
    \end{equation}
    We have
    \begin{equation}\label{eq: y l to inf}
        l+\alpha^2 \varphi_N(l) \ge l - \alpha^2 (l-\lambda_1) \overset{|\alpha|<1}{\to} \infty \text{ as } l \uparrow \infty,
    \end{equation}
    and recalling \eqref{eq: varphi limit} we have $l+\alpha^{2}\varphi_N(l)\to\lambda_{N}$
    as $l\downarrow\lambda_{N}$. Furthermore by \eqref{eq: phi derivative in l limit}
    \begin{equation}\label{eq: y deriv when l is lambdaN}
        \frac{d}{dl}\left\{ l+\alpha^{2}\varphi_N(l) \right\} \to1-\frac{\alpha^{2}}{u_{N}^{2}}\overset{|\alpha|>|u_N|}{<}0\text{ as }l\downarrow\lambda_{N},
    \end{equation}
    so the infimum of \eqref{eq: lambda at r star} over $l>\lambda_{N}$ is attained at some $l^{*}>\lambda_{N}$,
    and we obtain
    \[
    \inf_{l,r: l>\lambda_{N},r\ne0}\sup_{\sigma\in\mathbb{R}^{N}}\mathscr{L}\left(\sigma,l,r\right) = 
    \mathscr{L}\left(\sigma^{*}\left(l^{*},r^{*}\right),l^{*},r^{*}\left(l^{*}\right)\right)=\inf_{l>\lambda_{N}}\left\{ l-\frac{\alpha^{2}}{\sum_{i=1}^{N}\frac{u_{i}^{2}}{\lambda_{i}-l}}\right\} <\lambda_{N}.
    \]
    Together with (\ref{eq: other cases}) this proves that the minimax
    in (\ref{eq: lagrange UB}) is indeed attained at some $l^{*},r^{*}\in\mathbb{R},\sigma^{*}\in\mathbb{R}^{N}$,
    so (\ref{eq: lagrange UB}) holds in equality and \eqref{eq: simplify main case}
    follows.
    
    Next considering \eqref{eq: simplify alpha small} note that the upper bound is trivial, and the lower bound follows by plugging in 
    \begin{equation}
    \begin{array}{rcl}
        \sigma &=& \sqrt{\frac{1-\alpha^2}{1-u_N^2}} e_N + \left(\alpha-\sqrt{\frac{1-\alpha^2}{1-u_N^2}} u_N\right) u 
    \end{array}
    \end{equation}
    where $e_N=(0,\ldots,0,1)$, which satisfies $|\sigma|=1$, $\sigma\cdot u = \alpha$
    and $\sigma_{N}^{2}\ge1-u_{N}^{2}\left(1+\frac{1}{\sqrt{1-u_{N}^{2}}}\right)$ so that
    \begin{equation}
        \sum_{i=1}^N\lambda_{i}\sigma_{i}^{2}
            \ge
            \lambda_N \sigma_N^2
            +
            \lambda_1 (1-\sigma_N^2)
            =
            \lambda_{N}-u_{N}^{2}\left(1+\frac{1}{\sqrt{1-u_{N}^{2}}}\right)\left(\lambda_{N}-\lambda_{1}\right).
    \end{equation}
\end{proof}

We now prove a few results about this minimization problem. The next lemma shows that the map $l \to -s_\mu(l)^{-1}$ is convex for any measure $\mu$, so in particular $\varphi_N(l)$ is convex. Note that for any $\mu$ and $k\in\mathbb{N}$
\begin{equation}\label{eq: k-th derivative of s_mu(l)}
    s_\mu^{(k)}(l) \overset{\eqref{eq:stieltjes_def}}{=} (-1)^{k} k! \int_{\mathbb{R}} \frac{1}{(l-x)^{k+1}} \mu(dx).
\end{equation}
\begin{lemma}[Convexity]\label{lem: phi convexity}
    For any $\lambda\in\mathbb{R}$ and measure $\mu$ on $\mathbb{R}$
    with support contained in $(-\infty,\lambda]$ the map $l\to-\frac{1}{s_{\mu}(l)}$
    is convex in $(\lambda,\infty)$. If the support of $\mu$ is not
    a singleton it is strictly convex.
\end{lemma}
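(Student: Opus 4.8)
The plan is to compute the second derivative of $\varphi(l):=-1/s_\mu(l)$ explicitly and reduce its nonnegativity to the Cauchy--Schwarz inequality. First I would note that we may assume $\mu\neq 0$ (otherwise $-1/s_\mu$ is not defined), and that for $l>\lambda$ and $x$ in the support of $\mu$ one has $l-x>0$, so $s_\mu(l)=\int\frac{1}{l-x}\mu(dx)>0$; thus $\varphi$ is well-defined, finite, and (differentiating under the integral sign, which is justified since on any compact subinterval of $(\lambda,\infty)$ the functions $x\mapsto (l-x)^{-k-1}$ are bounded uniformly over $x\le\lambda$) infinitely differentiable on $(\lambda,\infty)$, with derivatives given by \eqref{eq: k-th derivative of s_mu(l)}.

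Next I would record
\[
    \varphi'(l)=\frac{s_\mu'(l)}{s_\mu(l)^2},
    \qquad
    \varphi''(l)=\frac{s_\mu''(l)\,s_\mu(l)-2\,s_\mu'(l)^2}{s_\mu(l)^3}.
\]
Since $s_\mu(l)>0$, the convexity statement is equivalent to $2\,s_\mu'(l)^2\le s_\mu(l)\,s_\mu''(l)$ for all $l>\lambda$. By \eqref{eq: k-th derivative of s_mu(l)} this is
\[
    \left(\int \frac{\mu(dx)}{(l-x)^2}\right)^{2}
    \le
    \left(\int \frac{\mu(dx)}{l-x}\right)\left(\int\frac{\mu(dx)}{(l-x)^3}\right),
\]
which is precisely the Cauchy--Schwarz inequality in $L^2(\mu)$ applied to the factorization $\frac{1}{(l-x)^2}=\frac{1}{(l-x)^{1/2}}\cdot\frac{1}{(l-x)^{3/2}}$ (legitimate because $l-x>0$ on $\mathrm{supp}\,\mu$). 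This establishes that $\varphi''(l)\ge 0$, hence $\varphi$ is convex on $(\lambda,\infty)$.

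Finally, for strict convexity I would invoke the equality case of Cauchy--Schwarz: equality in the last display holds iff $x\mapsto(l-x)^{-1/2}$ and $x\mapsto(l-x)^{-3/2}$ are proportional $\mu$-a.e., i.e. iff $l-x$ is constant $\mu$-a.e., i.e. iff $\mu$ is concentrated at a single point. Therefore, if the support of $\mu$ is not a singleton, the inequality is strict for every $l>\lambda$, so $\varphi''(l)>0$ throughout $(\lambda,\infty)$ and $\varphi$ is strictly convex. There is no serious obstacle in this argument; the only points needing a sentence of care are the justification of differentiation under the integral and the identification of the Cauchy--Schwarz equality case. (An alternative to the Cauchy--Schwarz step, if one prefers a self-contained argument, is to symmetrize the double integral $s_\mu''(l)\,s_\mu(l)-2\,s_\mu'(l)^2=\int\!\!\int a\,b\,(a-b)^2\,\mu(dx)\mu(dy)$ with $a=(l-x)^{-1}$, $b=(l-y)^{-1}$, which is manifestly nonnegative and strictly positive unless $\mu$ is a point mass.)
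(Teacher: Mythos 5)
Your proof is correct and follows essentially the same route as the paper: compute $\varphi''=\frac{s_\mu''\,s_\mu-2(s_\mu')^2}{s_\mu^3}$, reduce nonnegativity of the numerator to the Cauchy--Schwarz inequality for the weights $(l-x)^{-1/2}$ and $(l-x)^{-3/2}$, and read strictness off the equality case. (In fact your writeup is cleaner than the paper's, which contains a small typographical slip in the displayed numerator and mistakenly writes ``non-positive'' for ``non-negative''; your parenthetical symmetrization identity $\iint ab(a-b)^2\,\mu(dx)\mu(dy)$ is a valid and equally quick alternative.)
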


\begin{proof}
    For $l>\lambda$ the second derivative equals
    \[
        \left(-\frac{1}{s_{\mu}(l)}\right)''=\frac{s_{\mu}^{''}(l)-2s_{\mu}^{'}(l)^{2}s_{\mu}(l)}{s_{\mu}(l)^{3}}.
    \]
    Letting $w(x)=\frac{1}{l-x}$ and using \eqref{eq: k-th derivative of s_mu(l)} the numerator equals
    \[
        2\int w(x)^{3}\mu(dx)-2\left(\int w(x)^{2}\mu(dx)\right)^{2}\int w(x)\mu(dx).
    \]
    Since $w(x)>0$ on the support of $\mu$ it holds that $\int w(x)\mu(dx)>0$,
    and dividing through by this quantity we obtain
    \[
        2\left(\frac{\int w(x)^{3}\mu(dx)}{\int w(x)\mu(dx)}-\left(\frac{\int w(x)^{2}\mu(dx)}{\int w(x)\mu(dx)}\right)^{2}\right).
    \]
    This is non-positive by the Cauchy-Schwartz inequality, and equals zero only if $w(x)^{2}$
    is constant on the support of $\mu$, which is only the case if the
    support of $\mu$ is a singleton.
\end{proof}

The previous lemma implies the following about a general version of the minimization in \eqref{eq: simplify main case}.
\begin{lemma}[Uniqueness]\label{lem: l uniqueness general}
    Let $\mu$ be a real measure with support which is not a singleton and is contained in $[\lambda_{-},\lambda_{+}]$ for $-\infty <\lambda_{-}<\lambda_{+}<\infty$. For any $\alpha^2 < 1$ there is a unique $l^* \ge \lambda_{+}$ that achieves the infimum of
    $$
        \inf_{l >    \lambda_{+} }
          \left\{l -  \frac{\alpha^2}{s_\mu(l)}\right\}
        ,
    $$
    and $l^* > \lambda_{+}$ iff $\alpha^{2}>\lim_{l\downarrow\lambda_{+}}\frac{s_{\mu}(l)^{2}}{-s_{\mu}'(l)}$. If $\alpha = \pm 1$ then the infimum equals $\int \lambda \mu(d\lambda)$ and is achieved for $l\to\infty$.
\end{lemma}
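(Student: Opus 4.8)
The plan is to reduce the entire statement to a one-dimensional monotonicity analysis of $\psi(l) := l - \alpha^2/s_\mu(l)$ on $(\lambda_+,\infty)$; I take $\mu$ to be a probability measure, as is the case in all our applications, the general positive case only changing a normalizing constant. Since the support of $\mu$ lies in $[\lambda_-,\lambda_+]$, identity \eqref{eq: k-th derivative of s_mu(l)} gives $s_\mu(l) > 0$ and $s_\mu'(l) = -\int (l-x)^{-2}\,\mu(dx) < 0$ for $l > \lambda_+$, so $\psi$ is real-analytic there with $\psi'(l) = 1 + \alpha^2 s_\mu'(l)/s_\mu(l)^2 = 1 - \alpha^2/q(l)$, where $q(l) := s_\mu(l)^2/(-s_\mu'(l)) > 0$. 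The crucial input is Lemma~\ref{lem: phi convexity}: it says $l\mapsto -1/s_\mu(l)$ is strictly convex on $(\lambda_+,\infty)$, i.e.\ its derivative $-1/q(l)$ is strictly increasing, hence $q$ is continuous and strictly increasing there. Its endpoint limits therefore exist: $q(\lambda_+) := \lim_{l\downarrow\lambda_+} q(l) \in [0,\infty)$, and $\lim_{l\to\infty}q(l)=1$ because $l\, s_\mu(l) = \int \tfrac{l}{l-x}\,\mu(dx) \to 1$ and $l^2(-s_\mu'(l)) = \int \tfrac{l^2}{(l-x)^2}\,\mu(dx)\to 1$ by dominated convergence (total mass $1$). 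Thus $q$ maps $(\lambda_+,\infty)$ increasingly and bijectively onto $(q(\lambda_+),1)$; in particular $q(l)<1$ everywhere and $q(\lambda_+)<1$.

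Given this, the first two assertions follow by casework on the sign of $\psi' = 1 - \alpha^2/q$. For $\alpha^2 < 1$, I extend $\psi$ continuously to $l = \lambda_+$ by $\psi(\lambda_+) := \lambda_+ - \alpha^2/s_\mu(\lambda_+) \le \lambda_+$ (with $\alpha^2/\infty := 0$ if $s_\mu(\lambda_+)=\infty$). If $\alpha^2 \le q(\lambda_+)$, then $\alpha^2 < q(l)$ for every $l > \lambda_+$, so $\psi'(l) > 0$; hence $\psi$ is strictly increasing on $[\lambda_+,\infty)$ and its unique minimizer is $l^* = \lambda_+$ — consistent with the claimed equivalence, since here $\alpha^2 > q(\lambda_+)$ fails. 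If $q(\lambda_+) < \alpha^2 < 1$, then $\alpha^2$ lies in the range $(q(\lambda_+),1)$ of $q$, so there is a unique $l^*\in(\lambda_+,\infty)$ with $q(l^*) = \alpha^2$, i.e.\ $\psi'(l^*)=0$; strict monotonicity of $q$ then forces $\psi' < 0$ on $(\lambda_+,l^*)$ and $\psi' > 0$ on $(l^*,\infty)$, so $\psi$ is unimodal with unique minimizer $l^* > \lambda_+$ — again consistent, since here $\alpha^2 > q(\lambda_+)$ holds. In both cases the infimum is attained at a unique $l^*\ge\lambda_+$, and $l^*>\lambda_+$ exactly when $\alpha^2 > q(\lambda_+) = \lim_{l\downarrow\lambda_+} s_\mu(l)^2/(-s_\mu'(l))$, as stated.

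For the final sentence, when $\alpha = \pm 1$ we have $\psi'(l) = 1 - 1/q(l) < 0$ for all $l > \lambda_+$ (as $q(l)<1$), so $\psi$ is strictly decreasing and its infimum is $\lim_{l\to\infty}\psi(l)$; writing $\psi(l) = \dfrac{l\,s_\mu(l)-1}{s_\mu(l)} = \dfrac{\int \frac{lx}{l-x}\,\mu(dx)}{\int \frac{l}{l-x}\,\mu(dx)}$ (using $l s_\mu(l)-1 = \int \frac{x}{l-x}\,\mu(dx)$) and letting $l\to\infty$, dominated convergence gives $\psi(l)\to \int \lambda\,\mu(d\lambda)$, which is the infimum, achieved in the limit $l\to\infty$. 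I expect the only mildly delicate points to be the two endpoint asymptotics of $q$ (the one at $\infty$ uses $\mu(\mathbb{R})=1$; the one at $\lambda_+$ is automatic from monotonicity plus boundedness), and the cosmetic convention by which the infimum over the open interval $(\lambda_+,\infty)$ is ``achieved'' at $l^* = \lambda_+$ via the continuous extension; everything else is routine sign-chasing.
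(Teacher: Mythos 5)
Your proof is correct and takes essentially the same approach as the paper: both hinge on Lemma~\ref{lem: phi convexity} (strict convexity of $-1/s_\mu$, which you rephrase as strict monotonicity of $q(l)=s_\mu(l)^2/(-s_\mu'(l))$) and then read off uniqueness and the boundary dichotomy from the sign of $\psi'(l)=1-\alpha^2/q(l)$. The only cosmetic difference is that you establish $\lim_{l\to\infty}q(l)=1$ and use it directly to rule out a minimizer at infinity and to treat $\alpha=\pm1$, while the paper instead shows $\psi(l)\to\infty$ for $|\alpha|<1$ and uses a small-$1/l$ Taylor expansion of $s_\mu$ for the $\alpha=\pm1$ limit; also, you correctly make explicit the implicit assumption that $\mu$ is a probability measure, which the $\alpha=\pm1$ clause silently requires.
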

\begin{proof}
    If $\alpha = 0$ then $l^* = \lambda_{+}$ is the unique minimizer. If $\alpha^2 \in (0,1)$ then $l + \alpha^2 / s_\mu(l)$ is strictly convex  for $l \in (\lambda_+,\infty)$ by Lemma \ref{lem: phi convexity}, and similarly to \eqref{eq: y l to inf} it holds that $ l - \alpha^2/s_{\mu(l)} \ge l - \alpha^2 (l-\lambda_{-})\to\infty$ for $l \to \infty$. This implies that there is a unique minimizer in $[\lambda_{+},\infty)$. The minimizer is $\lambda_{+}$ iff
    $\lim_{l\downarrow\lambda_N} \frac{d}{dl} \{l - \alpha^2/s_{\mu}(l) \} \ge 0$ and
    \begin{equation}\label{eq:gen_opt_derivative}
        \frac{d}{dl}\left\{ l-\frac{\alpha^{2}}{s_{\mu}(l)}\right\} =1+\alpha^{2}\frac{s_{\mu}^{'}(l)}{s_{\mu}(l)^{2}},
    \end{equation}
    giving the condition in the statement.
    
    For  $\alpha=\pm 1$ it follows from \eqref{eq:stieltjes_def} and \eqref{eq: k-th derivative of s_mu(l)} with $k=1$ that the r.h.s of \eqref{eq:gen_opt_derivative} converges to $0$ for $l\to\infty$, which together with the convexity shows that the infimum is achieved for $l \to \infty$. Taylor expanding $\frac{1}{l-\lambda}$ yields
    \begin{equation}\label{eq:stieltjes_large_l_expansion_second}
        s_{\mu}(l)=\frac{1}{l}+\frac{1}{l^{2}}\int \lambda \mu(d\lambda)+O\left(\frac{\lambda_{+}^{2}}{l^{2}}\right)\text{ for }l \ge \lambda_{+} +1, 
    \end{equation}
    from which one can verify that the limit for $l\to\infty$ is $\int \lambda \mu(d\lambda)$.
\end{proof}

In particular for the minimization in \eqref{eq: simplify main case} we obtain the following from the previous lemma and \eqref{eq: phi derivative in l}-\eqref{eq: phi derivative in l limit}.
\begin{corollary}[Uniqueness for $\varphi_N$]\label{cor: l uniqueness}
    For any $\alpha^2 < 1, \lambda_1 < ... <\lambda_N$, $u_1^2,...,u_N^2 \in (0,1)$ with $\sum_{i=1}^N u_i^2 = 1$ there is a unique $l^* \ge \lambda_{{N}}$ that achieves the infimum of
    $$
        \inf_{l \ge \lambda_{{N}}}
          \left\{l + \alpha^2\varphi_N(l)\right\}
        ,
    $$
    and $l^* > \lambda_N$ iff $\alpha^2 > u_N^2$. If $\alpha = \pm 1$ then the infimum equals $\sum_{i=1}^N u_i^2 \lambda_i$ and is achieved for $l\to\infty$.
\end{corollary}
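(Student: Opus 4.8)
The plan is to deduce the corollary directly from Lemma \ref{lem: l uniqueness general}, applied to the measure $\mu=\sum_{i=1}^{N}u_{i}^{2}\delta_{\lambda_{i}}$, whose Stieltjes transform is $s_{\mu}=s_{\lambda,u}$ by \eqref{def: GNk}. Under the hypotheses $\lambda_{1}<\ldots<\lambda_{N}$ and $u_{1}^{2},\ldots,u_{N}^{2}\in(0,1)$ (which force $N\ge 2$), the support of $\mu$ is $\{\lambda_{1},\ldots,\lambda_{N}\}\subset[\lambda_{1},\lambda_{N}]$ and is not a singleton, so Lemma \ref{lem: l uniqueness general} applies with $\lambda_{-}=\lambda_{1}$ and $\lambda_{+}=\lambda_{N}$. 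Since $-1/s_{\mu}(l)=\varphi_{N}(l)$ by \eqref{eq: phi def}, the objective $l+\alpha^{2}\varphi_{N}(l)$ is precisely $l-\alpha^{2}/s_{\mu}(l)$; and since $\varphi_{N}$ extends continuously to $\lambda_{N}$ with $\varphi_{N}(\lambda_{N})=0$ by \eqref{eq: varphi limit}, the infimum over the closed half-line $[\lambda_{N},\infty)$ coincides with the one over $(\lambda_{N},\infty)$ treated in the lemma. This yields at once the existence and uniqueness of the minimizer $l^{*}\ge\lambda_{N}$, together with the criterion $l^{*}>\lambda_{N}$ iff $\alpha^{2}>\lim_{l\downarrow\lambda_{N}}s_{\mu}(l)^{2}/(-s_{\mu}'(l))$.

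The one remaining point is to identify this threshold with $u_{N}^{2}$, which is routine: by \eqref{eq: phi derivative in l} we have $\varphi_{N}'(l)=s_{\mu}'(l)/s_{\mu}(l)^{2}$, hence $s_{\mu}(l)^{2}/(-s_{\mu}'(l))=-1/\varphi_{N}'(l)$, and \eqref{eq: phi derivative in l limit} gives $\varphi_{N}'(l)\to-1/u_{N}^{2}$ as $l\downarrow\lambda_{N}$, so the limit is $u_{N}^{2}$. (Equivalently, in $\bigl(\sum_{i}u_{i}^{2}/(l-\lambda_{i})\bigr)^{2}\big/\sum_{i}u_{i}^{2}/(l-\lambda_{i})^{2}$ the $i=N$ terms dominate as $l\downarrow\lambda_{N}$, giving $u_{N}^{4}/u_{N}^{2}$.) The case $\alpha=\pm1$ is exactly the corresponding assertion of Lemma \ref{lem: l uniqueness general}: the infimum equals $\int\lambda\,\mu(d\lambda)=\sum_{i=1}^{N}u_{i}^{2}\lambda_{i}$ and is attained as $l\to\infty$.

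I do not expect a genuine obstacle here: the corollary is essentially a transcription of Lemma \ref{lem: l uniqueness general} into the notation of $\varphi_{N}$, so all of the substantive work (strict convexity via Lemma \ref{lem: phi convexity}, coercivity as $l\to\infty$, the large-$l$ expansion of $s_{\mu}$) is already in place. The only points needing care are the identification of the endpoint threshold $\lim_{l\downarrow\lambda_{N}}s_{\mu}(l)^{2}/(-s_{\mu}'(l))$ with $u_{N}^{2}$, and the harmless passage from the open to the closed half-line, both supplied by \eqref{eq: varphi limit}--\eqref{eq: phi derivative in l limit}.
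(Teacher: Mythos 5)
Your proposal is correct and follows exactly the route the paper intends: the paper gives no separate proof of this corollary, stating only that it follows "from the previous lemma and \eqref{eq: phi derivative in l}-\eqref{eq: phi derivative in l limit}," which is precisely what you carry out. Your identification of the endpoint threshold $\lim_{l\downarrow\lambda_N} s_\mu(l)^2/(-s_\mu'(l)) = -1/\varphi_N'(\lambda_N^+) = u_N^2$ and the observation that the support is non-degenerate since $N\ge 2$ are the right details to supply.
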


From Lemma \ref{lem: simplify} and Corollary \ref{cor: l uniqueness} we obtain the following.

\begin{corollary}\label{lem:estimate}
    For any $\lambda_1 < ... <\lambda_N$, $u_1^2,...,u_N^2 \in (0,1)$ with $\sum_{i=1}^N u_i^2 = 1$ it holds that
    \[
     \sup_{\alpha \in [-1,1]}\left|\sup_{\substack{|\sigma|=1\\ \sigma\cdot u=\alpha}}\sum_{i=1}^{N}\lambda_{i}\sigma_{i}^{2}-\inf_{l>\lambda_{N}}\left\{ l-\frac{\alpha^{2}}{s_{\lambda,u}(l)}\right\} \right|\le 2 (\lambda_N - \lambda_1)    \frac{u_{N}^{2}}{\sqrt{1-u_{N}^{2}}}.
    \]
\end{corollary}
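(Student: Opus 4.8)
The plan is to split the supremum over $\alpha$ according to the dichotomy in Lemma~\ref{lem: simplify} and to check that, in each of the two regimes, the two quantities being compared are within the asserted distance of each other.

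\emph{Regime $|u_N| \le |\alpha|$.} If moreover $|\alpha|<1$, then equation~\eqref{eq: simplify main case} asserts that $\sup_{|\sigma|=1,\,\sigma\cdot u=\alpha}\sum_{i=1}^{N}\lambda_i\sigma_i^2$ and $\inf_{l>\lambda_N}\{l-\alpha^2/s_{\lambda,u}(l)\}$ are literally equal, so the expression inside the absolute value vanishes. The two endpoints $\alpha=\pm1$ are not covered by~\eqref{eq: simplify main case} and must be handled by hand: the constraints $|\sigma|=1$ and $\sigma\cdot u=\pm1$ force $\sigma=\pm u$, whence the inner supremum equals $\sum_{i=1}^{N}u_i^2\lambda_i$, while by Corollary~\ref{cor: l uniqueness} (recalling $\varphi_N=-1/s_{\lambda,u}$) the infimum over $l$ equals $\sum_{i=1}^{N}u_i^2\lambda_i$ as well. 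Hence the difference is $0$, trivially below the asserted bound, for every $\alpha$ with $|\alpha|\ge|u_N|$.

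\emph{Regime $|\alpha|<|u_N|$.} Here the two-sided estimate~\eqref{eq: simplify alpha small} confines the inner supremum to the interval $\left[\lambda_N-2(\lambda_N-\lambda_1)\tfrac{u_N^2}{\sqrt{1-u_N^2}},\ \lambda_N\right]$. For the competing infimum, Corollary~\ref{cor: l uniqueness} says that when $\alpha^2<u_N^2$ the optimal $l$ is $l^*=\lambda_N$; since $\varphi_N$ extends continuously to $\lambda_N$ with $\varphi_N(\lambda_N)=0$ by~\eqref{eq: varphi limit}, the function $l\mapsto l+\alpha^2\varphi_N(l)$ is continuous on $[\lambda_N,\infty)$, so $\inf_{l>\lambda_N}\{l-\alpha^2/s_{\lambda,u}(l)\}=\inf_{l\ge\lambda_N}\{l+\alpha^2\varphi_N(l)\}=\lambda_N$. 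Thus both quantities lie inside one interval of length $2(\lambda_N-\lambda_1)\tfrac{u_N^2}{\sqrt{1-u_N^2}}$, so their difference is at most that length. Combining the two regimes and taking the supremum over $\alpha\in[-1,1]$ yields the claim.

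I expect no substantial difficulty: the statement is essentially a repackaging of Lemma~\ref{lem: simplify} together with Corollary~\ref{cor: l uniqueness}. The only points requiring a little care are (i) identifying the half-open infimum $\inf_{l>\lambda_N}$ in the statement with the closed-interval infimum used in Corollary~\ref{cor: l uniqueness}, which is legitimate precisely because $l\mapsto l+\alpha^2\varphi_N(l)$ is continuous up to $\lambda_N$, and (ii) treating separately the endpoints $\alpha=\pm1$, which fall outside the scope of~\eqref{eq: simplify main case} but for which both expressions reduce to $\sum_{i=1}^{N}u_i^2\lambda_i$.
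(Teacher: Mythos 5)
Your proposal is correct and follows essentially the same route as the paper: split at $|\alpha|\ge|u_N|$ versus $|\alpha|<|u_N|$, use the exact identity~\eqref{eq: simplify main case} in the first regime, and in the second regime combine the two-sided bound~\eqref{eq: simplify alpha small} with the fact (from Corollary~\ref{cor: l uniqueness} and the continuous extension of $\varphi_N$ at $\lambda_N$) that the infimum over $l$ equals $\lambda_N$. The one place you are actually more careful than the paper is the endpoints $\alpha=\pm1$: the paper simply asserts that the difference vanishes ``for $|\alpha|\ge|u_N|$ by~\eqref{eq: simplify main case},'' even though that identity is only stated for $1>|\alpha|\ge|u_N|$, whereas you supply the missing one-line verification that both the constrained supremum (forced to $\sigma=\pm u$) and the Lagrange infimum (by the last sentence of Corollary~\ref{cor: l uniqueness}) equal $\sum_i u_i^2\lambda_i$.
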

\begin{proof}
    Note that the difference is exactly zero for $|\alpha| \ge |u_N|$ by \eqref{eq: simplify main case}.
    For $\left|\alpha\right|<\left|u_{N}\right|$ it follows from \eqref{eq: simplify alpha small} that
    \[
        \left|\sup_{\sigma\cdot u=\alpha}\sum_{i=1}^{N}\lambda_{i}\sigma_{i}^{2} - \lambda_{N}\right|
        \le
        2(\lambda_N - \lambda_1)\frac{u_{N}^{2}}{\sqrt{1-u_{N}^{2}}}.
    \]
    Also Corollary \ref{cor: l uniqueness} implies that if $|\alpha| < |u_N|$ then $l^* = \lambda_N$ and therefore
    \begin{equation*}
        \inf_{l>\lambda_{N}}\left\{ l-\frac{\alpha^{2}}{s_{\lambda,u}(l)}\right\} = \lambda_{N}.
    \end{equation*}
\end{proof}

\section{Leading order behavior}\label{section: leading order}

In this section we will study the behavior of $L_N$ and $\tilde{L}_N$ to leading order, proving Theorem \ref{thm: sphere} (a) and Theorem \ref{thm: ball} (a). These are in fact immediate consequences of \eqref{eq: fix overlap}, \eqref{eq: fix overlap ball} and the following proposition.
\begin{proposition}\label{prop:max_slice_term}
    It holds that
        \begin{equation}\label{eq:max_slice_term}
            \sup_{\alpha\in[-1,1]}\left|\sup_{\sigma\cdot u=\alpha}\sum_{i=1}^{N}\lambda_{i}\sigma_{i}^{2}-\sqrt{2(1-\alpha^{2})}\right| \overset{\mathbb{P}}{\to} 0.
        \end{equation}
\end{proposition}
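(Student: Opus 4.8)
By Corollary \ref{lem:estimate}, the supremum over $\alpha$ of the difference between the inner constrained maximum $\sup_{\sigma\cdot u=\alpha}\sum_i\lambda_i\sigma_i^2$ and the Stieltjes-transform infimum $\inf_{l>\lambda_N}\{l-\alpha^2/s_{\lambda,u}(l)\}$ is bounded by $2(\lambda_N-\lambda_1)u_N^2/\sqrt{1-u_N^2}$. Since $u$ is uniform on $\mathcal{S}_{N-1}$ we have $u_N^2=O_{\mathbb{P}}(1/N)$ (e.g. $N u_N^2$ converges in distribution), and by \eqref{eq: extremal evals} (or Lemma \ref{lem: rigidity of eigenvalues}) $\lambda_N-\lambda_1\to 2\sqrt{2}$ in probability, so this bound is $o_{\mathbb{P}}(1)$. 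Hence it suffices to prove
\[
    \sup_{\alpha\in[-1,1]}\left|\inf_{l>\lambda_N}\left\{l-\frac{\alpha^2}{s_{\lambda,u}(l)}\right\}-\sqrt{2(1-\alpha^2)}\right|\overset{\mathbb{P}}{\to}0,
\]
and by \eqref{eq:inf_l_s_intro} the quantity $\sqrt{2(1-\alpha^2)}$ equals $\inf_{l>\sqrt{2}}\{l-\alpha^2/s(l)\}$ with $s(l)=l-\sqrt{l^2-2}$ the semicircle Stieltjes transform.

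The plan is therefore to control the difference of the two infima by a uniform (in $l$ on a suitable range, and in $\alpha$) estimate on $s_{\lambda,u}(l)-s(l)$. First I would establish that $s_{\lambda,u}(l)\to s(l)$ with enough uniformity: writing $s_{\lambda,u}(l)=\sum_i u_i^2/(l-\lambda_i)$, split the error into (i) replacing $\lambda_i$ by its classical location $\theta_{i/N}$ using the rigidity Lemma \ref{lem: rigidity of eigenvalues}, (ii) replacing the random weights $u_i^2$ by $1/N$ using concentration of the uniform vector on the sphere (so $\sum_i u_i^2/(l-\lambda_i)\approx\frac1N\sum_i 1/(l-\lambda_i)$), and (iii) replacing the sum $\frac1N\sum_i 1/(l-\theta_{i/N})$ by the integral $\int 1/(l-x)\,\mu_{sc}(dx)=s(l)$ via Lemma \ref{lem: sums}. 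Each of these holds uniformly for $l$ bounded away from $\sqrt2$, say $l\ge\sqrt2+\delta$, with error $o_{\mathbb{P}}(1)$ (indeed $O_{\mathbb{P}}(N^{-1/3+\varepsilon}/\delta^{?})$ type bounds), and one also needs to handle the region $l$ close to $\lambda_N$ separately, where one uses that $s_{\lambda,u}(l)\to\infty$ monotonically so the bracket $l-\alpha^2/s_{\lambda,u}(l)$ is close to $l$, hence close to $\sqrt2$, which for $\alpha$ near $\pm1$ is an overestimate but the true infimum for such $\alpha$ is near $0$ — here one invokes the explicit behavior near the edge and Corollary \ref{cor: l uniqueness} to see $l^*>\lambda_N$ precisely when $\alpha^2>u_N^2$, and $u_N^2\to0$, so the minimizer is genuinely interior for all fixed $\alpha\ne0$.

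To pass from uniform closeness of $s_{\lambda,u}$ to $s$ to closeness of the infima, I would use a standard soft argument: both $l\mapsto l-\alpha^2/s_{\lambda,u}(l)$ and $l\mapsto l-\alpha^2/s(l)$ are convex (Lemma \ref{lem: phi convexity}), the limiting one has its minimizer at the explicit location $l=l(\alpha):=\sqrt2\cdot\frac{2-\alpha^2}{\sqrt{2(1-\alpha^2)}}$ (which stays bounded away from $\sqrt2$ for $\alpha$ away from $\pm1$, and blows up as $\alpha\to\pm1$ while the infimum value goes to $0$), and a uniform bound $|s_{\lambda,u}(l)-s(l)|\le\eta$ on the relevant compact $l$-range plus lower bounds $s(l)\ge c>0$ translate into $|{-}\alpha^2/s_{\lambda,u}(l)+\alpha^2/s(l)|\le\alpha^2\eta/c^2$ uniformly, whence the infima differ by at most that. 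The $\alpha\to\pm1$ corner is handled by the crude two-sided bound: $0\le\inf_{l>\lambda_N}\{l-\alpha^2/s_{\lambda,u}(l)\}\le\lambda_N(1-\alpha^2)+\alpha^2\lambda_1$ (from plugging in appropriate $l$, or directly from \eqref{eq: simplify alpha small} together with Corollary \ref{lem:estimate}), so both sides are $O(\sqrt{1-\alpha^2})+o_{\mathbb{P}}(1)$ there, and we cover $\alpha\in[-1,1]$ by combining the interior estimate on $|\alpha|\le1-\rho$ with the corner estimate on $1-\rho\le|\alpha|\le1$ and letting $\rho\downarrow0$ after $N\to\infty$.

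The main obstacle is step (ii)–(iii) in the regime where $l$ is allowed to approach $\sqrt2$: the bounds degrade like inverse powers of $l-\sqrt2$, so one must carefully track that the minimizer $l^*$ of the random problem cannot escape to the edge for $\alpha$ bounded away from $\pm1$ — this is where Corollary \ref{cor: l uniqueness} (the minimizer is interior iff $\alpha^2>u_N^2$, and $u_N^2=o_{\mathbb{P}}(1)$) and the derivative computation \eqref{eq: y deriv when l is lambdaN} do the essential work, pinning $l^*$ into a compact subinterval of $(\sqrt2,\infty)$ on which the Stieltjes transform convergence is uniform. Everything else is routine: rigidity for the eigenvalues, sphere concentration for the weights, and Riemann-sum comparison via Lemma \ref{lem: sums}.
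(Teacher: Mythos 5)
Your overall plan tracks the paper's: reduce to the Lagrange infimum via Corollary~\ref{lem:estimate}, show $s_{\lambda,u}\to s$ with sufficient uniformity by splitting into eigenvalue rigidity, weight concentration, and Riemann-sum comparison, then transfer that to the infima. The decomposition (i)--(iii) and the overall architecture are fine.

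However, there is a concrete error in how you partition $\alpha$, which leaves a genuine gap. You write that the limiting minimizer $\hat{l}(\alpha)=\frac{2-\alpha^2}{\sqrt{2(1-\alpha^2)}}$ ``stays bounded away from $\sqrt{2}$ for $\alpha$ away from $\pm1$,'' and you cover $[-1,1]$ by an interior estimate on $|\alpha|\le 1-\rho$ plus a crude corner estimate near $\pm1$. That claim about $\hat{l}(\alpha)$ is false: $\hat{l}(\alpha)\to\sqrt{2}$ as $\alpha\to 0$ (and $\to\infty$ as $\alpha\to\pm1$). So on your ``interior'' region $|\alpha|\le 1-\rho$ the minimizer is \emph{not} pinned into a compact subinterval of $(\sqrt{2},\infty)$; it drifts to the edge as $\alpha\to 0$, which is exactly the regime where your uniform control of $s_{\lambda,u}(l)-s(l)$ degrades. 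Invoking Corollary~\ref{cor: l uniqueness} (interior minimizer iff $\alpha^2>u_N^2$) tells you the minimizer is strictly above $\lambda_N$ for fixed nonzero $\alpha$, but gives no quantitative separation from $\sqrt{2}$ that is uniform over $\alpha$ near $0$, so it does not close the gap.

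The paper avoids this by splitting at $\alpha=0$ rather than at $\alpha=\pm1$: for $|\alpha|\ge\delta$ the minimizer is pinned in $[\sqrt{2}+\varepsilon,\infty)$ (Lemma~\ref{lem: l minimizer epsilon distance}) and one uses the uniform convergence of $1/s_{\lambda,u}$ on that unbounded half-line (Lemma~\ref{lem:inverse_s_conv}, which also absorbs the $\alpha\to\pm1$ end since the relevant $l$-range there is $[L,\infty)$ for $L$ large); for $|\alpha|\le\delta$, both the random and deterministic infima are trivially within $O(\delta)+o_{\mathbb{P}}(1)$ of $\sqrt{2}$ (Lemma~\ref{lem:alpha_small}), so the difference is small without any Stieltjes estimate near the edge. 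To repair your proof you need to add this small-$\alpha$ case explicitly (a three-way split $|\alpha|<\delta$, $\delta\le|\alpha|\le1-\rho$, $|\alpha|>1-\rho$ would also work), and correct the characterization of where $\hat{l}(\alpha)$ stays away from $\sqrt{2}$. As a minor point, your stated formula for $\hat{l}(\alpha)$ carries a spurious factor of $\sqrt{2}$, and your lower bound $0\le\inf_{l>\lambda_N}\{\cdots\}$ for $\alpha$ near $\pm1$ is not literally correct (the infimum approaches $\sum_i u_i^2\lambda_i$, which is only $o_{\mathbb{P}}(1)$, not nonnegative).
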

Thanks to Corollary \ref{lem:estimate} and the facts that $u_N \overset{\mathbb{P}}{\to}0$ for $u$ uniform on the unit sphere, and that $\lambda_1,\lambda_N$ are stochastically bounded, this in turn is a direct consequence of 
\begin{equation}\label{eq:max_slice_term_inf}
    \sup_{\alpha\in[-1,1]}\left|\inf_{l>\lambda_{N}}\left\{ l-\frac{\alpha^{2}}{s_{\lambda,u}(l)}\right\}-\sqrt{2(1-\alpha^{2})}\right| \overset{\mathbb{P}}{\to} 0.
\end{equation}
The goal of the section is thus to prove \eqref{eq:max_slice_term_inf} and therefore Proposition \ref{prop:max_slice_term}.

To do so we will show laws of large numbers for $s_{\lambda,u}(l)$ and its derivatives in the first subsection, and in the second subsection use them to compute the infimum in \eqref{eq:max_slice_term_inf}.

\subsection{Law of large numbers for weighted Stieltjes transform}

In this subsection we give a leading order estimate for $s_{\lambda,u}(l)$, showing roughly speaking that $s_{\lambda,u}(l) \to s_{\mu_{\rm{sc}}}(l)$.  The following notations and results will also be useful later to handle the fluctuations of $s_{\lambda,u}(l)$ and $L_N,\tilde{L}_N$ in Section \ref{section: subleading order fluctuations}. To approximate $s_{\lambda,u}(l)$ by $s_{\mu_{\rm{sc}}}(l)$ we use the Stieltjes transforms of the measures
\begin{equation}\label{eq:mu_lambda_theta_theta_u}
    \mu_{\lambda}=\frac{1}{N}\sum_{i=1}^{N}\delta_{\lambda_{i}},\quad\quad\mu_{\theta}=\frac{1}{N}\sum_{i=1}^{N}\delta_{\theta_{i/N}},\quad\quad\mu_{\theta,u}=\frac{1}{N}\sum_{i=1}^{N}u_{i}^{2}\delta_{\theta_{i/N}},
\end{equation}
where the first two are empirical measures of random eigenvalues and
deterministic classical locations (recall \eqref{eq: classical locations}) respectively, and $\mu_{\theta,u}$ is a randomly weighted version of $\mu_{\theta}$, cf. \eqref{def: GNk}. As we already have for the Stieltjes transform of $\mu_{\lambda,u}$ we use the abbreviations (see \eqref{eq:semicircle_law_def}, \eqref{eq:stieltjes_def})
\begin{equation}\label{eq:s_plain_lambda_theta_theta_u}
    \begin{array}{lcl}
        \displaystyle{s(l)=s_{\mu_{{\rm sc}}}(l) = \int_{-\sqrt{2}}^{\sqrt{2}}\frac{\frac{1}{\pi}\sqrt{2-x^{2}}}{l-x}dx}, &  & \displaystyle{s_{\lambda}(l)=s_{\mu_{\lambda}}(l)=\frac{1}{N}\sum_{i=1}^{N}\frac{1}{l-\lambda_{i}}},\\
        \displaystyle{s_{\theta}(l)=s_{\mu_{\theta}}(l)=\frac{1}{N}\sum_{i=1}^{N}\frac{1}{l-\theta_{i/N}}}, &  & \displaystyle{s_{\theta,u}(l)=s_{\mu_{\theta,u}}(l)=\frac{1}{N}\sum_{i=1}^{N}\frac{u_{i}^{2}}{l-\theta_{i/N}}}.
    \end{array}
\end{equation}
The integral for $s(l)$ in \eqref{eq:s_plain_lambda_theta_theta_u}  can be computed explicitly yielding the following useful identities
\begin{equation*}
\begin{array}{rclcrcl}
    \IN{l} &=& l - \sqrt{l^2-2}, 
    &{\color{white}\bigg |}&
    \INk{1}{l}&=& -\frac{l-\sqrt{l^2-2}}{\sqrt{l^2-2}},
    \\
    \INk{2}{l}&=& \frac{2}{(l^2-2)^\frac{3}{2}}, 
    &{\color{white}\bigg |}&
    \INk{3}{l} &=& -\frac{6l}{(l^2-2)^{\frac{5}{2}}},
    \end{array}
   \numberthis\label{eq: useful integrals}
\end{equation*}
for all $k \in \mathbb{N}$ and $l > \sqrt{2}$. The two identities on the top row play a role in the study of the leading order here, and the higher derivatives on the bottom row will play a role in the study of the fluctuations in Section~\ref{section: subleading order fluctuations}.
Note that    
\begin{equation}\label{eq:s_props}
    s(\sqrt{2})=\sqrt{2},\quad\quad s(l)\text{ is decreasing on }[\sqrt{2},\infty),\text{ and }\quad\quad \lim_{l\to\infty}s(l)=0.
\end{equation}

We record the following direct consequence of Lemma \ref{lem: rigidity of eigenvalues}, comparing weighted sums over eigenvalues with the corresponding sum over classical locations.
\begin{lemma}\label{lem: sum eval class loc}
    For any $\delta>0$ we have
    \begin{equation}\label{eq: sum eval class loc}
        \mathbb{P}\left(
            \forall w\in C^{1}([-\sqrt{2}-\varepsilon,\sqrt{2}+\varepsilon]), u \in \mathcal{S}_{N-1}: \
            \left|
            \sum_{i=1}^{N} u_i^2 w\left(\lambda_{i}\right)-
            \sum_{i=1}^{N}u_i^2 w\left(\theta_{i/N}\right)\right|
            \le 
            |w'|_\infty
            N^{-\frac{2}{3}+\delta}
        \right) \to 1.
    \end{equation}
\end{lemma}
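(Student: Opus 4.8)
The plan is to deduce Lemma~\ref{lem: sum eval class loc} directly from the rigidity estimate of Lemma~\ref{lem: rigidity of eigenvalues}, by controlling the difference termwise and then using the normalization $\sum_i u_i^2 = 1$ to kill the dependence on $u$. First I would work on the event
$\mathcal{E}_N = \{ |\lambda_k - \theta_{k/N}| \le N^{-2/3+\delta} \min( k^{-1/3}, (N-k)^{-1/3}) \text{ for all } k\}$, which by Lemma~\ref{lem: rigidity of eigenvalues} (applied with $\varepsilon = \delta$, say) has probability tending to one. On this event, for any $w \in C^1([-\sqrt 2 - \varepsilon, \sqrt 2 + \varepsilon])$ and any $u \in \mathcal{S}_{N-1}$, I would write
\[
    \Bigl| \sum_{i=1}^N u_i^2 w(\lambda_i) - \sum_{i=1}^N u_i^2 w(\theta_{i/N}) \Bigr|
    \le \sum_{i=1}^N u_i^2 |w(\lambda_i) - w(\theta_{i/N})|
    \le |w'|_\infty \sum_{i=1}^N u_i^2 |\lambda_i - \theta_{i/N}|,
\]
using the mean value theorem in the last step; here one needs that both $\lambda_i$ and $\theta_{i/N}$ lie in the interval where $w$ is $C^1$, which holds on $\mathcal{E}_N$ for $N$ large since the rigidity bound forces $\lambda_i \in [-\sqrt 2 - \varepsilon, \sqrt 2 + \varepsilon]$.

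Next I would bound $\sum_i u_i^2 |\lambda_i - \theta_{i/N}|$ on $\mathcal{E}_N$. The crude estimate $|\lambda_i - \theta_{i/N}| \le N^{-2/3+\delta}$ (dropping the $\min(k^{-1/3},(N-k)^{-1/3})$ factor, which is at most $1$) gives
\[
    \sum_{i=1}^N u_i^2 |\lambda_i - \theta_{i/N}| \le N^{-2/3+\delta} \sum_{i=1}^N u_i^2 = N^{-2/3+\delta},
\]
which is exactly the claimed bound $|w'|_\infty N^{-2/3+\delta}$ after multiplying by $|w'|_\infty$. This shows the event in \eqref{eq: sum eval class loc} contains $\mathcal{E}_N$ for all sufficiently large $N$, hence has probability tending to one. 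The uniformity over $w$ and over $u$ is automatic because the final bound depends only on $|w'|_\infty$ and on $\sum_i u_i^2 = 1$, not on finer features of $w$ or $u$.

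There is essentially no serious obstacle here; the lemma is a routine consequence of rigidity. The only minor point requiring a word of care is the domain issue — one must note that $\lambda_i \in [-\sqrt 2 - \varepsilon, \sqrt 2 + \varepsilon]$ on $\mathcal{E}_N$ so that $w'$ is defined at the intermediate points of the mean value theorem; since $\theta_{i/N} \in [-\sqrt 2, \sqrt 2]$ always and the rigidity displacement is $o(1)$, this is immediate for $N$ large (one can also simply apply Lemma~\ref{lem: rigidity of eigenvalues} with a slightly larger tolerance, absorbing the room into $\delta$). Note the statement uses the stronger per-index displacement bound $\min(k^{-1/3},(N-k)^{-1/3})$ from Lemma~\ref{lem: rigidity of eigenvalues}, but for this particular weighted-sum estimate the uniform bound $N^{-2/3+\delta}$ suffices; the sharper factor is only needed elsewhere.
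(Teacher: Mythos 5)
Your proof is correct and is precisely what the paper has in mind: the paper does not even spell out a proof, presenting Lemma~\ref{lem: sum eval class loc} as a ``direct consequence'' of Lemma~\ref{lem: rigidity of eigenvalues}, and your argument — work on the rigidity event, apply the mean value theorem termwise, drop the $\min(k^{-1/3},(N-k)^{-1/3})$ factor, and use $\sum_i u_i^2=1$ — is the standard way to make that precise. Your side remarks (the domain check that the segment between $\lambda_i$ and $\theta_{i/N}$ stays in $[-\sqrt2-\varepsilon,\sqrt2+\varepsilon]$ for large $N$, and that the sharper per-index displacement factor is not needed here) are both accurate and worth noting.
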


The following approximations are a consequence of the previous lemma and Lemma \ref{lem: sums}. 
\begin{lemma}\label{lem: G,H,I}
    Let $\varepsilon,\delta>0$ and $k \in\mathbb{N}$.
    It holds uniformly for all $l > \sqrt{2} + \varepsilon$ that
        \begin{equation}\label{eq: Hk Ik estimate}
        \left|\HNk{k}{\theta}{l} -
        \INk{k}{l} \right|
        = {{O}}\left(\frac{1}{N}\right),
    \end{equation}
    and
    \begin{align*}
        \HNk{k}{\theta}{l} & = \HNk{k}{\lambda}{l} + {{O}}_{\mathbb{P}}\left(N^{-\frac{2}{3}+\delta}\right),
        \numberthis\label{eq: Hk lambda theta identities}
        \\
        \GNk{k}{\theta}{l} & = \GNk{k}{\lambda}{l} + {{O}}_{\mathbb{P}}\left(N^{-\frac{2}{3}+\delta}\right).
        \numberthis\label{eq: Gk lambda theta identities}
    \end{align*}
\end{lemma}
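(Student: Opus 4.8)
The plan is to prove each of the three claims by reducing to Lemma \ref{lem: sums} and Lemma \ref{lem: sum eval class loc}, applied to the test functions $w(x) = \frac{1}{l-x}$ (for the $s_\bullet$ quantities) and $w(x) = \frac{1}{(l-x)^2}$ and their higher analogues (for the derivatives, using \eqref{eq: k-th derivative of s_mu(l)}). Throughout we work on the fixed interval $[-\sqrt 2 - \varepsilon', \sqrt 2 + \varepsilon']$ for some $\varepsilon' < \varepsilon$, on which all these $w$ are $C^1$, and we must track how their sup-norms and the sup-norms of their derivatives depend on $l$; the point is that for $l > \sqrt 2 + \varepsilon$ these are bounded uniformly in $l$ by a constant depending only on $\varepsilon$ (and on $k$). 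Indeed $|l - x| \ge \varepsilon$ on the interval whenever $l > \sqrt 2 + \varepsilon$ and $|x| \le \sqrt 2$, wait — one needs $|x|$ restricted to $[-\sqrt2,\sqrt2]$ for $\mu_\theta$, which is the relevant case since the classical locations lie in $[-\sqrt2,\sqrt2]$; for the eigenvalue comparison \eqref{eq: Hk lambda theta identities}–\eqref{eq: Gk lambda theta identities} one should enlarge the interval slightly and invoke rigidity (Lemma \ref{lem: rigidity of eigenvalues}), which on an event of probability tending to one keeps all $\lambda_i$ within $[-\sqrt2-\varepsilon',\sqrt2+\varepsilon']$, so that $|l-\lambda_i|\ge \varepsilon - \varepsilon'>0$ there as well.

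For \eqref{eq: Hk Ik estimate}: by \eqref{eq: k-th derivative of s_mu(l)}, $s^{(k)}_\theta(l) = (-1)^k k! \,\frac1N\sum_i (l-\theta_{i/N})^{-(k+1)}$ and $s^{(k)}(l) = (-1)^k k!\int (l-x)^{-(k+1)}\mu_{\rm sc}(dx)$, so the difference is $(-1)^k k!$ times the quantity bounded in Lemma \ref{lem: sums} with $w_k(x) = (l-x)^{-(k+1)}$. Since $|w_k'|_\infty = (k+1)\sup_x |l-x|^{-(k+2)} \le (k+1)\varepsilon^{-(k+2)}$ uniformly in $l > \sqrt2+\varepsilon$, the right-hand side $\frac{2\sqrt2 |w_k'|_\infty}{N}$ is $O(1/N)$ with an implied constant depending only on $k,\varepsilon$. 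For \eqref{eq: Hk lambda theta identities} and \eqref{eq: Gk lambda theta identities}: again by \eqref{eq: k-th derivative of s_mu(l)} the difference $s^{(k)}_\theta(l) - s^{(k)}_\lambda(l)$ equals $(-1)^k k!$ times $\frac1N\sum_i\big( (l-\theta_{i/N})^{-(k+1)} - (l-\lambda_i)^{-(k+1)}\big)$, and the weighted version $s^{(k)}_{\theta,u}(l) - s^{(k)}_{\lambda,u}(l)$ has the same form with weights $u_i^2$. On the rigidity event, applying Lemma \ref{lem: sum eval class loc} with $w = w_k$ (restricted to the enlarged interval, on which $|w_k'|_\infty \le (k+1)(\varepsilon-\varepsilon')^{-(k+2)}$ uniformly in the relevant $l$) and also with weights $u_i^2 = \frac1N$ for the unweighted version, gives the bound $|w_k'|_\infty N^{-2/3+\delta} = O_{\mathbb P}(N^{-2/3+\delta})$, uniformly in $l$ and in $u\in\mathcal S_{N-1}$. (For the unweighted statement \eqref{eq: Hk lambda theta identities} one can alternatively apply Lemma \ref{lem: rigidity of eigenvalues} directly and sum the per-index bounds, but routing everything through Lemma \ref{lem: sum eval class loc} with uniform weights is cleanest.)

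The only mild subtlety — and the one step that needs a sentence of care rather than being purely routine — is the uniformity in $l$: Lemmas \ref{lem: sums} and \ref{lem: sum eval class loc} are stated for a fixed $w$, so to get the claimed uniformity over $l > \sqrt2+\varepsilon$ we note that the error bounds there are explicit and monotone in $|w'|_\infty$, and we have just argued $\sup_{l > \sqrt2+\varepsilon}|w_k'|_\infty < \infty$; for \eqref{eq: Hk lambda theta identities}–\eqref{eq: Gk lambda theta identities} the event in Lemma \ref{lem: sum eval class loc} (and the rigidity event) does not depend on $w$ or $l$ at all, so on that single event the bound holds simultaneously for every $l$ in the range. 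Since $k$ ranges over a finite set in any application we do not even need uniformity in $k$, though the constants above make that automatic. This completes the proof.
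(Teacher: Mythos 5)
Your proof is correct and follows essentially the same route as the paper: \eqref{eq: Hk Ik estimate} by applying Lemma \ref{lem: sums} to $w_k(x)=(l-x)^{-(k+1)}$ with the uniform bound $|w_k'|_\infty\le(k+1)\varepsilon^{-(k+2)}$, and \eqref{eq: Hk lambda theta identities}--\eqref{eq: Gk lambda theta identities} by applying Lemma \ref{lem: sum eval class loc} on a high-probability rigidity event. The extra remarks about uniformity in $l$ and the choice of weights $u_i^2=1/N$ for the unweighted case are sound and just make explicit what the paper treats in a single line.
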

\begin{proof}
    Let $w(l,\theta) = \tfrac{1}{l-\theta}$ and fix some $k\in\mathbb{N}$. By Lemma \ref{lem: sums}
    \begin{equation}\label{eq: uniform bound on s_theta(l) - s(l)}
        \left|\HNk{k}{\theta}{l} -
        \INk{k}{l} \right|
        \le \frac{\sup_{x\in [-\sqrt{2},\sqrt{2}]}|w^{(k+1)}(l,x)|}{N}
        \le \frac{1}{N} \frac{(k+1)!}{(l-\sqrt{2})^{k+2}}
        \le \frac{1}{N}\frac{(k+1)!}{\varepsilon^{k+2}} 
    \end{equation}
    for all $l \ge \sqrt{2}+\varepsilon$, which implies \eqref{eq: Hk Ik estimate}.
    On the event that $\lambda_N \le \sqrt{2} + \tfrac{\varepsilon}{2}$ we have by \eqref{eq: sum eval class loc} that for any $\delta > 0$
    \begin{equation*}
        \mathbb{P}
        \left(
        \forall l \ge \sqrt{2}+\varepsilon: \
        \left|\GNk{k}{\theta}{l} - \GNk{k}{\lambda}{l}\right| 
        \le  \frac{2(k+1)!}{\varepsilon^{k+2}} N^{-\frac{2}{3}+\delta}
        \right) \to 1,
    \end{equation*}
    implying \eqref{eq: Gk lambda theta identities}. The same argument for 
    $|\HNk{k}{\theta}{l} - \HNk{k}{\lambda}{l}|$ proves \eqref{eq: Hk lambda theta identities}.
\end{proof}

The following lemma gives a law of large numbers for sums over the classical locations or eigenvalues, weighted by the random $u_1^2,...,u_N^2$. It implies in particular that $\GNk{k}{\theta}{l} \to \INk{k}{l}$ and $\GNk{k}{\lambda}{l} \to \INk{k}{l}$ in probability.
\begin{lemma}\label{lem: u-fluc} 
    Let $\varepsilon>0$ and $w \in {C^1}([-\sqrt{2}-\varepsilon, \sqrt{2}+\varepsilon])$.
    Let $u$ be a random vector uniformly distributed on the sphere. Then as $N \rightarrow \infty$
            \begin{equation}\label{eq: u-fluc-theta}
                \sum_{i=1}^N w(\theta_{i/N}) u_i^2 
                \stackrel{\mathbb{P}}{\longrightarrow} 
                \int_{-\sqrt{2}}^{\sqrt{2}} w(x) \mu_{\text{sc}}(dx),
            \end{equation}
    and
            \begin{equation}\label{eq: u-fluc-lambda}
                \sum_{i=1}^N w(\lambda_i) u_i^2 
                \stackrel{\mathbb{P}}{\longrightarrow} 
                \int_{-\sqrt{2}}^{\sqrt{2}} w(x) \mu_{\text{sc}}(dx).
             \end{equation}
\end{lemma}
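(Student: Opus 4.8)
The plan is to prove \eqref{eq: u-fluc-theta} by a second-moment argument based on the Gaussian representation of the uniform vector on the sphere, and then to deduce \eqref{eq: u-fluc-lambda} from it using the eigenvalue rigidity recorded in Lemma \ref{lem: sum eval class loc}.

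For \eqref{eq: u-fluc-theta}, write $u = g/|g|$ where $g=(g_1,\dots,g_N)$ has i.i.d.\ standard Gaussian entries, so that
$\sum_{i=1}^N w(\theta_{i/N}) u_i^2 = \tfrac{N}{|g|^2}\, X_N$ with $X_N := \tfrac1N\sum_{i=1}^N w(\theta_{i/N}) g_i^2$. Since $\mathbb{E}[g_i^2]=1$, Lemma \ref{lem: sums} gives $\mathbb{E}[X_N] = \tfrac1N\sum_{i=1}^N w(\theta_{i/N}) \to \int_{-\sqrt2}^{\sqrt2} w\,d\mu_{\mathrm{sc}}$, while independence of the $g_i^2$ and $\mathrm{Var}(g_i^2)=2$ yield $\mathrm{Var}(X_N) = \tfrac{2}{N^2}\sum_{i=1}^N w(\theta_{i/N})^2 \le \tfrac{2|w|_\infty^2}{N}\to 0$. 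Hence $X_N \stackrel{\mathbb{P}}{\to} \int_{-\sqrt2}^{\sqrt2} w\,d\mu_{\mathrm{sc}}$ by Chebyshev's inequality. By the strong law of large numbers $|g|^2/N \to 1$ almost surely, so $N/|g|^2\to 1$ almost surely, and multiplying the two convergences proves \eqref{eq: u-fluc-theta}.

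For \eqref{eq: u-fluc-lambda}, observe first that by \eqref{eq: extremal evals} all $\lambda_i$ lie in $[-\sqrt2-\varepsilon,\sqrt2+\varepsilon]$ with probability tending to one, so that $w(\lambda_i)$ is well defined there. On this event Lemma \ref{lem: sum eval class loc} gives, for any fixed $\delta>0$ and with probability tending to one,
$\bigl|\sum_{i=1}^N u_i^2 w(\lambda_i) - \sum_{i=1}^N u_i^2 w(\theta_{i/N})\bigr| \le |w'|_\infty N^{-\frac23+\delta} \to 0$; combining this with \eqref{eq: u-fluc-theta} yields \eqref{eq: u-fluc-lambda}. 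Note that we need not separately invoke the independence of $u$ and the $\lambda_i$: the bound of Lemma \ref{lem: sum eval class loc} holds simultaneously for all unit vectors $u$ with high probability in the eigenvalues alone.

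There is no serious obstacle in this argument; the only points requiring a little care are confirming that $X_N$ indeed has vanishing variance (clear since the $g_i^2$ are independent and $w$ is bounded on the compact interval), and that the rigidity estimate of Lemma \ref{lem: sum eval class loc} is uniform over $u\in\mathcal{S}_{N-1}$, which holds by its construction.
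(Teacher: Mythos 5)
Your proof is correct and follows essentially the same approach as the paper: represent $u$ via normalized i.i.d.\ Gaussians, establish \eqref{eq: u-fluc-theta} by the second-moment method using Lemma \ref{lem: sums}, conclude that $|g|^2/N\to1$ handles the normalization, and then pass to \eqref{eq: u-fluc-lambda} via the uniform rigidity bound of Lemma \ref{lem: sum eval class loc}. The only cosmetic difference is your choice of variance $1$ for the Gaussians versus the paper's $1/N$, which is immaterial.
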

\begin{proof}
    Construct $u$ by setting $u_i = \frac{\tilde{u}_i}{|\tilde{u}|}$ with $\tilde{u}_1,...,\tilde{u}_N \sim \mathcal{N}(0,\tfrac{1}{N})$ i.i.d.. We then have
    \begin{align*}
        \mathbb{E}\left[\sum_{i=1}^N w(\theta_{i/N}) \tilde{u}_i^2 \right]
        =  \frac{1}{N} \sum_{i=1}^N w(\theta_{i/N})
        =  \int_{-\sqrt{2}}^{\sqrt{2}} w(x) \mu_{\text{sc}}(dx) + o(1),
    \end{align*}
    by Lemma \ref{lem: sums} and since $\text{Var}\left(N\tilde{u}_i^2\right) = 2$
    \begin{equation}\label{eq: w sum var}
        \text{Var}\left(\sum_{i=1}^N w(\theta_{i/N}) \tilde{u}_i^2\right)
        =
        \frac{2}{N^2} \sum_{i=1}^N w(\theta_{i/N}) 
        = {{O}}\left(\frac{1}{N}\right).
    \end{equation}
    Therefore
    \begin{align*}
        \sum_{i=1}^N w(\theta_{i/N}) \tilde{u}_i^2 
        \stackrel{\mathbb{P}}{\longrightarrow}
        \int_{-\sqrt{2}}^{\sqrt{2}} w(x) \mu_{\text{sc}}(dx)\text{ as }N\to\infty,
    \end{align*}
    and since also $|\tilde{u}| \to 1$ in probability the claim \eqref{eq: u-fluc-theta} follows.
    The second claim then follows from Lemma \ref{lem: sum eval class loc}.
\end{proof}
The previous lemma implies the following uniform convergence of $\GNk{k}{\theta}{l}$ to $\INk{k}{l}$.

\begin{lemma}\label{lem: GN IN uniformly o(1)}
Let $\varepsilon,\delta>0$ and $k\in\mathbb{N}$.
For any $\varepsilon>0,L>2$
\[
\sup_{l\in[\sqrt{2}+\varepsilon,L]}\left|\GNk{k}{\theta}{l}-\INk{k}{l}\right|=o_{\mathbb{P}}(1).
\]
\end{lemma}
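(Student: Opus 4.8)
The goal is to upgrade the pointwise law of large numbers $\GNk{k}{\theta}{l}\to\INk{k}{l}$ from Lemma \ref{lem: u-fluc} to locally uniform convergence on compacts $[\sqrt2+\varepsilon,L]$. The standard route is a compactness/equicontinuity argument: pointwise convergence of a sequence of monotone or uniformly equicontinuous functions to a continuous limit upgrades to uniform convergence on compacts. Let me think about what structure we have. $\GNk{k}{\theta}{l} = s^{(k)}_{\theta,u}(l)$ is, up to sign and constants, $\sum_i u_i^2 (l-\theta_{i/N})^{-k-1}$ — it's analytic in $l$ on $(\sqrt2,\infty)$. Its derivative in $l$ is $\GNk{k+1}{\theta}{l}$ times a constant. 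So I'd want a uniform (in $N$) bound on the derivative $\frac{d}{dl}\GNk{k}{\theta}{l}$ on $[\sqrt2+\varepsilon,L]$, giving equicontinuity. Indeed $|\frac{d}{dl}\GNk{k}{\theta}{l}| = (k+1)|\GNk{k+1}{\theta}{l}| \le (k+1)\sum_i u_i^2/(l-\theta_{i/N})^{k+2} \le (k+1)/\varepsilon^{k+2}$ since $\sum u_i^2=1$ and $\theta_{i/N}\le\sqrt2$. That's a deterministic bound, uniform in $N$, $u$, and $l\in[\sqrt2+\varepsilon,L]$. So the family $\{l\mapsto\GNk{k}{\theta}{l}\}_N$ is uniformly Lipschitz on $[\sqrt2+\varepsilon,L]$ with a deterministic constant.

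Given that, the argument is: fix $\eta>0$, pick a finite $\eta$-net (in fact $\frac{\eta}{3(k+1)\varepsilon^{-k-2}}$-net) $l_1,\dots,l_M$ of the compact interval $[\sqrt2+\varepsilon,L]$. By Lemma \ref{lem: u-fluc} applied with $w(x)=(l_j-x)^{-k-1}$ (times constant), $\GNk{k}{\theta}{l_j}\to\INk{k}{l_j}$ in probability for each of the finitely many $j$; $\INk{k}$ is also Lipschitz on the interval with the same type of bound. A union bound over the finite net shows $\max_j|\GNk{k}{\theta}{l_j}-\INk{k}{l_j}| = o_\mathbb{P}(1)$. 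Then for arbitrary $l$, choose the nearest net point $l_j$ and use the triangle inequality together with the uniform Lipschitz bounds on both $\GNk{k}{\theta}{\cdot}$ and $\INk{k}{\cdot}$ to control $|\GNk{k}{\theta}{l}-\INk{k}{l}|$ by $2\cdot(\text{net spacing})\cdot(\text{Lip const}) + \max_j|\GNk{k}{\theta}{l_j}-\INk{k}{l_j}| \le \tfrac{2\eta}{3} + o_\mathbb{P}(1)$, which is $\le\eta$ with probability tending to one. Taking the supremum over $l$ and then $\eta\downarrow0$ gives the claim.

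I would also note that Lemma \ref{lem: u-fluc} as stated requires $w\in C^1([-\sqrt2-\varepsilon,\sqrt2+\varepsilon])$, and $w(x)=(l_j-x)^{-k-1}$ is smooth on that interval since $l_j\ge\sqrt2+\varepsilon>\sqrt2+\varepsilon'$ for suitable $\varepsilon'$ — so the hypothesis is met; this is a routine check. The only genuine content is the equicontinuity bound plus the net argument, both of which are elementary. There is essentially no serious obstacle here: the sole thing to be careful about is getting a bound on the $l$-derivative that is uniform in $N$ and in the random vector $u$ (which works precisely because $\sum_i u_i^2=1$ is a deterministic normalization and the $\theta_{i/N}$ are bounded by $\sqrt2$), and ensuring all the pieces — the net size, the Lipschitz constants of $\GNk{k}{\theta}{\cdot}$ and of $\INk{k}{\cdot}$ — are chosen consistently. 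I would present it as: (i) establish the deterministic Lipschitz bound, (ii) reduce to a finite net via Lemma \ref{lem: u-fluc} and a union bound, (iii) conclude by interpolation.

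\begin{proof}
Fix $k\in\mathbb{N}$ and $L>\sqrt2$, and set $I=[\sqrt2+\varepsilon,L]$. We first record a deterministic Lipschitz bound. For $l\in I$, by \eqref{def: GNk} (applied to $\mu_{\theta,u}$) and \eqref{eq: k-th derivative of s_mu(l)},
\[
    \left|\frac{d}{dl}\GNk{k}{\theta}{l}\right|
    = \left|\GNk{k+1}{\theta}{l}\right|
    = (k+1)!\left|\frac{1}{N}\sum_{i=1}^{N}\frac{u_i^2}{(l-\theta_{i/N})^{k+2}}\right|
    \le \frac{(k+1)!}{\varepsilon^{k+2}},
\]
using $\theta_{i/N}\le\sqrt2$, $\sum_{i=1}^N u_i^2=1$ and $l-\theta_{i/N}\ge\varepsilon$. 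The same computation with $u_i^2$ replaced by $\frac1N$ shows $|\frac{d}{dl}\INk{k}{l}|=|\INk{k+1}{l}|\le (k+1)!\,\varepsilon^{-k-2}$ on $I$ as well. Write $C=(k+1)!\,\varepsilon^{-k-2}$; thus both $l\mapsto\GNk{k}{\theta}{l}$ and $l\mapsto\INk{k}{l}$ are $C$-Lipschitz on $I$, uniformly in $N$ and in $u\in\mathcal{S}_{N-1}$.

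Now fix $\eta>0$ and choose finitely many points $\sqrt2+\varepsilon=l_1<l_2<\dots<l_M=L$ in $I$ with $\max_{j}(l_{j+1}-l_j)\le\frac{\eta}{3C}$. For each $j$, applying Lemma \ref{lem: u-fluc} with $w(x)=(-1)^k k!\,(l_j-x)^{-k-1}$, which lies in $C^1([-\sqrt2-\varepsilon',\sqrt2+\varepsilon'])$ for $\varepsilon'<\varepsilon$ since $l_j\ge\sqrt2+\varepsilon$, and using \eqref{eq: k-th derivative of s_mu(l)} to identify the sum with $\GNk{k}{\theta}{l_j}$ and the integral with $\INk{k}{l_j}$, we get $\GNk{k}{\theta}{l_j}\overset{\mathbb{P}}{\to}\INk{k}{l_j}$. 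Since there are only finitely many $j$, a union bound gives
\[
    \max_{1\le j\le M}\left|\GNk{k}{\theta}{l_j}-\INk{k}{l_j}\right|\overset{\mathbb{P}}{\to}0.
\]
In particular, with probability tending to one this maximum is at most $\eta/3$.

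Finally, for arbitrary $l\in I$ pick $j$ with $|l-l_j|\le\frac{\eta}{3C}$. Then, by the triangle inequality and the two $C$-Lipschitz bounds,
\[
    \left|\GNk{k}{\theta}{l}-\INk{k}{l}\right|
    \le C|l-l_j| + \left|\GNk{k}{\theta}{l_j}-\INk{k}{l_j}\right| + C|l_j-l|
    \le \frac{2\eta}{3} + \max_{1\le j\le M}\left|\GNk{k}{\theta}{l_j}-\INk{k}{l_j}\right|.
\]
Taking the supremum over $l\in I$ and using the previous display, we obtain
\[
    \mathbb{P}\left(\sup_{l\in I}\left|\GNk{k}{\theta}{l}-\INk{k}{l}\right|\ge\eta\right)
    \le \mathbb{P}\left(\max_{1\le j\le M}\left|\GNk{k}{\theta}{l_j}-\INk{k}{l_j}\right|\ge\frac{\eta}{3}\right)\to0.
\]
Since $\eta>0$ was arbitrary, this is the claim.
\end{proof}
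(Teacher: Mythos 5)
Your proof is correct and uses the same strategy as the paper's: reduce to a finite net (the paper uses $[\sqrt2+\varepsilon,L]\cap\delta\mathbb{Z}$), apply Lemma \ref{lem: u-fluc} with a union bound at the net points, and interpolate using the deterministic Lipschitz bound on $\GNk{k}{\theta}{\cdot}$ and $\INk{k}{\cdot}$ coming from $l-\theta_{i/N}\ge\varepsilon$ and $\sum_iu_i^2=1$. The paper states this tersely ("These two facts imply the claim"); you have simply filled in the $\eta/3$-splitting details explicitly.
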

\begin{proof} Firstly, by Lemma \ref{lem: u-fluc} and a union bound it holds for
all $\delta>0$ that
\[
    \lim_{N\to\infty}\sup_{l\in[\sqrt{2}+\varepsilon,L]\cap\delta\mathbb{Z}}\mathbb{P}\left(
\left|\GNk{k}{\theta}{l}-\INk{k}{l}\right| 
\ge \delta \right)=0.
\]
Secondly, since $l\to\frac{1}{(l-\theta_{i/N})^k}$ is Lipschitz for $l\ge\sqrt{2}+\varepsilon$ so are $\GNk{k}{\theta}{l}$ and $\INk{k}{l}$.
These two facts imply the claim.
\end{proof}

\begin{remark}\label{rem: GN IN uniformly o(1)}
    (a) Though we do not need it here, it is easy to argue that the convergence is uniform on $[\sqrt{2}+\varepsilon,\infty)$, since $\lim_{l\to\infty}s^{(k)}_\mu(l)=0$ for all $k$ and $\mu$ with compact support. (b) In Section \ref{subsec:fluctuations_of_s_lambda_u} we strengthen the bound to ${{O}}_{\mathbb{P}}(N^{-1/2})$, as this is needed to study the fluctuations of $L_N$ and $\tilde{L}_N$ (see \eqref{eq:s_lambda_s_sqrt_N}).
\end{remark}

The estimate \eqref{eq: Gk lambda theta identities} and Lemma \ref{lem: GN IN uniformly o(1)} together imply that for all $\varepsilon>0,L>2,$
\begin{equation}\label{eq: IN GN distance o(1)}
    \GN{\lambda}{l} \to s(l) \text{ uniformly in probability on }[\sqrt{2}+\varepsilon,L].
\end{equation}
The next lemma deduces from this that also $\GN{\lambda}{l}^{-1} \to \IN{l}^{-1}$ uniformly, and here we do take care to prove it for an unbounded interval.

\begin{lemma}\label{lem:inverse_s_conv}
    For all $\varepsilon>0$ it holds that
    \[
        \sup_{l \ge \sqrt{2} + \varepsilon }\left|\frac{1}{s_{\lambda,u}(l)}-\frac{1}{s(l)}\right| \overset{\mathbb{P}}{\to}0.
    \]
\end{lemma}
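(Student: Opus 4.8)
The plan is to split the range $[\sqrt2+\varepsilon,\infty)$ into a bounded part $[\sqrt2+\varepsilon,L]$ and a tail $[L,\infty)$, with $L$ a large constant fixed at the very end. On the bounded part one reduces to the already–established uniform convergence $s_{\lambda,u}(l)\to s(l)$ of \eqref{eq: IN GN distance o(1)}, and on the tail one exploits that both $s_{\lambda,u}(l)$ and $s(l)$ are close to $1/l$, so that the only obstruction to their reciprocals being close is the first moment $\sum_{i=1}^N u_i^2\lambda_i$ of $\mu_{\lambda,u}$, which vanishes in probability since it converges to the mean $0$ of the semicircle law.

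Concretely, I would first restrict to the event $\Omega_N=\{\lambda_1\ge-\sqrt2-1,\ \lambda_N\le\sqrt2+\tfrac{\varepsilon}{2}\}$, which satisfies $\mathbb P(\Omega_N)\to1$ by \eqref{eq: extremal evals} and on which $s_{\lambda,u}$ is defined on all of $[\sqrt2+\varepsilon,\infty)$. From $\sum_i u_i^2=1$ and $\lambda_1\le\lambda_i\le\lambda_N$ one gets $\frac{1}{l-\lambda_1}\le s_{\lambda,u}(l)\le\frac{1}{l-\lambda_N}$, and since $|x|\le\sqrt2$ on the support of $\mu_{\rm sc}$ one has $\frac{1}{l+\sqrt2}\le s(l)\le\frac{1}{l-\sqrt2}$ (recall also \eqref{eq:s_props}). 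In particular, on $\Omega_N$ both $s_{\lambda,u}(l)$ and $s(l)$ are bounded below by some $c_{L,\varepsilon}>0$ uniformly for $l\in[\sqrt2+\varepsilon,L]$. Writing $\frac{1}{s_{\lambda,u}(l)}-\frac{1}{s(l)}=\frac{s(l)-s_{\lambda,u}(l)}{s(l)\,s_{\lambda,u}(l)}$, the denominator is $\ge c_{L,\varepsilon}^2$ on $\Omega_N$ while the numerator tends to $0$ uniformly in probability on $[\sqrt2+\varepsilon,L]$ by \eqref{eq: IN GN distance o(1)}, so $\sup_{l\in[\sqrt2+\varepsilon,L]}\bigl|\frac{1}{s_{\lambda,u}(l)}-\frac{1}{s(l)}\bigr|=o_{\mathbb P}(1)$.

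For the tail I would take $L$ large and Taylor-expand $\frac{1}{l-x}=\frac1l+\frac{x}{l^2}+\frac{x^2}{l^2(l-x)}$. Integrating against $\mu_{\lambda,u}$ and against $\mu_{\rm sc}$ gives, uniformly for $l\ge L$ on $\Omega_N$, that $s_{\lambda,u}(l)=\frac1l+\frac1{l^2}\sum_{i=1}^N u_i^2\lambda_i+O(l^{-3})$ and $s(l)=\frac1l+\frac1{l^2}\int x\,\mu_{\rm sc}(dx)+O(l^{-3})=\frac1l+O(l^{-3})$, since $\int x\,\mu_{\rm sc}(dx)=0$ (compare \eqref{eq:stieltjes_large_l_expansion_second}, keeping one more term). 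Hence $|s(l)-s_{\lambda,u}(l)|\le l^{-2}\bigl|\sum_i u_i^2\lambda_i\bigr|+C\,l^{-3}$, while $s(l)\,s_{\lambda,u}(l)\ge\frac14 l^{-2}$ for $l\ge\sqrt2+1$, so on $\Omega_N$
\[
    \sup_{l\ge L}\Bigl|\tfrac{1}{s_{\lambda,u}(l)}-\tfrac{1}{s(l)}\Bigr|
    \le 4\Bigl|\textstyle\sum_{i=1}^N u_i^2\lambda_i\Bigr|+\frac{4C}{L}.
\]
By Lemma \ref{lem: u-fluc} with $w(x)=x$ we have $\sum_i u_i^2\lambda_i\overset{\mathbb P}{\to}0$. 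Given $\eta>0$ I would first pick $L$ with $4C/L<\eta/2$, and then $\mathbb P(\Omega_N)\to1$ together with $\mathbb P\bigl(4|\sum_i u_i^2\lambda_i|>\eta/2\bigr)\to0$ shows that the tail supremum exceeds $\eta$ with vanishing probability; combined with the bounded-range estimate this proves the lemma.

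The step I expect to be the main obstacle is the tail $l\to\infty$: there $s_{\lambda,u}(l),s(l)\to0$, so one cannot bound the difference of reciprocals by a fixed constant times $|s(l)-s_{\lambda,u}(l)|$, and one must instead use that both functions agree with $1/l$ up to order $l^{-2}$ with matching leading coefficient (their respective first moments, with common limit $0$). Making the Taylor remainders genuinely uniform in $l\ge L$ requires the a priori bound on $\lambda_1,\lambda_N$ built into $\Omega_N$, which is why that event is introduced at the outset.
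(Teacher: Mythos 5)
Your proposal is correct and follows essentially the same route as the paper's proof: split $[\sqrt2+\varepsilon,\infty)$ at a large $L$, handle the bounded part via the uniform convergence $s_{\lambda,u}\to s$ (the paper's \eqref{eq: IN GN distance o(1)}), and handle the tail via the expansion $s_{\lambda,u}(l)=\tfrac1l+\tfrac1{l^2}\sum u_i^2\lambda_i+O(l^{-3})$ (the paper's \eqref{eq:stieltjes_large_l_expansion_second}), using Lemma~\ref{lem: u-fluc} to show the first moment vanishes in probability. You just spell out the event $\Omega_N$ and the Taylor remainders more explicitly than the paper does.
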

\begin{proof}
From \eqref{eq: useful integrals} it follows that $s(l)=l^{-1}+O\left(l^{-3}\right)$ for $l$
large. Similarly from \eqref{eq:stieltjes_large_l_expansion_second}
\[
    s_{\lambda,u}(l)=\frac{1}{l}+\frac{O\left(\sum_{i=1}^{N}u_{i}^{2}\lambda_{i}\right)}{l^{2}}+O\left(\frac{\max(|\lambda_1|,|\lambda_{N}|)^{3}}{l^{3}}\right),
\]
for all $u\in \mathcal{S}_{N-1}$ and $l\ge\lambda_{N}+1$. By Lemma \ref{lem: u-fluc} with $w(x)=x$ it holds that $\sum_{i=1}^{N}u_{i}^{2}\lambda_{i}\to \int_{-\sqrt{2}}^{\sqrt{2}} x \mu_{\rm{sc}}(dx)=0$
in probability, and since also $\lambda_{1},\lambda_N$ are stochastically bounded it follows that for each $\eta>0$ there is a large enough $L$ such that
    \[
        \lim_{N\to\infty}\mathbb{P}\left(\sup_{l\ge L}\left|\frac{1}{s_{\lambda,u}(l)}-\frac{1}{s(l)}\right|\ge\frac{\eta}{2}\right)=0.
    \]
    Furthermore \eqref{eq: IN GN distance o(1)} implies that
    \[
    \lim_{N\to\infty}\mathbb{P}\left(\sup_{l\in[\sqrt{2}+\varepsilon,L]}\left|\frac{1}{s_{\lambda,u}(l)}-\frac{1}{s(l)}\right|\ge\frac{\eta}{2}\right)=0,
    \]
    giving the claim.
\end{proof}

\subsection{Leading order estimate for Lagrange optimization}

We now use the laws of large numbers to study the optimization problem
\begin{equation}\label{eq:lagrange_opt}
    \inf_{l>\lambda_N} \left\{ l - \frac{\alpha^2}{\GN{\lambda}{l}} \right\},
\end{equation}
from \eqref{eq:max_slice_term_inf}. The law of large numbers $s_{\lambda,u}(l)\to s(l)$ leads us to consider the limiting optimization problem 
\begin{equation}\label{eq:lagrange_opt_lim}
    \inf_{l>\sqrt{2}}\left\{ l-\frac{\alpha^{2}}{s(l)}\right\}.
\end{equation}
The next lemma solves this limiting optimization.
\begin{lemma}
    For all $\alpha\in[-1,1]$
    \begin{equation}
        \inf_{l>\sqrt{2}}\left\{ l-\frac{\alpha^{2}}{s(l)}\right\} =\sqrt{2(1-\alpha^{2})},\label{eq: y limit}
    \end{equation}
    and if $\alpha\in(-1,1)$ the unique minimizer is 
    \begin{equation}
        \hat{l}(\alpha)=\frac{2-\alpha^{2}}{\sqrt{2(1-\alpha^{2})}},\label{eq: l hat def}
    \end{equation}
    while if $\alpha=\pm1$ the infimum is achieved for $l\to\infty$.
\end{lemma}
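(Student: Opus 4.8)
The plan is to reduce this one-dimensional infimum to an elementary calculus exercise by exploiting the explicit formula $s(l)=l-\sqrt{l^{2}-2}$ from \eqref{eq: useful integrals}. Rationalizing gives $\frac{1}{s(l)}=\frac{l+\sqrt{l^{2}-2}}{2}$ for $l>\sqrt{2}$, so that $F(l):=l-\frac{\alpha^{2}}{s(l)}=\bigl(1-\tfrac{\alpha^{2}}{2}\bigr)l-\tfrac{\alpha^{2}}{2}\sqrt{l^{2}-2}$, a form in which the critical-point equation, the sign of the second derivative, and the value at the critical point are all transparent.

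First I would dispose of the endpoint case $\alpha=\pm1$: here $F(l)=\tfrac12 s(l)\to0$ as $l\to\infty$ and $F$ is decreasing, so the infimum is $0=\sqrt{2(1-\alpha^{2})}$, approached only in the limit $l\to\infty$. (Equivalently this is the $\mu=\mu_{\rm sc}$ special case of the last statement of Lemma \ref{lem: l uniqueness general}, using $\int x\,\mu_{\rm sc}(dx)=0$ by symmetry.)

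For $\alpha\in(-1,1)$, Lemma \ref{lem: l uniqueness general} applied to $\mu_{\rm sc}$ (whose support $[-\sqrt{2},\sqrt{2}]$ is not a singleton) already guarantees a unique minimizer $l^{*}\ge\sqrt{2}$, so it remains only to locate it and evaluate $F$ there. Differentiating, $F'(l)=\bigl(1-\tfrac{\alpha^{2}}{2}\bigr)-\tfrac{\alpha^{2}}{2}\tfrac{l}{\sqrt{l^{2}-2}}$; setting $F'(l)=0$ and squaring yields $l^{2}=\frac{(2-\alpha^{2})^{2}}{2(1-\alpha^{2})}$, i.e. $l=\hat{l}(\alpha)$ as in \eqref{eq: l hat def}, and $\hat{l}(\alpha)>\sqrt{2}$ whenever $\alpha\ne0$ since $(2-\alpha^{2})^{2}-4(1-\alpha^{2})=\alpha^{4}>0$. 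The second derivative $F''(l)=\frac{\alpha^{2}}{(l^{2}-2)^{3/2}}\ge0$ reconfirms convexity, so this critical point is the global minimum; for $\alpha=0$ one has $\hat{l}(0)=\sqrt{2}$ and $\inf_{l>\sqrt{2}}F(l)=\inf_{l>\sqrt{2}}l=\sqrt{2}$, attained in the limit at the boundary point $\hat{l}(0)$. Finally, at $l=\hat{l}(\alpha)$ the critical-point relation reads $\sqrt{\hat{l}(\alpha)^{2}-2}=\tfrac{\alpha^{2}}{2-\alpha^{2}}\hat{l}(\alpha)$, whence $F(\hat{l}(\alpha))=\hat{l}(\alpha)\cdot\frac{(2-\alpha^{2})^{2}-\alpha^{4}}{2(2-\alpha^{2})}=\hat{l}(\alpha)\cdot\frac{2(1-\alpha^{2})}{2-\alpha^{2}}=\sqrt{2(1-\alpha^{2})}$ after inserting \eqref{eq: l hat def}.

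There is no genuine obstacle here; the computation is routine, and the only point needing mild care is the behaviour at the degenerate ends of the parameter range — $\alpha=0$, where the "minimizer" sits on the boundary $\sqrt{2}$, and $\alpha=\pm1$, where the infimum is realised only by a sequence $l\to\infty$. For this reason I separate those two cases and invoke Lemma \ref{lem: l uniqueness general} for the clean statement of existence and uniqueness rather than re-deriving it from convexity.
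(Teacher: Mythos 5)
Your proof is correct. The overall strategy — exploit the explicit formula $s(l)=l-\sqrt{l^{2}-2}$ and solve the resulting one-dimensional calculus problem — is the same as the paper's, but the algebraic route differs: you rationalize $1/s(l)=\tfrac12\bigl(l+\sqrt{l^{2}-2}\bigr)$ and work with $F(l)=\bigl(1-\tfrac{\alpha^2}{2}\bigr)l-\tfrac{\alpha^2}{2}\sqrt{l^2-2}$ directly, whereas the paper differentiates $l-\alpha^2/s(l)$ using $\tfrac{d}{dl}=1+\alpha^2 s'/s^2$ and then applies the Joukowski-type substitution $l=\tfrac{1}{\sqrt2}(x+x^{-1})$ to collapse the critical-point equation into $1-\alpha^2/(1-x^2)=0$. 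Your route is the more elementary of the two and is also a bit more complete: you explicitly verify the claimed value $F(\hat l(\alpha))=\sqrt{2(1-\alpha^2)}$ (the paper leaves that plug-in implicit), you note $\hat l(\alpha)>\sqrt 2$ for $\alpha\ne0$, and you flag the degenerate $\alpha=0$ case where the minimizer sits on the boundary; the paper's substitution argument, on the other hand, is the standard random-matrix parametrization and recurs elsewhere in the paper (cf. Lemma \ref{lem: l minimizer epsilon distance}), so it keeps the notation uniform. Both treatments of $\alpha=\pm1$ coincide via Lemma \ref{lem: l uniqueness general}.
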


\begin{proof}
    We have
    \begin{equation}\label{eq:derivative_in_limit}
        \frac{d}{dl}\left\{ l-\frac{\alpha^{2}}{s(l)}\right\}
        = 1+\alpha^{2}\frac{s'(l)}{s(l)^2}
        \overset{\eqref{eq: useful integrals}}{=}
        1-\alpha^{2}\frac{1}{\left(l-\sqrt{l^{2}-2}\right)\sqrt{l^{2}-2}}
        = 1-\frac{\alpha^{2}}{1-x^{2}},
    \end{equation}
    where the last equality comes from the change of variables
    $l=\frac{1}{\sqrt{2}}\left(x+x^{-1}\right)$
    for which $\sqrt{l^{2}-2}=\frac{1}{\sqrt{2}}\left(x-x^{-1}\right)$. If $\alpha\in(-1,1)$
     the critical point equation thus has unique solution $x=\sqrt{1-\alpha^{2}}$
    which yields \eqref{eq: l hat def}. The claim for $\alpha^2=1$ follows from the general Lemma \ref{lem: l uniqueness general}, or since the derivative \eqref{eq:derivative_in_limit} is negative for all $l > \sqrt{2}$.
\end{proof}

We recognize on the r.h.s. of \eqref{eq: y limit} the term that \eqref{eq:max_slice_term_inf}
claims is the limit of \eqref{eq:lagrange_opt}. To prove \eqref{eq:max_slice_term_inf} we thus need to
approximate the random optimization \eqref{eq:lagrange_opt} by the limiting \eqref{eq:lagrange_opt_lim}.

From the explicit formula (\ref{eq: l hat def}) it follows that minimizer in the limiting problem \eqref{eq:lagrange_opt_lim} is bounded away from $\sqrt{2}$ if $\alpha$ is bounded away from zero,
and bounded if $\alpha$ is bounded away from $\pm1$. Formally, for
all $\delta$ there exists a $\varepsilon>0$ such that $\hat{l}(\alpha)\ge\sqrt{2}+\varepsilon$
if $\left|\alpha\right|\ge\delta$ and $\hat{l}(\alpha)\le\varepsilon^{-1}$
if $\left|\alpha\right|\le1-\delta$, and thus 
\begin{equation}\label{eq:leading_order_lim_for_alpha_small_large}
    \inf_{\lambda>\sqrt{2}}\left\{ l-\frac{\alpha^{2}}{s(l)}\right\} =
    \left\{ \begin{array}{rll}
    {\displaystyle \inf_{\lambda\ge\sqrt{2}+\varepsilon}} & { \left\{ l-\frac{\alpha^{2}}{s(l)}\right\} } & \text{\,if }\left|\alpha\right|\ge\delta,\\
    {\displaystyle \inf_{\lambda\in[\sqrt{2},\varepsilon^{-1}]}} & { \left\{ l-\frac{\alpha^{2}}{s(l)}\right\} } & \text{\,if }\left|\alpha\right|\le1-\delta.
    \end{array}\right.
\end{equation}
The next lemma shows that this also holds for the random optimization problem \eqref{eq:lagrange_opt}.
\begin{lemma}\label{lem: l minimizer epsilon distance}
Let $y(\alpha,l) = l - \frac{\alpha^2}{\GN{\lambda}{l}}$. For each $\delta>0$ there is an $\varepsilon>0$ such that
\begin{equation}\label{eq: l minimizer epsilon distance}
    \lim_{N\to \infty} \mathbb{P}\left(\inf_{l\ge\lambda_{N}}y(\alpha,l)=\inf_{l \ge \sqrt{2}+\varepsilon}y(\alpha,l), \ \forall\alpha: |\alpha| \ge  \delta \right)=1
\end{equation}
    and 
    \begin{equation}\label{eq: l minimizer finite for small alpha}
        \lim_{N\to \infty} \mathbb{P}\left(\inf_{l\ge\lambda_{N}}y(\alpha,l)=\inf_{l\in[\lambda_N,\varepsilon^{-1}]}y(\alpha,l), \ \forall\alpha: |\alpha| \le 1-\delta \right)=1.
    \end{equation}
\end{lemma}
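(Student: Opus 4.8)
The plan is to use the convexity of $l\mapsto y(\alpha,l)$ to reduce each of \eqref{eq: l minimizer epsilon distance} and \eqref{eq: l minimizer finite for small alpha} to a sign condition on a single derivative, and then to read that condition off from the limiting problem via the laws of large numbers of this subsection. By \eqref{eq: phi def}, $y(\alpha,l)=l+\alpha^{2}\varphi_{N}(l)$, and $\varphi_{N}$ is continuous on $[\lambda_{N},\infty)$ by \eqref{eq: varphi limit}, differentiable there by \eqref{eq: phi derivative in l limit}, and convex by Lemma \ref{lem: phi convexity} (applied to $\mu=\mu_{\lambda,u}$, supported in $(-\infty,\lambda_{N}]$); hence $y(\alpha,\cdot)$ is convex on $[\lambda_{N},\infty)$ with
\[
    \partial_{l}y(\alpha,l)\overset{\eqref{eq: phi derivative in l}}{=}1+\alpha^{2}\,\frac{s_{\lambda,u}'(l)}{s_{\lambda,u}(l)^{2}}.
\]
For a convex differentiable function, $\partial_{l}y(\alpha,a)\le0$ at some point $a$ makes $y(\alpha,\cdot)$ non-increasing on $[\lambda_{N},a]$, so $\inf_{l\ge\lambda_{N}}y(\alpha,l)=\inf_{l\ge a}y(\alpha,l)$; symmetrically $\partial_{l}y(\alpha,a)\ge0$ gives $\inf_{l\ge\lambda_{N}}y(\alpha,l)=\inf_{\lambda_{N}\le l\le a}y(\alpha,l)$. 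Thus it suffices to exhibit, for each $\delta>0$, an $\varepsilon>0$ for which, with probability tending to one, $\lambda_{N}<\sqrt{2}+\varepsilon<\varepsilon^{-1}$ and moreover $\partial_{l}y(\alpha,\sqrt{2}+\varepsilon)\le0$ for all $|\alpha|\ge\delta$ and $\partial_{l}y(\alpha,\varepsilon^{-1})\ge0$ for all $|\alpha|\le1-\delta$, simultaneously.

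First I would record the analogous bounds for the limiting derivative $\partial_{l}\{l-\alpha^{2}/s(l)\}=1+\alpha^{2}s'(l)/s(l)^{2}$, see \eqref{eq:derivative_in_limit}. Here $s'(l)<0$ on $(\sqrt{2},\infty)$ by \eqref{eq: useful integrals}, while by \eqref{eq: l hat def} and Lemma \ref{lem: phi convexity} the map $l\mapsto l-\alpha^{2}/s(l)$ is strictly convex with unique minimizer $\hat{l}(\alpha)=\tfrac{2-\alpha^{2}}{\sqrt{2(1-\alpha^{2})}}$, so its derivative is $<0$ on $(\sqrt{2},\hat{l}(\alpha))$ and $>0$ on $(\hat{l}(\alpha),\infty)$. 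Since $\hat{l}$ is continuous and strictly increasing in $\alpha^{2}$ with $\hat{l}(0)=\sqrt{2}$ (cf. \eqref{eq:leading_order_lim_for_alpha_small_large}), I fix $\varepsilon>0$ small enough that $\sqrt{2}+\varepsilon<\min\{\varepsilon^{-1},\hat{l}(\delta)\}$ and $\varepsilon^{-1}>\hat{l}(1-\delta)$. Using $s'/s^{2}<0$ together with $\delta^{2}\le\alpha^{2}\le1$ on the first range and $0\le\alpha^{2}\le(1-\delta)^{2}$ on the second, this produces constants $c(\delta),c'(\delta)>0$ with
\[
    1+\alpha^{2}\frac{s'(\sqrt{2}+\varepsilon)}{s(\sqrt{2}+\varepsilon)^{2}}\ \le\ 1+\delta^{2}\frac{s'(\sqrt{2}+\varepsilon)}{s(\sqrt{2}+\varepsilon)^{2}}\ =\ -c(\delta)\quad\text{ for }|\alpha|\ge\delta,
\]
\[
    1+\alpha^{2}\frac{s'(\varepsilon^{-1})}{s(\varepsilon^{-1})^{2}}\ \ge\ 1+(1-\delta)^{2}\frac{s'(\varepsilon^{-1})}{s(\varepsilon^{-1})^{2}}\ =\ c'(\delta)\quad\text{ for }|\alpha|\le1-\delta.
\]

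Finally I would transfer these to $\partial_{l}y$. Combining Lemma \ref{lem: GN IN uniformly o(1)} (with $k=1$) and \eqref{eq: Gk lambda theta identities} gives $s_{\lambda,u}'(l)\to s'(l)$ uniformly on compact subsets of $(\sqrt{2},\infty)$, while Lemma \ref{lem:inverse_s_conv} gives $1/s_{\lambda,u}(l)\to1/s(l)$ there; evaluating at the two points $\sqrt{2}+\varepsilon$ and $\varepsilon^{-1}$ yields $s_{\lambda,u}'(l)/s_{\lambda,u}(l)^{2}\overset{\mathbb{P}}{\to}s'(l)/s(l)^{2}$. Since $\alpha$ enters $\partial_{l}y(\alpha,l)=1+\alpha^{2}s_{\lambda,u}'(l)/s_{\lambda,u}(l)^{2}$ only through $\alpha^{2}\in[0,1]$, on the event (of probability tending to one, using also $\lambda_{N}\overset{\mathbb{P}}{\to}\sqrt{2}$ from \eqref{eq: extremal evals}) that $\lambda_{N}<\sqrt{2}+\varepsilon<\varepsilon^{-1}$ and that $s_{\lambda,u}'/s_{\lambda,u}^{2}$ lies within $\tfrac12\min\{c(\delta),c'(\delta)\}$ of its limit at both points, the two displays above degrade by at most that amount, giving $\partial_{l}y(\alpha,\sqrt{2}+\varepsilon)\le-\tfrac12 c(\delta)<0$ for all $|\alpha|\ge\delta$ and $\partial_{l}y(\alpha,\varepsilon^{-1})\ge\tfrac12 c'(\delta)>0$ for all $|\alpha|\le1-\delta$. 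The convexity reduction of the first paragraph then yields \eqref{eq: l minimizer epsilon distance} and \eqref{eq: l minimizer finite for small alpha}. The argument is mostly bookkeeping; the step I would be most careful about is separating the deterministic convexity statement from the high-probability events ($\lambda_{N}<\sqrt{2}+\varepsilon$, and the two Stieltjes derivatives having concentrated), and observing that — as $\alpha$ enters only via $\alpha^{2}$ in a bounded range — a single concentration event handles all admissible $\alpha$ at once.
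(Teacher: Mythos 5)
Your proposal is correct and follows essentially the same route as the paper: reduce via convexity (Lemma \ref{lem: phi convexity}) to a sign condition on $\partial_l y$ at the two fixed points $\sqrt{2}+\varepsilon$ and $\varepsilon^{-1}$, use that $s_{\lambda,u}'/s_{\lambda,u}^2<0$ so that checking $|\alpha|=\delta$ and $|\alpha|=1-\delta$ suffices, and transfer the deterministic sign from $s'/s^2$ to the random quantity via the pointwise law of large numbers for $s_{\lambda,u}$ and its derivative. The only cosmetic difference is that you identify the threshold through the minimizer $\hat l(\alpha)$ from \eqref{eq: l hat def}, while the paper reads the same information off the explicit limiting derivative $t(\alpha,l)=1-\alpha^2/(1-\tfrac12 s(l)^2)$ and its boundary limits; both yield the same choice of $\varepsilon$.
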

\begin{proof}
For any $l>\sqrt{2}$ and $\alpha$
\[
    \partial_{l}y\left(\alpha,l\right)
    =
    1+\alpha^2\frac{\GNk{1}{\lambda}{l}}{\GN{\lambda}{l}^2}    
    \overset{\mathbb{P}}{\underset{\text{Lem }\ref{lem: u-fluc}}{\longrightarrow}}
    1+\alpha^{2}
    \frac{\INk{1}{l}}{\IN{l}^2}
    = 1-\frac{\alpha^{2}}{1-\frac{1}{2}s(l)^{2}} =: t(\alpha,l)
\]
where the final expression follows by \eqref{eq:derivative_in_limit}, since inverting the change
of variables $l=\frac{1}{\sqrt{2}}(x+x^{-1})$ used there yields $x=\frac{1}{\sqrt{2}}(l-\sqrt{l^{2}-2})=\frac{1}{\sqrt{2}}s(l)$. By \eqref{eq:s_props} the r.h.s. tends to $-\infty$ if $l\downarrow\sqrt{2}$ and $\alpha\ne0$,
and to $1-\alpha^{2}>0$ if $l\uparrow\infty$ and $\left|\alpha\right|<1$.
Thus there is an $\varepsilon>0$ small enough so that
\[
t(\delta,\sqrt{2} + \varepsilon)<0\quad\quad\text{and}\quad\quad t(1-\delta,\varepsilon^{-1})>0.
\]
Since $\frac{\GNk{1}{\lambda}{l}}{\GN{\lambda}{l}^2}$ is negative for all $l>\lambda_{N}$ (see e.g. \eqref{eq: phi derivative in l}) we have $\partial_{l}y(\alpha,\sqrt{2}+\varepsilon)\le\partial_{l}y(\delta,\sqrt{2}+\varepsilon)$
for $\left|\alpha\right|\ge\delta$ on the event $\sqrt{2}+\varepsilon>\lambda_{N}$
(which has probability tending to one). It follows that 
\[
    \lim_{N\to\infty}\mathbb{P}\left(\partial_{l}y(\alpha,\sqrt{2}+\varepsilon)<0,\quad\forall\alpha:\left|\alpha\right|\ge\delta\right)=1.
\]
Since $y(\alpha,l)$ is almost surely convex in $l>\lambda_{N}$ by Lemma \ref{lem: phi convexity} the
claim \eqref{eq: l minimizer epsilon distance} follows. The claim \eqref{eq: l minimizer finite for small alpha} follows similarly since $\partial_{l}y(\alpha,\sqrt{2}+\varepsilon) \ge \partial_{l}y(1-\delta,\sqrt{2}+\varepsilon)$
for $\left|\alpha\right|\le1-\delta$ (if $\sqrt{2}+\varepsilon>\lambda_{N}$), so that 
\[
    \lim_{N\to\infty}\mathbb{P}\left(\partial_{l}y(\alpha,\varepsilon^{-1})>0,\quad\forall\alpha:\left|\alpha\right|\le1-\delta\right)=1.
\]
\end{proof}

We can now compute \eqref{eq:lagrange_opt} for $\alpha$ bounded away from zero.
\begin{lemma}\label{lem:alpha_large}
For all $\delta>0$ 
    \begin{equation}\label{eq:alpha_large}
        \sup_{\alpha\in[-1,1]:\delta \le \left|\alpha\right| }
        \left|\inf_{l>\lambda_{N}}\left\{ l-\frac{\alpha^{2}}{s_{\lambda,u}(l)}\right\} -\sqrt{2(1-\alpha^{2})}\right|
        \overset{\mathbb{P}}{\to}0.
    \end{equation}
\end{lemma}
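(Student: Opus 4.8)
The plan is to reduce the random optimization \eqref{eq:lagrange_opt} to the limiting one \eqref{eq:lagrange_opt_lim} on an interval $[\sqrt{2}+\varepsilon,\infty)$ bounded away from $\sqrt{2}$, where Lemma \ref{lem:inverse_s_conv} gives uniform control of $\frac{1}{s_{\lambda,u}(l)}-\frac{1}{s(l)}$, and to justify the restriction to that interval using Lemma \ref{lem: l minimizer epsilon distance}. Fix $\delta>0$. First I would choose $\varepsilon>0$ small enough that simultaneously: (i) the conclusion \eqref{eq: l minimizer epsilon distance} of Lemma \ref{lem: l minimizer epsilon distance} holds, and (ii) by \eqref{eq:leading_order_lim_for_alpha_small_large} (equivalently the explicit formula \eqref{eq: l hat def} for $\hat{l}(\alpha)$) one has $\inf_{l>\sqrt2}\{l-\alpha^{2}/s(l)\}=\inf_{l\ge\sqrt2+\varepsilon}\{l-\alpha^{2}/s(l)\}$ for all $|\alpha|\ge\delta$. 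Both conditions are preserved under shrinking $\varepsilon$: (ii) clearly, and (i) because $\lambda_{N}<\sqrt2+\varepsilon$ with probability tending to one and, on that event, the minimizer of $y(\alpha,\cdot)$ over $[\lambda_{N},\infty)$ is unchanged when the left endpoint is lowered from $\sqrt2+\varepsilon$ (the original one) toward $\lambda_N$. Note also $\inf_{l>\lambda_N}y(\alpha,l)=\inf_{l\ge\lambda_N}y(\alpha,l)$ since $\varphi_N$, hence $y(\alpha,\cdot)$, extends continuously to $\lambda_{N}$.

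Next I would estimate, uniformly over $\alpha\in[-1,1]$, using $\alpha^{2}\le1$ and the elementary inequality $|\inf_{l}F(l)-\inf_{l}G(l)|\le\sup_{l}|F(l)-G(l)|$,
\[
\left|\inf_{l\ge\sqrt2+\varepsilon}y(\alpha,l)-\inf_{l\ge\sqrt2+\varepsilon}\left\{l-\frac{\alpha^{2}}{s(l)}\right\}\right|
\le\alpha^{2}\sup_{l\ge\sqrt2+\varepsilon}\left|\frac{1}{s_{\lambda,u}(l)}-\frac{1}{s(l)}\right|
\le\sup_{l\ge\sqrt2+\varepsilon}\left|\frac{1}{s_{\lambda,u}(l)}-\frac{1}{s(l)}\right|,
\]
and the right-hand side is $o_{\mathbb{P}}(1)$ by Lemma \ref{lem:inverse_s_conv}. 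On the high-probability event of Lemma \ref{lem: l minimizer epsilon distance} one has $\inf_{l>\lambda_{N}}y(\alpha,l)=\inf_{l\ge\sqrt2+\varepsilon}y(\alpha,l)$ simultaneously for all $|\alpha|\ge\delta$, and for such $\alpha$ the limiting infimum equals $\inf_{l>\sqrt2}\{l-\alpha^{2}/s(l)\}=\sqrt{2(1-\alpha^{2})}$ by \eqref{eq: y limit} and the choice of $\varepsilon$. Combining these facts gives, with probability tending to one,
\[
\sup_{\alpha:\,\delta\le|\alpha|\le1}\left|\inf_{l>\lambda_{N}}\left\{l-\frac{\alpha^{2}}{s_{\lambda,u}(l)}\right\}-\sqrt{2(1-\alpha^{2})}\right|\le\sup_{l\ge\sqrt2+\varepsilon}\left|\frac{1}{s_{\lambda,u}(l)}-\frac{1}{s(l)}\right|=o_{\mathbb{P}}(1),
\]
which is the claim.

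There is no serious obstacle here, since the substantive work has been done in Lemmas \ref{lem: l minimizer epsilon distance} and \ref{lem:inverse_s_conv}; the only point requiring care is the coordination of the single parameter $\varepsilon$, which must at once keep the minimizer of the random problem away from $\sqrt2$ for every $|\alpha|\ge\delta$ and keep the limiting minimizer $\hat{l}(\alpha)$ away from $\sqrt2$ as well, so that both infima may be taken over a common interval $[\sqrt2+\varepsilon,\infty)$ on which Lemma \ref{lem:inverse_s_conv} applies. Uniformity in $\alpha$ is automatic because the error bound $\sup_{l\ge\sqrt2+\varepsilon}|\frac{1}{s_{\lambda,u}(l)}-\frac{1}{s(l)}|$ does not depend on $\alpha$ and $\alpha^{2}\le1$.
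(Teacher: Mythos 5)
Your proof is correct and follows essentially the same route as the paper: both invoke Lemma \ref{lem: l minimizer epsilon distance} to restrict the random infimum to $[\sqrt{2}+\varepsilon,\infty)$ uniformly for $|\alpha|\ge\delta$, use Lemma \ref{lem:inverse_s_conv} (together with $\alpha^2\le1$) to control the replacement of $s_{\lambda,u}$ by $s$, and then identify the limiting infimum via \eqref{eq:leading_order_lim_for_alpha_small_large} and \eqref{eq: y limit}. You merely make explicit the elementary $|\inf F-\inf G|\le\sup|F-G|$ step and the stability of the choice of $\varepsilon$ under shrinking, which the paper leaves implicit.
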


\begin{proof}
If we pick $\varepsilon$ small enough depending on $\delta$ then by Lemma \ref{lem:inverse_s_conv} and \eqref{eq: l minimizer epsilon distance}
\[                                                                                                                                              
    \sup_{\alpha\in[-1,1]:\left|\alpha\right|\ge\delta}\left|\inf_{l>\lambda_{N}}\left\{ l-\frac{\alpha^{2}}{s_{\lambda,u}(l)}\right\} -\inf_{l\ge \sqrt{2}+\varepsilon}\left\{ l-\frac{\alpha^{2}}{
        s(l)
    }\right\} \right|\overset{\mathbb{P}}{\to}0,
\]
while by \eqref{eq:leading_order_lim_for_alpha_small_large} and \eqref{eq: y limit}  also
\[
    \inf_{l \ge \sqrt{2}+\varepsilon}\left\{ l-\frac{\alpha^{2}}{s(l)}\right\} = \inf_{l > \sqrt{2}}\left\{ l-\frac{\alpha^{2}}{s(l)}\right\} = \sqrt{2(1-\alpha^2)}\text{ for all }|\alpha| \ge \delta.
\]
\end{proof}

Next we estimate \eqref{eq:lagrange_opt} for $\alpha$ close to zero.

\begin{lemma}
\label{lem:alpha_small}There is a universal constant $c$ such that for all $\delta>0$
\begin{equation}\label{eq:alpha_small}
    \lim_{N\to\infty} \mathbb{P}\left(\sup_{\alpha\in[-1,1]:\left|\alpha\right|\le\delta}\left|\inf_{l>\lambda_{N}}\left\{ l-\frac{\alpha^{2}}{s_{\lambda,u}(l)}\right\} -\sqrt{2(1-\alpha^{2})}\right|\ge c\delta\right)=0.
\end{equation}
\end{lemma}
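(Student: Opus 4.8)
The plan is to bound the inner infimum
$I(\alpha):=\inf_{l>\lambda_N}\{l-\alpha^2/s_{\lambda,u}(l)\}$
from above and below by two crude, purely deterministic quantities, and then to observe that for $|\alpha|\le\delta$ both of these lie within $O(\delta)$ of $\sqrt{2(1-\alpha^2)}$ as soon as $\lambda_N$ and $\lambda_1$ are close to $\sqrt2$ and $-\sqrt2$ respectively, which happens with high probability by \eqref{eq: extremal evals}. In this way the randomness enters only through a single edge-eigenvalue event, and everything else is elementary.

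First I would record, for any $\alpha\in[-1,1]$, any $\lambda_1<\ldots<\lambda_N$ and any unit vector $u$, the two-sided deterministic bound $(1-\alpha^2)\lambda_N+\alpha^2\lambda_1\le I(\alpha)\le\lambda_N$. The upper bound is immediate: since $s_{\lambda,u}(l)=\sum_{i=1}^N u_i^2/(l-\lambda_i)>0$ for $l>\lambda_N$, we have $l-\alpha^2/s_{\lambda,u}(l)\le l$, hence $I(\alpha)\le\inf_{l>\lambda_N}l=\lambda_N$. For the lower bound, $l-\lambda_i\le l-\lambda_1$ for every $i$ together with $\sum_i u_i^2=1$ gives $s_{\lambda,u}(l)\ge(l-\lambda_1)^{-1}$, so $l-\alpha^2/s_{\lambda,u}(l)\ge(1-\alpha^2)l+\alpha^2\lambda_1\ge(1-\alpha^2)\lambda_N+\alpha^2\lambda_1$, using $1-\alpha^2\ge0$ and $l>\lambda_N$.

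Next I would carry out the comparison, treating first the case $\delta\le1$. On the event $E_N=\{\lambda_N\le\sqrt2+\delta\}\cap\{\lambda_1\ge-\sqrt2-\delta\}$, every $\alpha$ with $|\alpha|\le\delta$ satisfies $\alpha^2\le\delta$, so that $\sqrt2-\sqrt2\,\delta\le\sqrt2(1-\alpha^2)\le\sqrt{2(1-\alpha^2)}\le\sqrt2$. Combining this with the deterministic bounds gives, on $E_N$ and uniformly in such $\alpha$,
$I(\alpha)-\sqrt{2(1-\alpha^2)}\le(\sqrt2+\delta)-(\sqrt2-\sqrt2\,\delta)=(1+\sqrt2)\delta$
and
$\sqrt{2(1-\alpha^2)}-I(\alpha)\le\sqrt2-\bigl((1-\alpha^2)(\sqrt2-\delta)+\alpha^2(-\sqrt2-\delta)\bigr)=2\sqrt2\,\alpha^2+\delta\le(1+2\sqrt2)\delta$.
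Hence on $E_N$ the supremum in \eqref{eq:alpha_small} is at most $c\delta$ with the universal constant $c=1+2\sqrt2$, while $\mathbb{P}(E_N)\to1$ by \eqref{eq: extremal evals}, so $\mathbb{P}(\text{sup}\ge c\delta)\le\mathbb{P}(E_N^c)\to0$, giving \eqref{eq:alpha_small} for $\delta\le1$. Finally, for $\delta>1$ the constraint $|\alpha|\le\delta$ is vacuous on $[-1,1]$, so the supremum equals the one at $\delta=1$; combining the $\delta=1$ case with $c\le c\delta$ gives the statement for all $\delta>0$ with the same $c$.

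I do not expect a genuine obstacle: once the two crude deterministic bounds are in place the rest is routine. The only point requiring care is that $c$ must be truly universal, that is, independent of $\delta$ and $N$; this is what forces the reduction to $\delta\le1$ and the systematic use of $\alpha^2\le\delta$ to absorb every $\alpha$-dependent term into $O(\delta)$, leaving all the $N$-dependence in the single high-probability event $E_N$, controlled via $\lambda_1\to-\sqrt2$ and $\lambda_N\to\sqrt2$.
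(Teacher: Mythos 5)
Your proof is correct in outline and gives a genuinely more elementary route than the paper's, but there is a small slip in the definition of your high-probability event. The deterministic sandwich $(1-\alpha^2)\lambda_N+\alpha^2\lambda_1\le I(\alpha)\le\lambda_N$ is right: the upper bound uses $s_{\lambda,u}(l)>0$, and the lower bound uses $s_{\lambda,u}(l)\ge 1/(l-\lambda_1)$ together with $l>\lambda_N$ and $1-\alpha^2\ge 0$. However, you define $E_N=\{\lambda_N\le\sqrt2+\delta\}\cap\{\lambda_1\ge-\sqrt2-\delta\}$, yet in the displayed bound on $\sqrt{2(1-\alpha^2)}-I(\alpha)$ you substitute $\lambda_N\ge\sqrt2-\delta$, which is not part of $E_N$ as written. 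This is easily repaired by instead taking $E_N=\{|\lambda_N-\sqrt2|\le\delta\}\cap\{\lambda_1\ge-\sqrt2-\delta\}$, whose probability still tends to one by \eqref{eq: extremal evals}; the arithmetic then gives $c=1+2\sqrt2$. The reduction to $\delta\le 1$, the use of $\alpha^2\le\delta$, and the handling of $\delta>1$ are all fine.

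Compared with the paper's proof your route is different and more self-contained. The paper uses that $l\mapsto l-\alpha^2/s_{\lambda,u}(l)$ is pointwise decreasing in $\alpha^2$ (since $s_{\lambda,u}(l)>0$) to sandwich $I(\alpha)$ between (approximately) $\lambda_N$ and $\inf_{l>\lambda_N}\{l-\delta^2/s_{\lambda,u}(l)\}$, then invokes Lemma~\ref{lem:alpha_large} at $|\alpha|=\delta$ to conclude $\inf_{l>\lambda_N}\{l-\delta^2/s_{\lambda,u}(l)\}\overset{\mathbb{P}}{\to}\sqrt{2(1-\delta^2)}=\sqrt2+O(\delta)$. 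That is shorter, but it depends on the law-of-large-numbers machinery for $s_{\lambda,u}$ established beforehand. Your argument replaces the lower bound with the purely deterministic inequality $s_{\lambda,u}(l)\ge 1/(l-\lambda_1)$, so the only probabilistic input is the edge convergence of $\lambda_1$ and $\lambda_N$, making the lemma logically independent of Lemma~\ref{lem:alpha_large}.
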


\begin{proof}
If $\left|\alpha\right|\le\delta$ then 
\[
\sqrt{2}\ge\inf_{l>\lambda_{N}}\left\{ l-\frac{\alpha^{2}}{s_{\lambda,u}(l)}\right\} \ge\inf_{l>\lambda_{N}}\left\{ l-\frac{\delta^{2}}{s_{\lambda,u}(l)}\right\} \overset{\mathbb{P}}{\to}\sqrt{2(1-\delta^{2})}=\sqrt{2}+{{O}}(\delta),
\]
where we used Lemma \ref{lem:alpha_large}. Since also $\sqrt{2(1-\alpha^{2})}=\sqrt{2}+{{O}}(\delta)$ this implies \eqref{eq:alpha_small}.
\end{proof}

\begin{proof}[Proof of Proposition \ref{prop:max_slice_term}]
    The convergence \eqref{eq:max_slice_term_inf} is a consequence of Lemma \ref{lem:alpha_large} and Lemma \ref{lem:alpha_small}. By Corollary \ref{lem:estimate} this proves \eqref{eq:max_slice_term}, since $u_N\to 0$ in probability and $\lambda_1,\lambda_N$ are stochastically bounded (see Lemma \ref{lem: rigidity of eigenvalues}).
\end{proof}

This also completes the proofs of Theorem \ref{thm: sphere} (a) and Theorem \ref{thm: ball} (a), since as already mentioned the former follows from \eqref{eq: fix overlap} and Proposition \ref{prop:max_slice_term}, while the latter follows from \eqref{eq: fix overlap ball} and Proposition \ref{prop:max_slice_term}

\section{Examples: Leading order}\label{section: example leading order}

In this section we consider some important special cases where specific choices are made for $f$ and $g$
and characterize the maximizing $\alpha$ and $r$ as explicitly as possible. Later after proving Theorems \ref{thm: sphere} (b) resp. \ref{thm: ball} (b) about fluctuations we will see that they also apply to these examples.

Recall
$$
    \mathcal{B}(\alpha) = f(\alpha) + \sqrt{2}\beta\sqrt{1-\alpha^2}.
$$
We will first consider the ground state $L_N$ on the sphere for monomials $f(x)=h x^k$. Define for $h>0$
\begin{equation}\label{eq: beta critical}
    \beta_c(k,h) := 
        \begin{cases}
            \infty & \text{ for } k= 1,
            \\
            \sqrt{2}h & \text{ for } k= 2,
            \\
            \frac{h}{\sqrt{2}} \frac{k-1}{k-2}\left(1-\frac{1}{(k-1)^2}\right)^{\frac{k}{2}}
            & \text{ for } k \ge 3.
        \end{cases}
\end{equation}
Let also for $k \ge 3$
\begin{equation}\label{eq: betac tilde def}
\tilde{\beta}_{c}\left(k,h\right)=\frac{hk}{\sqrt{2}}\frac{\left(k-2\right)^{\frac{k-2}{2}}}{\left(k-1\right)^{\frac{k-1}{2}}} > \beta_{c}\left(k,h\right).
\end{equation}

The next lemma shows for monomial $f$ that $\mathcal{B}(\alpha)$ has a unique maximizer $\hat{\alpha}$ for $\beta \neq \beta_c(k,h)$, where $\hat{\alpha}=0$ if  $\beta > \beta_c(k,h)$ and $\hat{\alpha}>0$ if  $\beta < \beta_c(k,h)$.

\begin{lemma}[Ground state on sphere for monomials]\label{lem: monomial maxima}
When $k=1$ then for all $\beta>0$
\[
\sup_{\alpha\in\left[-1,1\right]}\mathcal{B}\left(\alpha\right)=
\sqrt{h^{2}+2\beta^{2}},
\]
and the unique local and global maximizer of $\mathcal{B}\left(\alpha\right)$
is $\alpha=\frac{h}{\sqrt{h^{2}+2\beta^{2}}}$.

When $k=2$ and $\beta\ge\beta_{c}\left(2,h\right)$ the unique local
and global maximizer of $\mathcal{B}\left(\alpha\right)$ is $\alpha=0$
and when $\beta<\beta_{c}\left(2,h\right)$ 
\[
\sup_{\alpha\in\left[-1,1\right]}\mathcal{B}\left(\alpha\right)=h+\frac{\beta^{2}}{2h},
\]
and the unique local and global maximizers of $\mathcal{B}\left(\alpha\right)$
are $\alpha=\pm\sqrt{1-\frac{\beta^{2}}{2h^{2}}}$.

When $k\ge3$ and $\beta\ge\tilde{\beta}_{c}\left(k,h\right)$ the unique local and global
maximizer of $\mathcal{B}\left(\alpha\right)$ is $\alpha=0$. When
$\beta<\tilde{\beta}_{c}\left(k,h\right)$ let $\hat{\alpha}$ be
the largest solution to 
\begin{equation}
\alpha^{2\left(k-2\right)}\left(1-\alpha^{2}\right)=2\left(\frac{\beta}{hk}\right)^{2},\label{eq: alpha eq}
\end{equation}
which is the unique solution to the equation in $\left(\sqrt{\frac{k-2}{k-1}},1\right)$.
Then $\alpha=0,\alpha=\hat{\alpha}$ are the only local maximizers of
$\mathcal{B}\left(\alpha\right)$ in $\left[0,1\right]$. When $\beta>\beta_{c}\left(k,h\right)$
the global maximizer is $\alpha=0$ and when $\beta=\beta_{c}\left(k,h\right)$
both $\alpha=0$ and $\alpha=\hat{\alpha}$ are global maximizers, and
when $\beta<\beta_{c}\left(k,h\right)$ the global maximizer in $\left[0,1\right]$
is $\hat{\alpha}$.

When $k\ge4$ and $k$ even then $\alpha=-\hat{\alpha}$ is also local
resp. global maximizer and the unique one in $[-1,0)$, and if $k\ge3$
and $k$ odd then there are no local maximizers in $[-1,0)$.
\end{lemma}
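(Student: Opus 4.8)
The plan is to work directly with the first derivative of $\mathcal{B}(\alpha)=h\alpha^{k}+\sqrt{2}\beta\sqrt{1-\alpha^{2}}$ on $[-1,1]$, namely
$$
\mathcal{B}'(\alpha)=kh\alpha^{k-1}-\sqrt{2}\beta\,\frac{\alpha}{\sqrt{1-\alpha^{2}}},
$$
and locate local maxima by sign changes of $\mathcal{B}'$. First I would observe that $\mathcal{B}'(\alpha)\to-\infty$ as $\alpha\uparrow 1$ and $\mathcal{B}'(\alpha)\to+\infty$ as $\alpha\downarrow-1$ (using $\beta>0$), so the endpoints $\pm1$ are never local maximizers; since $\mathcal{B}$ is $C^{\infty}$ on $(-1,1)$ with isolated critical points, every local maximizer lies in $(-1,1)$ and the global maximizer (which exists by compactness) is one of them. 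For $k=1$ the claim is then immediate: $\alpha\mapsto\alpha/\sqrt{1-\alpha^{2}}$ is a strictly increasing bijection of $(-1,1)$ onto $\mathbb{R}$, so $\mathcal{B}'$ is strictly decreasing, i.e. $\mathcal{B}$ is strictly concave with a unique critical point $\hat\alpha$ solving $h\sqrt{1-\hat\alpha^{2}}=\sqrt{2}\beta\hat\alpha$; solving the quadratic gives $\hat\alpha=h/\sqrt{h^{2}+2\beta^{2}}$, and substituting back yields $\mathcal{B}(\hat\alpha)=\sqrt{h^{2}+2\beta^{2}}$.

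For $k\ge 2$ I would factor, for $\alpha\in(0,1)$,
$$
\mathcal{B}'(\alpha)=\frac{\alpha}{\sqrt{1-\alpha^{2}}}\bigl(g_{k}(\alpha)-\sqrt{2}\beta\bigr),\qquad g_{k}(\alpha):=kh\,\alpha^{k-2}\sqrt{1-\alpha^{2}},
$$
so on $(0,1)$ the sign of $\mathcal{B}'$ is the sign of $g_{k}(\alpha)-\sqrt{2}\beta$, and near $\alpha=0$ one has $\mathcal{B}'(\alpha)=(2h-\sqrt2\beta)\alpha+O(\alpha^{3})$ for $k=2$ and $\mathcal{B}'(\alpha)=-\sqrt2\beta\,\alpha+O(\alpha^{2})$ for $k\ge 3$. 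For $k=2$, $g_{2}$ is strictly decreasing from $2h$ to $0$, which gives: if $\beta\ge\sqrt2 h=\beta_{c}(2,h)$ then $\mathcal{B}$ is strictly decreasing on $[0,1]$ and, by evenness, strictly increasing on $[-1,0]$, so $\alpha=0$ is the unique local and global maximizer; if $\beta<\sqrt2 h$ then $g_{2}(\hat\alpha)=\sqrt2\beta$ has the unique root $\hat\alpha=\sqrt{1-\beta^{2}/(2h^{2})}$, which is a local max, $\alpha=0$ is a local min, and evenness produces $-\hat\alpha$; the value $h+\beta^{2}/(2h)$ follows by substitution.

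For $k\ge 3$ I would record that $g_{k}(0)=g_{k}(1)=0$, $g_{k}>0$ on $(0,1)$, and from $g_{k}^{2}=k^{2}h^{2}\alpha^{2(k-2)}(1-\alpha^{2})$ with derivative $2\alpha^{2k-5}\bigl((k-2)-(k-1)\alpha^{2}\bigr)$, that $g_{k}$ strictly increases on $(0,\alpha_{*})$ and strictly decreases on $(\alpha_{*},1)$, where $\alpha_{*}=\sqrt{(k-2)/(k-1)}$ and $g_{k}(\alpha_{*})=kh\,(k-2)^{(k-2)/2}(k-1)^{-(k-1)/2}=\sqrt2\,\tilde{\beta}_{c}(k,h)$ by \eqref{eq: betac tilde def}. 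Hence if $\beta\ge\tilde{\beta}_{c}(k,h)$ then $g_{k}\le\sqrt2\beta$ on $(0,1)$ so $\mathcal{B}$ is strictly decreasing on $[0,1]$, while on $(-1,0)$ one checks $\mathcal{B}'>0$ when $k$ is odd (both terms positive) and $\mathcal{B}$ is even when $k$ is even, so $\alpha=0$ is the unique local and global maximizer. If $\beta<\tilde{\beta}_{c}(k,h)$ then $g_{k}=\sqrt2\beta$ has exactly two roots $\alpha_{1}<\alpha_{*}<\alpha_{2}$ in $(0,1)$, which (after squaring, $g_k\ge0$) are exactly the two solutions of \eqref{eq: alpha eq}, with $\alpha_{2}=\hat\alpha$ the unique one in $(\alpha_{*},1)$; the sign pattern $-,+,-$ of $\mathcal{B}'$ on $(0,\alpha_{1}),(\alpha_{1},\hat\alpha),(\hat\alpha,1)$, together with $\mathcal{B}'$ changing from $+$ to $-$ at $0$, shows that the local maximizers in $[0,1]$ are exactly $0$ and $\hat\alpha$, with $\alpha_{1}$ a local minimum.

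It remains, for $k\ge 3$ with $\beta<\tilde{\beta}_{c}$, to compare $\mathcal{B}(0)=\sqrt2\beta$ with $\mathcal{B}(\hat\alpha)$. Using $\sqrt2\beta=g_{k}(\hat\alpha)=kh\,\hat\alpha^{k-2}\sqrt{1-\hat\alpha^{2}}$, I would compute
$$
\mathcal{B}(\hat\alpha)-\mathcal{B}(0)=h\,\hat\alpha^{k-2}\bigl(k-(k-1)\hat\alpha^{2}-k\sqrt{1-\hat\alpha^{2}}\bigr),
$$
and with $s=\sqrt{1-\hat\alpha^{2}}$ the bracket equals $1-ks+(k-1)s^{2}=(k-1)(s-1)\bigl(s-\tfrac{1}{k-1}\bigr)$. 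Since $\hat\alpha\in(\alpha_{*},1)$ we have $s\in(0,1/\sqrt{k-1})\subset(0,1)$, so the sign of $\mathcal{B}(\hat\alpha)-\mathcal{B}(0)$ is that of $\tfrac{1}{k-1}-s$, i.e. $\mathcal{B}(\hat\alpha)>\mathcal{B}(0)$ iff $\hat\alpha^{2}>1-\tfrac{1}{(k-1)^{2}}$. As $g_{k}$ is strictly decreasing on $(\alpha_{*},1)$, the relation $\sqrt2\beta=g_{k}(\hat\alpha)$ is a decreasing bijection between $\hat\alpha$ and $\beta$, so this is equivalent to $\beta<\tfrac{1}{\sqrt2}g_{k}(\hat\alpha_{c})$ with $\hat\alpha_{c}=\sqrt{1-\tfrac{1}{(k-1)^{2}}}$; the identity $1-\tfrac{1}{(k-1)^{2}}=\tfrac{k(k-2)}{(k-1)^{2}}$ both shows $\hat\alpha_{c}\in(\alpha_{*},1)$ (hence $\beta_{c}(k,h)<\tilde{\beta}_{c}(k,h)$, so $\hat\alpha$ indeed exists at and below $\beta_{c}$) and simplifies $\tfrac{1}{\sqrt2}g_{k}(\hat\alpha_{c})$ to the expression $\beta_{c}(k,h)$ in \eqref{eq: beta critical}. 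This yields all global-maximizer claims on $[0,1]$ (with equality exactly at $\beta=\beta_{c}$), and the behaviour on $[-1,0)$—evenness for $k$ even, and $\mathcal{B}'>0$ throughout for $k$ odd—finishes the proof. The one genuinely delicate point is this last computation: the algebraic factorization $1-ks+(k-1)s^{2}=(k-1)(s-1)(s-\tfrac{1}{k-1})$ and the verification that its critical value coincides with $\beta_{c}(k,h)$; the remainder is routine monotonicity bookkeeping.
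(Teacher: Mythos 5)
Your proof is correct and follows the same broad outline as the paper's (locate local maximizers from the sign of $\mathcal{B}'$, then compare $\mathcal{B}(\hat\alpha)$ with $\mathcal{B}(0)$), but the tactics differ in two places that are worth noting. To classify critical points, the paper computes $\mathcal{B}''$ and substitutes the critical-point equation to get $\mathcal{B}''(\alpha)=\frac{\sqrt{2}\beta}{\sqrt{1-\alpha^{2}}}\left(k-1-\frac{1}{1-\alpha^{2}}\right)$, classifying each critical point as max/min/saddle by the sign of that factor; you instead factor $\mathcal{B}'(\alpha)=\frac{\alpha}{\sqrt{1-\alpha^{2}}}\bigl(g_{k}(\alpha)-\sqrt{2}\beta\bigr)$ and read off the sign pattern from the unimodality of $g_{k}(\alpha)=kh\alpha^{k-2}\sqrt{1-\alpha^{2}}$, which has the advantage of identifying $\sqrt{2}\,\tilde\beta_{c}(k,h)$ as the peak value $g_{k}(\alpha_{*})$ in one stroke and never requires a second-derivative computation. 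For the global comparison, the paper avoids the critical-point equation entirely and shows that $\mathcal{B}(\alpha)>\mathcal{B}(0)$ is equivalent to $\frac{\alpha^{k}}{1-\sqrt{1-\alpha^{2}}}>\frac{\sqrt{2}\beta}{h}$, then maximizes the left-hand side over $\alpha$; you instead evaluate $\mathcal{B}(\hat\alpha)-\mathcal{B}(0)$ by substituting $\sqrt{2}\beta=g_{k}(\hat\alpha)$ and factoring $1-ks+(k-1)s^{2}=(k-1)(s-1)\bigl(s-\tfrac{1}{k-1}\bigr)$, and then invoke the monotone bijection $\hat\alpha\leftrightarrow\beta$ on $(\alpha_{*},1)$ to convert the threshold $\hat\alpha_{c}^{2}=1-\tfrac{1}{(k-1)^{2}}$ into the threshold $\beta=\beta_{c}(k,h)$. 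Both comparisons land on the same critical value $\sqrt{k(k-2)}/(k-1)$; the paper's is a bit more direct (no extra composition with the $\hat\alpha\leftrightarrow\beta$ bijection needed), while yours is arguably more transparent about why the identity $\tfrac{1}{\sqrt{2}}g_{k}(\hat\alpha_{c})=\beta_{c}(k,h)$ holds, and it also delivers $\beta_{c}<\tilde\beta_{c}$ as a free by-product of $\hat\alpha_{c}\in(\alpha_{*},1)$. One small point of polish: when $\beta=\tilde\beta_{c}(k,h)$ you assert $\mathcal{B}$ is strictly decreasing on $[0,1]$, but $\mathcal{B}'(\alpha_{*})=0$ there; since it is an isolated zero $\mathcal{B}$ is still injective and $\alpha=0$ is still the unique local maximizer, so the conclusion stands, but "strictly decreasing derivative sign" should be replaced by "nonpositive with isolated zeros."
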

\begin{remark}
Also when $k=1,2$ and $\beta>\beta_{c}\left(k,h\right)$
the unique global maximizer is a solution of \eqref{eq: alpha eq} (in
fact the unique solution).
\end{remark}

\begin{proof}
    
Since $\mathcal{B}'\left(\alpha\right)\to-\infty$ for $\alpha\to\pm1$
a non-negative maximizer must exist and it must be a local maximizer
of $\mathcal{B}\left(\alpha\right)$ in $(-1,1)$. We have 
\[
\mathcal{B}'\left(\alpha\right)=hk\alpha^{k-1}-\sqrt{2}\beta\frac{\alpha}{\sqrt{1-\alpha^{2}}}.
\]
For $k$ odd we have $\mathcal{B}'\left(\alpha\right)<0$ for $\alpha\in\left(-1,0\right)$,
so there are no local maximizers in that interval. If $k$ is even and
thus $\mathcal{B}$ is symmetric, every local or global maximizer $-\alpha<0$
must correspond to $+\alpha>0$ that is also a local resp. global
maximizum of $\mathcal{B}$. Thus we may now restrict attention to
$\alpha\in\left[0,1\right]$.

For $k=1$ and all $\beta>0$ we have that $\mathcal{B}'\left(\alpha\right)=0\iff h-\sqrt{2}\beta\frac{\alpha}{\sqrt{1-\alpha^{2}}}=0$
has the unique solution $\frac{h}{\sqrt{h^{2}+2\beta^{2}}}$ which
must then be the unique local and global maximizer of $\mathcal{B}\left(\alpha\right)$,
and indeed $\mathcal{B}(\frac{h}{\sqrt{h^{2}+2\beta^{2}}})=h^{2}+2\beta^{2}$.
This completes the proof in the case $k=1$.

For $k\ge2$ we will use that
\[
\mathcal{B}''\left(\alpha\right)=hk\left(k-1\right)\alpha^{k-2}-\sqrt{2}\beta\frac{1}{\left(1-\alpha^{2}\right)^{3/2}}.
\]

When $k=2$ then $\mathcal{B}'\left(0\right)=0$ for all $\beta$.
If $\beta\ge\beta_{c}\left(k,h\right)$ then $\mathcal{B}'\left(\alpha\right)=0\iff2h\alpha-\sqrt{2}\beta\frac{\alpha}{\sqrt{1-\alpha^{2}}}=0$
has no non-zero solutions, so $\alpha=0$ is the unique local and
global maximizer. If $k=2$ and $\beta<\beta_{c}\left(k,h\right)$
then the unique positive solution of $\mathcal{B}'\left(\alpha\right)=0$
is $\sqrt{1-\frac{\beta^{2}}{2h^{2}}}$, and
\[
\mathcal{B}\left(\sqrt{1-\frac{\beta^{2}}{2h^{2}}}\right)=h+\frac{\beta^{2}}{2h^{2}}=\frac{\beta}{\sqrt{2}}\left(\frac{\sqrt{2}h}{\beta}+\frac{\beta}{\sqrt{2}h}\right)>\sqrt{2}\beta=\mathcal{B}\left(0\right),
\]
so this is the global maximum. Also $\mathcal{B}''\left(0\right)=2h-\sqrt{2}\beta>0$
so $\alpha=0$ is a local minimizer. This completes the proof in the
case $k=2$.

If $k\ge3$ then $\alpha=0$ is always a local maximizer of $\mathcal{B}\left(\alpha\right)$.
 Also the l.h.s. of (\ref{eq: alpha eq}) is maximized at $\alpha=\sqrt{\frac{k-2}{k-1}}$,
so when $\beta>\tilde{\beta}_{c}\left(k,h\right)$ then using \eqref{eq: betac tilde def} the l.h.s. of \eqref{eq: alpha eq} is
smaller than the r.h.s. for all $\alpha$, so the equation has no
solutions and $\alpha=0$ is the unique maximizer. When $\beta=\tilde{\beta}_{c}\left(k,h\right)$
it has a single solution at $\alpha=\sqrt{\frac{k-2}{k-1}}$ and otherwise
one in $\left(0,\sqrt{\frac{k-2}{k-1}}\right)$ and one in $\left(\sqrt{\frac{k-2}{k-1}},1\right)$.
At any solution $\alpha$ of $\mathcal{B}'\left(\alpha\right)=0$
we have that
\begin{align*}
\begin{array}{ccl}
\mathcal{B}''\left(\alpha\right) & = & \left(k-1\right)\sqrt{2}\beta\frac{1}{\sqrt{1-\alpha^{2}}}-\sqrt{2}\beta\frac{1}{\left(1-\alpha^{2}\right)^{3/2}}\\
 & = & \frac{\sqrt{2}\beta}{\sqrt{1-\alpha^{2}}}\left( k-1-\frac{1}{1-\alpha^{2}}\right) \begin{cases}
<0 & \text{ if }\alpha>\sqrt{\frac{k-2}{k-1}},\\
=0 & \text{ if }\alpha=\sqrt{\frac{k-2}{k-1}},\\
>0 & \text{ if }\alpha<\sqrt{\frac{k-2}{k-1}}.
\end{cases}
\end{array}
\end{align*}
This shows that when $\beta=\tilde{\beta}_{c}\left(k,h\right)$ we
have that $\alpha=\sqrt{\frac{k-2}{k-1}}$ is a saddle point (using that $\alpha=0$ is a local maximizer and $\mathcal{B}'\left(\alpha\right)\to-\infty$
for $\alpha\to1$), and when $\beta<\tilde{\beta}_{c}\left(k,h\right)$
the smaller solution is a local minimizer and the larger one is local
maximizer. It only remains to check which of the two local maximizers is the global maximizer when $\beta < \tilde{\beta}_c(k,h)$.

To this end note that
\[
\mathcal{B}\left(\alpha\right)>\mathcal{B}\left(0\right)\iff\frac{\alpha^{k}}{1-\sqrt{1-\alpha^{2}}}>\frac{\sqrt{2}\beta}{h}.
\]
The left-hand side is uniquely maximized at $\tilde{\alpha}=\frac{\sqrt{k\left(k-2\right)}}{k-1}$.
Thus if $\beta>\beta_{c}\left(k,h\right)$ so that $\frac{\tilde{\alpha}^{k}}{1-\sqrt{1-\tilde{\alpha}^{2}}}<\frac{\sqrt{2}\beta}{h}$
the global maximizer is $\alpha=0$, and if $\beta=\beta_{c}\left(k,h\right)$
we have $\mathcal{B}\left(\tilde{\alpha}\right)=\mathcal{B}\left(0\right)$
and $\mathcal{B}\left(\alpha\right)<\mathcal{B}\left(0\right)$ for
all $\alpha\in(0,1)\backslash\{\tilde{\alpha}\}$ so both $\alpha=0$
and $\alpha=\tilde{\alpha}$ are global maximizers, and the latter
is the aforementioned non-zero local maximizer. Lastly if $\beta<\beta_{c}\left(k,h\right)$
then the global maximizer is non-zero and is the aforementioned
non-zero local maximizer. This completes the proof for $k\ge3$.
\end{proof}

\begin{figure}[b]
	\centering
	\includegraphics[width=1\linewidth]{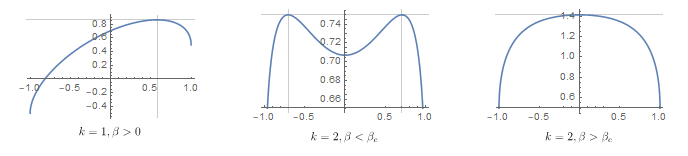}
	\label{fig:HTAP}
	\includegraphics[width=1\linewidth]{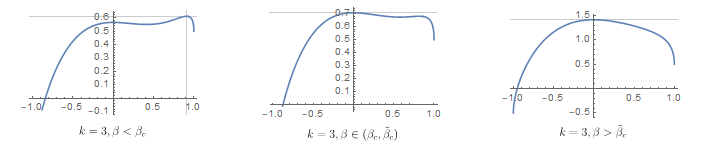}
    \caption{Plot of $\mathcal{B}(\alpha)$ for $\alpha\in [-1,1]$}
\end{figure}

\bigskip
We will now study an important special case of $\tilde{L}_N$. Recall the TAP free energy
$$ F_{\rm{TAP}}(m) = \beta H_N(m) + N f(u \cdot m) 
                + N g(|m|),$$
where $\beta\ge0$ and
$$
    g(x) = \frac{1}{2}\log\left(1-x^2\right) + \frac{\beta^2}{2}(1-x^2)^2 \text{ for }x\ge0.
$$
Let $q_P = \max( 1-\frac{1}{\sqrt{2\beta}},0)$ and define the \textit{Plefka region}
    \begin{equation}
        {\rm{Plef}}(\beta) = \left[\sqrt{q_P}, 1\right] \subset [0,1],
    \end{equation}
and denote its interior by $\rm{Plef}(\beta)^{\mathrm{o}}$.
In TAP analysis one is interested in the maximum of $F_{\rm{TAP}}$ for $m$ such that $|m|\in \rm{Plef}(\beta)$, that is in $\tilde{L}_N$ for this $g$ and $\mathcal{R}=\rm{Plef}(\beta)$.
Let $h>0,f(x)=h x^k$ for $k\ge1$ and define 
\begin{equation}
        \tilde{\mathcal{B}}(\alpha, r) = f(r \alpha) + \sqrt{2}\beta r^2 \sqrt{1-\alpha^2} + g(r),
\end{equation}
so that by \eqref{eq: L tilde leading order} 
$$
\frac{1}{N}\tilde{L}_N \overset{\mathbb{P}}{\to} \sup_{r\in{\rm{Plef}(\beta)},\alpha\in[-1,1]} \tilde{\mathcal{B}}(\alpha,r).$$
In the rest of the section we will compute the r.h.s. explicitly as possible, and show that except for critical values of $\beta,h$ it has a unique maximizer.

\begin{lemma}[TAP maximizer with linear external field]\label{lem: k=1 maximum on ball}
    Let $h > 0$, $\beta > 0$ and $f(x) = h x$. It holds hat
\begin{equation}
\sup_{r\in{\rm Plef}\left(\beta\right),\alpha\in\left[-1,1\right]}\tilde{\mathcal{B}}(\alpha,r)=\sup_{q\in[q_P ,1)}\mathscr{B}\left(q\right),\label{eq: tap linear}
\end{equation}
where 
\[
\mathscr{B}\left(q\right)=\sqrt{h^{2}q+2\beta^{2}q^{2}}+\frac{1}{2}\log(1-q)+\frac{\beta^{2}}{2}(1-q)^{2},
\]
is a concave function in $[q_P ,1)$
whose unique maximizer $\hat{q}$ is the unique solution to
\begin{equation}\label{eq: TAP linear qhat}
    \frac{q}{h^{2}+2q\beta^{2}}=(1-q)^2
\end{equation}
in $\left(q_P ,1\right)$.
Furthermore the unique maximizer of the l.h.s. of (\ref{eq: tap linear})
is $\hat{r}=\sqrt{\hat{q}}$ and $\hat{\alpha}=\frac{h}{\sqrt{h^{2}+2\beta^{2}\hat{q}}}$.
\end{lemma}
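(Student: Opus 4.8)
The plan is to perform the maximization in two stages --- first over $\alpha$ with $r$ held fixed, then over $r$, the latter recast via $q=r^2$ --- reducing everything to the one-dimensional maximization of $\mathscr{B}$. For fixed $r$, the $\alpha$-dependent part of $\tilde{\mathcal B}(\alpha,r)$ is $hr\alpha+\sqrt2\beta r^2\sqrt{1-\alpha^2}$, of the form $A\alpha+B\sqrt{1-\alpha^2}$ with $A=hr\ge0$, $B=\sqrt2\beta r^2\ge0$. This is strictly concave in $\alpha\in[-1,1]$ (second derivative $-B(1-\alpha^2)^{-3/2}$) with derivative going from $+\infty$ at $\alpha=-1$ to $-\infty$ as $\alpha\uparrow1$, so its maximum over $[-1,1]$ equals $\sqrt{A^2+B^2}$, attained at the unique point $\alpha=A/\sqrt{A^2+B^2}$. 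Thus for each $r$,
\[
    \sup_{\alpha\in[-1,1]}\tilde{\mathcal B}(\alpha,r)=\sqrt{h^2r^2+2\beta^2r^4}+g(r),\qquad\text{attained at }\ \alpha=\frac{h}{\sqrt{h^2+2\beta^2r^2}}.
\]
Substituting $q=r^2$, which runs over $[q_P,1)$ as $r$ runs over ${\rm Plef}(\beta)$ (the endpoint $r=1$ contributing $-\infty$), the right-hand side becomes exactly $\mathscr{B}(q)$; this proves \eqref{eq: tap linear} and gives $\hat r=\sqrt{\hat q}$, $\hat\alpha=h/\sqrt{h^2+2\beta^2\hat q}$ once the maximizer $\hat q$ of $\mathscr{B}$ on $[q_P,1)$ is found. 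Uniqueness of the maximizer of $\tilde{\mathcal B}$ then follows from the strict concavity just noted together with the uniqueness of $\hat q$ and the fact (established below) that $\hat q>0$.

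The crux is the concavity of $\mathscr{B}$ on $[q_P,1)$. Writing $\mathscr{B}(q)=\psi(q)+\tfrac12\log(1-q)+\tfrac{\beta^2}{2}(1-q)^2$ with $\psi(q)=\sqrt{h^2q+2\beta^2q^2}$, a short computation (set $\psi=\sqrt\phi$, $\phi(q)=h^2q+2\beta^2q^2$, and use $2\phi\phi''-(\phi')^2=-h^4$) gives $\psi''(q)=-h^4\big(4(h^2q+2\beta^2q^2)^{3/2}\big)^{-1}<0$, so
\[
    \mathscr{B}''(q)=-\frac{h^4}{4(h^2q+2\beta^2q^2)^{3/2}}-\frac{1}{2(1-q)^2}+\beta^2.
\]
The only convex ingredient of $\mathscr{B}$ is the Onsager term $\tfrac{\beta^2}{2}(1-q)^2$, and Plefka's condition is precisely what tames it: for $q\in{\rm Plef}(\beta)$ one has $\sqrt2\beta(1-q)\le1$, i.e.\ $\beta^2\le\tfrac1{2(1-q)^2}$, so the last two terms above sum to $\le0$ and $\mathscr{B}''(q)<0$ on $[q_P,1)$. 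I expect this to be the only genuinely delicate step; everything else is one-variable calculus.

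Finally I would locate $\hat q$. Since $\mathscr{B}(q)\to-\infty$ as $q\uparrow1$, and $\mathscr{B}'$ is positive at the left end --- when $q_P=0$ because $\mathscr{B}'(q)\to+\infty$ as $q\downarrow0$, and when $q_P>0$ because $\mathscr{B}'(q_P)=\psi'(q_P)-\sqrt2\beta>0$, using $\tfrac1{2(1-q_P)}+\beta^2(1-q_P)=\sqrt2\beta$ and $\psi'(q)^2>2\beta^2$ for all $q>0$ (the latter from $(h^2+4\beta^2q)^2-8\beta^2(h^2q+2\beta^2q^2)=h^4>0$) --- strict concavity yields a unique maximizer $\hat q\in(q_P,1)$, the unique critical point of $\mathscr{B}$ there. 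To identify it with the root of \eqref{eq: TAP linear qhat}: on $(0,1)$ the left side $\tfrac{q}{h^2+2q\beta^2}$ increases from $0$ while the right side $(1-q)^2$ decreases to $0$, so \eqref{eq: TAP linear qhat} has a unique root there, and this root lies in $(q_P,1)$ since $(1-q)^2=\tfrac{q}{h^2+2q\beta^2}<\tfrac1{2\beta^2}$ forces $\sqrt2\beta(1-q)<1$; and inserting $\alpha^2=\tfrac{h^2}{h^2+2\beta^2q}$ (equivalently rewriting \eqref{eq: TAP linear qhat} as $\sqrt{1-\alpha^2}=\sqrt2\beta(1-q)$) one checks by a routine substitution that this root makes $\mathscr{B}'(q)=\psi'(q)-\tfrac1{2(1-q)}-\beta^2(1-q)$ vanish, so by uniqueness of the critical point it equals $\hat q$. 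This also yields $\hat q>0$, completing the uniqueness claim for $\tilde{\mathcal B}$.
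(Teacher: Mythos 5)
Your proof is correct and follows essentially the same route as the paper: optimize in $\alpha$ with $r$ fixed, substitute $q=r^2$, establish strict concavity of $\mathscr{B}$ on $[q_P,1)$, and identify the unique critical point with the root of \eqref{eq: TAP linear qhat}. The differences are stylistic: you use the closed-form $\max_\alpha(A\alpha+B\sqrt{1-\alpha^2})=\sqrt{A^2+B^2}$ where the paper solves the critical-point equation; you compute $\psi''(q)=-h^4/\bigl(4\phi(q)^{3/2}\bigr)$ exactly via $2\phi\phi''-(\phi')^2=-h^4$, which makes it especially transparent that Plefka's condition $\beta^2\le\tfrac{1}{2(1-q)^2}$ is exactly what cancels the convex Onsager contribution (the paper proves the same concavity via the inequality $2\beta^2(h^2q+2\beta^2q^2)<(h^2+4\beta^2q)^2$ and a separate computation for $g(\sqrt q)$); and you verify the critical-point equation by direct substitution rather than the paper's $v(x)=x+x^{-1}$ bijectivity trick. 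You are also a bit more explicit than the paper about $\mathscr{B}'(q_P)>0$ and $\mathscr{B}(q)\to-\infty$ as $q\uparrow1$, which cleanly forces an interior maximizer.
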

\begin{proof}
    We will first maximize $\tilde{\mathcal{B}}$ in $\alpha$ for fixed $r \ne 0$. Since 
    $$\partial_\alpha \tilde{\mathcal{B}}(\alpha,r)=h r - \sqrt{2}\beta r^2 \frac{\alpha}{\sqrt{1-\alpha^2}} \to-\infty\text{ for }\alpha\to \pm1$$
    a maximizer must exist and be a critical point. The critical point equation $\partial_\alpha \tilde{\mathcal{B}}(\alpha,r)=0$
    has the unique solution
    \begin{equation}\label{eq: alpha(r) for k=1}
        \alpha_r := \frac{h}{\sqrt{h^2 + 2\beta^2r^2}}
    \end{equation}
    which maximizes $\tilde{\mathcal{B}}(\cdot ,r)$. This implies
    \begin{equation}
        \sup_{\alpha \in [-1,1]} \tilde{\mathcal{B}}(\alpha,r) = \tilde{\mathcal{B}}(\alpha_r,r) = r \sqrt{h^2 + 2\beta^2 r^2} + g(r),
    \end{equation}
    (also when $r=0$ since then all three expressions are identically $\beta^2/2$). With the change of variables $q = r^2$ we get
    \begin{equation}
        \tilde{\mathcal{B}}(\alpha_r,r) = \mathscr{B}(q) 
        :=  t(q) + g(\sqrt{q})
    \end{equation}
    where
    $$ t(q)=\sqrt{h^2 q + 2\beta^2 q^2},$$
    and
    \begin{equation}\label{eq: g as func of q}
        g(\sqrt{q})=\frac{1}{2} \log(1-q)+\frac{\beta^2}{2} (1-q)^2.
    \end{equation}
    We have thus proved \eqref{eq: tap linear}.
    
    Furthermore we have
    $$t'(q)=\frac{h^2+4\beta^2 q}{2 \sqrt{h^2 q + 2 \beta^2 q^2}} \text{ and }t''(q)=\frac{2 \beta^2}{\sqrt{h^2 q + 2\beta^2 q^2}}-\frac{(h^2 + 4 \beta^2 q)^2}{4 (h^2 q + 2\beta^2 q^2)^{\frac{3}{2}}}.$$
    Since $2\beta^{2}(h^{2}q+2\beta^{2}q^{2})<(h^{2}+4\beta^{2}q)^{2}$ for all $q\in[0,1]$ one sees that $t''(q)<0$, so $t$ is strictly concave. Also
    $$\frac{\partial}{\partial q} g(\sqrt{q}) = -\beta^2 (1-q) -\frac{1}{2(1-q)} \ \text{ and } \ \frac{\partial^2}{\partial q^2} g(\sqrt{q}) = \beta^2 - \tfrac{1}{2(1-q)^2},$$
    and the latter is negative for $q\in(q_P ,1)$, so 
    \begin{equation}\label{eq: g of q conc}
        q\to g(\sqrt{q})\text{ is strictly concave in } [q_p,1].
    \end{equation}
    Thus also $\mathscr{B}(q)$ is strictly concave in $[q_P ,1)$. This implies that $\tilde{\mathcal{B}}(q)$ has a unique maximizer $\hat{q}$ in $[q_P ,1)$, and $\hat{r}=\sqrt{\hat{q}}$ is the unique maximizer of $r\to \tilde{\mathcal{B}}(\alpha_r,r)$ in ${\rm Plef}\left(\beta\right)$, and $(\sqrt{\hat{q}},\frac{h}{\sqrt{h^2+2\beta^2 \hat{q}}})$ is the unique maximizer of the l.h.s. of \eqref{eq: tap linear}.
    
    Thus it only remains to derive the equation \eqref{eq: TAP linear qhat} for $\hat{q}$. For this it suffices to note that with $v\left(x\right)=x+x^{-1}$
we have the identities
\begin{equation}\label{eq: t prime and g prime}
t'\left(q\right)=\frac{\beta}{\sqrt{2}}v\left(\frac{\sqrt{2q}\beta}{\sqrt{h^{2}+2q\beta^{2}}}\right)\text{ and }\frac{\partial}{\partial q}g(\sqrt{q})=-\frac{\beta}{\sqrt{2}} v\left(\sqrt{2}\beta\left(1-q\right)\right).
\end{equation}
Therefore the critical point equation $\mathscr{B}'\left(q\right)=0$
is equivalent to $v\left(\frac{\sqrt{2q}\beta}{\sqrt{h^{2}+2q\beta^{2}}}\right)=v(\sqrt{2}\beta\left(1-q\right))$ and since $v\left(x\right)$ is a bijection for $x\in\left[0,1\right]$ this is in turn equivalent to $\frac{\sqrt{2q}\beta}{\sqrt{h^{2}+2q\beta^{2}}}=\sqrt{2}\beta\left(1-q\right)$
and \eqref{eq: TAP linear qhat}. Since a solution to \eqref{eq: TAP linear qhat} always exists a unique critical point always exists in $(q_P,1)$, and by concavity it is the unique local and global maximum.
\end{proof}
For the cases $k\ge2$ the following fact will be useful.
\begin{lemma}\label{lem: L at alpha=0 strictly decreasing} For all $f,\beta,h$ it holds that $\tilde{\mathcal{B}}(0,r)$ is strictly decreasing in $r$.
\end{lemma}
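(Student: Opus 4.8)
The plan is a one-variable calculus computation combined with completing a square. Setting $\alpha = 0$ in the definition of $\tilde{\mathcal{B}}$ and recalling $g(x) = \tfrac{1}{2}\log(1-x^2) + \tfrac{\beta^2}{2}(1-x^2)^2$, one has
\[
    \tilde{\mathcal{B}}(0,r) = f(0) + \sqrt{2}\,\beta r^2 + \tfrac{1}{2}\log(1-r^2) + \tfrac{\beta^2}{2}(1-r^2)^2 , \qquad r \in [0,1) ,
\]
which depends on $f$ only through the constant $f(0)$ and not at all on $h$, so it suffices to show this single function is strictly decreasing on $[0,1)$.

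First I would differentiate and collect terms, obtaining
\[
    \partial_r \tilde{\mathcal{B}}(0,r) = r\left( 2\sqrt{2}\,\beta - \frac{1}{1-r^2} - 2\beta^2(1-r^2) \right) .
\]
Writing $t = 1-r^2 \in (0,1]$, the bracketed factor equals $2\sqrt{2}\,\beta - \bigl(\tfrac{1}{t} + 2\beta^2 t\bigr) = -\bigl(\tfrac{1}{\sqrt{t}} - \sqrt{2}\,\beta\sqrt{t}\bigr)^2$, which is $\le 0$ and vanishes exactly when $t = \tfrac{1}{\sqrt{2}\,\beta}$, i.e. when $r^2 = 1 - \tfrac{1}{\sqrt{2}\,\beta} = q_P$, the left endpoint of the Plefka region. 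Hence $\partial_r \tilde{\mathcal{B}}(0,r) \le 0$ for all $r \in [0,1)$, and $\partial_r \tilde{\mathcal{B}}(0,r) < 0$ for all $r \in (0,1)$ with at most the single exception $r = \sqrt{q_P}$.

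A continuous function whose derivative is nonpositive on an interval and strictly negative off an isolated point is strictly decreasing there; this gives the claim on all of $[0,1)$, and in particular on ${\rm Plef}(\beta)$. There is no real obstacle: the only point to watch is the vanishing of the derivative at $r = 0$ and at the Plefka boundary $r = \sqrt{q_P}$, handled by the isolated-point remark above.
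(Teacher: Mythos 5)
Your proof is correct and matches the paper's approach: both differentiate $\tilde{\mathcal{B}}(0,r)$ and complete the square to get $\partial_r\tilde{\mathcal{B}}(0,r) = -2r\bigl(\beta\sqrt{1-r^2}-\tfrac{1}{\sqrt{2(1-r^2)}}\bigr)^2 \le 0$, then conclude strict monotonicity from the derivative vanishing only at isolated points. One cosmetic slip: your identification $r^2=1-\tfrac{1}{\sqrt{2}\beta}=q_P$ holds only for $\beta\ge\tfrac{1}{\sqrt{2}}$ (for smaller $\beta$ the square never vanishes on $(0,1)$ and $q_P=0$), but this does not affect the argument.
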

\begin{proof}
    We have
    \begin{equation}
        \tilde{\mathcal{B}}(0,r) = \sqrt{2} \beta r^2 + \frac{\beta^2}{2} (1-r^2)^2 + \frac{1}{2} \log(1-r^2), 
    \end{equation}
    and
    \begin{equation}\label{eq: L deriv in alpha at r=0}
        \partial_r \tilde{\mathcal{B}}(0,r) = - 2 r \left( \beta^2 (1-r^2) - \sqrt{2}\beta + \frac{1}{2(1-r^2)} \right)
        =
        - 2 r \left(\beta\sqrt{1-r^2} - \tfrac{1}{\sqrt{2(1-r^2)}}\right)^2
        \le 0,
    \end{equation}
    with equality only at a single point, implying the claim.
\end{proof}

We are now ready to study the case $k=2$. Define for $\beta > 0$
\begin{equation}\label{eq: F beta def}
    \mathcal{F}(\beta) 
    = \sup_{r \in [\sqrt{q_P},1)} \tilde{\mathcal{B}}(0,r) =\tilde{\mathcal{B}}(0,\sqrt{q_P}) =
    \begin{cases}
        \frac{\beta^2}{2}  & \ \text{ for } \beta \le \frac{1}{\sqrt{2}},
        \\
        \sqrt{2}\beta-\frac{3}{4}-\frac{1}{2}\log(\sqrt{2}\beta) & \ \text{ for } \beta \ge \frac{1}{\sqrt{2}}.
    \end{cases}
\end{equation}
\begin{lemma}[TAP maximizer with quadratic spike]\label{lem: k=2 maximum on ball}
    Let $f(x)=hx^2$. If $h>\frac{1}{2}$ and $\beta < \sqrt{2}h$ then
    \begin{equation}\label{eq: maximum at k=2}
        \sup_{r\in\rm{Plef}(\beta),\alpha\in [-1,1]} \tilde{\mathcal{B}}(\alpha,r)
        =
        \frac{\beta^2}{8h^2}(4h-1) + h - \frac{1}{2}\left(1 + \log(2h)\right),
    \end{equation}
    and the unique maximizers of the l.h.s. are
    \begin{equation}
        \left(\sqrt{1-\tfrac{1}{2h}}, \pm \sqrt{1-\tfrac{\beta^2}{2h^2}}\right).
    \end{equation}
    If either $h \le \frac{1}{2}$ or $\beta \ge \sqrt{2}h$ then
    \begin{equation}\label{eq: maximum at k=2 border}
        \sup_{r\in\rm{Plef}(\beta),\alpha\in [-1,1]} \tilde{\mathcal{B}}(\alpha,r)
        = \mathcal{F}(\beta),
    \end{equation}
    where the maximum is attained at $(q_P,0)$ (uniquely if $\beta>\frac{1}{\sqrt{2}}$ and otherwise also on $\{0\}\times[0,1]$).
\end{lemma}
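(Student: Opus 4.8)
The plan is to follow the two-step scheme of Lemma~\ref{lem: k=1 maximum on ball}: first maximize over $\alpha$ for fixed radius, then solve the resulting one-dimensional problem in $q=r^2$. Since $f(x)=hx^{2}$ we have the clean identity $\tilde{\mathcal{B}}(\alpha,r)=r^{2}\mathcal{B}(\alpha)+g(r)$ with $\mathcal{B}(\alpha)=h\alpha^{2}+\sqrt{2}\beta\sqrt{1-\alpha^{2}}$ the sphere profile of Section~\ref{section: example leading order}. Hence $\sup_{\alpha\in[-1,1]}\tilde{\mathcal{B}}(\alpha,r)=r^{2}B^{\ast}+g(r)$ for all $r\ge 0$, where $B^{\ast}:=\sup_{\alpha\in[-1,1]}\mathcal{B}(\alpha)$, and with $q=r^{2}$ the problem becomes $\sup_{r\in{\rm Plef}(\beta),\alpha}\tilde{\mathcal{B}}(\alpha,r)=\sup_{q\in[q_P,1)}G(q)$ for $G(q):=qB^{\ast}+g(\sqrt q)$. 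By Lemma~\ref{lem: monomial maxima} with $k=2$: if $\beta<\sqrt{2}h$ then $B^{\ast}=h+\tfrac{\beta^{2}}{2h}$, attained exactly at $\alpha=\pm\sqrt{1-\tfrac{\beta^{2}}{2h^{2}}}\neq 0$; if $\beta\ge\sqrt{2}h$ then $B^{\ast}=\sqrt{2}\beta=\mathcal{B}(0)$, attained uniquely at $\alpha=0$. As in the proof of Lemma~\ref{lem: k=1 maximum on ball}, $q\mapsto g(\sqrt q)$ has $\tfrac{\partial^{2}}{\partial q^{2}}g(\sqrt q)=\beta^{2}-\tfrac{1}{2(1-q)^{2}}$, negative for $q\in(q_P,1)$, so $G$ is strictly concave on $[q_P,1)$.

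For the first regime ($h>\tfrac12$, $\beta<\sqrt{2}h$) I would substitute $B^{\ast}=h+\tfrac{\beta^{2}}{2h}$ into $G'(q)=B^{\ast}-\beta^{2}(1-q)-\tfrac{1}{2(1-q)}$ and check by direct substitution that $G'(q)=0$ at $q=1-\tfrac{1}{2h}$ (the resulting equation is quadratic in $1-q$, and the other root is irrelevant since strict concavity forces at most one critical point in $(q_P,1)$). The two hypotheses are exactly what makes $1-\tfrac{1}{2h}\in(q_P,1)$: $1-\tfrac1{2h}>0\iff h>\tfrac12$, and $1-\tfrac1{2h}>1-\tfrac{1}{\sqrt2\beta}\iff\beta<\sqrt2h$ (when $q_P=0$ one only needs $\hat q>0$, i.e. $h>\tfrac12$, and then $\beta<\sqrt2h$ holds automatically). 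Strict concavity then makes $\hat q=1-\tfrac1{2h}$ the unique maximizer of $G$ on $[q_P,1)$, and evaluating $G(\hat q)=\hat q\bigl(h+\tfrac{\beta^{2}}{2h}\bigr)+\tfrac12\log\tfrac{1}{2h}+\tfrac{\beta^{2}}{2}\cdot\tfrac{1}{4h^{2}}$ and simplifying yields the stated value. Combining with the $\alpha$-maximizers gives the maximizers $\alpha=\pm\sqrt{1-\tfrac{\beta^{2}}{2h^{2}}}$, $r=\sqrt{1-\tfrac1{2h}}$.

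For the remaining regime I would split on the value of $B^{\ast}$. If $\beta\ge\sqrt{2}h$, then for every $r>0$ the $\alpha$-supremum is attained uniquely at $\alpha=0$ and equals $\tilde{\mathcal{B}}(0,r)$, which is strictly decreasing in $r$ by Lemma~\ref{lem: L at alpha=0 strictly decreasing}; hence the maximum over $r\in[\sqrt{q_P},1)$ is at $r=\sqrt{q_P}$ with value $\mathcal{F}(\beta)$, the overall maximizer being $\alpha=0$, $r=\sqrt{q_P}$, unique when $q_P>0$, i.e. $\beta>\tfrac{1}{\sqrt2}$, while if $q_P=0$ the radius is $0$ and every $\alpha$ is a maximizer (value $g(0)=\tfrac{\beta^{2}}{2}=\mathcal{F}(\beta)$). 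If instead $h\le\tfrac12$ (and $\beta<\sqrt2h$, else we are in the previous subcase), then $\beta<\sqrt2h\le\tfrac{1}{\sqrt2}$ forces $q_P=0$ and $B^{\ast}=h+\tfrac{\beta^{2}}{2h}$; a short computation gives $G'(0)=\tfrac{2h-1}{2h}(h-\beta^{2})\le0$ (using $h\le\tfrac12$ and $\beta^{2}<2h^{2}\le h$), so by strict concavity $G$ is strictly decreasing on $[0,1)$ and the maximum is $G(0)=g(0)=\tfrac{\beta^{2}}{2}=\mathcal{F}(\beta)$, attained at $r=0$ with every $\alpha$.

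The computational parts — the critical-point identity and the final simplification of $G(\hat q)$ — are routine. The only thing demanding care is the bookkeeping of the degenerate case $q_P=0$ (equivalently $\beta\le\tfrac{1}{\sqrt2}$) in the uniqueness assertions, together with checking that the dichotomy $\beta\lessgtr\sqrt2h$ governing $B^{\ast}$ is compatible with the threshold $h=\tfrac12$ in the statement; this is the main, albeit modest, obstacle.
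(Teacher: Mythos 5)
Your proof is correct and follows essentially the same two-step scheme as the paper's: maximize first in $\alpha$ for fixed radius, then study the resulting strictly concave one-dimensional function of $q=r^2$ on $[q_P,1)$. The one refinement you make is the observation $\tilde{\mathcal{B}}(\alpha,r)=r^{2}\mathcal{B}(\alpha)+g(r)$, which lets you invoke Lemma~\ref{lem: monomial maxima} directly for the $\alpha$-step instead of re-solving the $\alpha$-critical-point equation as the paper does; the subsequent concavity argument, the identity $\hat q=1-\tfrac1{2h}$, and the boundary-case bookkeeping (including the direct check $G'(0)=\tfrac{2h-1}{2h}(h-\beta^2)\le 0$ for $h\le\tfrac12$) all match the paper's reasoning.
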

\begin{proof} We first maximize in $\alpha$ for fixed $r$. The critical point equation in $\alpha$ for $r$ fixed is
    \begin{equation}
        r^2\left(2h\alpha - \sqrt{2}\beta \frac{\alpha}{\sqrt{1-\alpha^2}}\right) = 0.
    \end{equation}
    Thus when $r\ne0$ the only critical points are $\alpha=0$  and
    if $\beta < \sqrt{2}h$ also
    \begin{equation}\label{eq: alpha(r) for k=2}
        \alpha_r = \pm\sqrt{1 - \frac{\beta^2}{2 h^2}}.
    \end{equation}
    Note that if $\beta < \sqrt{2}h$ and $r\ne0$ we also have
    $$
        \tilde{\mathcal{B}}(\alpha_r,r) - \tilde{\mathcal{B}}(0,r)
        = \left(\frac{2h^2 + \beta^2}{2h} - \sqrt{2}\beta\right) r^2
        = \frac{r^2}{2h} \left(\sqrt{2}h - \beta\right)^2
        > 0,
    $$
    so that the maximizing $\alpha$ for fixed $r\neq 0$ is
    $$
        \alpha = 
        \begin{cases}
            0 & \text{ if } \beta \ge \sqrt{2}h,
            \\
            \pm\sqrt{1 - \frac{\beta^2}{2 h^2}} & \text{ if } \beta < \sqrt{2}h.
        \end{cases}
    $$
    Thus with $q = r^2$ and recalling \eqref{eq: g as func of q} we have
    \begin{equation}\label{eq: tilde L(q) quadratic}
         \sup_{\alpha\in [-1,1]} \tilde{\mathcal{B}}(\alpha,r) = 
         \begin{cases}
            \mathscr{B}(q) :=  \frac{2h^2 + \beta^2}{2h} q + g(\sqrt{q})
             & \text{ if } \beta < \sqrt{2}h,
            \\
            \tilde{\mathcal{B}}(0,\sqrt{q})
             & \text{ if } \beta \ge \sqrt{2}h.
         \end{cases}
    \end{equation}
    If $\beta \ge \sqrt{2}h$ all claims thus follow by Lemma \ref{lem: L at alpha=0 strictly decreasing} and \eqref{eq: F beta def}.
    
    If $\beta < \sqrt{2}h$, since the first term $\mathscr{B}\left(q\right)$ is linear \eqref{eq: g of q conc} implies that $\mathscr{B}\left(q\right)$ is
    strictly concave in $q\in[q_{P},1)$, and so it has a unique maximizer.  
    Note that
    \[
    \mathscr{B}'\left(q\right)=\frac{2h^{2}+\beta^{2}}{2h}+\frac{\partial}{\partial q}g\left(\sqrt{q}\right)=\frac{\beta}{\sqrt{2}}\left(v\left(\frac{\beta}{\sqrt{2}h}\right)-s\left(\sqrt{2}\beta\left(1-q\right)\right)\right),
    \]
    recalling the second part of \eqref{eq: t prime and g prime} and the function $v\left(x\right)=x+x^{-1}$ from $(0,1]$
    to $[2,\infty)$ which is an increasing bijection. Therefore $\mathscr{B}'\left(q\right)=0$
    is equivalent to
    \[
    \frac{\beta}{\sqrt{2}h}=\sqrt{2}\beta\left(1-q\right)\iff q=1-\frac{1}{2h}.
    \]
    Now if $h>\frac{1}{2}$,
    we have that $1-\frac{1}{2h}\in\left(q_{P},1\right)$ so that $1-\frac{1}{2h}$
    is a critical point in $(q_{P},1)$ and by concavity it is the unique
    local and global maximum. It is easy to check that $\mathscr{B}\left(1-\frac{1}{2h}\right)$
    equals the r.h.s. of \eqref{eq: maximum at k=2}, completing the proof when $h>\frac{1}{2}$ and $\beta < \sqrt{2}h$. If $h \le \frac{1}{2}$ the maxmizer is $q=\sqrt{q}_P$, since $\mathscr{B}(q)\to-\infty$ for $q\to1$, and $\mathscr{B}(q)=\tilde{\mathcal{B}}(0,\sqrt{q_P})=\mathcal{F}(\beta)$, giving the claims.
\end{proof}
The result on maximizers of $\tilde{\mathcal{B}}$ for monomial $f$ with $k \ge 3$ is less explicit, and the analysis more complicated. We first show that the global maximum of $\tilde{\mathcal{B}}$ on $[0,1]\times[\sqrt{q_p},1]$ is either achieved at a critical point of in the interior $(\sqrt{q_p},1)\times(0,1)$ or at $(\sqrt{q_p},0)$.
\begin{lemma}\label{lem: L max interior or corner}
     For any $f\in C^1([-1,1])$ 
    we have that $\tilde{\mathcal{B}}(\alpha,r)$ for $(\alpha,r) \in [0,1]\times[\sqrt{q_P},1]$ is maximized
    in the interior $(\sqrt{q_P},1)\times(0,1)$
    or at the point $(\alpha,r) = (0,\sqrt{q_P})$.
\end{lemma}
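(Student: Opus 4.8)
The plan is to show that the maximum of $\tilde{\mathcal{B}}$ over the compact rectangle $[0,1]\times[\sqrt{q_P},1]$ is attained, and then to rule out, piece by piece, that it can sit on the boundary unless it is the corner $(0,\sqrt{q_P})$, so that any remaining maximizer lies in the open rectangle $(0,1)\times(\sqrt{q_P},1)$. Existence is immediate: in $\tilde{\mathcal{B}}(\alpha,r)=f(r\alpha)+\sqrt{2}\beta r^{2}\sqrt{1-\alpha^{2}}+g(r)$ the first two terms are bounded (by $\sup_{[-1,1]}|f|$ and by $\sqrt{2}\beta$) while $g(r)\to-\infty$ as $r\uparrow 1$, so $\tilde{\mathcal{B}}\to-\infty$ uniformly in $\alpha$; hence the supremum is attained on some compact set $[0,1]\times[\sqrt{q_P},1-\delta]$, giving a maximizer $(\hat{\alpha},\hat{r})$ with $\hat{r}<1$.

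Next I would dispose of the two vertical edges. On $\{\alpha=1\}$ with $r>0$ one has $\partial_{\alpha}\tilde{\mathcal{B}}(\alpha,r)=rf'(r\alpha)-\sqrt{2}\beta r^{2}\alpha/\sqrt{1-\alpha^{2}}\to-\infty$ as $\alpha\uparrow 1$ (here $f\in C^{1}$ keeps $f'$ bounded), so $(1,r)$ is not maximal; thus $\hat{\alpha}=1$ forces $\hat{r}=0$, which requires $q_P=0$, and then $\tilde{\mathcal{B}}(1,0)=f(0)+g(0)=\tilde{\mathcal{B}}(0,0)=\tilde{\mathcal{B}}(0,\sqrt{q_P})$, so $(0,\sqrt{q_P})$ is also a maximizer. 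On $\{\alpha=0\}$, the computation from the proof of Lemma~\ref{lem: L at alpha=0 strictly decreasing} (unchanged by adding the constant $f(0)$) gives $\partial_{r}\tilde{\mathcal{B}}(0,r)=-2r\bigl(\beta\sqrt{1-r^{2}}-\tfrac{1}{\sqrt{2(1-r^{2})}}\bigr)^{2}\le0$, with equality only at $r=\sqrt{q_P}$ (or at $r=0$ when $q_P=0$); hence $r\mapsto\tilde{\mathcal{B}}(0,r)$ is strictly decreasing on $[\sqrt{q_P},1)$, so $\tilde{\mathcal{B}}(0,\hat{r})\le\tilde{\mathcal{B}}(0,\sqrt{q_P})$ and $(0,\sqrt{q_P})$ attains the maximum.

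The remaining and delicate case is the bottom edge $\{r=\sqrt{q_P}\}$; by the above I may assume $\hat{r}=\sqrt{q_P}$ and $\hat{\alpha}\in(0,1)$. If $q_P=0$ then $\tilde{\mathcal{B}}(\cdot,0)$ is constant and $(0,0)=(0,\sqrt{q_P})$ is a maximizer. If $q_P>0$, then $\hat{r}=\sqrt{q_P}>0$ and $\hat{\alpha}$, being an interior maximizer of $\alpha\mapsto\tilde{\mathcal{B}}(\alpha,\sqrt{q_P})$, satisfies the first-order condition, which rearranges to $\hat{\alpha}f'(\sqrt{q_P}\hat{\alpha})=\sqrt{2}\beta\sqrt{q_P}\,\hat{\alpha}^{2}/\sqrt{1-\hat{\alpha}^{2}}$. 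Substituting this into $\partial_{r}\tilde{\mathcal{B}}(\hat{\alpha},\sqrt{q_P})$ and using the defining Plefka relation $\sqrt{2}\beta(1-q_P)=1$ to evaluate $g'(\sqrt{q_P})=-2\sqrt{2}\beta\sqrt{q_P}$, the $\alpha$-dependent terms collapse into a perfect square:
\[
\partial_{r}\tilde{\mathcal{B}}(\hat{\alpha},\sqrt{q_P})=\sqrt{2}\beta\sqrt{q_P}\left(\frac{\hat{\alpha}^{2}}{\sqrt{1-\hat{\alpha}^{2}}}+2\sqrt{1-\hat{\alpha}^{2}}-2\right)=\sqrt{2}\beta\sqrt{q_P}\,\frac{\bigl(1-\sqrt{1-\hat{\alpha}^{2}}\bigr)^{2}}{\sqrt{1-\hat{\alpha}^{2}}}>0 .
\]
Hence $\tilde{\mathcal{B}}(\hat{\alpha},\sqrt{q_P}+\varepsilon)>\tilde{\mathcal{B}}(\hat{\alpha},\sqrt{q_P})$ for small $\varepsilon>0$, and $(\hat{\alpha},\sqrt{q_P}+\varepsilon)$ lies in the open rectangle $(0,1)\times(\sqrt{q_P},1)$ because $\hat{\alpha}\in(0,1)$, contradicting maximality. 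Collecting the cases shows that some maximizer lies in the interior or equals $(0,\sqrt{q_P})$, which is the assertion. I expect the only point needing care to be the algebra in the last display --- plugging in the $\alpha$-criticality, using the Plefka relation for $g'(\sqrt{q_P})$, and recognizing $\tfrac{\hat{\alpha}^{2}}{\sqrt{1-\hat{\alpha}^{2}}}+2\sqrt{1-\hat{\alpha}^{2}}-2=(1-\sqrt{1-\hat{\alpha}^{2}})^{2}/\sqrt{1-\hat{\alpha}^{2}}$ --- while the rest of the argument is routine.
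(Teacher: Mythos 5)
Your proof is correct and follows essentially the same route as the paper's: dispose of the edges $\alpha=1$, $r=1$, and $\alpha=0$, then on the Plefka edge $r=\sqrt{q_P}$ plug the $\alpha$-criticality into $\partial_r\tilde{\mathcal{B}}$ and use $\sqrt{2}\beta(1-q_P)=1$ to see the $r$-derivative is strictly positive for $\hat{\alpha}\in(0,1)$. Your factorization $\tfrac{\hat{\alpha}^2}{\sqrt{1-\hat{\alpha}^2}}+2\sqrt{1-\hat{\alpha}^2}-2=(1-\sqrt{1-\hat{\alpha}^2})^2/\sqrt{1-\hat{\alpha}^2}$ makes the positivity manifest, which is a small but pleasant sharpening over the paper's phrasing.
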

\begin{proof}
    Note that we have $\tilde{\mathcal{B}}(\alpha,1) = -\infty$ and
    \begin{equation}\label{eq: alpha = 1 never maximizing}
        \frac{\partial}{\partial\alpha}\tilde{\mathcal{B}}(\alpha,r)
        = 
        r f'(r \alpha) - \frac{\sqrt{2}\beta r^2}{\sqrt{1-\alpha^2}}
        \longrightarrow
        -\infty 
        \quad \text{ as } \alpha\to 1,
    \end{equation}
    so $(\alpha,r)$ with $r=1$ or $\alpha=1$ can not be maximizers. Lemma \ref{lem: L at alpha=0 strictly decreasing} shows the only possible maximizer with $r\in\left[\sqrt{q_{P}},1\right],\alpha=0$
is $\left(\sqrt{q_{P}},0\right)$. If $\beta\le\frac{1}{\sqrt{2}}$
then $q_{P}=0$, and $\tilde{\mathcal{B}}\left(\alpha,0\right)=f\left(0\right)+g\left(0\right)$
for all $\alpha$, so if a point on the remaining boundary $r=\sqrt{q_{P}},\alpha\in\left[0,1\right]$
is a maximizer then so is $\left(\sqrt{q_{P}},0\right)$.

Lastly if $\beta>\frac{1}{\sqrt{2}}$ then any critical point of
    $$
        \tilde{\mathcal{B}}\left(\alpha, \sqrt{q_P} \right)
        =
        f(\sqrt{q_P} \alpha) + \sqrt{2}\beta q_P \sqrt{1-\alpha^2} + g(\sqrt{q_P})
    $$
    is a solution of
    \begin{equation}\label{eq: critical alpha on plefka border}
       \sqrt{q_P} f'(\sqrt{q_P} \alpha) - \sqrt{2} \beta q_P \frac{\alpha}{\sqrt{1-\alpha^2}} = 0
        \quad\Leftrightarrow\quad 
        f'(\sqrt{q_P} \alpha)  = \frac{\sqrt{2} \beta\sqrt{q_P} \alpha}{\sqrt{1-\alpha^2}}.
    \end{equation}
    However, in any such point the derivative of $\tilde{\mathcal{B}}$ in $r$ is
    \begin{equation}
       \begin{array}{rcl}
        \alpha f'(\sqrt{q_P} \alpha) + 2\sqrt{2}\beta\sqrt{q_P} \sqrt{1-\alpha^2} + g'(\sqrt{q_P})
        &\stackrel{\eqref{eq: critical alpha on plefka border}}{=}&
        \sqrt{2} \beta\sqrt{q_P}
        \left( 
            \frac{ \alpha^2}{\sqrt{1-\alpha^2}}
            +
            2 \sqrt{1-\alpha^2}
        \right) 
            -2\sqrt{2}\beta\sqrt{q_P}
        \\
        & = &
        \sqrt{2}\beta\sqrt{q_P}
        \left( \frac{2-\alpha^2}{\sqrt{1-\alpha^2}}
        - 2
        \right),
        \end{array}
    \end{equation}
    which is equal to zero for $\alpha = 0$ and positive for all $\alpha \in (0,1)$. Therefore, if some $\alpha > 0$ maximizes $\tilde{\mathcal{B}}\left(\alpha,\sqrt{q_P}\right)$ then there are larger values in the neighborhood of that point, and thus $(\sqrt{q_P} , \alpha)$ cannot be a global maximizer.
\end{proof}

Define
\begin{equation}
    h_c(k,\beta) =  
        \begin{cases}
            0  & \text{ for } k = 1,
            \\
            \min\{\tfrac{1}{2}, \tfrac{\beta}{\sqrt{2}}\}  & \text{ for } k = 2,
            \\
            \mathcal{W}(k,\beta)  & \text{ for } k \ge 3,
        \end{cases}
\end{equation}
where
\begin{equation}
    \mathcal{W}(k,\beta)
    =
    \inf_{r \in \rm{Plef}(\beta)}
    \left\{
    \tfrac{\mathcal{F}(\beta)-g(r)-2\beta^2 r^2(1-r^2)}{\left(r \sqrt{1 - 2\beta^2(1-r^2)^2} \right)^k}
    \right\}
    .
\end{equation}
We now show that if $h>h_c(k,\beta)$ for $k\ge 3$ then there is a unique maximizer in the interior $(q_P,1)\times(0,1)$, while for $h < h_c(k,\beta)$ the point $(\sqrt{q_P},0)$ is the unique maximizer.

\begin{lemma}[TAP maximizer with degree $k\ge3$ spike]\label{lem: k>=3 main lemma}
    Let $k \ge 3, \beta > 0,h>0$ and $f(x) = h x^k$. It holds that
    \begin{equation}\label{eq: k=3 sup}
        \sup_{r\in{\rm Plef}\left(\beta\right),\alpha\in\left[-1,1\right]}\tilde{\mathcal{B}}(\alpha,r)=\sup_{r\in{\rm Plef}\left(\beta\right)}\left\{ hr^{k}\left(1-2\beta^{2}\left(1-r^{2}\right)^{2}\right)^{\frac{k}{2}}+2\beta^{2}r^{2}\left(1-r^{2}\right)+g\left(r\right)\right\} .
    \end{equation}
If $h<h_{c}\left(k,\beta\right)$ then the unique maximizer
of the l.h.s. is $(\sqrt{q_{P}},0)$ and the l.h.s. equals $\mathcal{F}(\beta)$,
and if $h>h_{c}\left(k,\beta\right)$ it the unqiue maximizer is $(\hat{r},\sqrt{1-2\beta^{2}\left(1-\hat{r}^{2}\right)^{2}})$
where $\hat{r}$ is the largest of the two solutions of
    \begin{equation}\label{eq: r equation}
        (1-r^2) \left(r^2\left(1 - 2 \beta^2 (1-r^2)^2\right)\right)^{\frac{k-2}{2}}
        = \frac{1}{h k}
    \end{equation}
    in $(\sqrt{q_P},1)$.
\end{lemma}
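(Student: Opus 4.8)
The plan is to first establish the reduction \eqref{eq: k=3 sup} and then analyse the resulting one-dimensional problem via the sign of a derivative. Write $\alpha^{*}(r)=\sqrt{1-2\beta^{2}(1-r^{2})^{2}}$ and $\Phi(r)=\tilde{\mathcal B}(\alpha^{*}(r),r)$; since $\sqrt{1-\alpha^{*}(r)^{2}}=\sqrt{2}\beta(1-r^{2})$ and $f(x)=hx^{k}$, this $\Phi(r)$ is exactly the right-hand side of \eqref{eq: k=3 sup}. For $r\in{\rm Plef}(\beta)$ one has $2\beta^{2}(1-r^{2})^{2}\le1$, so $\alpha^{*}(r)\in[0,1]$ and hence $\sup_{r\in{\rm Plef}(\beta)}\Phi(r)\le\sup_{r,\alpha}\tilde{\mathcal B}(\alpha,r)$. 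For the reverse inequality I would reduce to $\alpha\ge0$ (by the symmetry $\alpha\mapsto-\alpha$ if $k$ is even, and because $f(r\alpha)<0$ for $\alpha<0$ if $k$ is odd) and then invoke Lemma \ref{lem: L max interior or corner}: the maximum is attained either at $(0,\sqrt{q_{P}})$, where the value is $\mathcal F(\beta)=\Phi(\sqrt{q_{P}})$ because $\alpha^{*}(\sqrt{q_{P}})=0$, or at an interior critical point $(\alpha,r)$ with $\alpha\in(0,1)$ and $r\in(\sqrt{q_{P}},1)$. At such a point both partials of $\tilde{\mathcal B}$ vanish; eliminating $hkr^{k-1}\alpha^{k}$ between $\partial_{\alpha}\tilde{\mathcal B}=0$ and $\partial_{r}\tilde{\mathcal B}=0$ and using $g'(r)=-\tfrac{r}{1-r^{2}}-2\beta^{2}r(1-r^{2})$ yields $v(\sqrt{1-\alpha^{2}})=v(\sqrt{2}\beta(1-r^{2}))$ for $v(t)=t+t^{-1}$; both arguments lie in $(0,1)$, where $v$ is injective, so $\alpha=\alpha^{*}(r)$ and the value is $\Phi(r)$. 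This proves \eqref{eq: k=3 sup} and also shows the interior critical points of $\tilde{\mathcal B}$ are exactly the $(\alpha^{*}(r),r)$ with $r$ a critical point of $\Phi$ in $(\sqrt{q_{P}},1)$.

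Next I would differentiate $\Phi$ and factor the result as
\[
  \Phi'(r)=\frac{r\bigl(1-2\beta^{2}(1-r^{2})(1-3r^{2})\bigr)}{1-r^{2}}\Bigl(hk\,(1-r^{2})\bigl(r^{2}(1-2\beta^{2}(1-r^{2})^{2})\bigr)^{(k-2)/2}-1\Bigr),
\]
so that $\Phi'(r)=0$ on $(\sqrt{q_{P}},1)$ is precisely equation \eqref{eq: r equation}. Writing $s=1-r^{2}$, the first factor has the sign of $r\,q(s)$ with $q(s)=1+4\beta^{2}s-6\beta^{2}s^{2}$, which is concave with $q(0)=1>0$ and $q(1-q_{P})\ge0$ (equal to $2(\sqrt{2}\beta-1)$ if $q_{P}>0$ and to $1-2\beta^{2}$ if $q_{P}=0$), hence strictly positive on $[0,1-q_{P})$; thus on $(\sqrt{q_{P}},1)$ the prefactor is strictly positive and $\operatorname{sign}\Phi'(r)=\operatorname{sign}(hk\,G(r)-1)$, where $G(r)=(1-r^{2})\bigl(r^{2}(1-2\beta^{2}(1-r^{2})^{2})\bigr)^{(k-2)/2}$ is continuous on $[\sqrt{q_{P}},1]$, vanishes at both endpoints, and is positive inside.

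The crux is to show that $G$ is unimodal on $[\sqrt{q_{P}},1]$, i.e.\ strictly increasing and then strictly decreasing. I would substitute $x=1-r^{2}$: then $G(r)^{2/(k-2)}=H(x):=x^{2/(k-2)}(1-x)(1-2\beta^{2}x^{2})$ for $x\in[0,\min\{1,(\sqrt{2}\beta)^{-1}\}]$, on which $1-2\beta^{2}x^{2}>0$, and
\[
  \frac{H'(x)}{H(x)}=\frac{2/(k-2)}{x}-\frac{1}{1-x}-\frac{4\beta^{2}x}{1-2\beta^{2}x^{2}}
\]
is the difference of a strictly decreasing and a strictly increasing function of $x$, both continuous on the interval, with the first blowing up at $x=0$ and the second at the right endpoint; hence they cross exactly once, $H'/H$ changes sign exactly once (from $+$ to $-$), so $H$, and therefore $G$, is unimodal. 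This unimodality is the main obstacle, and the substitution $x=1-r^{2}$ turning it into a monotone-crossing statement is what makes it tractable.

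Finally I would combine everything. One checks $\mathcal F(\beta)=\Phi(\sqrt{q_{P}})$ and the identity $\Phi(r)-\mathcal F(\beta)=(r\alpha^{*}(r))^{k}\bigl(h-\psi(r)\bigr)$ for $r\in(\sqrt{q_{P}},1)$, where $\psi(r)=\bigl(\mathcal F(\beta)-g(r)-2\beta^{2}r^{2}(1-r^{2})\bigr)/(r\alpha^{*}(r))^{k}$, so that $\inf_{r}\psi(r)=\mathcal W(k,\beta)=h_{c}(k,\beta)$ and $\Phi(r)>\mathcal F(\beta)\iff h>\psi(r)$. If $h<h_{c}$ then $\Phi(r)<\mathcal F(\beta)$ for all $r\in(\sqrt{q_{P}},1)$, so by the first paragraph and Lemma \ref{lem: L max interior or corner} the unique maximizer of $\tilde{\mathcal B}$ is $(0,\sqrt{q_{P}})$ with value $\mathcal F(\beta)$ (with the same caveat as in Lemma \ref{lem: k=2 maximum on ball} when $q_{P}=0$, where the whole segment $r=0$ is maximizing). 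If $h>h_{c}$ then $\Phi(r)>\mathcal F(\beta)=\Phi(\sqrt{q_{P}})$ for some $r$, so $\Phi$ increases somewhere and $\max G>\tfrac1{hk}$; by unimodality $G(r)=\tfrac1{hk}$ has exactly two solutions $r_{1}<r_{2}=:\hat r$, and $\Phi$ is strictly decreasing on $(\sqrt{q_{P}},r_{1})$, strictly increasing on $(r_{1},\hat r)$, strictly decreasing on $(\hat r,1)$, with $\Phi(r)\to-\infty$ as $r\to1$; since $\Phi(\sqrt{q_{P}})=\mathcal F(\beta)$ and $\Phi(r_{1})<\mathcal F(\beta)$ by continuity, the global maximum of $\Phi$ on $[\sqrt{q_{P}},1]$ is $\max\{\mathcal F(\beta),\Phi(\hat r)\}$, and because $\hat r$ carries the only interior local maximum, $\Phi(\hat r)>\mathcal F(\beta)$ is equivalent to ``$\Phi(r)>\mathcal F(\beta)$ for some $r$'', hence to $h>h_{c}$, which holds; so $\Phi$ is maximized uniquely at $\hat r$, and translating back via \eqref{eq: k=3 sup} and Lemma \ref{lem: L max interior or corner} gives that $\tilde{\mathcal B}$ is maximized uniquely at $(\hat r,\alpha^{*}(\hat r))$ (together with its reflection when $k$ is even). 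Besides the unimodality of $G$, the remaining care is in the case distinctions $q_{P}>0$ versus $q_{P}=0$ and $k$ even versus odd.
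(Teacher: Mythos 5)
Your proof is correct and follows essentially the same route as the paper: reduce to interior critical points or the corner $(\sqrt{q_P},0)$ via Lemma \ref{lem: L max interior or corner}, eliminate $\alpha$ from the critical-point equations to obtain \eqref{eq: k=3 sup}, and then count the solutions of \eqref{eq: r equation} by showing the relevant one-variable function is unimodal. Your explicit factorization of the one-dimensional derivative with a manifestly positive prefactor, together with the substitution $x=1-r^2$ turning unimodality into a monotone-crossing statement, streamlines the paper's version (which separately verifies that the expression decreases near $\sqrt{q_P}$ by a Taylor-expansion case analysis on $\beta$ and establishes unimodality via a less transparent sign argument for the log-derivative), but the overall structure is identical.
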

\begin{proof}
    By Lemma \ref{lem: L max interior or corner} the maximizer of the l.h.s. of \eqref{eq: k=3 sup} is either $(\sqrt{q}_P,0)$ or a critical point of $\tilde{\mathcal{B}}$  in $(1,\sqrt{q}_P)\times(0,1)$. The critical point equations are
    \begin{align}
        0 \ = & \ h k \alpha^k r^{k-1} + 2\sqrt{2} \beta r \sqrt{1-\alpha^2} + g'(r)
        \numberthis\label{eq: k>=3 equation 1}
        \\
        0 \ = & \ h k \alpha^{k-1} r^{k} - \sqrt{2}\beta r^2 \tfrac{\alpha}{\sqrt{1-\alpha^2}}
        \numberthis\label{eq: k>=3 equation 2}.
    \end{align}
    Any solution to \eqref{eq: k>=3 equation 2} must satisfy $ h k \alpha^k r^{k-1} = r\sqrt{2}\beta r^2 \frac{\alpha^2}{1-\alpha^2}$, and plugging this into \eqref{eq: k>=3 equation 1} we get that any critical point must satisfy
    \begin{equation}\label{eq: k>=3 equation 1 transformed}
        \frac{\alpha^2}{\sqrt{1-\alpha^2}} + 2 \sqrt{1-\alpha^2} = c(r), 
    \end{equation}
    where
    $$ c(r)= \frac{g'(r)}{\sqrt{2}\beta r}=\frac{1}{\sqrt{2}\beta(1-r^2)} + \sqrt{2}\beta(1-r^2).$$ 
    The quadratic \eqref{eq: k>=3 equation 1 transformed} in $\alpha^2$ has the solutions $\frac{ -(c(r)^2-4) \pm c(r) \sqrt{c(r)^2-4}}{2}$ which are well-defined since $c(r)>2$ for $r>\sqrt{q}_P$. Since only one is non-negative and using $\sqrt{(x+x^{-1})^2-4}=x^{-1}-x$ for $x\in(0,1)$ we obtain that any critical point must satisfy
    \begin{equation}\label{eq: k>=3 alpha solution}
        \alpha^2 = \frac{c(r) \sqrt{c(r)^2-4}  - (c(r)^2-4)}{2} 
        = {1-2\beta^2(1-r^2)^2}.
    \end{equation}
    The r.h.s. lies in $[0,1]$ for all $\beta > 0$ and $r\in\rm{Plef}(\beta)$.
    Thus
    \begin{equation}\label{eq: k>=3 first result}
         \sup_{r\in{\rm Plef}\left(\beta\right),\alpha\in\left[-1,1\right]}\tilde{\mathcal{B}}(\alpha,r) = \sup_{r \in {\rm Plef}(\beta)} \tilde{\mathcal{B}}(\sqrt{1-2\beta^2(1-r^2)^2},r),
     \end{equation}
    noting that when $r$ is the left-end point $\sqrt{q_P}$ of $\rm{Plef}(\beta)$ the r.h.s. is $\tilde{\mathcal{B}}(0,\sqrt{q}_P)$. The r.h.s. of \eqref{eq: k>=3 first result} equals the r.h.s. of \eqref{eq: k=3 sup}, so \eqref{eq: k=3 sup} is proved.
    
    Next note that
    \begin{equation}\label{eq: W(k,beta) for beta small}
    \begin{array}{rcl}
        &  & \
        \exists r \in (0,1):
        \tilde{\mathcal{B}}(\sqrt{1-2\beta^2(1-r^2)^2},r) 
        > \tilde{\mathcal{B}}(0,\sqrt{q_P}) = \mathcal{F}(\beta)
        \\
        & \Leftrightarrow & \
        \exists r \in (0,1) :
        h  > 
        \tfrac{\mathcal{F}(\beta) - 2\beta^2 r^2 (1-r^2) - g(r)}{\left(r^2 (1-2\beta^2(1-r^2)^2)\right)^{\frac{k}{2}}}
        \\
        & \Leftrightarrow & \
        h  >  \mathcal{W}(k,\beta).
    \end{array}
    \end{equation}
    Thus indeed for $h < h_c(k,\beta)$ the unique maximizer is $(\sqrt{q}_P,0)$. When $h > h_c(k,\beta)$ the maximizer is a critical point $(\hat{r}, \sqrt{1-2\beta^2(1-\hat{r}^2)^2})$ in the interior $(\sqrt{q}_P,1)\times(0,1)$. It remains to characterize this point and prove its uniqueness.
    
    Firstly, plugging \eqref{eq: k>=3 alpha solution} into \eqref{eq: k>=3 equation 2} one sees that any critical point $(\alpha,r)$ of $\tilde{\mathcal{B}}$ and critical point of the expression on the r.h.s. of \eqref{eq: k=3 sup} with $r\in(\sqrt{q}_P,1)$ must satisfy \eqref{eq: r equation}. When $h>h_c(k,\beta)$ there is a local and global maximum, so the equation must have at least one solution.
    Let
    \begin{equation}\label{def: T(q)}
        T(q) =   (1-q) \left(q (1-2\beta^2(1-q)^2)\right)^{\frac{k-2}{2}},
    \end{equation}
    so that the l.h.s. of \eqref{eq: r equation} is $T(r^2)$. Note that $T(q)$ is non-negative for all $q\in (q_P,1)$ and zero for $q \in \{q_P,1\}$. Furthermore
    \[
    \begin{array}{ccl}
    \frac{\partial}{\partial q}\log T\left(q\right) & = &\displaystyle{ -\frac{1}{1-q}+\frac{k-2}{2}\frac{1}{q}-\left(k-2\right)\frac{2\beta^{2}\left(1-q\right)}{1-2\beta^{2}\left(1-q\right)^{2}}}\\
     & = & \displaystyle{\frac{k-2-kq-\left(k-2\right)q\frac{2\beta^{2}\left(1-q\right)^{2}}{1-2\beta^{2}\left(1-q\right)^{2}}}{2q\left(1-q\right)}}\\
     & = & \displaystyle{\frac{k-2-q\left\{ k-\left(k-2\right)\left(\frac{1}{1-2\beta^{2}\left(1-q\right)^{2}}-1\right)\right\} }{2q\left(1-q\right)}}.
    \end{array}
    \]
    Since $\frac{1}{1-2\beta^{2}\left(1-q\right)^{2}}-1$ is negative
    and decreasing in $(q_{P},1)$, we have that the numerator is decreasing.
    Therefore $\frac{\partial}{\partial q}\log T\left(q\right)$ can switch sign only
    once in $\left(q_{P},1\right)$, showing that $T\left(q\right)$ has
    exactly one critical point in $\left(q_{P},1\right)$, so the equation
    \eqref{eq: r equation} has zero, one or two solutions. We have already excluded the possibility of it having zero solutions. Thus the expression on the r.h.s. of \eqref{eq: k=3 sup} has one or two critical points, of which at least one is a local maximum.
    
    To determine the number and type of the critical point(s) it is useful to note that the expression on the r.h.s. of \eqref{eq: k=3 sup} is always decreasing in $r$ in a neighborhood of $q_{P}$. Indeed when $\beta<\frac{1}{\sqrt{2}}$ so that $q_{P}=0$ this
    follows by expanding the expression around $r=0$ as $\frac{\beta^{2}}{2}+(\beta^{2}-\frac{1}{2})r^{2}+O(r^{3})$.
    When $\beta=\frac{1}{\sqrt{2}}$ similarly the expression expands as $\frac{\beta^{2}}{2}-r^{4}+O(r^{5})$.
    When $\beta>\frac{1}{\sqrt{2}}$ we can make the change of variables
    $1-2\beta^{2}\left(1-r^{2}\right)=z$ and expand the expression around $z=0$
    as $\mathcal{F}(\beta)+\frac{1}{2}(1-\sqrt{2}\beta)z+O(z^{2})$,
    which is decreasing in $z$ in neighborhood of $0$ and therefore
    decreasing in $r$ in a neighborhood of $\sqrt{q_{P}}$.
    
    Thus since the expression is decreasing in a neighbourhood of $r=\sqrt{q_{P}}$ the left-most critical point cannot be a local
    maximum. Thus there are two critical points and \eqref{eq: r equation} has two solutions, the
    smaller which corresponds to a local minimum, and the larger of which
    corresponds to a local maximum which is also the global maximum.
\end{proof}

\bigskip

\section{Fluctuations}\label{section: subleading order fluctuations}

In this section we prove Theorem \ref{thm: sphere} (b) and Theorem \ref{thm: ball} (b) about the  fluctuations of $L_{N}$ resp. $\tilde{L}_{N}$. We do so by studying the fluctuations of minimax expressions of the type
$$\sup_y \inf_l h(y,l,s_{\lambda,u}(l)).$$
The next lemma  shows that under the assumptions of Theorem \ref{thm: sphere} (b) and Theorem \ref{thm: ball} (b) the quantities $L_{N}$ and $\tilde{L}_{N}$ equal such minimax expressions with probability tending to one. Recall $\mathcal{B}(\alpha)$ and $\mathcal{B}(\alpha, r)$ from \eqref{def: B(alpha)} and \eqref{def: tilde B_r}.

\begin{lemma}\label{lem: L_N in terms of h}
    (a) If $\mathcal{B}(\alpha)$ has finitely many global maximizers $\hat{\alpha}_i$, $i=1,...,m$ which are all non-zero then for all $\varepsilon>0$ small enough 
    \begin{equation}\label{eq: high probability equality L_N}
        \lim_{N\to\infty}\mathbb{P}\left(\frac{1}{N}L_{N}=
        \max_{i=1,\ldots,m}\sup_{\alpha\in\left[\hat{\alpha}_i-\varepsilon,\hat{\alpha}_i+\varepsilon\right]}\inf_{l\in[\sqrt{2}+\varepsilon,\varepsilon^{-1}]}h(\alpha,l,s_{\lambda,u}(l))\right)=1,
    \end{equation}
    where
    \[          
        h(\alpha,l,g)=f(\alpha)+\beta\left(l-\frac{\alpha^{2}}{g}\right).
    \]
    
    (b) If $\tilde{\mathcal{B}}(\alpha,r)$ has finitely many global maximizers $(\hat{\alpha}_i,\hat{r}_i)$, $i=1,...,m$, all lying in the interior $[-1,1]\times\mathcal{R}$ with $\hat{\alpha}_i ,\hat{r}_i \neq 0$, then for all $\varepsilon>0$ small enough
    \begin{equation}\label{eq: high probability equality tilde L_N}
        \lim_{N\to\infty}\mathbb{P}\left(
        \frac{1}{N}\tilde{L}_{N}=
        \max_{i=1,...,n}\sup_{\alpha\in\left[\hat{\alpha}_i-\varepsilon,\hat{\alpha}_i+\varepsilon\right],r\in[\hat{r}_i-\varepsilon,\hat{r}_i+\varepsilon]}\inf_{l\in[\sqrt{2}+\varepsilon,\varepsilon^{-1}]}h((\alpha,r),l,s_{\lambda,u}(l))
    \right)=1,
    \end{equation}
    where
    \[
        h((\alpha,r),l,g)=f(\alpha r)+g(r)+\beta r^{2}\left(l-\frac{\alpha^{2}}{g}\right).
    \]
\end{lemma}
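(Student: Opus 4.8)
The plan is to run essentially the same argument for both parts: start from the exact identities \eqref{eq: fix overlap} and \eqref{eq: fix overlap ball}, use the leading-order Proposition \ref{prop:max_slice_term} to localize the outer supremum to small neighborhoods of the global maximizers of $\mathcal{B}$ (resp.\ $\tilde{\mathcal{B}}$), and on those neighborhoods rewrite the inner slice-optimization via the deterministic Lemma \ref{lem: simplify} and then confine $l$ to a compact interval via Lemma \ref{lem: l minimizer epsilon distance}.

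For part (a), write $\Phi_N(\alpha) = f(\alpha) + \beta \sup_{|\sigma|=1,\,\sigma\cdot u=\alpha}\sum_i\lambda_i\sigma_i^2$, so that $\frac{1}{N}L_N = \sup_{\alpha\in[-1,1]}\Phi_N(\alpha)$ by \eqref{eq: fix overlap}. First I would fix $\delta\in(0,1)$ with $\delta<\min_i|\hat{\alpha}_i|$ and then choose $\varepsilon>0$ so small that the intervals $[\hat{\alpha}_i-\varepsilon,\hat{\alpha}_i+\varepsilon]$ are pairwise disjoint, contained in $\{\alpha:\delta\le|\alpha|\le 1-\delta\}$, and with $\varepsilon$ at most the constant that Lemma \ref{lem: l minimizer epsilon distance} associates to $\delta$; since $\mathcal{B}$ is continuous on the compact interval and the $\hat{\alpha}_i$ are its only global maximizers, the complement $C_\varepsilon=[-1,1]\setminus\bigcup_i(\hat{\alpha}_i-\varepsilon,\hat{\alpha}_i+\varepsilon)$ then satisfies $\eta:=\max_{[-1,1]}\mathcal{B}-\max_{C_\varepsilon}\mathcal{B}>0$. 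By Proposition \ref{prop:max_slice_term}, $\sup_\alpha|\Phi_N(\alpha)-\mathcal{B}(\alpha)|\to 0$ in probability, so with probability tending to one this supremum is below $\eta/3$; on that event $\Phi_N<\max\mathcal{B}-2\eta/3$ on $C_\varepsilon$ while $\Phi_N(\hat{\alpha}_1)>\max\mathcal{B}-\eta/3$, so the outer supremum is attained in $\bigcup_i[\hat{\alpha}_i-\varepsilon,\hat{\alpha}_i+\varepsilon]$. Next I would rewrite $\Phi_N$ on those neighborhoods: since $u$ is uniform on the sphere, $u_N\to 0$ in probability and $\lambda_N\to\sqrt2$, so with probability tending to one $|u_N|<\delta$ and $\lambda_N<\sqrt2+\varepsilon$ simultaneously, and on that event Lemma \ref{lem: simplify} gives, for every $\alpha$ with $\delta\le|\alpha|\le1-\delta$, that $\sup_{\sigma\cdot u=\alpha}\sum_i\lambda_i\sigma_i^2=\inf_{l>\lambda_N}\{l-\alpha^2/s_{\lambda,u}(l)\}$ and hence $\Phi_N(\alpha)=\inf_{l>\lambda_N}h(\alpha,l,s_{\lambda,u}(l))$; intersecting further with the event of Lemma \ref{lem: l minimizer epsilon distance}, which together with $\lambda_N<\sqrt2+\varepsilon$ forces the unique minimizer in $l$ to lie in $[\sqrt2+\varepsilon,\varepsilon^{-1}]$, this infimum equals $\inf_{l\in[\sqrt2+\varepsilon,\varepsilon^{-1}]}h(\alpha,l,s_{\lambda,u}(l))$. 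Taking the supremum over $\alpha$ in each $[\hat{\alpha}_i-\varepsilon,\hat{\alpha}_i+\varepsilon]$ and combining with the localization of the outer supremum yields \eqref{eq: high probability equality L_N}.

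Part (b) is entirely analogous, starting from $\frac{1}{N}\tilde{L}_N=\sup_{r\in\mathcal{R},\,\alpha\in[-1,1]}\tilde{\Phi}_N(\alpha,r)$ with $\tilde{\Phi}_N(\alpha,r)=f(\alpha r)+g(r)+\beta r^2\sup_{\sigma\cdot u=\alpha}\sum_i\lambda_i\sigma_i^2$; Proposition \ref{prop:max_slice_term} still gives $\sup_{\alpha,r}|\tilde{\Phi}_N(\alpha,r)-\tilde{\mathcal{B}}(\alpha,r)|\to 0$ in probability because $r^2\le 1$. One localizes to the product neighborhoods $[\hat{\alpha}_i-\varepsilon,\hat{\alpha}_i+\varepsilon]\times[\hat{r}_i-\varepsilon,\hat{r}_i+\varepsilon]$ of the finitely many interior, coordinatewise-nonzero global maximizers exactly as above, choosing $\varepsilon$ also so small that $\hat{r}_i-\varepsilon>0$ and the boxes lie inside the interior of $[-1,1]\times\mathcal{R}$. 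For $\alpha$ with $\delta\le|\alpha|\le1-\delta$ and $r>0$, Lemma \ref{lem: simplify} together with $\beta r^2>0$ gives $\tilde{\Phi}_N(\alpha,r)=\inf_{l>\lambda_N}h((\alpha,r),l,s_{\lambda,u}(l))$, the factor $\beta r^2$ and the $r$-dependent additive terms passing through the infimum; since the $l$-optimization does not involve $r$, Lemma \ref{lem: l minimizer epsilon distance} again confines $l$ to $[\sqrt2+\varepsilon,\varepsilon^{-1}]$, giving \eqref{eq: high probability equality tilde L_N}.

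The hard part here is not analytic but bookkeeping: fixing $\delta$, then $\varepsilon$, then $\eta$ in a consistent order, and checking that the several high-probability events (from Proposition \ref{prop:max_slice_term}, Lemma \ref{lem: l minimizer epsilon distance}, $u_N\to 0$, $\lambda_N\to\sqrt2$) can be intersected so that the claimed identity holds for \emph{all} $\alpha$ (and $r$) in the neighborhoods at once. One should also note that the possibility $|\hat{\alpha}_i|=1$ in part (a) is vacuous once $f\in C^1$, since then $\mathcal{B}'(\alpha)\to\mp\infty$ as $\alpha\to\pm1$ so $\pm1$ cannot be maximizers, and in part (b) it is excluded outright by the interiority hypothesis.
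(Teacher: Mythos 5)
Your proof is correct and follows essentially the same route as the paper's: decompose via \eqref{eq: fix overlap} / \eqref{eq: fix overlap ball}, localize the outer supremum to $\varepsilon$-neighborhoods of the maximizers using Proposition \ref{prop:max_slice_term}, rewrite the inner slice-optimization via Lemma \ref{lem: simplify} on the event $|u_N|<\delta$, and confine $l$ to $[\sqrt{2}+\varepsilon,\varepsilon^{-1}]$ via Lemma \ref{lem: l minimizer epsilon distance}. If anything you are somewhat more explicit than the paper about intersecting all the high-probability events uniformly over $\alpha$ (and $r$) in the neighborhoods, and about why $\pm1$ cannot be maximizers; the paper's proof glosses over some of this bookkeeping but is clearly intended in the same spirit.
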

\begin{proof}
    By \eqref{eq: fix overlap} we have
    \begin{equation*}
        \frac{1}{N}L_N
        =
        \sup_{\alpha \in [-1,1]} \left\{
        f(\alpha) + \beta
        \sup_{\substack{|\sigma| = 1 \\ \sigma\cdot u = \alpha}} \sum_{i=1}^N \lambda_i \sigma_i^2
        \right\}
    \end{equation*}
    and by Proposition \ref{prop:max_slice_term}
    \begin{equation*}
        f(\alpha) + \beta
        \sup_{\substack{|\sigma| = 1 \\ \sigma\cdot u = \alpha}} \sum_{i=1}^N \lambda_i \sigma_i^2
        =
        \mathcal{B}(\alpha) + o_{\mathbb{P}}(1)
    \end{equation*}
    for all $\alpha\in[-1,1]$ uniformly, so for any $\varepsilon>0$ a global maximizer $\alpha^*$ of the l.h.s. must lie in a $\varepsilon$-neighborhood of one of the $\hat{\alpha}_i\neq 0$ with probability tending to $1$. 
    Thus by Lemma~\ref{lem: simplify}
    \begin{equation*}
        \frac{1}{N}L_N = \max_{i=1,...,m}\sup_{\alpha\in\left[\hat{\alpha}_i-\varepsilon,\hat{\alpha}_i+\varepsilon\right]}\inf_{l>\lambda_{N}}\left\{ l-\frac{\alpha^{2}}{  \GN{\lambda}{l} } \right\},
    \end{equation*}
    with probability tending to one. Since $f\in C^1([-1,1])$ and the derivative of $\sqrt{2(1-\alpha^2)}$  diverges for $\alpha^2\to 1$, neither $1$ nor $-1$ can be a maximizer, so the $\alpha^*_i$ are bounded away from $\pm1$ with probability tending to one. By Lemma \ref{lem: l minimizer epsilon distance} the minimizer in $l$ of $h(\hat{\alpha}_i,l,s_{\lambda,u}(l))$ must lie in $[\sqrt{2}+\varepsilon,\varepsilon^{-1}]$ with probability tending to one for each $i$, after possibly decreasing $\varepsilon$, proving (a).
    
    The claim (b) follows similarly using \eqref{eq: fix overlap ball} and Proposition \ref{prop:max_slice_term}.
\end{proof}

\subsection{General minimax optimization involving \texorpdfstring{$s_{\lambda,u}$}{TEXT}}

In the rest of the section we will study the fluctuations of $\inf_{y\in\mathcal{Y}}\sup_{l\in\mathcal{L}}h(y,l,s_{\lambda,u}(l))$
under the assumptions that 
\begin{equation}\label{eq:YLcompact}
    \mathcal{Y}\subset\mathbb{R}^{n},\mathcal{L}\subset(\sqrt{2},\infty),\mathcal{G}\text{ are compact with } s(\mathcal{L}) \subset \mathcal{G}^{o}
\end{equation}
(where $A^{o}$ denotes the interior of a set $A$)
\begin{equation}\label{eq:hdiff}
    h:\mathcal{Y}\times\mathcal{L}\times\mathcal{G}\to\mathbb{R}\text{ is three times continuously differentiable,}
\end{equation}
\begin{equation}\label{eq:y_unique_max}
    y\to \mathcal{B}(y)\text{ is uniquely maximized at a }\hat{y}\in\mathcal{Y}^{o}\text{, where }\mathcal{B}(y) = \inf_{l\in\mathcal{L}}h(y,l,s(l))\},
\end{equation}
\begin{equation}\label{eq:l_unique_max}
    l\to h(\hat{y},l,s(l))\text{ is uniquely minimized at a }\hat{l}\in\mathcal{L}^{o},
\end{equation}
\begin{equation}\label{eq:hllpos}
    \partial_{ll}h(\hat{y},l,s(l))|_{l=\hat{l}}>0,
\end{equation}
\begin{equation}\label{eq:Hessynegdef}
    \nabla^2 \mathcal{B}(\hat{y}) \text{ is negative definite.}
\end{equation}

The existence of the derivatives in \eqref{eq:Hessynegdef} is guaranteed by the formula \eqref{eq: y limit} for the specific $h$ from Lemma \ref{lem: L_N in terms of h} (a) (b). It also follows from the other assumptions by the implicit function theorem. The latter argument is included in the following two lemmas, which will be needed also later.
\begin{lemma}\label{lem:use_imp_func_thm}
    Let $n\ge1,A\subset\mathbb{R}^{n}$,
    $\eta>0$ and $t:A\times[-\eta,\eta]\to\mathbb{R}$ be twice continuously
    differentiable. If $\partial_{bb}t(a,b)>0$ for all $a\in A,b\in[-\eta,\eta]$, 
    and $\partial_{b}t(a,-\eta)<0,\partial_{b}t(a,\eta)>0,$ for all $a\in A$
    then, ${\rm argmin}_{b\in [-\eta,\eta]}t(a,b)$ is unique for all $a\in A$
    and $b^{*}(a)={\rm argmin}_{b \in [-\eta,\eta]}t(a,b)$ is continuously
    differentiable in $A$ with
    \begin{equation}\label{eq: gradient of minimizer}
        \nabla b^{*}(a)=-\frac{\partial_{b}\nabla_{a}t(a,b)}{\partial_{bb}t(a,b)}|_{b=b^*(a)}
    \end{equation}
    for all $a\in A$.
    Furthermore for all $a\in A$
    \begin{equation}\label{eq: gradient of t}
        \nabla_{a}\left\{ t(a,b^{*}(a))\right\} =\left\{ \nabla_{a}t\right\} (a,b^{*}(a)),
    \end{equation}
    and
    \begin{equation}\label{eq: hessian of t}
        \nabla_{a}^{2}\left\{ t(a,b^{*}(a))\right\} = \nabla_{a}^{2}t(a,b^{*}(a))-\frac{\partial_{b}\{ \nabla_{a}t\}(a,b^{*}(a))\left(\partial_{b}\{\nabla_{a}t\}(a,b^{*}(a))\right)^{T}}{\partial_{bb}t(a,b)}.
    \end{equation}
\end{lemma}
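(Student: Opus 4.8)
The plan is a direct application of strict convexity together with the implicit function theorem. First I would note that the hypothesis $\partial_{bb}t(a,b)>0$ on all of $A\times[-\eta,\eta]$ makes $b\mapsto t(a,b)$ strictly convex for each fixed $a$, so it has a unique minimizer on $[-\eta,\eta]$; and since $\partial_b t(a,-\eta)<0$ while $\partial_b t(a,\eta)>0$, this minimizer lies in the open interval $(-\eta,\eta)$ and is the unique zero of $b\mapsto\partial_b t(a,b)$. This establishes uniqueness of $b^*(a)={\rm argmin}_{b}t(a,b)$ and, crucially, identifies $b^*(a)$ as the solution of the smooth scalar equation $\partial_b t(a,b)=0$.

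Next I would apply the implicit function theorem to $F(a,b):=\partial_b t(a,b)$, which is $C^1$ because $t$ is $C^2$. Since $F(a,b^*(a))=0$ and $\partial_b F(a,b^*(a))=\partial_{bb}t(a,b^*(a))>0\neq 0$, the theorem yields, locally around each $a\in A$, a $C^1$ function solving $F=0$; by the global uniqueness established above this local function must coincide with $b^*$, so $b^*$ is $C^1$ on $A$. Differentiating the identity $\partial_b t(a,b^*(a))=0$ in $a$ and solving for $\nabla b^*(a)$ then gives \eqref{eq: gradient of minimizer}.

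The envelope identity \eqref{eq: gradient of t} follows from the chain rule applied to $a\mapsto t(a,b^*(a))$: one has $\nabla_a\{t(a,b^*(a))\}=\{\nabla_a t\}(a,b^*(a))+\partial_b t(a,b^*(a))\,\nabla_a b^*(a)$, and the second term vanishes by the first-order condition $\partial_b t(a,b^*(a))=0$. Finally, \eqref{eq: hessian of t} is obtained by differentiating \eqref{eq: gradient of t} once more in $a$ — which is legitimate since $\nabla_a t$ is $C^1$ and $b^*$ is $C^1$ — producing $\nabla_a^2 t(a,b^*(a))+\partial_b\{\nabla_a t\}(a,b^*(a))\,(\nabla_a b^*(a))^T$, into which one substitutes the expression for $\nabla b^*(a)$ from \eqref{eq: gradient of minimizer}.

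The argument is essentially routine and I do not anticipate a genuine obstacle; the only points requiring a little care are verifying that the minimizer is interior, so that $b^*$ solves a smooth equation and the implicit function theorem is applicable, and checking that the $C^2$-regularity of $t$ is exactly what is needed to justify the two differentiations leading to \eqref{eq: hessian of t} — no higher smoothness of $b^*$ is required, since the Hessian formula only differentiates the \emph{first-order} expression $\{\nabla_a t\}(a,b^*(a))$. (If $A$ is not open one interprets all $\nabla_a$ as derivatives on $A^{o}$, which is the only case in which the lemma is applied.)
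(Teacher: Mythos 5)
Your proof is correct and follows essentially the same route as the paper: strict convexity from $\partial_{bb}t>0$ gives uniqueness; the sign conditions at $b=\pm\eta$ locate the minimizer in the interior as the unique zero of $\partial_b t(a,\cdot)$; the implicit function theorem applied to $\partial_b t=0$ yields $C^1$ regularity of $b^*$ and formula \eqref{eq: gradient of minimizer}; the envelope identity \eqref{eq: gradient of t} follows from the chain rule plus the first-order condition; and differentiating once more and substituting \eqref{eq: gradient of minimizer} gives \eqref{eq: hessian of t}. Your remarks about patching the local IFT solution into the global $b^*$, about $C^2$ of $t$ sufficing because the Hessian formula only differentiates the first-order expression, and about interpreting $\nabla_a$ on $A^{o}$ are minor but worthwhile clarifications that the paper leaves implicit.
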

\begin{proof}
    The assumption $\partial_{bb}t(a,b)>0$ implies that ${\rm \text{argmin}}_{b\in [-\eta,\eta]}t(a,b)$
    is unique. Then the assumption $\partial_{b}t(a,-\eta)<0,\partial_{b}t(a,\eta)>0$,
    implies that $b^{*}(a)$ lies in $(-\eta,\eta$) and is the unique
    solution of $\partial_{b}t(a,b)=0$ in this interval. Finally by the
    implicit function theorem applied to $\partial_{b}t(a,b)=0$ the solution
    $b^{*}(a)$ to this equation for $b$ is continuously differentiable
    and satisfies $\nabla b^{*}(a)=-\frac{\partial_{b}\nabla_{a}t(a,b)}{\partial_{bb}t(a,b)}$,
     using again that $\partial_{bb}t(a,b)>0$. 
     Furthermore
     \begin{equation}\label{eq: gradient of t calc}
        \nabla_{a}\left\{ t(a,b^{*}(a))\right\}
        =
        \left\{ \nabla_{a}t\right\} (a,b^{*}(a))
        + \underbrace{\partial_b t(a,b^*(a))}_{=0}
    \end{equation}
    for all $a\in\mathcal{A}$, which shows \eqref{eq: gradient D(y) in h terms}. By taking the derivative of \eqref{eq: gradient of t calc} one obtains 
    \begin{equation*}
        \nabla_{a}^{2}\left\{ t(a,b^{*}(a))\right\} = 
        \nabla_{a}^{2}t(a,b^{*}(a))- \partial_{b}\{ \nabla_{a}t\}(a,b^{*}(a)) \ \nabla b^{*}(a)^T
    \end{equation*}
    and by using \eqref{eq: gradient of minimizer} this shows \eqref{eq: hessian of t}.
\end{proof}
\bigskip

Applied to $(y,l)\to h(y,l,s(l))$ the lemma yields that $\mathcal{B}(y)$ is differentiable in a neighborhood and the following relation between the derivatives of $\mathcal{B}(y)$ and the derivatives of $h(y,l,s(l))$.

\begin{lemma}\label{lem: derivatives of B(y)}
    Assume \eqref{eq:YLcompact}-\eqref{eq:Hessynegdef}.
    Then there is a neighborhood $\mathcal{U}$ of $\hat{y}$ such that $\inf_{l\in\mathcal{L}} h(y,l,s(l))$ is uniquely maximized at a $\hat{l}(y)$ for $y\in \mathcal{U}$, $\mathcal{B}$ from \eqref{eq:y_unique_max} is three times continuously differentiable in $\mathcal{U}$, and for all $y\in\mathcal{U}$
    \begin{equation}\label{eq: gradient D(y) in h terms}
        \nabla\mathcal{B}(y)
        =
        \nabla_y h(y,\hat{l}(y),s(\hat{l}(y)))
    \end{equation}
    and 
    \begin{equation}\label{eq: hessian D(y) in h terms}
        \nabla^2\mathcal{B}(y)
        =
        \nabla_y^2 h(y,\hat{l}(y),s(\hat{l}(y))) 
        - \frac{ \partial_l\left\{\nabla_y h(y,l,s(l))\right\} \left(\partial_l\left\{\nabla_y h(y,l,s(l))\right\}\right)^T}{\partial_{ll} h(y,l,s(l))}\big|_{l=\hat{l}(y)}.
    \end{equation}
\end{lemma}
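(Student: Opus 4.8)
The plan is to obtain the lemma by applying Lemma~\ref{lem:use_imp_func_thm} to the function $t(y,l)=h(y,l,s(l))$, after first arguing that for $y$ near $\hat y$ the infimum of $t(y,\cdot)$ over all of $\mathcal{L}$ is actually attained in a small fixed interval around $\hat l$. Note that $t$ is three times continuously differentiable on $\mathcal{Y}\times\mathcal{L}$: by \eqref{eq: useful integrals} the function $s(l)=l-\sqrt{l^2-2}$ is smooth on $(\sqrt2,\infty)\supset\mathcal{L}$, by \eqref{eq:YLcompact} we have $s(\mathcal{L})\subset\mathcal{G}^o$ so that $(y,l)\mapsto(y,l,s(l))$ takes values in the open set on which $h\in C^3$ is defined (by \eqref{eq:hdiff}).

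\textbf{Localization.} Since $\partial_{ll}t(\hat y,l)|_{l=\hat l}>0$ by \eqref{eq:hllpos} and $\hat l\in\mathcal{L}^o$ by \eqref{eq:l_unique_max}, continuity provides an $\eta>0$ with $[\hat l-\eta,\hat l+\eta]\subset\mathcal{L}^o$ and $\partial_{ll}t(\hat y,l)>0$ for all $l\in[\hat l-\eta,\hat l+\eta]$. By \eqref{eq:l_unique_max} the point $\hat l$ is the unique global minimizer of $t(\hat y,\cdot)$ over $\mathcal{L}$, so $\partial_l t(\hat y,\hat l)=0$, and combined with the sign of $\partial_{ll}t(\hat y,\cdot)$ on the interval this yields $\partial_l t(\hat y,\hat l-\eta)<0<\partial_l t(\hat y,\hat l+\eta)$. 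By continuity of $\partial_l t$ and $\partial_{ll}t$ there is a neighborhood $\mathcal{U}_0\subset\mathcal{Y}^o$ of $\hat y$ on which $\partial_{ll}t(y,l)>0$ for $l\in[\hat l-\eta,\hat l+\eta]$ and $\partial_l t(y,\hat l-\eta)<0<\partial_l t(y,\hat l+\eta)$. Applying Lemma~\ref{lem:use_imp_func_thm} (with $a=y$, $b=l-\hat l$, $A=\mathcal{U}_0$, and using that $t\in C^3$ to upgrade the conclusion to $b^{*}\in C^2$) we get that for $y\in\mathcal{U}_0$ the map $l\mapsto t(y,l)$ has a unique minimizer $\hat l(y)$ in $[\hat l-\eta,\hat l+\eta]$, lying in the open interval, that $y\mapsto\hat l(y)$ is twice continuously differentiable, and that the identities \eqref{eq: gradient of t} and \eqref{eq: hessian of t} hold for $t(y,\hat l(y))$.

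\textbf{From local to global minimizer.} Because $\hat l$ is the unique minimizer of $t(\hat y,\cdot)$ over $\mathcal{L}$ and $\mathcal{L}\setminus(\hat l-\eta,\hat l+\eta)$ is compact, the number $\delta:=\bigl(\min_{l\in\mathcal{L}\setminus(\hat l-\eta,\hat l+\eta)}t(\hat y,l)\bigr)-t(\hat y,\hat l)$ is strictly positive. By uniform continuity of $t$ on the compact set $\mathcal{Y}\times\mathcal{L}$ we may shrink $\mathcal{U}_0$ to a neighborhood $\mathcal{U}$ of $\hat y$ with $\sup_{l\in\mathcal{L}}|t(y,l)-t(\hat y,l)|<\delta/3$ for $y\in\mathcal{U}$. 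Then for $y\in\mathcal{U}$ and any $l$ in $\mathcal{L}$ outside the interval one has $t(y,l)>t(\hat y,\hat l)+2\delta/3$, whereas $t(y,\hat l(y))\le t(y,\hat l)<t(\hat y,\hat l)+\delta/3$; hence the infimum of $t(y,\cdot)$ over $\mathcal{L}$ is attained in the interval, so it equals $t(y,\hat l(y))$ and is attained only at $\hat l(y)$. In particular $\mathcal{B}(y)=t(y,\hat l(y))$ for all $y\in\mathcal{U}$.

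\textbf{Regularity and the derivative formulas.} By \eqref{eq: gradient of t}, for $y\in\mathcal{U}$ we have $\nabla\mathcal{B}(y)=\{\nabla_y t\}(y,\hat l(y))=\nabla_y h(y,\hat l(y),s(\hat l(y)))$, which is \eqref{eq: gradient D(y) in h terms}; the right-hand side is a composition of $C^2$ maps (using $\hat l\in C^2$, $s$ smooth, $h\in C^3$), so $\nabla\mathcal{B}\in C^2(\mathcal{U})$ and hence $\mathcal{B}\in C^3(\mathcal{U})$. Finally \eqref{eq: hessian of t}, upon substituting $\partial_{bb}t(y,l)=\partial_{ll}h(y,l,s(l))$ and $\partial_b\{\nabla_a t\}(y,l)=\partial_l\{\nabla_y h(y,l,s(l))\}$, is precisely \eqref{eq: hessian D(y) in h terms}. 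The step I expect to be the main obstacle is the passage from local to global minimizer: Lemma~\ref{lem:use_imp_func_thm} only controls the critical point inside a prescribed interval, and one must invoke the global uniqueness assumption \eqref{eq:l_unique_max} together with compactness of $\mathcal{L}$ and uniform continuity of $t$ to exclude the infimum over $\mathcal{L}$ escaping to the boundary of $\mathcal{L}$ (or to some other critical point) for $y$ near but not equal to $\hat y$; the $C^3$ bootstrap via \eqref{eq: gradient D(y) in h terms} is the other point needing a little care, since Lemma~\ref{lem:use_imp_func_thm} is phrased with only one derivative of $b^{*}$.
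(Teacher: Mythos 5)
Your proof takes essentially the same route as the paper's: localize to a small interval $[\hat l-\eta,\hat l+\eta]$ using \eqref{eq:hllpos} and \eqref{eq:l_unique_max}, then apply Lemma~\ref{lem:use_imp_func_thm} to $t(a,b)=h(\hat y+a,\hat l+b,s(\hat l+b))$ to extract the gradient and Hessian formulas. The paper's proof is terser but is the same argument in substance.

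Two points where you add material worth comparing. First, your ``local to global'' step, showing via the compactness of $\mathcal{L}\setminus(\hat l-\eta,\hat l+\eta)$ and uniform continuity of $t$ that the local minimizer $\hat l(y)$ produced by Lemma~\ref{lem:use_imp_func_thm} is in fact the unique minimizer over all of $\mathcal{L}$, is a necessary link that the paper's proof passes over silently: since $\mathcal{B}$ is defined as $\inf_{l\in\mathcal{L}}h(y,l,s(l))$ rather than the infimum over the small interval, the identities \eqref{eq: gradient D(y) in h terms}--\eqref{eq: hessian D(y) in h terms} only follow once one knows these two infima agree near $\hat y$. You identify and close this correctly. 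Second, for the $C^3$ bootstrap you argue through \eqref{eq: gradient D(y) in h terms} and therefore need to upgrade $\hat l(\cdot)$ from $C^1$ (what Lemma~\ref{lem:use_imp_func_thm} states) to $C^2$, which you flag and which is indeed available from the implicit function theorem with $h\in C^3$. The paper instead bootstraps through \eqref{eq: hessian D(y) in h terms}, whose right-hand side is already a composition of $C^1$ maps when $\hat l(\cdot)\in C^1$, so no upgrade of the regularity of $\hat l(\cdot)$ is needed there; that route is marginally more economical, though both are valid.
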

\begin{proof}
    Using \eqref{eq:hdiff} and \eqref{eq:hllpos} it follows that there is a neighborhood $\mathcal{U}$ of $\hat{y}$ and $[\hat{l}-\eta, \hat{l}+\eta]$ of $\hat{l}$ where $\partial_{ll}h(y,l,s(l))>0$ for all $y\in\mathcal{U},l \in [\hat{l}-\eta, \hat{l}+\eta]$, and by \eqref{eq:l_unique_max} one can in addition ensure that  $\partial_{l} h(y,l,s(l))|_{l = \hat{l} - \eta}<0$ and $\partial_{l} h(y,l,s(l))|_{l = \hat{l} + \eta}>0$. By Lemma \ref{lem:use_imp_func_thm} applied to $t(a,b)=h(\hat{y}+a,\hat{l}+b,s(\hat{l}+b))$ one obtains \eqref{eq: gradient D(y) in h terms} and \eqref{eq: hessian D(y) in h terms}. Since all terms on the r.h.s. of \eqref{eq: hessian D(y) in h terms} are  continuously differentiable it follows that $\mathcal{B} \in C^3(\mathcal{U})$.
\end{proof}
\bigskip

\subsection{Fluctuations of \texorpdfstring{$s_{\lambda,u}$}{TEXT} around \texorpdfstring{$s$}{TEXT}}\label{subsec:fluctuations_of_s_lambda_u}
We will calculate the fluctuations of $\sup_{y \in \mathcal{Y}}\inf_{l \in \mathcal{L}}h(\alpha,l,\GN{\lambda}{l})$ by quadratically expanding $h$ around $(\hat{y},\hat{l},s(\hat{l}))$.  To this end we start by studying the fluctuations of $\GNk{k}{\lambda}{l}$ around $\INk{k}{l}$. Note that for all $l\in\mathcal{L}$
\begin{equation}\label{eq:s_lambda_u_decomp}
    s_{\lambda,u}^{(k)}(l) =
    s^{(k)}(l)+\frac{1}{\sqrt{N}}W_{N}^{(k)}(l)+\frac{1}{N}\Lambda_{N}^{(k)}(l)+\frac{1}{N}R_{N}^{(k)}(l),
\end{equation}
where
\begin{equation}\label{def: W_N(l)}
    W_{N}(l)=\frac{1}{\sqrt{N}}\sum_{i=1}^{N}\frac{Nu_{i}^{2}-1}{l-\theta_{i/N}},  
\end{equation}
and
\begin{equation*}
    \Lambda_{N}(l)=\sum_{i=1}^{N}\frac{1}{l-\lambda_{i}}-Ns(l),
\end{equation*}
as well as
\begin{equation*}
    R_{N}(l)=\sum_{i=1}^{N}\left(Nu_{i}^{2}-1\right)\left(\frac{1}{l-\lambda_{i}}-\frac{1}{l-\theta_{i/N}}\right).
\end{equation*}
We also define
\begin{equation*}
    U_N(l) = \frac{1}{\sqrt{N}} \sum_{i=1}^N \frac{Nu_i^2-1}{l - \lambda_i},
\end{equation*}
which equals $U_N$ from Theorem \ref{thm: sphere} for $l=\frac{2-\hat{\alpha}^{2}}{\hat{z}}$ (with $\hat{\alpha}$ and $\hat{z}$ as in the theorem), recalling from below \eqref{eq: LN in diagonalizing basis} that $u$ is the vector $v$ in the diagonalizing basis of $J$ and $\lambda_i$ are the eigenvalues of $\frac{1}{\sqrt{N}} J_N$. The derivative $U'_N(l)$ for $l=\frac{2-\hat{\alpha}^{2}}{\hat{z}}$ also equals $U'_N$ from Theorem \ref{thm: sphere}. Later we will use that
\begin{equation}\label{eq: U_N-W_N=R_N}
    U_N^{(k)}(l)-W_N^{(k)}(l) = \frac{1}{\sqrt{N}}R_N^{(k)}(l).
\end{equation}
The next lemma shows that the error term $R_N^{(k)}$ in \eqref{eq:s_lambda_u_decomp} and \eqref{eq: U_N-W_N=R_N} is small.
\bigskip

\begin{lemma}\label{lem:s_lambda_u_exp_rest_term}
    For all $k$ and $\varepsilon>0$ it holds that $\sup_{l\ge \sqrt{2}+\varepsilon}|R_{N}^{(k)}(l)|=o_{\mathbb{P}}\left(1\right)$.
\end{lemma}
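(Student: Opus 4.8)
The plan is to exploit the mean-zero cancellation in the weights $Nu_i^2-1$; a pathwise rigidity bound of the type behind Lemma~\ref{lem: sum eval class loc} only yields $O_{\mathbb P}(N^{1/3+\delta})$ here, so the argument must pass through a variance estimate. By \eqref{eq: k-th derivative of s_mu(l)} we may write
\[
    R_N^{(k)}(l) = (-1)^k k! \sum_{i=1}^N (Nu_i^2-1)\, a_i(l),
    \qquad
    a_i(l) := \frac{1}{(l-\lambda_i)^{k+1}} - \frac{1}{(l-\theta_{i/N})^{k+1}} .
\]
First I would realise $u$ as $u_i=\tilde u_i/|\tilde u|$ with $\tilde u_1,\dots,\tilde u_N\sim\mathcal N(0,1/N)$ i.i.d.\ and independent of $J$ (hence of the $\lambda_i$), as in the proof of Lemma~\ref{lem: u-fluc}. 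Then $Nu_i^2-1=|\tilde u|^{-2}\big((N\tilde u_i^2-1)-(|\tilde u|^2-1)\big)$, so
\[
    R_N^{(k)}(l)=\frac{(-1)^k k!}{|\tilde u|^2}\Big(\sum_{i=1}^N(N\tilde u_i^2-1)a_i(l)\;-\;(|\tilde u|^2-1)\sum_{i=1}^N a_i(l)\Big).
\]
Since $|\tilde u|\to1$ in probability, $|\tilde u|^2-1=O_{\mathbb P}(N^{-1/2})$, while $\sum_{i}a_i(l)=\tfrac{(-1)^kN}{k!}\big(\HNk{k}{\lambda}{l}-\HNk{k}{\theta}{l}\big)=O_{\mathbb P}(N^{1/3+\delta})$ uniformly in $l\ge\sqrt2+\varepsilon$ by \eqref{eq: Hk lambda theta identities}; hence the second term above is $o_{\mathbb P}(1)$ uniformly in $l$, and it suffices to prove $\sup_{l\ge\sqrt2+\varepsilon}|S_N(l)|=o_{\mathbb P}(1)$ for $S_N(l):=\sum_{i=1}^N(N\tilde u_i^2-1)a_i(l)$.

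Next I would introduce the event $\mathcal E_N$, measurable with respect to the spectrum and with $\mathbb P(\mathcal E_N)\to1$, on which the rigidity bound of Lemma~\ref{lem: rigidity of eigenvalues} holds (with parameter $\delta$) and $\lambda_N\le\sqrt2+\varepsilon/2$. On $\mathcal E_N$ one has $l-\lambda_i,\,l-\theta_{i/N}\ge\varepsilon/2$ for all $l\ge\sqrt2+\varepsilon$, so the mean value theorem gives $|a_i(l)|\le C_{k,\varepsilon}|\lambda_i-\theta_{i/N}|$ and $|a_i'(l)|\le C_{k,\varepsilon}|\lambda_i-\theta_{i/N}|$, uniformly in $l$. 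Combining with Lemma~\ref{lem: rigidity of eigenvalues} (summing the rigidity bound gives $\sum_i|\lambda_i-\theta_{i/N}|^2=O_{\mathbb P}(N^{-1+2\delta})$ and $\sum_i|\lambda_i-\theta_{i/N}|=O_{\mathbb P}(N^{\delta})$), I obtain that on $\mathcal E_N$, uniformly in $l\ge\sqrt2+\varepsilon$,
\[
    \sum_{i=1}^N a_i(l)^2 \le C'_{k,\varepsilon}\,N^{-1+2\delta}, \qquad \max_{i}|a_i(l)| \le C'_{k,\varepsilon}\,N^{-2/3+\delta}, \qquad \sum_{i=1}^N |a_i'(l)| \le C'_{k,\varepsilon}\,N^{\delta}.
\]

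Then the conclusion follows in two steps. Conditionally on the spectrum the $\tilde u_i$ are i.i.d., so $S_N(l)$ is a sum of independent centred sub-exponential variables with conditional variance $2\sum_i a_i(l)^2$ and sub-exponential scale $O(\max_i|a_i(l)|)$; on $\mathcal E_N$ Bernstein's inequality gives, for each fixed $l\ge\sqrt2+\varepsilon$ and $t>0$, $\mathbb P(|S_N(l)|\ge t\mid\lambda)\le 2\exp(-c\min\{t^2N^{1-2\delta},\,tN^{2/3-\delta}\})$, which is smaller than any fixed negative power of $N$. To pass to the supremum, note that on $\mathcal E_N$ the map $l\mapsto S_N(l)$ is Lipschitz on $[\sqrt2+\varepsilon,\infty)$ with constant $\le C_{k,\varepsilon}\sum_i|N\tilde u_i^2-1|\,|\lambda_i-\theta_{i/N}|$, whose conditional mean is $\le\sqrt2\,C_{k,\varepsilon}\sum_i|\lambda_i-\theta_{i/N}|=O_{\mathbb P}(N^{\delta})$, so this constant is $\le N^{2\delta}$ with probability tending to one; covering $[\sqrt2+\varepsilon,N^2]$ by polynomially many points and union-bounding the Bernstein estimate controls $\sup_{l\in[\sqrt2+\varepsilon,N^2]}|S_N(l)|$, while for $l\ge N^2$ the crude bound $|a_i(l)|\le 2(l-\sqrt2-1)^{-1}$ (valid on $\mathcal E_N$) together with $\sum_i|N\tilde u_i^2-1|=O_{\mathbb P}(N)$ gives $\sup_{l\ge N^2}|S_N(l)|=O_{\mathbb P}(N^{-1})$. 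Choosing $\delta$ small throughout finishes the proof.

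The main obstacle is the very first reduction: one genuinely needs the cancellation coming from $\mathbb E[N u_i^2-1]=0$, since a pathwise estimate gives only $|R_N^{(k)}(l)|=O_{\mathbb P}(N^{1/3+\delta})$, not $o_{\mathbb P}(1)$. The point is that, conditionally on the spectrum, the relevant variance is $\sum_i a_i(l)^2\lesssim\sum_i|\lambda_i-\theta_{i/N}|^2$, and it is precisely the \emph{square} of the rigidity bound that makes this $o(1)$. The remaining, more routine, difficulty is upgrading this tiny pointwise control to a uniform bound over the unbounded range of $l$, which the Lipschitz-and-net argument together with the separate large-$l$ estimate take care of.
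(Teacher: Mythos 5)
Your argument follows the paper's core strategy precisely: realise $u=\tilde u/|\tilde u|$ from i.i.d.\ Gaussians, condition on the spectrum so that the $N\tilde u_i^2-1$ are independent and centred, and control $S_N(l)=\sum_i(N\tilde u_i^2-1)a_i(l)$ via a conditional second-moment calculation in which rigidity of the eigenvalues makes the conditional variance $2\sum_i a_i(l)^2$ small; the reduction using $Nu_i^2-1=|\tilde u|^{-2}\bigl((N\tilde u_i^2-1)-(|\tilde u|^2-1)\bigr)$ is exactly the paper's handling of the $u$ versus $\tilde u$ discrepancy. Where you go beyond the paper's write-up is in the treatment of the supremum over $l$: the paper applies Chebyshev (using only the cruder $\sup_i|\lambda_i-\theta_{i/N}|\le N^{-2/3+\delta}$) at each fixed $l$ and stops there, whereas you use the sharper summed rigidity bound to get $\sum_i a_i(l)^2=O_{\mathbb P}(N^{-1+2\delta})$, replace Chebyshev by Bernstein so that the conditional tail bound decays faster than any polynomial, and then combine a polynomial net on $[\sqrt 2+\varepsilon,N^2]$ with a Lipschitz bound coming from $\sum_i|a_i'(l)|=O_{\mathbb P}(N^\delta)$ and a separate crude estimate for $l\ge N^2$. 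This is correct and arguably necessary: a Chebyshev bound of order $N^{-1/3+2\delta}$ does not survive a union bound over polynomially many net points, so the Bernstein upgrade is genuinely what buys the uniformity the lemma asserts — a point the paper's own proof leaves implicit. Your opening observation that a pathwise rigidity bound alone gives only $O_{\mathbb P}(N^{1/3+\delta})$, so that the mean-zero cancellation and a variance estimate are essential, is also the right diagnosis.
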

\begin{proof}
    Let $w(l,x) = \frac{1}{l-x}$ and denote by $w^{(k)}(l,x)$ the $k$-th derivative in $l$.
    Let $\delta > 0$ and define the event
    \begin{equation}\label{eq: E event}
        \mathcal{E}_\delta = 
        \left\{
            \sup_{i=1,...,N} |\lambda_i - \theta_{i/N}|
            \le N^{-\frac{2}{3}+\delta}
        \right\},
    \end{equation}
    whose probability converges to one for any choice of $\delta$ by Lemma \ref{lem: rigidity of eigenvalues}, and define the $\sigma$-algebra
    $$
        \sigma_\Lambda = \sigma(\lambda_1,...,\lambda_N).
    $$
    First consider
    $$
        X := 
        \frac{1}{N}\sum_{i=1}^N \left(N\tilde{u}_i^2 -1\right) 
        \left(w^{(k)}(l,\lambda_i)-w^{(k)}(l,\theta_{i/N})\right),
    $$
     where $\tilde{u}_i$ are i.i.d with law $\mathcal{N}(0,1)$ and independent of $J$, as in the proof of Lemma \ref{lem: u-fluc}.
    Then $\mathbb{E}[X|\sigma_\Lambda] = 0$ and
    \begin{equation*}
    \begin{array}{rcl}
        \mathbb{E}[X^2|\sigma_\Lambda] 1_{\mathcal{E}_\delta}
        &=&
        \frac{1}{N^2}
        \mathbb{E}\left[ 
            \left(
                \sum_{i=1}^N \left(N\tilde{u}_i^2 -1\right) 
                \left(w^{(k)}(l,\lambda_i)-w^{(k)}(l,\theta_{i/N})\right)
            \right)^2
            \big | \sigma_\Lambda
        \right] 1_{\mathcal{E}_\delta}
        \\
        &=&
        \frac{2}{N^2} \sum_{i=1}^N  \left(w^{(k)}(l,\lambda_i)-w^{(k)}(l,\theta_{i/N})\right)^2 1_{\mathcal{E}_\delta}
        \\
        & \overset{\eqref{eq: E event}}{\le} &
        \frac{2|w^{(k+1)}|_{\infty}}{N^2} N N^{-\frac{4}{3}+2\delta} 1_{\mathcal{E}_\delta}
         =
        N^{-7/3 + 2 \delta} 1_{\mathcal{E}_\delta},
    \end{array}
    \end{equation*}
    which implies via Chebyshev's inequality that
    \begin{equation*}
    \begin{array}{rcl}
        \mathbb{P}\left(|X| \ge \tfrac{1}{N \log N}\right)
        & = &
        \mathbb{E}\left[\mathbb{P}\left(|X| \ge \tfrac{1}{N \log N}|\sigma_\Lambda \right)\right]
        \\
        & \le &
        \mathbb{E}\left[\mathbb{P}\left(|X| \ge \tfrac{1}{N \log N}|\sigma_\Lambda \right)1_{\mathcal{E}_\delta}\right] + \mathbb{P}(\mathcal{E}^c_\delta)
        \\
        & \le &
        \mathbb{E}\left[
        \mathbb{E}[X^2|\sigma_\Lambda] (N \log N)^2
        1_{\mathcal{E}_\delta}\right] + \mathbb{P}(\mathcal{E}^c_\delta)
        \\
        & \le &
        (\log N)^2 N^{-\frac{1}{3} + 2\delta} + \mathbb{P}(\mathcal{E}^c_\delta).
    \end{array}
    \end{equation*}
    By choosing $\delta < \tfrac{1}{6}$ this probability converges to zero, and so $X = o_{\mathbb{P}}(\tfrac{1}{N})$. \\
    Constructing the vector $u$ via $u = \tilde{u}/|\tilde{u}|$ we then have
    \begin{equation*}
        \begin{array}{rcl}
            && \sum_{i=1}^N {u}_i^2 \left(w^{(k)}(l,\lambda_i)-w^{(k)}(l,\theta_{i/N})\right) - 
            \sum_{i=1}^N \tilde{u}_i^2 \left(w^{(k)}(l,\lambda_i) -w^{(k)}(l,\theta_{i/N})\right)
            {\color{white}\bigg|}
            \\
            &=&
            (1 - |\tilde{u}|^{2})
            \sum_{i=1}^N u_i^2 \left(w^{(k)}(l,\lambda_i)-w^{(k)}(l,\theta_{i/N})\right),
        \end{array}
    \end{equation*}
    and
    \begin{equation*}
    \begin{array}{l}
        \mathbb{P}\left(
        \left|(1 - |\tilde{u}|^2)
        \sum_{i=1}^N u_i^2 \left(w^{(k)}(l,\lambda_i)-w^{(k)}(l,\theta_{i/N})\right)\right|
        \le 
        \frac{1}{N \log N}
        \right)
        \\
        \ge 
        \mathbb{P}\left(
        \left|1 - |\tilde{u}|^2\right| \le \frac{1}{N^{\frac{1}{3}+\delta} \log N}
        , \
        \left|\sum_{i=1}^N u_i^2 \left(w^{(k)}(l,\lambda_i)-w^{(k)}(l,\theta_{i/N})\right)\right|
        \le N^{-\frac{2}{3}+\delta}
        \right)
        \\
        \ge
        \mathbb{P}\left(
        \left|1 - |\tilde{u}|^2\right| \le \frac{1}{N^{\frac{1}{3}+\delta} \log N}\right)
        -
        \mathbb{P}\left(
        \displaystyle{\sup_{i=1,...,N}} |\lambda_i - \theta_{i/N}|
        \ge \tfrac{1}{|w^{(k+1)}|_{\infty}} N^{-\frac{2}{3}+\delta}\right),
    \end{array}
    \end{equation*}
    (if $|w'|_\infty=0$ the claim of the lemma is of course trivial) which for $\delta < \frac{1}{6}$ converges to $1$ by the CLT on the first probability and Lemma \ref{lem: rigidity of eigenvalues} on the second.
\end{proof}
\bigskip

It thus holds that
\begin{equation}\label{eq:s_lambda_u_decomp_o}
    s_{\lambda,u}^{(k)}(l)=s^{(k)}(l)+\frac{1}{\sqrt{N}}W_{N}^{(k)}(l)+\frac{1}{N}\Lambda_{N}^{(k)}(l)+o_{\mathbb{P}}(N^{-1})\text{ uniformly in }l\ge\sqrt{2}+\varepsilon,
\end{equation}
for any $\varepsilon>0$ and $k\in\mathbb{N}$. The next lemma shows that $W^{(k)}(l)$ of \eqref{eq:s_lambda_u_decomp_o} is of order $O_{\mathbb{P}}(1)$ uniformly.

\begin{lemma}\label{lem:U_N_UB}
    For all $k$ and $\varepsilon>0$ it holds that $\sup_{l \ge \sqrt{2}+\varepsilon}|W_{N}^{(k)}(l)|=O_{\mathbb{P}}(1)$.
\end{lemma}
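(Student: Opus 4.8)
The plan is to dominate the supremum over the unbounded range $l\ge\sqrt 2+\varepsilon$ by the $L^{1}$-norm of the next derivative $W_N^{(k+1)}$, whose variance decays fast enough in $l$ to be integrable at infinity \emph{uniformly in $N$}. Concretely, differentiating \eqref{def: W_N(l)} gives $W_N^{(k)}(l)=\frac{1}{\sqrt N}\sum_{i=1}^N(Nu_i^2-1)\frac{(-1)^kk!}{(l-\theta_{i/N})^{k+1}}$, a finite sum whose every term vanishes as $l\to\infty$; since also $\frac{d}{dl}W_N^{(k)}=W_N^{(k+1)}$ and $|l-\theta_{i/N}|\ge l-\sqrt 2$ makes $W_N^{(k+1)}$ a.s. integrable on $[\sqrt 2+\varepsilon,\infty)$, the fundamental theorem of calculus yields $W_N^{(k)}(l)=-\int_l^\infty W_N^{(k+1)}(t)\,dt$ and hence $\sup_{l\ge\sqrt 2+\varepsilon}|W_N^{(k)}(l)|\le\int_{\sqrt 2+\varepsilon}^\infty|W_N^{(k+1)}(t)|\,dt$.

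The core estimate is then a uniform-in-$N$ bound on $\mathbb{E}|W_N^{(k+1)}(t)|$. Since $\mathbb{E}[Nu_i^2]=1$ this variable is centered, so $\mathbb{E}|W_N^{(k+1)}(t)|\le\mathrm{Var}(W_N^{(k+1)}(t))^{1/2}$, and I would bound the variance using that for $u$ uniform on the sphere $\mathrm{Var}(Nu_i^2)\le 2$ while $\mathrm{Cov}(Nu_i^2,Nu_j^2)\le 0$ for $i\ne j$ (both elementary, e.g. from $u_i^2\sim\mathrm{Beta}(\tfrac12,\tfrac{N-1}2)$ and the joint Dirichlet law). Because the coefficients $(t-\theta_{i/N})^{-(k+2)}$ all share the same sign, the non-positive off-diagonal terms can be discarded, leaving
\[
    \mathrm{Var}\!\left(W_N^{(k+1)}(t)\right)\le\frac{((k+1)!)^2}{N}\sum_{i=1}^N\frac{\mathrm{Var}(Nu_i^2)}{(t-\theta_{i/N})^{2(k+2)}}\le\frac{2((k+1)!)^2}{(t-\sqrt 2)^{2(k+2)}},
\]
using $\theta_{i/N}\le\sqrt 2$. (Equivalently one can run the Gaussian construction $u=\tilde u/|\tilde u|$ of the proof of Lemma~\ref{lem: u-fluc}, reducing the main contribution to a sum of independent summands at the price of a correction of size $||\tilde u|^2-1|=O_{\mathbb{P}}(N^{-1/2})$ times $\frac{1}{\sqrt N}\sum_i(t-\theta_{i/N})^{-(k+2)}=O(\sqrt N)$, the latter by Lemma~\ref{lem: sums}.)

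Finally I would integrate and conclude: by Tonelli's theorem,
\[
    \mathbb{E}\!\left[\sup_{l\ge\sqrt 2+\varepsilon}|W_N^{(k)}(l)|\right]\le\int_{\sqrt 2+\varepsilon}^\infty\mathbb{E}|W_N^{(k+1)}(t)|\,dt\le\sqrt 2\,(k+1)!\int_{\sqrt 2+\varepsilon}^\infty\frac{dt}{(t-\sqrt 2)^{k+2}}=\sqrt 2\,k!\,\varepsilon^{-(k+1)},
\]
which is finite and independent of $N$; Markov's inequality then gives $\mathbb{P}(\sup_{l\ge\sqrt2+\varepsilon}|W_N^{(k)}(l)|\ge x)\le\sqrt2\,k!\,\varepsilon^{-(k+1)}/x$ for all $x>0$ and all $N$, so $\limsup_{N\to\infty}\mathbb{P}(\cdots\ge x)\to0$ as $x\to\infty$, which is precisely $\sup_{l\ge\sqrt2+\varepsilon}|W_N^{(k)}(l)|=O_{\mathbb{P}}(1)$.

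The only subtle point is the uniformity over the \emph{unbounded} interval $l\ge\sqrt 2+\varepsilon$: a pointwise variance bound combined with a net/chaining argument would dispatch any compact subinterval but not the tail $l\to\infty$. Dominating the supremum by $\int|W_N^{(k+1)}|$ handles both at once, since $\mathrm{Var}(W_N^{(k+1)}(t))$ decays like $(t-\sqrt 2)^{-2(k+2)}$, which is integrable at infinity; everything else is routine.
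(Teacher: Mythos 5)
Your argument is correct and follows a genuinely different route from the paper. The paper passes to the Gaussian construction $u=\tilde u/|\tilde u|$, isolates the $l$-dependence by expanding $\frac{1}{(l-\theta_{i/N})^{k}}$ as a power series in $\theta_{i/N}/l$, and then applies Chebyshev term-by-term with a union bound over the index $j$ (after separately controlling the $s_\theta^{(k)}(l)$ centering by Lemma~\ref{lem: G,H,I} and a CLT). You instead dominate the supremum over the unbounded interval by $\int_{\sqrt2+\varepsilon}^\infty|W_N^{(k+1)}|$, bound the integrand in $L^1$ pointwise, and integrate. This is shorter, entirely avoids the power-series machinery, the Gaussian construction, and the separate treatment of the centering term. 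The one ingredient the paper sidesteps and you need is that the off-diagonal covariances of the vector $(Nu_1^2,\ldots,Nu_N^2)$ are nonpositive; your justification is fine — it follows most cheaply from $\sum_i Nu_i^2 = N$ a.s.\ together with exchangeability, which forces the common off-diagonal covariance to be $-\mathrm{Var}(Nu_i^2)/(N-1)<0$ — and the single-sign property of the coefficients $(t-\theta_{i/N})^{-(k+2)}$ is exactly what lets you discard those cross terms. All the remaining steps (centeredness from $\mathbb{E}[Nu_i^2]=1$, $\mathrm{Var}(Nu_i^2)\le 2$, the Tonelli interchange, the $\varepsilon^{-(k+1)}$ integral, Markov) check out, so this constitutes a valid and arguably more self-contained proof of the lemma.
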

\begin{proof}
    We construct $u$ by setting $u_i = \frac{\tilde{u}_i}{|\tilde{u}|}$ with $\tilde{u}_1,...,\tilde{u}_N \sim \mathcal{N}(0,\tfrac{1}{N})$ i.i.d.. We then have
    \begin{equation}
        \begin{array}{cclcl}
            W_{N}(l) & = & {\displaystyle \frac{1}{\sqrt{N}}\frac{1}{\left|\tilde{u}\right|^{2}}\sum_{i=1}^{N}\frac{N\tilde{u}_{i}^{2}-\left|\tilde{u}\right|^{2}}{l-\theta_{i/N}}} & = & {\displaystyle \frac{1}{\sqrt{N}}\frac{1}{\left|\tilde{u}\right|^{2}}\sum_{i=1}^{N}\left(N\tilde{u}_{i}^{2}-\left|\tilde{u}\right|^{2}\right)\left(\frac{1}{l-\theta_{i/N}}-s_{\theta}(l)\right)}\\
             &  &  & = & {\displaystyle \frac{1}{\sqrt{N}}\frac{1}{\left|\tilde{u}\right|^{2}}\sum_{i=1}^{N}\left(N\tilde{u}_{i}^{2}-1\right)\left(\frac{1}{l-\theta_{i/N}}-s_{\theta}(l)\right)}
        \end{array}
    \end{equation}
    and similarly
    \begin{align*}
        W_{N}^{(k)}(l)
        \numberthis\label{eq: WN in tilde u form}
        =& 
        \frac{1}{|\tilde{u}|^{2}} \frac{1}{\sqrt{N}}\sum_{i=1}^{N}(N\tilde{u}_{i}^{2}-1)
        \left(\frac{k!(-1)^k}{(l-\theta_{i/N})^{k+1}}-\HNk{k}{\theta}{l}\right).
    \end{align*}
    Note that $\frac{1}{|\tilde{u}|^{2}} = 1 + o_{\mathbb{P}}(1)$ and that by using \eqref{eq: Hk Ik estimate} and a CLT we have
    $$\frac{1}{\sqrt{N}}\sum_{i=1}^{N}(N\tilde{u}_{i}^{2}-1)\HNk{k}{\theta}{l} = O_{\mathbb{P}}(1).$$
    Thus it only remains to show that
    $\tfrac{1}{\sqrt{N}}\sum_{i=1}^N \left(N\tilde{u}_i^2 -1 \right) w^{(k)}(\theta_{i/N},l)= {{O}}_{\mathbb{P}}\left(\frac{1}{\sqrt{N}}\right)$, i.e.
    \begin{equation}\label{eq: sums uniform order} 
        \lim_{z\to\infty} \lim_{N \to \infty}
        \mathbb{P}\left( \sup_{l \ge \sqrt{2}+\varepsilon}
        \frac{1}{N}\sum_{i=1}^N \frac{N\tilde{u}^2_i - 1}{(l  - \theta_{i/N})^k} 
        \ge z \right) = 0.
    \end{equation}
    
    Note that for $x\in(0,1)$
    \begin{equation}
        \frac{1}{(1 - x)^k} = \sum_{j=0}^\infty x^j C_j(k)
    \end{equation}
    where $C_j(k) = \frac{k(k+1)\ldots(k+1-j)}{j!}$, so that that we have for $l  \ge \sqrt{2}+\varepsilon$ and all $x \in [-\sqrt{2}-\tfrac{\varepsilon}{2}, \sqrt{2}+\tfrac{\varepsilon}{2}]$
    \begin{align*}
        \frac{1}{N}\sum_{i=1}^N \frac{N\tilde{u}_i^2 - 1}{(l  - \theta_{i/N})^k}
        = &
        \frac{1}{N}\sum_{j=0}^\infty \sum_{i=1}^N \frac{C_j(k) \theta_{i/N}^j}{l ^{j+k}}
            \left(N\tilde{u}_i^2 - 1\right)
            \\
        = &
        \frac{1}{N}\sum_{j=0}^\infty
            \frac{C_j(k)}{l ^k}\left(\frac{\sqrt{2} + \tfrac{\varepsilon}{2}}{l }\right)^{j}
        \sum_{i=1}^N
            \left(N\tilde{u}_i^2 - 1\right) \left(\frac{\theta_{i/N}}{\sqrt{2} + \tfrac{\varepsilon}{2}}\right)^j
            .
    \end{align*}
    Let
    $$
        \psi_N(j) = \frac{1}{\sqrt{N}} \sum_{i=1}^N
            \left(N\tilde{u}_i^2 - 1\right) \left(\frac{\theta_{i/N}}{\sqrt{2} + \tfrac{\varepsilon}{2}}\right)^j.
    $$
    Since
    $$
        C_j(k) = {k+j-1 \choose k-1} \le (j+k)^{k-1} 
        \quad \text{ and } \quad
        \bigg |
            \frac{\sqrt{2} + \frac{\varepsilon}{2}}{l }
        \bigg | < q 
    $$ 
    for some $q \in (0,1)$, there exists some $c_1 = c_1(k,q) > 0$ such that for fixed $k\in\mathbb{N}_0$
    $$
        \frac{C_j(k)}{l ^k}\left(\frac{\sqrt{2} + \tfrac{\varepsilon}{2}}{l }\right)^{j}
        \le c_1 q^j
    $$
    uniformly for all $j$ and $l  > \sqrt{2} + \varepsilon$, and so
    \begin{align*}
        \bigg | 
            \frac{1}{\sqrt{N}}\sum_{i=1}^N \frac{N\tilde{u}_i^2 - 1}{(l  - \theta_{i/N})^k}
        \bigg |
        \le &
        \sum_{j=0}^\infty c_1 q^j |\psi_N(j)|.
    \end{align*}
    We have
    \begin{align*}
        \text{Var}(\psi_N(j)) = 
        \sum_{i=1}^N
        \left(\frac{\theta_{i/N}}{\sqrt{2} + \tfrac{\varepsilon}{2}}\right)^{2j}
        \mathbb{E}\left[(1-N\tilde{u}_i^2)^2\right] 
        =
        \frac{2}{N} \sum_{i=1}^N\left(\frac{\theta_{i/N}}{\sqrt{2} + \tfrac{\varepsilon}{2}}\right)^{2j}
        \le
        2 \left(\frac{\sqrt{2}}{\sqrt{2}+\frac{\varepsilon}{2}}\right)^{2j}.
    \end{align*}
    For any $x \in \mathbb{R}^+$ via Chebyshev's inequality 
    \begin{align*}
        \mathbb{P}\left(
            \exists j\ge 1:
            |\psi_N(j)| \ge x
        \right)
        \le &
        \sum_{j=1}^\infty
        \frac{\text{Var}(\psi_N(j))}{x^2}
        \le
        \frac{1}{x^2} \sum_{j=1}^\infty 2 \left(\frac{\sqrt{2}}{\sqrt{2}+ \frac{\varepsilon}{2}}\right)^{2j}
        \le
        \frac{c_2}{x^2}
        \numberthis\label{eq: psi finite}
    \end{align*}
    for some $c_2 = c_2(\varepsilon) >0$.  
    Thus the probability in \eqref{eq: sums uniform order} 
    is bounded from above by
    \begin{align*}
        \mathbb{P}\left(
            \sum_{j=0}^\infty c_1 q^j |\psi_N(j)| \ge z
        \right)
        \le & \
        \mathbb{P}\left(
            \sup_{j} |\psi_N(j)| \ge \frac{z}{2 c_1 \sum_{j=0}^\infty q^j}
        \right)
        \stackrel{\eqref{eq: psi finite}}{\le}
        \frac{4 c_1^2 c_2}{(1-q)^2} \frac{1}{z^2}
    \end{align*}
    for all $N$. Taking the limts $N\to\infty$ then $z\to\infty$ completes the proof.
\end{proof}
\bigskip

The following lemma shows that $\Lambda_N^{(k)}(l)$ from \eqref{eq:s_lambda_u_decomp_o} is of order $O_{\mathbb{P}}(1)$ for fixed $l$, and the suboptimal but sufficient bound $O_{\mathbb{P}}(N^{\frac{2}{5}})$ uniformly in $l$.

\begin{lemma}\label{lem:lambda_N_UB}
    For all $k$ and $l$ it holds that $|\Lambda_{N}^{(k)}(l)|=O_{\mathbb{P}}(1)$, and $\sup_{l \ge \sqrt{2}+\varepsilon}|\Lambda_{N}^{(k)}(l)|=O_{\mathbb{P}}(N^{\frac{2}{5}})$ for all $\varepsilon>0$.
\end{lemma}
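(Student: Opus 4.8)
The plan is to recognize $\Lambda_N^{(k)}(l)$ as a centered linear eigenvalue statistic and then invoke the estimates of Lemma~\ref{lem: G,H,I} together with the central limit theorem of Lemma~\ref{lem: ev fluctuation}. Differentiating the defining expression $\Lambda_N(l)=\sum_{i=1}^N\frac{1}{l-\lambda_i}-Ns(l)$ $k$ times and using \eqref{eq: k-th derivative of s_mu(l)} gives
\[
\Lambda_N^{(k)}(l)
=(-1)^k k!\left(\sum_{i=1}^N\frac{1}{(l-\lambda_i)^{k+1}}-N\int_{-\sqrt{2}}^{\sqrt{2}}\frac{\mu_{\rm{sc}}(dx)}{(l-x)^{k+1}}\right)
=N\left(\HNk{k}{\lambda}{l}-\INk{k}{l}\right),
\]
i.e.\ $\Lambda_N^{(k)}(l)=\sum_i w(\lambda_i)-N\int w\,d\mu_{\rm{sc}}$ with $w(x)=(-1)^k k!\,(l-x)^{-(k+1)}$. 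For the pointwise claim, fix $l>\sqrt{2}$; then $w$ is smooth, in particular $C^1$, on $[-\sqrt{2}-\varepsilon',\sqrt{2}+\varepsilon']$ for any $\varepsilon'<l-\sqrt{2}$, so Lemma~\ref{lem: ev fluctuation} yields $\Lambda_N^{(k)}(l)\overset{d}{\to}\mathcal{N}(m(w),v(w))$. A sequence converging in distribution is stochastically bounded, so $|\Lambda_N^{(k)}(l)|=O_{\mathbb{P}}(1)$ (the quantity being well defined with probability tending to one since $\lambda_N\to\sqrt{2}<l$).

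For the uniform bound I would split, using the identity above,
\[
\Lambda_N^{(k)}(l)=N\left(\HNk{k}{\lambda}{l}-\HNk{k}{\theta}{l}\right)+N\left(\HNk{k}{\theta}{l}-\INk{k}{l}\right),
\]
and control the two terms by Lemma~\ref{lem: G,H,I}. The deterministic estimate \eqref{eq: Hk Ik estimate} bounds the second term by $N\cdot O(1/N)=O(1)$, uniformly in $l\ge\sqrt{2}+\varepsilon$. For the first term, the proof of \eqref{eq: Hk lambda theta identities} (which uses the rigidity Lemma~\ref{lem: rigidity of eigenvalues} through Lemma~\ref{lem: sum eval class loc}) produces, for every $\delta>0$, an event of probability tending to one on which $|\HNk{k}{\lambda}{l}-\HNk{k}{\theta}{l}|\le C_{k,\varepsilon}N^{-2/3+\delta}$ simultaneously for all $l\ge\sqrt{2}+\varepsilon$; hence $N|\HNk{k}{\lambda}{l}-\HNk{k}{\theta}{l}|\le C_{k,\varepsilon}N^{1/3+\delta}$ uniformly in $l$. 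Choosing $\delta<\tfrac{1}{15}$ makes $N^{1/3+\delta}=o(N^{2/5})$, and combining the two bounds gives $\sup_{l\ge\sqrt{2}+\varepsilon}|\Lambda_N^{(k)}(l)|=O_{\mathbb{P}}(N^{1/3+\delta})=O_{\mathbb{P}}(N^{2/5})$.

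There is no genuine obstacle here: both parts follow once one observes $\Lambda_N^{(k)}(l)=N(\HNk{k}{\lambda}{l}-\INk{k}{l})$ and quotes the earlier lemmas. The only point deserving attention is that the uniform estimate must hold over the \emph{unbounded} range $l\ge\sqrt{2}+\varepsilon$ rather than over a compact interval; but this is exactly the form in which Lemma~\ref{lem: G,H,I} is stated, its bounds being valid for all such $l$ on the high-probability event $\{\lambda_N\le\sqrt{2}+\varepsilon/2\}$, so nothing new is needed. Using the $\min\{k^{-1/3},(N-k)^{-1/3}\}$ refinement in Lemma~\ref{lem: rigidity of eigenvalues} one could even sharpen $N^{2/5}$ to $N^{\delta}$ for arbitrary $\delta>0$, but the cruder bound is all that is used in the subsequent fluctuation analysis.
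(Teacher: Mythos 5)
Your proposal is correct and follows essentially the same approach as the paper: for the pointwise bound you invoke Lemma~\ref{lem: ev fluctuation} directly, and for the uniform bound you write $\Lambda_N^{(k)}(l)=N(\HNk{k}{\lambda}{l}-\HNk{k}{\theta}{l})+N(\HNk{k}{\theta}{l}-\INk{k}{l})$ and control the two terms by rigidity (via Lemma~\ref{lem: G,H,I}) and by Lemma~\ref{lem: sums} respectively, exactly as the paper does. The only cosmetic difference is that the paper re-derives the mean-value-theorem bound $N\,|w^{(k+1)}|_\infty\sup_i|\lambda_i-\theta_{i/N}|$ inline rather than citing \eqref{eq: Hk lambda theta identities}; your observation that the $\min\{k^{-1/3},(N-k)^{-1/3}\}$ refinement would sharpen the exponent to $N^\delta$ is also correct, though neither you nor the paper needs it.
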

\begin{proof}
    Lemma \ref{lem: ev fluctuation} implies that $|\Lambda_{N}^{(k)}(l)|=O_{\mathbb{P}}(1)$. \\
    Let $w(l,x) = \frac{1}{l-x}$ and let $w^{(k)}$ denote the $k$-th derivative in $l$.
    It holds that
    \begin{equation}\label{eq: Lambda rough decomposition}
        \Lambda_{N}^{(k)}(l) = 
        \left(\sum_{i=1}^{N}w^{(k)}\left(l,\lambda_i\right)-\sum_{i=1}^{N}w^{(k)}\left(l,\theta_{i/N}\right)\right)
        +
        \left(\sum_{i=1}^{N}w^{(k)}\left(l,\theta_{i/N}\right)-Ns(l)\right),
    \end{equation}
    where the left most term on the r.h.s. is bounded by
    \begin{equation*}
        N |w^{(k+1)}(l) 1_{\{l\ge\sqrt{2}+\varepsilon\}}|_{\infty} \sup_{i=1,...,N}|\lambda_i-\theta_{i/N}|,
    \end{equation*}
    which is of order $O_{\mathbb{P}}(N^{\frac{2}{5}})$ by Lemma \ref{lem: rigidity of eigenvalues}. The right-most term of \eqref{eq: Lambda rough decomposition} is of order $O_{\mathbb{P}}(1)$ by Lemma \ref{lem: sums}.
\end{proof}

In particular we have from \eqref{eq:s_lambda_u_decomp_o} that
\begin{equation}\label{eq:s_lambda_s_sqrt_N}
    s_{\lambda,u}^{(k)}(l)=s^{(k)}(l) + O_{\mathbb{P}}(N^{-1/2})\text{ uniformly in }l \ge \sqrt{2}+\varepsilon,
\end{equation}
for any $\varepsilon>0$ and $k\in\mathbb{N}$.

\subsection{Quadratic expansion and fluctuations of minimax}

We are now ready to expand $h(y,l,\GN{\lambda}{l})$ quadratically around $(\hat{y},\hat{l},s(\hat{l}))$. To formulate the result one needs to take various partial derivatives
of $h$, such as $\partial_{l}\left\{ \{\partial_{g}h\}(y,l,s(l))\right\} |_{(\hat{y},\hat{l})}$.
To keep the typographical size of expressions manageable we define
the shorthand notation
\begin{equation}
h_{\underset{j\text{ times}}{\underbrace{g\ldots g}}}(y,l,s(l))=\left\{ \partial_{g}^{j}h\right\} (y,l,s(l))\label{eq:shorthand_hg}
\end{equation}
for first taking the $g$ derivative $j$ times and
then substituting $s(l)$ for $g$, and 
\begin{equation}
h_{g\ldots g}=h_{g\ldots g}(\hat{y},\hat{l},s(\hat{l}))\text{ for }j\in\{0,1,\ldots\},\label{eq:shorthand_hg_at_hat}
\end{equation}
for in addition substituting $(\hat{y},\hat{l})$ for $(y,l)$ at
the end. Furthermore for $V=\{l\},V=\{y\}$ or $V=\left\{ l,y\right\} $
the notation 
\begin{equation}
h_{V,g\ldots g}=\nabla_{V}\left\{ h_{g\ldots g}(y,l,s(l))\right\} |_{(\hat{y},\hat{l})}\in\mathbb{R}^{\left|V\right|},\label{eq:shorthand_gradient}
\end{equation}
is the gradient (viewed as a column vector) in some combination of
$l$ and $y$ \emph{after} taking $g$ derivatives and substituting
$s(l)$, evaluated at $(\hat{y},\hat{l})$.
Lastly for $V,V'=\{l\},\{y\}$ or $\left\{ y,l\right\} $
\begin{equation}
h_{V',V,g\ldots g}=\nabla_{V'}\nabla_{V}\left\{ h_{g\ldots g}(y,l,s(l))\right\} |_{(\hat{y},\hat{l})}\in\mathbb{R}^{\left|V\right|\times\left|V'\right|},\label{eq:shorthand_mixed_deriv}
\end{equation}
is a matrix of mixed derivatives in $y,l$ obtained in the same way.
Then e.g. $h_{l,g}=h_{\left\{ l\right\} ,g}=\partial_{l}\{\{\partial_{g}h\}(y,l,s(l))\}|_{(\hat{y},\hat{l})}$,
or $h_{y}=h_{\{y\}}=\left\{ \nabla_{y}h\right\} (\hat{y},\hat{l},s(\hat{l}))\in\mathbb{R}^{n}$
or $h_{y,g}=h_{\{y\},g}=\left\{ \nabla_{y}\partial_{g}\right\} h(\hat{y},\hat{l},s(\hat{l}))\in\mathbb{R}^{n}$.
In the statement and proof below $h,h_{g},h_{gg},h_{l,g}\in\mathbb{R}$,
$h_{\{y,l\},g}\in\mathbb{R}^{n+1}$ (column vector), $h_{y,g}\in\mathbb{R}^{n}$
(column vector) and $h_{\{y,l\},\{y,l\}}\in\mathbb{R}^{(n+1)\times(n+1)}$
(matrix) appear.

Similarly, we write for short
\begin{equation}\label{eq:U_N_lambda_N_at_hatl}
    W_N^{(k)} = W_N^{(k)}(\hat{l}) \quad \text{ and } \quad \Lambda_N^{(k)} = \Lambda_N^{(k)}(\hat{l}) \quad \text{ for } \quad k \in \mathbb{N}.
\end{equation}

We now state the quadratic expansion.

\begin{lemma}\label{lem:h_quad_expansion}
    Let $h,\mathcal{Y},\mathcal{L}$
    be as in \eqref{eq:YLcompact}-\eqref{eq:Hessynegdef}. Writing $\Delta=(y-\hat{y},        l-\hat{l})^T \in \mathbb{R}^{n+1}$ (a column vector)   
    it holds that
    \begin{equation}\label{eq:h_quad_expansion}
        h(y,l,s_{\lambda,u}(l)) = p_N(\Delta) + O_{\mathbb{P}}\left(\left|\Delta\right|^{3}\right) + o_{\mathbb{P}}(N^{-1}),
    \end{equation}
    uniformly in all $(y,l)\in\mathcal{Y}\times\mathcal{L}$, for the random quadratic
    \begin{equation}\label{eq:p_N}
        p_{N}(\Delta)=h+\frac{h_{g}}{\sqrt{N}}A_{N}+\frac{1}{N}C_{N}+\frac{1}{\sqrt{N}}\Delta\cdot V_{N}+\frac{1}{2}\Delta^{T}D\Delta,
    \end{equation}
    where the
    sequences $C_{N},V_N$ of random variables are stochastically bounded and given by
    \begin{equation}\label{eq:delta_v_D}
        C_{N}=\Lambda_N h_{g}+\frac{W_{N}^{2}}{2}h_{gg},
        \quad
        V_{N}=\left(\begin{matrix}h_{y,g} & 0\\
        h_{l,g} & h_{g}
        \end{matrix}\right)\left(\begin{matrix}W_{N}\\
        W_{N}^{'}
        \end{matrix}\right)\in\mathbb{R}^{n+1},\quad D=h_{\{y,l\},\{y,l\}}\in\mathbb{R}^{(n+1)\times(n+1)}.
    \end{equation}
\end{lemma}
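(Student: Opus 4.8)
The plan is a two-stage Taylor expansion of $h(y,l,s_{\lambda,u}(l))$: first in the last (``$g$'') slot of $h$ around $s(l)$, using the decomposition \eqref{eq:s_lambda_u_decomp_o} of $s_{\lambda,u}^{(k)}$ together with the uniform bounds of Lemmas \ref{lem:s_lambda_u_exp_rest_term}, \ref{lem:U_N_UB} and \ref{lem:lambda_N_UB}; and then in $(y,l)$ around $(\hat y,\hat l)$, using the optimality relations \eqref{eq:l_unique_max}, \eqref{eq:y_unique_max} and Lemma \ref{lem: derivatives of B(y)}. Everything is done on an event of probability tending to one on which $s_{\lambda,u}(l)\in\mathcal{G}$ for all $l\in\mathcal{L}$; such an event is available since $s(\mathcal{L})$ is a compact subset of $\mathcal{G}^{o}$ and $s_{\lambda,u}(l)=s(l)+O_{\mathbb{P}}(N^{-1/2})$ uniformly on $\mathcal{L}$ by \eqref{eq:s_lambda_s_sqrt_N}.

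\textbf{Stage 1.} Writing $\delta(l):=s_{\lambda,u}(l)-s(l)=\tfrac{1}{\sqrt N}W_{N}(l)+\tfrac1N\Lambda_{N}(l)+o_{\mathbb{P}}(N^{-1})$ uniformly (by \eqref{eq:s_lambda_u_decomp_o}) and expanding $g\mapsto h(y,l,g)$ to third order around $s(l)$ (with uniformly bounded remainder, $h$ being $C^{3}$ on the compact $\mathcal{Y}\times\mathcal{L}\times\mathcal{G}$), then using $\sup_l|W_N(l)|=O_{\mathbb{P}}(1)$ and $\sup_l|\Lambda_N(l)|=O_{\mathbb{P}}(N^{2/5})$ (so that $\delta(l)^{2}=\tfrac1N W_N(l)^2+o_{\mathbb{P}}(N^{-1})$ and $\delta(l)^{3}=o_{\mathbb{P}}(N^{-1})$ uniformly) together with the boundedness of $h_g(\cdot),h_{gg}(\cdot)$, one obtains, uniformly in $(y,l)\in\mathcal{Y}\times\mathcal{L}$,
\begin{equation*}
h(y,l,s_{\lambda,u}(l))=\underbrace{h(y,l,s(l))}_{(\mathrm{I})}+\underbrace{\tfrac{h_{g}(y,l,s(l))}{\sqrt N}W_{N}(l)}_{(\mathrm{II})}+\underbrace{\tfrac{h_{g}(y,l,s(l))}{N}\Lambda_{N}(l)}_{(\mathrm{III})}+\underbrace{\tfrac{h_{gg}(y,l,s(l))}{2N}W_{N}(l)^{2}}_{(\mathrm{IV})}+o_{\mathbb{P}}(N^{-1}).
\end{equation*}

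\textbf{Stage 2.} Expand each of $(\mathrm{I})$--$(\mathrm{IV})$ in $\Delta=(y-\hat y,l-\hat l)^{T}$ around $(\hat y,\hat l)$. For $(\mathrm{I})$ the $(y,l)$-gradient of $h(y,l,s(l))$ vanishes at $(\hat y,\hat l)$: its $l$-component is $\partial_l\{h(\hat y,l,s(l))\}|_{\hat l}=0$ by \eqref{eq:l_unique_max}, and its $y$-component is $h_y=\nabla_y h(\hat y,\hat l,s(\hat l))=\nabla\mathcal{B}(\hat y)=0$ by Lemma \ref{lem: derivatives of B(y)} and \eqref{eq:y_unique_max}; hence a second-order Taylor expansion gives $(\mathrm{I})=h+\tfrac12\Delta^{T}D\Delta+O(|\Delta|^{3})$ with $D=h_{\{y,l\},\{y,l\}}$. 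For $(\mathrm{II})$ a first-order Taylor expansion in $(y,l)$ with Lagrange remainder gives the $\Delta^{0}$ term $\tfrac{h_g}{\sqrt N}W_{N}$, the $\Delta^{1}$ term $\tfrac1{\sqrt N}\Delta\cdot V_{N}$ with $V_{N}=(W_{N}h_{y,g},\,W_{N}h_{l,g}+h_g W_{N}')^{T}$ as in \eqref{eq:delta_v_D} (the $(y,l)$-gradient of $h_g(y,l,s(l))W_N(l)$ being exactly that), and a remainder $O_{\mathbb{P}}(N^{-1/2}|\Delta|^{2})$, the relevant Hessian involving only $W_N^{(k)}$, $k\le2$, which are $O_{\mathbb{P}}(1)$ uniformly. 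For $(\mathrm{III})$ a first-order Taylor expansion gives the $\Delta^{0}$ term $\tfrac1N h_g\Lambda_N$, a $\Delta^{1}$ term of size $O_{\mathbb{P}}(N^{-1}|\Delta|)$ (its coefficient involves $\Lambda_N^{(k)}$, $k\le1$, at the \emph{fixed} point $\hat l$, hence is $O_{\mathbb{P}}(1)$), and a Hessian remainder $O_{\mathbb{P}}(N^{-3/5}|\Delta|^{2})$ (the Hessian now involving $\Lambda_N^{(k)}$, $k\le2$, bounded only by $O_{\mathbb{P}}(N^{2/5})$ uniformly). For $(\mathrm{IV})$, which is only $C^{1}$ in $(y,l)$ under the $C^{3}$-hypothesis on $h$, a zeroth-order Taylor expansion gives the $\Delta^{0}$ term $\tfrac1{2N}h_{gg}W_N^{2}$ with gradient remainder $O_{\mathbb{P}}(N^{-1}|\Delta|)$. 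Collecting the $\Delta^{0}$ terms of orders $N^{-1/2}$ and $N^{-1}$ produces $\tfrac{h_g}{\sqrt N}A_N$ with $A_N=W_N$ and $\tfrac1N C_N$ with $C_N=\Lambda_N h_g+\tfrac{W_N^{2}}{2}h_{gg}$; together with $D$ and $V_N$ this is exactly $p_N(\Delta)$.

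\textbf{Error control (the main obstacle).} It remains to show that all discarded terms are $O_{\mathbb{P}}(|\Delta|^{3})+o_{\mathbb{P}}(N^{-1})$ uniformly in $(y,l)$; these are of the forms $O(|\Delta|^{3})$, $O_{\mathbb{P}}(N^{-1/2}|\Delta|^{2})$, $O_{\mathbb{P}}(N^{-3/5}|\Delta|^{2})$, $O_{\mathbb{P}}(N^{-1}|\Delta|)$ and $o_{\mathbb{P}}(N^{-1})$. For each of the middle three, split on whether $|\Delta|\ge N^{-\gamma}$ or $|\Delta|<N^{-\gamma}$ with $\gamma=\tfrac12,\tfrac3{10},\tfrac12$ respectively: in the first case the term is $\le|\Delta|^{3}$, in the second at most $N^{-3/2},N^{-6/5},N^{-3/2}$ respectively, all $o(N^{-1})$. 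I expect this to be the delicate step, and it dictates the structure above: the uniform rigidity bound on $\Lambda_N^{(k)}(l)$ is only $O_{\mathbb{P}}(N^{2/5})$, so a crude comparison of $(\mathrm{III})$ with $\tfrac1N h_g\Lambda_N$ would leave an error $O_{\mathbb{P}}(N^{-3/5}|\Delta|)$, which does \emph{not} fit inside $O_{\mathbb{P}}(|\Delta|^{3})+o_{\mathbb{P}}(N^{-1})$. One is therefore forced to expand $(\mathrm{III})$ (and $(\mathrm{IV})$) to first order with Lagrange remainder so that the $\Delta^{1}$ term -- whose coefficient, being $\Lambda_N,\Lambda_N'$ evaluated at the fixed $\hat l$, is only $O_{\mathbb{P}}(1)$ -- is isolated and absorbed as $O_{\mathbb{P}}(N^{-1}|\Delta|)$, while the weak uniform bound on $\Lambda_N^{(k)}$ then appears multiplied by a full factor $|\Delta|^{2}$. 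Everything else reduces to routine bookkeeping with the boundedness of the derivatives of $h$ and $s$ and the uniform bounds of Lemmas \ref{lem:U_N_UB}, \ref{lem:lambda_N_UB} and \ref{lem:s_lambda_u_exp_rest_term}.
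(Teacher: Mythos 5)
Your proof is correct and follows essentially the same route as the paper: a two‑stage Taylor expansion (first in the $g$-slot around $s(l)$ using \eqref{eq:s_lambda_u_decomp_o}, then in $(y,l)$ around $(\hat y,\hat l)$ using $h_{\{y,l\}}=0$), with the key observation — identical to the paper's — that $\Lambda_N(l)$ must be expanded to first order so its $O_{\mathbb{P}}(1)$ derivative at the fixed $\hat l$ is isolated, leaving the weak uniform bound $O_{\mathbb{P}}(N^{2/5})$ multiplied by a full $|\Delta|^2$. Your explicit case-split on $|\Delta|\gtrless N^{-\gamma}$ is just a spelled-out version of the paper's terse remark that $|\Delta|^a(N^{-1/2})^b=O(|\Delta|^3+N^{-3/2})$.
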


\begin{proof}
    We start by Taylor expanding in $s_{\lambda,u}(l)-s(l)$ and obtain
    \begin{equation}\label{eq:h_exp_in_g}
    \begin{array}{rcl}
        h(y,l,s_{\lambda,u}(l))
        &=&h(y,l,s(l))+\partial_{g}h(y,l,s(l))(s_{\lambda,u}(l)-s(l))
        \\
        &&+\frac{1}{2}\partial_{gg}h(y,l,s(l))\left(s_{\lambda,u}(l)-s(l)\right)^{2}+O\left(\left|s_{\lambda,u}(l)-s(l)\right|^{3}\right),
    \end{array}
    \end{equation}
    where we used \eqref{eq:hdiff} and therefore the constant in the $O$ term depends on $h,\mathcal{Y},\mathcal{L}$
    (as in several estimates below). Using \eqref{eq:s_lambda_u_decomp_o} and Lemmas \ref{lem:U_N_UB} and \ref{lem:lambda_N_UB} it follows that
    \begin{equation}\label{eq:h_exp_in_g_A_B}
        \begin{array}{rcl}
            h(y,l,s_{\lambda,u}(l))&=&
            h(y,l,s(l))+\partial_{g}h(y,l,s(l))\left(\frac{1}{\sqrt{N}}W_{N}(l)+\frac{1}{N}\Lambda_{N}(l)\right)
            \\
            &&+\frac{1}{2}\partial_{gg}h(y,l,s(l))\frac{1}{N}W_{N}(l)^{2}+o_{\mathbb{P}}\left(N^{-1}\right),
        \end{array}
    \end{equation}
    uniformly in $l\in\mathcal{L}$.
    
    Next we Taylor expand $h(y,l,s(l))$ around $(\hat{y},\hat{l})$,
    giving with the shorthand notation (\ref{eq:shorthand_hg_at_hat}) 
    \begin{equation}\label{eq:h_exp_in_y_l}
        h(y,l,s(l))=h+h_{\{y,l\}}\cdot\Delta+\frac{1}{2}\Delta^{T}h_{\{y,l\},\{y,l\}}\Delta+O(\left|\Delta\right|^{3}).
    \end{equation}
    Note that $h_l = \partial_l \{ h(y,l,s(l)) \}|_{y=\hat{y},l=\hat{l}} = 0$ by \eqref{eq:l_unique_max}, and $h_y = \nabla\mathcal{B}(\hat{y}) =0$ by \eqref{eq:y_unique_max} and \eqref{eq: gradient D(y) in h terms}, so 
    \begin{equation}\label{eq:h_grad_at_hats}
        h_{\{y,l\}} = (h_y, h_l)=0.
    \end{equation}
    Similarly Taylor
    expanding $\partial_{g}h(y,s(l))$ and $\partial_{gg}h(y,s(l))$ around
    $(\hat{y},\hat{l})$ gives 
    \begin{equation}\label{eq:hg_exp_in_y_l}
        \partial_{g}h(y,l,s(l))=h_{g}+h_{\{y,l\},g}\cdot\Delta+O(\left|\Delta\right|^{2})\text{ and }\partial_{gg}h(y,l,s(l))=h_{gg}+O(\left|\Delta\right|).
    \end{equation}
    Finally Taylor expanding $W_{N}(l)$ around $\hat{l}$ and using Lemma \ref{lem:U_N_UB} gives that
    \begin{equation}
        W_{N}(l) = W_{N} + W_{N}' \Delta_l + {{O}}_{\mathbb{P}}(|\Delta^2|),
    \end{equation}
    (recall \eqref{eq:U_N_lambda_N_at_hatl}) uniformly in $l\in\mathcal{L}$, and using Lemma \ref{lem:lambda_N_UB} that 
        $\Lambda_{N}(l) = \Lambda_{N}+O_{\mathbb{P}}(\left|\Delta\right| + \left|\Delta\right|^2 N^{\frac{2}{5} })$, so that
    \begin{equation}\label{eq:B_N_exp}
        \Lambda_{N}(l)
        =
        \Lambda_{N}+O_{\mathbb{P}}(\left|\Delta\right| + N \left|\Delta\right|^2 N^{-\frac{1}{2} })
    \end{equation}
    
    Combining (\ref{eq:h_exp_in_g_A_B})-(\ref{eq:B_N_exp}) and noting
    that $\left|\Delta\right|^{a}(N^{-1/2})^{b}=O(\left|\Delta\right|^{3}+N^{-3/2})$
    for $a+b\le3$ we obtain (\ref{eq:h_quad_expansion}).
\end{proof}

\bigskip

The next lemma computes the minimax of $p_N(\Delta)$ from \eqref{eq:p_N}.

\begin{lemma}\label{lem:max_fluct_quad}
    For any $h$ satisfying \eqref{eq:YLcompact}-\eqref{eq:Hessynegdef}
    there exist constants $E_{1},E_{2}$ and a stochastically bounded
    sequence of random variables $F_{N}$ such that $p_N$ from \eqref{eq:p_N} a.s. satisfies   \begin{equation}\label{eq:max_fluct_quad}  
        \sup_{y\in\mathbb{R}^n}\inf_{l\in\mathbb{R}}p_N(\Delta_y,\Delta_l) =E_{1}+\frac{1}{\sqrt{N}}E_{2}W_{N}+\frac{1}{N}F_{N}.
    \end{equation}
    
    Furthermore, $E_{1},E_{2},F_{N}$ are explicit in terms of the derivatives
    of $h$ at $\hat{y},\hat{l}$ and equal
    \begin{equation}\label{eq:E1E2E3}
        E_{1}=h=h(\hat{y},s(\hat{l}))=\mathcal{B}(\hat{y}),\quad\quad E_{2}=h_{g},\quad\quad E_{3}=h_{gg},\quad(E_{1},E_{2},E_{3}\in\mathbb{R}),
    \end{equation}
    and
    \begin{equation}\label{eq:FN}
        F_{N}=E_{2}\Lambda_{N}-\frac{1}{2}\left(\begin{matrix}W_{N}\\
        W_{N}^{'}
        \end{matrix}\right)^{T}G\left(\begin{matrix}W_{N}\\
        W_{N}^{'}
        \end{matrix}\right)\in\mathbb{R},
    \end{equation}
    where
    \begin{equation}\label{eq:GHJ}
        G=H-\left(\begin{matrix}E_{3} & 0\\
        0 & 0
        \end{matrix}\right)\in\mathbb{R}^{2\times2},\quad H=K^{T}JK+\frac{ww^{T}}{h_{l,l}}\in\mathbb{R}^{2\times2},\quad J= \nabla^{2}\mathcal{B}(\hat{y})\in\mathbb{R}^{n\times n},
    \end{equation}
    \begin{equation}\label{eq:KLw}
        K=L-\frac{h_{l,y}w^{T}}{h_{l,l}}\in\mathbb{R}^{n\times2},\quad w=\left(\begin{matrix}h_{l,g}\\
        E_{2}
        \end{matrix}\right)=\left(\begin{matrix}h_{l,g}\\
        h_{g}
        \end{matrix}\right)\in\mathbb{R}^{2\times1},\quad L=\left(\begin{matrix}h_{y,g} & 0\end{matrix}\right)\in\mathbb{R}^{n\times2},
    \end{equation}
    where we view $w$ as a column vector, and recall from (\ref{eq:shorthand_hg})-(\ref{eq:shorthand_gradient})
    that $h,h_{g},h_{gg},h_{l,l},h_{l,g}\in\mathbb{R}$ are scalars, that
    $h_{y,g},h_{l,y}\in\mathbb{R}^{n\times1}$ are column vectors and
    $h_{y,l}\in\mathbb{R}^{1\times n}$ is a row vector.
\end{lemma}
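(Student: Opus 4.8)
The plan is to evaluate the left-hand side of \eqref{eq:max_fluct_quad} as a purely deterministic, finite-dimensional minimax: for every realization, $p_N$ is a quadratic in $\Delta=(\Delta_y,\Delta_l)\in\mathbb{R}^{n}\times\mathbb{R}$ whose randomness enters only through the coefficients $W_N,W_N',\Lambda_N$, which are stochastically bounded by Lemmas \ref{lem:U_N_UB} and \ref{lem:lambda_N_UB}. I would compute $\sup_{\Delta_y}\inf_{\Delta_l}p_N$ by completing the square twice, first in the inner variable $\Delta_l$ and then in $\Delta_y$, and finally collect terms according to their order in $N^{-1/2}$.

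Write $D=h_{\{y,l\},\{y,l\}}$ in block form, with diagonal blocks $h_{y,y}\in\mathbb{R}^{n\times n}$ and $h_{l,l}\in\mathbb{R}$ and off-diagonal block $h_{l,y}\in\mathbb{R}^{n}$. Since $h_{l,l}>0$ by \eqref{eq:hllpos}, the map $\Delta_l\mapsto p_N(\Delta_y,\Delta_l)$ is a strictly convex parabola, minimized at the affine point $\Delta_l^{*}(\Delta_y)=-h_{l,l}^{-1}\bigl(h_{l,y}^{T}\Delta_y+N^{-1/2}(V_N)_l\bigr)$, where from \eqref{eq:delta_v_D} the $\Delta_l$-component of $V_N$ is $(V_N)_l=h_{l,g}W_N+h_g W_N'$. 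Substituting back, $\inf_{\Delta_l}p_N$ is again a quadratic in $\Delta_y$ whose quadratic part has matrix the Schur complement $h_{y,y}-h_{l,l}^{-1}h_{l,y}h_{l,y}^{T}$, which by formula \eqref{eq: hessian D(y) in h terms} of Lemma \ref{lem: derivatives of B(y)} equals $\nabla^{2}\mathcal{B}(\hat{y})$ and is therefore negative definite by \eqref{eq:Hessynegdef}. Hence the outer supremum over $\Delta_y\in\mathbb{R}^{n}$ is finite and attained, and completing the square once more gives
\[
\sup_{\Delta_y}\inf_{\Delta_l}p_N
= h+\frac{h_g}{\sqrt N}A_N+\frac1N C_N
-\frac{1}{2N}\tilde V_N^{T}\bigl(\nabla^{2}\mathcal{B}(\hat{y})\bigr)^{-1}\tilde V_N-\frac{1}{2N\,h_{l,l}}(V_N)_l^{2},
\]
where $\tilde V_N:=(V_N)_y-h_{l,l}^{-1}(V_N)_l\,h_{l,y}$ with $(V_N)_y=h_{y,g}W_N$, and the $\Delta$-independent part $h+\tfrac{h_g}{\sqrt N}A_N+\tfrac1N C_N$ of $p_N$ has simply passed through the optimization. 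The linear term $N^{-1/2}\Delta\cdot V_N$ contributes nothing at order $N^{-1/2}$ precisely because the gradient $h_{\{y,l\}}$ of $h(y,l,s(l))$ at $(\hat{y},\hat{l})$ vanishes, i.e. $h_l=0$ by \eqref{eq:l_unique_max} and $h_y=\nabla\mathcal{B}(\hat{y})=0$ by \eqref{eq:y_unique_max} and \eqref{eq: gradient D(y) in h terms}, as recorded in \eqref{eq:h_grad_at_hats}; it enters only quadratically, at order $N^{-1}$.

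From this one reads off $E_1=h=\mathcal{B}(\hat{y})$, $E_2=h_g$ (so $A_N=W_N$), and, recalling $C_N=\Lambda_N h_g+\tfrac12 W_N^{2}h_{gg}$,
\[
F_N=\Lambda_N h_g+\tfrac12 W_N^{2}h_{gg}-\tfrac{1}{2h_{l,l}}(V_N)_l^{2}-\tfrac12\tilde V_N^{T}\bigl(\nabla^{2}\mathcal{B}(\hat{y})\bigr)^{-1}\tilde V_N,
\]
which is a fixed quadratic form in the stochastically bounded $W_N,W_N',\Lambda_N$, hence stochastically bounded. It remains to match this expression with the stated matrices $G,K,L,w$ of \eqref{eq:GHJ}--\eqref{eq:KLw}: from the definitions of $L,w,K$ and of $V_N$ in \eqref{eq:delta_v_D}, one checks directly that $(V_N)_l=w^{T}(W_N,W_N')^{T}$ and $\tilde V_N=\bigl(L-h_{l,l}^{-1}h_{l,y}w^{T}\bigr)(W_N,W_N')^{T}=K\,(W_N,W_N')^{T}$; substituting these into the displayed formula for $F_N$ and using $E_3=h_{gg}$ yields $F_N=E_2\Lambda_N-\tfrac12(W_N,W_N')\,G\,(W_N,W_N')^{T}$ with
\[
G=K^{T}\bigl(\nabla^{2}\mathcal{B}(\hat{y})\bigr)^{-1}K+\frac{1}{h_{l,l}}ww^{T}-\begin{pmatrix}E_3 & 0\\ 0 & 0\end{pmatrix},
\]
which is \eqref{eq:FN}. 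The only real work here is this algebraic bookkeeping: keeping the row/column/block conventions of the shorthand \eqref{eq:shorthand_hg}--\eqref{eq:shorthand_mixed_deriv} straight, and invoking \eqref{eq: hessian D(y) in h terms} for the Schur-complement identity $h_{y,y}-h_{l,l}^{-1}h_{l,y}h_{l,y}^{T}=\nabla^2\mathcal{B}(\hat y)$. There is no analytic obstacle, since the entire computation is deterministic and finite-dimensional; the negative definiteness of $\nabla^2\mathcal{B}(\hat y)$ and positivity of $h_{l,l}$ are exactly what make each stage of the square-completion well posed.
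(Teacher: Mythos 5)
Your proof is correct and follows essentially the same two-stage square-completion that the paper uses: minimize the quadratic $p_N$ in $\Delta_l$ (using $h_{l,l}>0$), recognize the resulting Schur complement as $\nabla^2\mathcal{B}(\hat y)$ via Lemma~\ref{lem: derivatives of B(y)}, then maximize in $\Delta_y$, and finally match the coefficients to $K,L,w,G$. Note that your final formula $G=K^{T}\bigl(\nabla^{2}\mathcal{B}(\hat{y})\bigr)^{-1}K+\tfrac{1}{h_{l,l}}ww^{T}-\bigl(\begin{smallmatrix}E_3&0\\0&0\end{smallmatrix}\bigr)$ has $J^{-1}$ where the displayed \eqref{eq:GHJ} writes $K^TJK$, which is a typo in the lemma statement; the paper's own proof and the downstream Theorems~\ref{thm: sphere}(b) and~\ref{thm: ball}(b) use $K^TJ^{-1}K$, in agreement with what you derived.
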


\begin{proof}
    The expressions $D$
    and $V_N$ from (\ref{eq:delta_v_D}) can be written as
    \begin{equation}
    D=h_{\{y,l\},\{y,l\}}=\left(\begin{matrix}h_{y,y} & h_{l,y}\\
    h_{y,l} & h_{l,l}
    \end{matrix}\right)\text{ and }V_{N}=\left(\begin{matrix}V_{y,N}\\
    V_{l,N}
    \end{matrix}\right)\overset{\eqref{eq:KLw}}{=}\underset{\in\mathbb{R}^{(n+1)\times2}}{\underbrace{\left(\begin{matrix}\begin{array}{c}
    L\end{array}\\
    w^{T}
    \end{matrix}\right)}}\left(\begin{matrix}W_{N}\\
    W_{N}^{'}
    \end{matrix}\right),\label{eq:Dv}
    \end{equation}
     (for $V_{y,N}\in\mathbb{R}^{n}$ and $V_{l,N}\in\mathbb{R}$; note
    that $h_{y,l}=h_{l,y}^{T}$) and $\Delta$ from (\ref{eq:delta_v_D})
    as $\Delta=\left(\begin{matrix}\Delta_{y}\\
    \Delta_{l}
    \end{matrix}\right)$ for $\Delta_{l}\in\mathbb{R}$ and row vector $\Delta_{y}\in\mathbb{R}^{n}$.
    With this notation $p_N$ can be written as
    \[
        p_N(\Delta_{y},\Delta_{l})=X_{N}+\frac{1}{\sqrt{N}}\Delta_{y}\cdot v_{y}+\frac{1}{\sqrt{N}}\Delta_{l}v_{l}+\frac{1}{2}\Delta_{y}^{T}h_{y,y}\Delta_{y}+\Delta_{l}h_{y,l}\Delta_{y}+\frac{1}{2}\Delta_{l}^{2}h_{l,l},
    \]
    for (recalling (\ref{eq:E1E2E3}))
    \begin{equation}\label{eq:XN_def}
        X_{N}=E_{1}+\frac{E_{2}}{\sqrt{N}}W_{N}+\frac{1}{N}C_{N}.
    \end{equation}
    Collecting the terms involving $\Delta_{l}$
    we can furthermore write
    \begin{equation}\label{eq:quad_in_delta}
        p_N(\Delta_{y},\Delta_{l})=X_{N}+\frac{1}{\sqrt{N}}\Delta_{y}\cdot V_{y,l}+\frac{1}{2}\Delta_{y}^{T}h_{y,y}\Delta_{y}+\Delta_{l}\left(\frac{1}{\sqrt{N}}V_{l,N}+\Delta_{y}^{T}h_{l,y}\right)+\frac{1}{2}\Delta_{l}^{2}h_{l,l}.
    \end{equation}
    Recalling that $h_{l,l}$ is positive by \eqref{eq:hllpos} the quadratic
    $\Delta_{l}\to p_N(\Delta_{y},\Delta_{l})$ with $\Delta_{y}$ fixed
    is minimized by 
    \begin{equation}
        \hat{\Delta}_{l}=-\frac{1}{h_{l,l}}\left(\frac{1}{\sqrt{N}}V_{l,N}+\Delta_{y}^{T}h_{l,y}\right),\label{eq:hat_delta_l}
    \end{equation}
    and plugging this into (\ref{eq:quad_in_delta}) gives 
    \begin{equation}\label{eq:minimzed_in_l}
        \begin{array}{lcl}
            p_N(\Delta_{y}):=p_N(\Delta_{y},\hat{\Delta}_{l}) & = & X_{N}+\frac{1}{\sqrt{N}}\Delta_{y}^{T}V_{y,N}+\frac{1}{2}\Delta_{y}^{T}h_{y,y}\Delta_{y}-\frac{1}{2}\frac{1}{h_{l,l}}\left(\frac{1}{\sqrt{N}}V_{l,N}+\Delta_{y}^{T}h_{l,y}\right)^{2}\\
             & = & X_{N}+\frac{1}{\sqrt{N}}\Delta_{y}^{T}\left(V_{y,N}-\frac{1}{h_{l,l}}V_{l,N}h_{l,y}\right)+\frac{1}{2}\Delta_{y}^{T}\left(h_{y,y}-\frac{h_{l,y}h_{l,y}^{T}}{h_{l,l}}\right)\Delta_{y}-\frac{1}{2}\frac{1}{N}\frac{1}{h_{l,l}}V_{l,N}^{2}\\
             & = & X_{N}+\frac{1}{\sqrt{N}}\Delta_{y}^{T}K\left(\begin{matrix}W_{N}\\
            W_{N}^{'}
            \end{matrix}\right)+\frac{1}{2}\Delta_{y}^{T}J\Delta_{y}-\frac{1}{2}\frac{1}{N}\frac{1}{h_{l,l}}\left(\begin{matrix}W_{N}\\
            W_{N}^{'}
            \end{matrix}\right)^{T}ww^{T}\left(\begin{matrix}W_{N}\\
            W_{N}^{'}
            \end{matrix}\right),
        \end{array}
    \end{equation}
    where the last representation follows since by Lemma \ref{lem: derivatives of B(y)}
    \[
        J=h_{y,y}-\frac{h_{l,y}h_{l,y}^{T}}{h_{ll}},
    \]
    and
    \[
        V_{y,N}-\frac{1}{h_{ll}}V_{l,N}h_{l,y}\overset{\eqref{eq:KLw},\eqref{eq:Dv}}{=}\left(L-\frac{h_{l,y}w^{T}}{h_{ll}}\right)\left(\begin{matrix}W_{N}\\
        W_{N}^{'}
        \end{matrix}\right)=K\left(\begin{matrix}W_{N}\\
        W_{N}^{'}
        \end{matrix}\right),
    \]
    and 
    \[
        V_{l,N}^{2}\overset{\eqref{eq:KLw},\eqref{eq:Dv}}{=}\left(w^{T}\left(\begin{matrix}W_{N}\\
        W_{N}^{'}
        \end{matrix}\right)\right)^{2}=\left(\begin{matrix}W_{N}\\
        W_{N}^{'}
        \end{matrix}\right)^{T}ww^{T}\left(\begin{matrix}W_{N}\\
        W_{N}^{'}
        \end{matrix}\right).
    \]
    
    We now maximize $p_N(\Delta_{y})$ in $\Delta_{y}$. Recall that $J$
    is negative definite by assumption. It is easily seen that $p_N(\Delta_{y})$
    is maximized by 
    \begin{equation}\label{eq:hat_delta_y}
        \hat{\Delta}_{y}=-\frac{1}{\sqrt{N}}J^{-1}
        \begin{pmatrix}
        W_{N}\\
        W_{N}^{'}
        \end{pmatrix},
    \end{equation}
    and plugging this in yields
    \begin{equation}
        \begin{array}{ccl}
            p_N(\hat{\Delta}_{y}) & = & \begin{array}{c}
            X_{N}-\frac{1}{2}\frac{1}{N}\left(\begin{matrix}W_{N}\\
            W_{N}^{'}
            \end{matrix}\right)^{T}K^{T}J^{-1}K\left(\begin{matrix}W_{N}\\
            W_{N}^{'}
            \end{matrix}\right)-\frac{1}{2}\frac{1}{N}\frac{1}{h_{ll}}\left(\begin{matrix}W_{N}\\
            W_{N}^{'}
            \end{matrix}\right)^{T}ww^{T}\left(\begin{matrix}W_{N}\\
            W_{N}^{'}
            \end{matrix}\right)\end{array}\\
             & \overset{\eqref{eq:GHJ}}{=} & X_{N}-\frac{1}{N}\frac{1}{2}\left(\begin{matrix}W_{N}\\
            W_{N}^{'}
            \end{matrix}\right)^{T}H\left(\begin{matrix}W_{N}\\
            W_{N}^{'}
            \end{matrix}\right)\\
             & \overset{(*)}{=} & E_{1}+\frac{1}{\sqrt{N}}E_{2}W_{N}+\frac{1}{N}F_{N}\quad\quad(*: \text{ by \eqref{eq:delta_v_D}, \eqref{eq:GHJ}, \eqref{eq:XN_def}})
        \end{array}
    \end{equation}
    Thus have we have proved \eqref{eq:max_fluct_quad}.
\end{proof}
\bigskip

The following lemma shows that we can reduce the optimization region $\mathcal{Y}\times\mathcal{L}$ to a small neighborhood of $(\hat{y},\hat{l})$. Let
\[
    \mathcal{Y}(\varepsilon)=\left\{ y\in\mathcal{Y}:|y-\hat{y}|<\varepsilon\right\} \quad\quad\text{ and }\quad\quad\mathcal{L}(\varepsilon)=\left\{ l\in\mathcal{L}:|l-\hat{l}|<\varepsilon\right\}.
\]
\begin{lemma}\label{lem:localize}
    For all $h$ that satisfy \eqref{eq:YLcompact}-\eqref{eq:Hessynegdef},
    and all $\varepsilon_{1}>0$ there is a $\delta=\delta(\varepsilon_{1})$
    such that if $0<\varepsilon_{2}\le\delta$ then
    \[
        \lim_{N\to\infty}\mathbb{P}\left(\sup_{y\in\mathcal{Y}}\inf_{l\in\mathcal{L}}h(y,l,s_{\lambda,u}(l))=\sup_{y\in\mathcal{Y}(\varepsilon_{2})}\inf_{l\in\mathcal{L}(\varepsilon_{1})}h(y,l,s_{\lambda,u}(l))\right)=1.
    \]
\end{lemma}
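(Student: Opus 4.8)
The statement is a localization/concentration result: the min-max over the full compact sets $\mathcal{Y}\times\mathcal{L}$ agrees with high probability with the min-max over a small neighborhood $\mathcal{Y}(\varepsilon_2)\times\mathcal{L}(\varepsilon_1)$ of the limiting optimizer $(\hat y,\hat l)$. The plan is to prove this in two separate steps — first localize the inner infimum in $l$, then localize the outer supremum in $y$ — and in each step to use uniform convergence of $s_{\lambda,u}(l)\to s(l)$ (which follows from \eqref{eq: IN GN distance o(1)} together with Lemma~\ref{lem:inverse_s_conv} for the inverse, i.e. uniform convergence $h(y,l,s_{\lambda,u}(l))\to h(y,l,s(l))$ on $\mathcal{Y}\times\mathcal{L}$ by \eqref{eq:hdiff} and compactness) combined with the fact that the limiting objects have strict optima there.

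First I would record the uniform-in-$(y,l)$ convergence
$\sup_{y\in\mathcal{Y},l\in\mathcal{L}}|h(y,l,s_{\lambda,u}(l))-h(y,l,s(l))|\overset{\mathbb{P}}{\to}0$,
which holds because $s_{\lambda,u}(l)\to s(l)$ uniformly on $\mathcal{L}$ (so in particular $s_{\lambda,u}(\mathcal{L})\subset\mathcal{G}^{\mathrm o}$ with probability tending to one, using $s(\mathcal{L})\subset\mathcal{G}^{\mathrm o}$ from \eqref{eq:YLcompact}) and $h$ is uniformly continuous on the compact set $\mathcal{Y}\times\mathcal{L}\times\mathcal{G}$. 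Next, localization in $l$: by \eqref{eq:l_unique_max} the function $l\mapsto h(\hat y,l,s(l))$ has a unique minimizer $\hat l$ in $\mathcal{L}$; by \eqref{eq:hllpos} and \eqref{eq:hdiff} it is strictly convex in a neighborhood of $\hat l$, and — via Lemma~\ref{lem: derivatives of B(y)} (or directly the arguments of Lemma~\ref{lem:use_imp_func_thm}) — for $y$ in a small neighborhood $\mathcal{U}$ of $\hat y$ the minimizer $\hat l(y)$ of $l\mapsto h(y,l,s(l))$ over $\mathcal{L}(\varepsilon_1)$ lies in the open set $\mathcal{L}(\varepsilon_1)^{\mathrm o}$, with $\partial_l h(y,l,s(l))<0$ at the left endpoint and $>0$ at the right endpoint of $\mathcal{L}(\varepsilon_1)\cap\mathcal{L}$. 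Since $\partial_l\{h(y,l,s_{\lambda,u}(l))\}\to\partial_l\{h(y,l,s(l))\}$ uniformly — this again follows from \eqref{eq:s_lambda_s_sqrt_N} (which gives $s_{\lambda,u}^{(k)}(l)=s^{(k)}(l)+O_{\mathbb{P}}(N^{-1/2})$ uniformly) and the chain rule — the same sign conditions hold for the random objective with probability tending to one, and by convexity near $\hat l$ the inner infimum over $l\in\mathcal{L}$ is attained in $\mathcal{L}(\varepsilon_1)$ for every $y\in\mathcal{U}$. Outside $\mathcal{U}$ I would use a crude bound: choosing $\varepsilon_1$ small and $\mathcal{U}$ appropriately, $\inf_{l\in\mathcal{L}}h(y,l,s(l))\le\mathcal{B}(y)<\mathcal{B}(\hat y)-\eta$ uniformly for $y\notin\mathcal{U}$ (using \eqref{eq:y_unique_max} and compactness), so the full min-max $\sup_y\inf_l h(y,l,s_{\lambda,u}(l))$ cannot be attained by such $y$ with probability tending to one.

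Then I would do the localization in $y$. Having replaced $\inf_{l\in\mathcal{L}}$ by $\inf_{l\in\mathcal{L}(\varepsilon_1)}$, define $\mathcal{B}_N(y)=\inf_{l\in\mathcal{L}(\varepsilon_1)}h(y,l,s_{\lambda,u}(l))$; by the uniform convergence above, $\mathcal{B}_N(y)\to\mathcal{B}(y)$ uniformly in probability on $\mathcal{U}$ (and on all of $\mathcal{Y}$ after the same reduction). By \eqref{eq:y_unique_max} $\mathcal{B}$ has the unique maximizer $\hat y\in\mathcal{Y}^{\mathrm o}$, so for any $\varepsilon_2>0$ we have $\sup_{y\in\mathcal{Y}\setminus\mathcal{Y}(\varepsilon_2)}\mathcal{B}(y)<\mathcal{B}(\hat y)-\eta'$ for some $\eta'>0$; combined with $\mathcal{B}_N(\hat y)\to\mathcal{B}(\hat y)$ and uniform convergence this forces $\sup_{y\in\mathcal{Y}}\mathcal{B}_N(y)=\sup_{y\in\mathcal{Y}(\varepsilon_2)}\mathcal{B}_N(y)$ with probability tending to one. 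Finally, the quantification: I would run the $l$-step first to fix $\delta_1=\delta_1(\varepsilon_1)$ so that the neighborhood $\mathcal{U}\supseteq\mathcal{Y}(\delta_1)$ works, and then note the $y$-step works for any $\varepsilon_2>0$; setting $\delta(\varepsilon_1)=\delta_1(\varepsilon_1)$ and taking $\varepsilon_2\le\delta$ gives the claim, since $\mathcal{Y}(\varepsilon_2)\subset\mathcal{U}$ ensures the inner infimum over $l\in\mathcal{L}$ already equals that over $l\in\mathcal{L}(\varepsilon_1)$ for all relevant $y$.

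The main obstacle is bookkeeping the order of the two localizations and the dependence of neighborhoods on $\varepsilon_1$: the $l$-localization is only valid for $y$ in a neighborhood $\mathcal{U}$ of $\hat y$ whose size depends on $\varepsilon_1$ (smaller $\varepsilon_1$ forces smaller $\mathcal{U}$, since the sign of $\partial_l h(y,\cdot,s(\cdot))$ at the endpoints of $\mathcal{L}(\varepsilon_1)$ degenerates as $\varepsilon_1\downarrow0$), whereas the $y$-localization to $\mathcal{Y}(\varepsilon_2)$ must be arranged so that $\mathcal{Y}(\varepsilon_2)\subset\mathcal{U}$; this is exactly why the statement has $\delta$ depend on $\varepsilon_1$ and requires $\varepsilon_2\le\delta$. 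Everything else is a routine combination of uniform convergence (from Section~\ref{subsec:fluctuations_of_s_lambda_u}, in particular \eqref{eq:s_lambda_s_sqrt_N}), strict convexity/strict maximality of the limiting objects, and compactness.
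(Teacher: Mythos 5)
Your plan is structurally sound (localize in $l$ for $y$ near $\hat y$, then localize in $y$), and the outer $y$-localization via uniform convergence of $h(y,l,s_{\lambda,u}(l))\to h(y,l,s(l))$ and strict maximality of $\mathcal{B}$ at $\hat y$ matches what the paper does. But the inner $l$-localization has a genuine gap.

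You argue: for $y\in\mathcal{U}$ the derivative $\partial_l h(y,l,s(l))$ is negative at the left endpoint and positive at the right endpoint of $\mathcal{L}(\varepsilon_1)$, and $h$ is strictly convex near $\hat l$; these conditions persist for $s_{\lambda,u}$ with probability tending to one; and you conclude ``by convexity near $\hat l$ the inner infimum over $l\in\mathcal{L}$ is attained in $\mathcal{L}(\varepsilon_1)$.'' This last step does not follow. The endpoint sign conditions and local convexity only ensure that the infimum over $\mathcal{L}(\varepsilon_1)$ is attained at an interior critical point; they say nothing about the values of $h(y,l,s_{\lambda,u}(l))$ for $l\in\mathcal{L}\setminus\mathcal{L}(\varepsilon_1)$, which could a priori undercut that local minimum. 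The assumption \eqref{eq:hllpos} is only local, and \eqref{eq:l_unique_max} gives global uniqueness of the minimizer only at the single parameter value $y=\hat y$, not for $y$ in a neighborhood and not for the perturbed objective. To rule out far-away competitors you need exactly the argument the paper supplies and that your proposal omits: by compactness of $\mathcal{L}\setminus\mathcal{L}(\varepsilon_1)$, continuity of $h$, and \eqref{eq:l_unique_max}, there is a strict gap $\inf_{l\in\mathcal{L}\setminus\mathcal{L}(\varepsilon_1)}h(\hat y,l,s(l))>h(\hat y,\hat l,s(\hat l))$; uniform continuity of $h$ on the compact $\mathcal{Y}\times(\mathcal{L}\setminus\mathcal{L}(\varepsilon_1))\times\mathcal{G}$ propagates the gap to a neighborhood $\mathcal{Y}(\varepsilon_2)$ (this is what fixes $\delta(\varepsilon_1)$); and uniform-in-$(y,l)$ convergence of $h(y,l,s_{\lambda,u}(l))$ to $h(y,l,s(l))$ (Lemma~\ref{lem: GN IN uniformly o(1)} plus \eqref{eq:hdiff}, no need for \eqref{eq:s_lambda_s_sqrt_N} here) transfers the gap to the random objective with probability tending to one.

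A secondary remark: the paper's proof of this particular lemma is purely topological — continuity, compactness, and strict inequality at the unique optimizer — and never needs derivatives, convexity, or the implicit-function lemmas. Your derivative-and-convexity machinery is the right tool for the sharper localization to an $O(N^{-1/2}\log N)$ window in Lemma~\ref{lem:localize_more}, which you are anticipating, but it is both overkill and, as written, insufficient for the coarse statement being proved here. If you add the compactness/strict-gap step for $l\in\mathcal{L}\setminus\mathcal{L}(\varepsilon_1)$, your argument closes; at that point, however, the derivative content becomes redundant.
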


\begin{proof}
    By the continuity of $h$, the compactness of $\mathcal{L}\backslash\mathcal{L}(\varepsilon_{1})$
    and (\ref{eq:l_unique_max}) it holds for any $\varepsilon_{1}>0$
    that
    \begin{equation}
        \inf_{l\in\mathcal{L}\backslash\mathcal{L}(\varepsilon_{1})}h(\hat{y},l,s(l))>h(\hat{y},\hat{l},s(\hat{l})).\label{eq:hl}
    \end{equation}
    Using uniform continuity of $h$ on the compact $\mathcal{Y}\times(\mathcal{L}\backslash\mathcal{L}(\varepsilon_{1}))$
    there is some $\delta>0$ such that if $0<\varepsilon_{2}\le\delta$
    then in addition
    \begin{equation}\label{eq:h_inf_lb}
        \inf_{l\in\mathcal{L}\backslash\mathcal{L}(\varepsilon_{1})}h(y,l,s(l))>h(y,\hat{l},s(\hat{l}))\text{ for all }y\in\mathcal{Y}(\varepsilon_{2}).
    \end{equation}
    By Lemma \ref{lem: GN IN uniformly o(1)}, \eqref{eq:hdiff} and compactness it follows that $h(y,l,s_{\lambda,u}(l))\to h(y,l,s(l))$ in probability uniformly in $\mathcal{Y}\times\mathcal{L}$, so that \eqref{eq:h_inf_lb} holds with $s_{\lambda,u}$ in place of $s$, with probability tending to one. This implies that \begin{equation}\label{eq:P1}
        \lim_{N\to\infty}\mathbb{P}\left(\inf_{l\in\mathcal{L}}h(y,l,s_{\lambda,u}(l))=\inf_{l\in\mathcal{L}(\varepsilon_{1})}h(y,l,s_{\lambda,u}(l))\text{ for all }y\in\mathcal{Y}(\varepsilon_{2})\right)=1.
    \end{equation}
    Similarly to (\ref{eq:hl}) it also follows from (\ref{eq:y_unique_max})
    that
        \[
        \sup_{y\in\mathcal{Y}\backslash\mathcal{Y}(\varepsilon_{2})}\inf_{l\in\mathcal{L}}h(y,l,s(l))<h(\hat{y},\hat{l},s(\hat{l})),
    \]
    and similary by the uniform convergence of $h(y,l,s_{\lambda,u}(l))$ it
    follows that
    \begin{equation}\label{eq:P2}
        \lim_{N\to\infty}\mathbb{P}\left(\sup_{y\in\mathcal{Y}}\inf_{l\in\mathcal{L}}h(y,l,s_{\lambda,u}(l))=\sup_{y\in\mathcal{Y}(\varepsilon_{2})}\inf_{l\in\mathcal{L}}h(y,l,s_{\lambda,u}(l))\right)=1.
    \end{equation}
    The claim then follows by (\ref{eq:P1}) and (\ref{eq:P2}).
\end{proof}
\bigskip

Let $\partial_{a}$ denote the directional derivative in the direction
of a vector $a$. The next lemma gives conditions under which the optimizer of a minimax is given by a unique critical point.
\begin{lemma}\label{lem:minimax_crit}
    Let $n\ge1,d>0,\eta>0$, $A(d)=\left\{ a\in\mathbb{R}^{n}:\left|a\right|\le d\right\}$
    and $t:A(d)\times[-\eta,\eta]\to\mathbb{R}$ be twice continuously differentiable.
    Assume $\partial_{bb}t(a,b)>0$ for all $a\in A(d),b\in[-\eta,\eta]$,
    and $\partial_{b}t(a,\eta)>0,\partial_{b}t(a,-\eta)<0$ for all $a\in A(d)$,
    and 
    \[
        \lambda_{\max}\left(\nabla_{a}^{2}t(a,b)-\frac{1}{\partial_{bb}t(a,b)}\partial_{b}\nabla_{a}t(a,b)\left(\partial_{b}\nabla_{a}t(a,b)\right)^{T}\right)<0
    \]
    for $a\in A(d),b\in[-\eta,\eta]$ (where $\lambda_{\max}$ denotes the largest eigenvalue), and that $\partial_{a}\left\{ \inf_{b\in[-\eta,\eta]}t(a,b)\right\}$
    exists and is negative for all $a$ with $\left|a\right|=d$. Then
    $t$ has a unique critical point in $A(d)\times[-\eta,\eta]$ and $\sup_{a\in A(d)}\inf_{b\in[-\eta,\eta]}t(a,b)$
    is uniquely achieved at this critical point.
\end{lemma}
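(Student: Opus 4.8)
The plan is to reduce the minimax to an ordinary maximization over $A(d)$ of the function $T(a) := \inf_{b\in[-\eta,\eta]} t(a,b)$, show that $T$ is $C^2$ with everywhere negative-definite Hessian (so that it has at most one critical point and at most one maximizer), locate its maximizer in the interior of $A(d)$, and finally match the critical points of $t$ with those of $T$.

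First I would apply Lemma \ref{lem:use_imp_func_thm} with $A=A(d)$: the hypotheses $\partial_{bb}t(a,b)>0$ on $A(d)\times[-\eta,\eta]$ and $\partial_b t(a,-\eta)<0<\partial_b t(a,\eta)$ on $A(d)$ are exactly its assumptions, so for every $a\in A(d)$ the minimizer $b^*(a)=\mathrm{argmin}_{b\in[-\eta,\eta]} t(a,b)$ is unique, lies in the open interval $(-\eta,\eta)$, and $a\mapsto b^*(a)$ is continuously differentiable. Setting $T(a)=t(a,b^*(a))=\inf_{b\in[-\eta,\eta]}t(a,b)$, the formulas \eqref{eq: gradient of t} and \eqref{eq: hessian of t} give
\[
  \nabla T(a)=\{\nabla_a t\}(a,b^*(a)),\qquad
  \nabla^2 T(a)=\nabla_a^2 t(a,b^*(a))-\frac{\partial_b\{\nabla_a t\}(a,b^*(a))\,\bigl(\partial_b\{\nabla_a t\}(a,b^*(a))\bigr)^T}{\partial_{bb}t(a,b^*(a))}.
\]
Evaluating the eigenvalue hypothesis at $b=b^*(a)$ shows $\lambda_{\max}(\nabla^2 T(a))<0$ for every $a\in A(d)$, so $T$ is $C^2$ with negative-definite Hessian on the convex set $A(d)$. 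In particular, writing $\nabla T(a_1)-\nabla T(a_2)=\int_0^1\nabla^2 T(a_2+s(a_1-a_2))(a_1-a_2)\,ds$ and pairing with $a_1-a_2$, we get $(\nabla T(a_1)-\nabla T(a_2))\cdot(a_1-a_2)<0$ whenever $a_1\neq a_2$; hence $T$ has at most one critical point in $A(d)$ and at most one maximizer.

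Next I would locate the maximizer. Since $A(d)$ is compact and $T$ continuous, $T$ attains its maximum at some $\hat a\in A(d)$. For $a$ with $|a|=d$ the hypothesis says the directional derivative of $T$ at $a$ in the outward radial direction $a$ is negative, hence in the inward direction $-a$ it is positive; therefore $T((1-s)a)>T(a)$ for all sufficiently small $s>0$, and since $(1-s)a$ lies in the interior of $A(d)$ no boundary point of $A(d)$ can be a maximizer. Thus $\hat a$ lies in the interior, $\nabla T(\hat a)=0$, so $\{\nabla_a t\}(\hat a,b^*(\hat a))=0$, and together with $\partial_b t(\hat a,b^*(\hat a))=0$ (the defining property of $b^*$) the point $(\hat a,b^*(\hat a))$ is a critical point of $t$.

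Finally I would establish uniqueness and read off the value. If $(a_0,b_0)\in A(d)\times[-\eta,\eta]$ is any critical point of $t$, then $\partial_b t(a_0,b_0)=0$; since $\partial_{bb}t(a_0,\cdot)>0$ and $\partial_b t(a_0,-\eta)<0<\partial_b t(a_0,\eta)$, the unique zero of $\partial_b t(a_0,\cdot)$ in $[-\eta,\eta]$ is $b^*(a_0)$, so $b_0=b^*(a_0)$. Then $\nabla T(a_0)=\{\nabla_a t\}(a_0,b_0)=0$, and since $T$ has at most one critical point, $a_0=\hat a$ and $(a_0,b_0)=(\hat a,b^*(\hat a))$, proving uniqueness. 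For the value, $\sup_{a\in A(d)}\inf_{b\in[-\eta,\eta]}t(a,b)=\sup_{a\in A(d)}T(a)=T(\hat a)$, uniquely attained at $a=\hat a$ because $\hat a$ is the only maximizer of $T$, while the inner infimum at $a=\hat a$ is uniquely attained at $b=b^*(\hat a)$; hence the minimax is uniquely achieved at the unique critical point $(\hat a,b^*(\hat a))$. I expect no serious obstacle here; the points that require care are the correct bookkeeping when transporting \eqref{eq: hessian of t} into the negative-definiteness of $\nabla^2 T$, and the boundary argument ruling out maximizers on $\partial A(d)$, which is where a careless proof could slip.
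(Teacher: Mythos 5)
Your proof is correct and takes essentially the same approach as the paper: apply Lemma \ref{lem:use_imp_func_thm} to obtain the minimizer $b^*(a)$ and the formulas for $\nabla T$ and $\nabla^2 T$, use the eigenvalue hypothesis (evaluated at $b=b^*(a)$) to get strict concavity of $T$, rule out boundary maximizers via the radial-derivative hypothesis, and match critical points of $t$ with those of $T$. Your write-up is merely more explicit than the paper's (e.g., the integral argument for strict monotonicity of $\nabla T$ and the careful uniqueness bookkeeping), but it is the same route.
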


\begin{proof}
    This $t$ satisfies the assumptions of Lemma \ref{lem:use_imp_func_thm},
    so the map $a\to b^{*}(a):={\rm argmin}_{b\in[-\eta,\eta]}t(a,b)$
    is well defined and continuously differentiable, and $\nabla_{a}\left\{ t(a,b^{*}(a))\right\} =\left\{ \nabla_{a}t\right\} (a,b^{*}(a))$
    and
    \[
        \nabla_{a}^{2}\left\{ t(a,b^{*}(a))\right\} =\nabla_{a}^{2}t(a,b^{*}(a))-\frac{1}{\partial_{bb}t(a,b)}\partial_{b}\nabla_{a}t(a,b^{*}(a))\left(\partial_{b}\nabla_{a}t(a,b^{*}(a))\right)^{T}\text{ for all }a\in A(d).
    \]
    By assumption this is negative-definite for all $a\in A(d)$, implying
    that if $a\to t(a,b^{*}(a))$ is concave and therefore if not maximized on the boundary of
    $A(d)$, it has a unique critical point in the interior which is the
    maximizer. Since the assumption $\partial_{a}\left\{ \inf_{b\in[-\eta,\eta]}t(a,b)\right\} < 0$
    rules out the maximizer lying on the boundary, and $(a,b)$ is a critical
    point of $t$ iff $b=b^{*}(a)$ and $a$ is a critical point of $a\to t(a,b^{*}(a))$,
    this proves the claim.
\end{proof}

We can now strengthen Lemma \ref{lem:localize}.
\begin{lemma}\label{lem:localize_more}
    It holds that
    \[
        \lim_{N\to\infty}\mathbb{P}\left(\sup_{y\in\mathcal{Y}}\inf_{l\in\mathcal{L}}h(y,l,s_{\lambda,u}(l))=\sup_{y\in\mathcal{Y}\left(\frac{\log N}{\sqrt{N}}\right)}\inf_{l\in\mathcal{L}\left(\frac{(\log N)^{2}}{\sqrt{N}}\right)}h(y,l,s_{\lambda,u}(l))\right)=1.
    \]
\end{lemma}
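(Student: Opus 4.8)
The plan is to combine the localization Lemma \ref{lem:localize} with two applications of the minimax criterion Lemma \ref{lem:minimax_crit}. First Lemma \ref{lem:localize} reduces the problem to a fixed neighborhood; Lemma \ref{lem:minimax_crit} applied there shows that the minimax is attained at a unique critical point $(\bar a_N,\bar b_N)$ of $t_N(a,b):=h(\hat y+a,\hat l+b,s_{\lambda,u}(\hat l+b))$. One then shows $|\bar a_N|+|\bar b_N|=O_{\mathbb{P}}(N^{-1/2})$, so that this critical point lies well inside the shrinking box $\{|a|\le\tfrac{\log N}{\sqrt N}\}\times[-\tfrac{(\log N)^2}{\sqrt N},\tfrac{(\log N)^2}{\sqrt N}]$; a second application of Lemma \ref{lem:minimax_crit} on this smaller box identifies the same critical point as its minimax, which yields the claimed equality.

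In more detail, fix small $\varepsilon_1,\varepsilon_2>0$ with $\varepsilon_2\le\delta(\varepsilon_1)$ as in Lemma \ref{lem:localize}, so that $\sup_{\mathcal{Y}}\inf_{\mathcal{L}}h(\cdot,\cdot,s_{\lambda,u}(\cdot))=\sup_{\mathcal{Y}(\varepsilon_2)}\inf_{\mathcal{L}(\varepsilon_1)}h(\cdot,\cdot,s_{\lambda,u}(\cdot))$ with probability tending to one, and set $t_\infty(a,b)=h(\hat y+a,\hat l+b,s(\hat l+b))$. By \eqref{eq:s_lambda_s_sqrt_N} (equivalently Lemma \ref{lem: GN IN uniformly o(1)} together with \eqref{eq: Gk lambda theta identities}) the derivatives $s_{\lambda,u}^{(k)}$, $k=0,1,2$, converge to $s^{(k)}$ uniformly on $\mathcal{L}$ in probability, so since $h\in C^3$ the functions $t_N$ converge to $t_\infty$ in $C^2$, uniformly on $\{|a|\le\varepsilon_2\}\times[-\varepsilon_1,\varepsilon_1]$, in probability. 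For $\varepsilon_1,\varepsilon_2$ small $t_\infty$ satisfies the hypotheses of Lemma \ref{lem:minimax_crit} with strict inequalities: $\partial_{bb}t_\infty>0$ by \eqref{eq:hllpos} and continuity; the ``reduced Hessian'' of $t_\infty$ appearing in Lemma \ref{lem:minimax_crit} equals $\nabla^2\mathcal{B}(\hat y)$ at $(0,0)$ by Lemma \ref{lem: derivatives of B(y)}, hence is negative definite on the small box by \eqref{eq:Hessynegdef} and continuity; $\partial_b t_\infty(a,\pm\varepsilon_1)$ have the correct signs by \eqref{eq:l_unique_max}, \eqref{eq:hllpos} and continuity; and $\inf_{|b|<\varepsilon_1}t_\infty(a,b)=\mathcal{B}(\hat y+a)$ for $|a|$ small, which is strictly decreasing radially by \eqref{eq:y_unique_max}, \eqref{eq:Hessynegdef}, giving the boundary condition on $\{|a|=\varepsilon_2\}$. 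Each of these properties is stable under small $C^2$-perturbations of $t$ on the compact box, so on an event of probability tending to one $t_N$ satisfies them as well, and Lemma \ref{lem:minimax_crit} produces a unique critical point $(\bar a_N,\bar b_N)$ of $t_N$ in $\{|a|\le\varepsilon_2\}\times[-\varepsilon_1,\varepsilon_1]$ with $\sup_{|a|\le\varepsilon_2}\inf_{|b|\le\varepsilon_1}t_N=t_N(\bar a_N,\bar b_N)$.

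Next I would show $|\bar a_N|+|\bar b_N|=O_{\mathbb{P}}(N^{-1/2})$, which is the crux. The pair $(\bar a_N,\bar b_N)$ solves $\Phi_N(a,b):=\nabla_{(y,l)}\{h(\hat y+a,l,s_{\lambda,u}(l))\}\big|_{l=\hat l+b}=0$, and its limit $\Phi_\infty(a,b):=\nabla_{(y,l)}\{h(\hat y+a,l,s(l))\}\big|_{l=\hat l+b}$ vanishes at $(0,0)$ by \eqref{eq:l_unique_max}, \eqref{eq:y_unique_max} and \eqref{eq: gradient D(y) in h terms}, with derivative $D\Phi_\infty(0,0)=h_{\{y,l\},\{y,l\}}$; the latter is invertible since its Schur complement relative to the block $h_{l,l}>0$ equals $\nabla^2\mathcal{B}(\hat y)$ (Lemma \ref{lem: derivatives of B(y)}), invertible by \eqref{eq:Hessynegdef}. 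Since $\|\Phi_N-\Phi_\infty\|_{C^1}=O_{\mathbb{P}}(N^{-1/2})$ on the fixed box (again by \eqref{eq:s_lambda_s_sqrt_N}), a quantitative implicit function theorem gives a zero of $\Phi_N$ within $O_{\mathbb{P}}(N^{-1/2})$ of $(0,0)$, which by the uniqueness of the previous step is $(\bar a_N,\bar b_N)$. (Alternatively one can read this off from the quadratic expansion Lemma \ref{lem:h_quad_expansion} and Lemma \ref{lem:max_fluct_quad}, whose saddle point has size $O_{\mathbb{P}}(N^{-1/2})$, controlling the displacement of the critical point caused by the cubic remainder.)

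Finally, with $d_N=\tfrac{\log N}{\sqrt N}$ and $\eta_N=\tfrac{(\log N)^2}{\sqrt N}$ one has $\{|a|\le d_N\}\times[-\eta_N,\eta_N]\subset\{|a|\le\varepsilon_2\}\times[-\varepsilon_1,\varepsilon_1]$ for $N$ large, and I would re-apply Lemma \ref{lem:minimax_crit} to $t_N$ on this smaller box. The conditions $\partial_{bb}t_N>0$ and ``reduced Hessian $<0$'' are inherited. For the sign conditions write $\partial_b t_N(a,\eta_N)=\partial_b t_N(0,0)+O_{\mathbb{P}}(|a|)+\int_0^{\eta_N}\partial_{bb}t_N(a,s)\,ds\ge c'\eta_N-O_{\mathbb{P}}(d_N)-O_{\mathbb{P}}(N^{-1/2})$, where $c'=\inf\partial_{bb}t_N>0$ and $\partial_b t_N(0,0)=O_{\mathbb{P}}(N^{-1/2})$ because $\partial_b t_\infty(0,0)=\partial_l\{h(\hat y,l,s(l))\}|_{\hat l}=0$ and $\|t_N-t_\infty\|_{C^1}=O_{\mathbb{P}}(N^{-1/2})$; since $\eta_N/d_N=\log N\to\infty$ this is positive uniformly in $|a|\le d_N$ with probability tending to one, and symmetrically $\partial_b t_N(a,-\eta_N)<0$ --- this is exactly what forces the extra $\log N$ factor. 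For the boundary condition, by Lemma \ref{lem:use_imp_func_thm} the map $a\mapsto\inf_{|b|\le\eta_N}t_N(a,b)$ is concave with negative-definite Hessian on $\{|a|\le d_N\}$; its minimizing $b$ at $\bar a_N$ is still $\bar b_N$ since $|\bar b_N|=O_{\mathbb{P}}(N^{-1/2})<\eta_N$ with high probability, so $\bar a_N$ is an interior critical point, hence the maximizer, of this concave function (using $|\bar a_N|=O_{\mathbb{P}}(N^{-1/2})<d_N$ with high probability), and a short computation with strong concavity and $|\bar a_N|=o_{\mathbb{P}}(d_N)$ shows its outward radial derivative is strictly negative on $\{|a|=d_N\}$. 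Thus Lemma \ref{lem:minimax_crit} applies on the smaller box, and by uniqueness of the critical point of $t_N$ in the larger one its minimax is again $t_N(\bar a_N,\bar b_N)$. Chaining the equalities yields, with probability tending to one,
\[
    \sup_{\mathcal{Y}}\inf_{\mathcal{L}}h(\cdot,\cdot,s_{\lambda,u}(\cdot))
    =\sup_{\mathcal{Y}(\varepsilon_2)}\inf_{\mathcal{L}(\varepsilon_1)}h(\cdot,\cdot,s_{\lambda,u}(\cdot))
    =t_N(\bar a_N,\bar b_N)
    =\sup_{\mathcal{Y}(d_N)}\inf_{\mathcal{L}(\eta_N)}h(\cdot,\cdot,s_{\lambda,u}(\cdot)),
\]
as claimed. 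The main obstacle is the middle step, pinning the saddle point of $t_N$ to the scale $N^{-1/2}$; granting it, the remaining verifications --- including the sign conditions at $\pm\eta_N$ that dictate the $\log N$ in $d_N$ and the $(\log N)^2$ in $\eta_N$ --- are routine.
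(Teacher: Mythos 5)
Your proposal is correct and follows essentially the same strategy as the paper's proof: reduce to a fixed box via Lemma \ref{lem:localize}, apply Lemma \ref{lem:minimax_crit} there to identify the minimax with a unique critical point, show that this critical point lies within $O_{\mathbb{P}}(N^{-1/2})$ of $(\hat{y},\hat{l})$, and conclude. The one structural difference is in the pinning and conclusion steps: where you use a quantitative implicit function theorem and then explicitly re-verify the hypotheses of Lemma \ref{lem:minimax_crit} on the shrinking box (in particular the sign conditions at $\pm\eta_N$, which you correctly identify as the source of the $\log N$ factors), the paper instead uses the bound $|\nabla_{y,l}h(y,l,s(l))| \gtrsim |y-\hat y| + |l-\hat l|$ (coming from non-degeneracy of the Hessian) to exclude critical points of $h(\cdot,\cdot,s_{\lambda,u}(\cdot))$ outside the shrinking box, and then concludes more tersely; these are equivalent routes, and your explicit handling of the final re-application is, if anything, a bit more careful than the paper's compressed last sentence.
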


\begin{proof}
    By  Lemma \ref{lem:localize} there is for for each $\varepsilon_{2}>0$
    small enough an $\varepsilon_{1}>0$ small enough so that 
    \begin{equation}
        \lim_{N\to\infty}\mathbb{P}\left(\sup_{y\in\mathcal{Y}}\inf_{l\in\mathcal{L}}h(y,l,s_{\lambda,u}(l))=\sup_{y\in\mathcal{Y}(\varepsilon_{2})}\inf_{l\in\mathcal{L}(\varepsilon_{1})}h(y,l,s_{\lambda,u}(l))\right)=1.\label{eq:to_eps_nbd}
    \end{equation}
    
    Furthermore, for each $\varepsilon_{2}>0$
    small enough, there is an $\varepsilon_{1}>0$ small enough such that
    \[
        \partial_{ll}\left\{ h(y,l,s(l))\right\} >0, \
        \partial_{l}\left\{ h(y,l,s(l))\right\}\big|_{\hat{l}-\varepsilon_{1}} >0, \
        \partial_{l}\left\{ h(y,l,s(l))\right\}\big|_{\hat{l}-\varepsilon_{1}} <0,
    \]    
    for all $y\in\mathcal{Y}(\varepsilon_{2}),l\in\mathcal{L}(\varepsilon_{1})$ (see \eqref{eq:l_unique_max} and \eqref{eq:hllpos}),
    and
    \begin{equation}
        \lambda_{\max}\left(\nabla_{y}^{2}h(y,l,s(l))-\frac{\partial_{l}\nabla_{y}h(y,l,s(l))(\partial_{l}\nabla_{y}h(y,l,s(l)))^{T}}{\partial_{ll}h(y,l,s(l))}\right )<0,\label{eq:apa}
    \end{equation}
    for all $y\in\mathcal{Y}(\varepsilon_{2}),l\in\mathcal{L}(\varepsilon_{1})$ (see \eqref{eq:Hessynegdef}),
    and since $\hat{y}$ is the unique maximum (see \eqref{eq:y_unique_max})
    \[
    \partial_{(\hat{y}-y)}\left\{ h(y,l,s(l))\right\} >0\text{\,for all }y\text{ s.t.}\left|y-\hat{y}\right|=\varepsilon_{2}.
    \]
    By Lemma \ref{lem: GN IN uniformly o(1)} and \eqref{eq:hdiff} the
    same holds with $s_{\lambda,u}(l)$ in place of $s(l)$, on an event
    with probability tending to one. Therefore by applying Lemma \ref{lem:minimax_crit}
    to $(a,b)\to h(\hat{y}+a,\hat{l}+b,s_{\lambda,u}(\hat{l}+b))$ on this event one obtains that
    \begin{equation}
    \lim_{N\to\infty}\mathbb{P}\left(\begin{array}{c}
    (y,l)\to h(y,l,s_{\lambda,u}(l))\text{\,has a unique critical point }(y^{*},l^{*})\text{\,in }\mathcal{Y}(\varepsilon_{2})\times\mathcal{L}(\varepsilon_{1})\\
    \text{ and }\sup_{y\in\mathcal{Y}(\varepsilon_{2})}\inf_{l\in\mathcal{L}(\varepsilon_{1})}h(y,l,s_{\lambda,u}(l))\text{ is achieved at }(y^{*},l^{*})
    \end{array}\right)=1.\label{eq:minimax_is_crit}
    \end{equation}
    
    By the Schur complement formula and (\ref{eq:apa}) it holds that
    $\nabla_{y,l}^{2}\left\{ h(y,l,s(l))\right\} $ is non-degenerate,
    and $\nabla_{y,l} \{ h(\hat{y},\hat{l},s(\hat{l}))\} = h_{\{y,l\}} = 0$ as stated in \eqref{eq:h_grad_at_hats},
    so for $\varepsilon_{2},\varepsilon_{1}>0$ small enough there is
    a constant $c$ such that
    \[
    \left|\nabla_{y,l}\left\{ h(y,l,s(l))\right\} \right|\ge c(|y-\hat{y}|^{2}+|l-\hat{l}|^{2})\text{\,for all }
    (y,l)\in\mathcal{Y}(\varepsilon_{2})\times\mathcal{L}(\varepsilon_{1}).
    \]
    Since $\left|\nabla h(y,l,s(l))-\nabla h(y,l,s_{\lambda,u}(l))\right|=O_{\mathbb{P}}(N^{-1/2})$
    by \eqref{eq:s_lambda_s_sqrt_N} it follows that
    \begin{equation}
    \lim_{N\to\infty}\mathbb{P}\left(\begin{array}{c}
    h(y,l,s_{\lambda,u}(l))\text{ has no critical point in }\mathcal{Y}(\varepsilon_{2})\times\mathcal{L}(\varepsilon_{1})\\
    \text{ with }\left|y-\hat{y}\right|\ge\frac{\log N}{N^{1/2}},|l-\hat{l}|\ge\frac{(\log N)^{2}}{N^{1/2}}
    \end{array}\right)=1.\label{eq:no_crit}
    \end{equation}
    Since we can pick $\varepsilon_{2}>0$ and then $\varepsilon_{1}>0$
    small enough so that (\ref{eq:to_eps_nbd}), (\ref{eq:minimax_is_crit}),
    (\ref{eq:no_crit}) hold simultaneously the claim follows.
\end{proof}
\bigskip

We can now prove a version of Lemma \ref{lem:max_fluct_quad} for the actual function  $h(y,l,s_{\lambda,u}(l))$ rather than its quadratic expansion.

\begin{proposition}\label{prop:max_fluct_gen}
    For any $h$ satisfying \eqref{eq:YLcompact}-\eqref{eq:Hessynegdef} it holds that
    \begin{equation}\label{eq:max_fluct_gen}  
        \sup_{y\in\mathcal{Y}}\inf_{l\in\mathcal{L}}h(y,l,s_{\lambda,u}(l)) = E_{1}+\frac{1}{\sqrt{N}}E_{2}W_{N}+\frac{1}{N}F_{N}+o_{\mathbb{P}}\left(\frac{1}{N}\right),
    \end{equation}
    for $E_1,E_2,E_3,F_N$ as in \eqref{eq:E1E2E3}-\eqref{eq:FN}. 
\end{proposition}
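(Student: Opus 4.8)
The plan is to chain together the three preceding lemmas: Lemma \ref{lem:localize_more} to localize the minimax to a small neighbourhood of $(\hat{y},\hat{l})$, Lemma \ref{lem:h_quad_expansion} to replace $h$ there by its quadratic approximant $p_N$ with an error of order $o_{\mathbb{P}}(N^{-1})$, and Lemma \ref{lem:max_fluct_quad} to evaluate the minimax of $p_N$ exactly.

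First I would invoke Lemma \ref{lem:localize_more}, which gives, on an event of probability tending to one,
$$\sup_{y\in\mathcal{Y}}\inf_{l\in\mathcal{L}}h(y,l,s_{\lambda,u}(l)) = \sup_{y\in\mathcal{Y}(\log N/\sqrt{N})}\inf_{l\in\mathcal{L}((\log N)^2/\sqrt{N})}h(y,l,s_{\lambda,u}(l)).$$
On that neighbourhood $|\Delta| = O((\log N)^2 N^{-1/2})$, hence $|\Delta|^3 = O((\log N)^6 N^{-3/2}) = o(N^{-1})$; since the constant implicit in the $O_{\mathbb{P}}(|\Delta|^3)$ term of \eqref{eq:h_quad_expansion} is stochastically bounded, that term is $o_{\mathbb{P}}(N^{-1})$ uniformly over the neighbourhood, and combined with the $o_{\mathbb{P}}(N^{-1})$ term already present in \eqref{eq:h_quad_expansion} we get $h(y,l,s_{\lambda,u}(l)) = p_N(\Delta) + \rho_N(y,l)$ with $\sup|\rho_N| = o_{\mathbb{P}}(N^{-1})$ there. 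Since a uniform additive error can be pulled out of a $\sup$--$\inf$ (i.e. $|\sup_y\inf_l(F+\rho)-\sup_y\inf_l F|\le\sup|\rho|$), it follows that
$$\sup_{y\in\mathcal{Y}}\inf_{l\in\mathcal{L}}h(y,l,s_{\lambda,u}(l)) = \sup_{y\in\mathcal{Y}(\log N/\sqrt{N})}\inf_{l\in\mathcal{L}((\log N)^2/\sqrt{N})}p_N(\Delta) + o_{\mathbb{P}}(N^{-1}).$$

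Next I would show that this constrained minimax of $p_N$ equals the unconstrained one $\sup_{y\in\mathbb{R}^n}\inf_{l\in\mathbb{R}}p_N$ of Lemma \ref{lem:max_fluct_quad} with probability tending to one. For fixed $\Delta_y$ the quadratic $\Delta_l\mapsto p_N$ is strictly convex ($h_{l,l}>0$ by \eqref{eq:hllpos}) and minimized at $\hat{\Delta}_l(\Delta_y) = -h_{l,l}^{-1}(N^{-1/2}V_{l,N} + \Delta_y^T h_{l,y})$ as in \eqref{eq:hat_delta_l}; since $V_{l,N} = O_{\mathbb{P}}(1)$ by Lemma \ref{lem:U_N_UB}, one has $\sup_{|\Delta_y|\le\log N/\sqrt{N}}|\hat{\Delta}_l(\Delta_y)| = O_{\mathbb{P}}(\log N/\sqrt{N})$, so with probability tending to one $\hat{\Delta}_l(\Delta_y)\in\mathcal{L}((\log N)^2/\sqrt{N})$ for every $\Delta_y$ in the $y$-neighbourhood, and the inner infimum over $\mathcal{L}((\log N)^2/\sqrt{N})$ equals the unconstrained value $p_N(\Delta_y,\hat{\Delta}_l(\Delta_y))$. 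The resulting profile $\Delta_y\mapsto p_N(\Delta_y,\hat{\Delta}_l(\Delta_y))$ is a concave quadratic (Hessian $J = \nabla^2\mathcal{B}(\hat{y})$, negative definite by \eqref{eq:Hessynegdef}) maximized at $\hat{\Delta}_y = -N^{-1/2}J^{-1}(W_N,W_N')^T$ from \eqref{eq:hat_delta_y}, and $\hat{\Delta}_y$, being of size $O_{\mathbb{P}}(N^{-1/2})$, lies in the $y$-neighbourhood with probability tending to one. Thus on an event of probability tending to one the constrained and unconstrained minimax of $p_N$ coincide, and by Lemma \ref{lem:max_fluct_quad} both equal $E_1 + N^{-1/2}E_2 W_N + N^{-1}F_N$; substituting into the last display gives \eqref{eq:max_fluct_gen}.

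I expect the main point of care to be bookkeeping rather than anything conceptual: one must keep the localization radius in the Goldilocks zone — of size $o(N^{-1/3})$ so that the cubic remainder $O_{\mathbb{P}}(|\Delta|^3)$ is negligible at scale $N^{-1}$, yet $\gg N^{-1/2}$ so that it still contains the $O_{\mathbb{P}}(N^{-1/2})$-sized optimizers $\hat{\Delta}_y,\hat{\Delta}_l$ — which is precisely what the choice $\log N/\sqrt{N}$, $(\log N)^2/\sqrt{N}$ in Lemma \ref{lem:localize_more} achieves, so that no estimates beyond those in Lemmas \ref{lem:h_quad_expansion}--\ref{lem:localize_more} are required.
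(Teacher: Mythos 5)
Your proposal is correct and follows essentially the same route as the paper's own proof: chaining Lemma \ref{lem:localize_more}, Lemma \ref{lem:h_quad_expansion}, and Lemma \ref{lem:max_fluct_quad}, then checking that the unconstrained optimizers $\hat{\Delta}_l,\hat{\Delta}_y$ of $p_N$ fall inside the localized region with probability tending to one. Your write-up is in fact somewhat more explicit than the paper's (e.g., spelling out the pull-out of a uniform additive error from the minimax and the uniform-in-$\Delta_y$ bound $\sup_{|\Delta_y|\le\log N/\sqrt{N}}|\hat{\Delta}_l(\Delta_y)|=O_{\mathbb{P}}(\log N/\sqrt{N})$), but the argument is the same.
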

\begin{proof}
    By Lemma \ref{lem:h_quad_expansion} and Lemma \ref{lem:localize_more}
    \[
        \sup_{y\in\mathcal{Y}}\inf_{l\in\mathcal{L}}h(y,l,s_{\lambda,u}(l))=\sup_{y\in\mathcal{Y}(N^{-1/2}\log N)}\inf_{l\in\mathcal{L}(N^{-1/2}(\log N)^{2})}p_{N}(\Delta_{y},\Delta_{l})+o_{\mathbb{P}}(N^{-1}).
    \]
    Recall from the proof of Lemma \ref{lem:max_fluct_quad} that $\hat{\Delta}_{l}$ from \eqref{eq:hat_delta_l} is the minimizer of
    $\inf_{l\in\mathbb{R}}p_{N}(\Delta_{y},\Delta_{l})$. Note that for
    all $y\in\mathcal{Y}(N^{-1/2}\log N)$ it holds that $\mathbb{P}(|\hat{\Delta}_{l}|\le N^{-1/2}(\log N)^{2})\to1,$ so
    \[
        \lim_{N\to\infty}\mathbb{P}\left(\inf_{l\in\mathcal{L}(N^{-1/2}(\log N)^{2})}p_{N}(\Delta_{y},\Delta_{l})=p_{N}(\Delta_{y},\hat{\Delta}_{l})\right)=1.
    \]
    Similarly recall that $\hat{\Delta}_{y}$ from \eqref{eq:hat_delta_y} is the maximizer
    of $\sup_{y\in\mathbb{R}^{n}}\inf_{l\in\mathbb{R}}p_{N}(\Delta_{y},\Delta_{l})$
    and note that $\mathbb{P}(|\hat{\Delta}_{y}|\le N^{-1/2}\log N)\to1$
    so that furthermore
    \[
        \lim_{N\to\infty}\mathbb{P}\left(\sup_{y\in\mathcal{Y}(N^{-1/2}\log N)}\inf_{l\in\mathcal{L}(N^{-1/2}(\log N)^{2})}p_{N}(\Delta_{y},\Delta_{l})=p_{N}(\hat{\Delta}_{y},\hat{\Delta}_{l})\right)=1.
    \]
    Thus the claim follows from \eqref{eq:max_fluct_quad}.
\end{proof}
\bigskip

The next lemma computes the distributional limit of $(W_{N}(l),W_{N}'(l))$. 

\begin{lemma}\label{lem:A Y joint conv for general w}
    For all $l$ it holds that
    \begin{equation}\label{eq:A Y joint conv}
        (W_{N}(l), W'_{N}(l)) \overset{d}{\to} (U(l),U'(l)),
    \end{equation} 
    where
    $(U(l),U'(l))$
    is a centered Gaussian vector with covariance matrix
    \begin{equation*}
        \Sigma =
        \begin{pmatrix}
        -2s'(l)-2s(l)^2 & 
        -s''(l)-2s(l) s'(l) \\
        -s''(l)-2s(l) s'(l) & 
        -\tfrac{1}{3}s'''(l)-2s'(l)^2
        \end{pmatrix}
        .
    \end{equation*}
\end{lemma}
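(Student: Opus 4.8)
The plan is to prove this joint central limit theorem via the Cram\'er--Wold device together with the Lindeberg--Feller (Lyapunov) central limit theorem for a triangular array of independent summands, after first reducing $W_N(l)$ and $W_N'(l)$ to a centered form using the constraint $|u|=1$.

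First I would realise $u$ as $u_i=\tilde u_i/|\tilde u|$ with $\tilde u_1,\dots,\tilde u_N\sim\mathcal N(0,\tfrac1N)$ i.i.d., as in the proofs of Lemma~\ref{lem: u-fluc} and Lemma~\ref{lem:U_N_UB}, and set $\xi_i=N\tilde u_i^2-1$, so that the $\xi_i$ are i.i.d., centered, with $\mathrm{Var}(\xi_i)=2$ and finite moments of all orders. Proceeding exactly as in \eqref{eq: WN in tilde u form}, and using $\sum_i\xi_i=N|\tilde u|^2-N$, one obtains for $k=0,1$ the identity
\[
|\tilde u|^2\,W_N^{(k)}(l)=\frac{1}{\sqrt N}\sum_{i=1}^N\xi_i\left(\frac{(-1)^k k!}{(l-\theta_{i/N})^{k+1}}-s_\theta^{(k)}(l)\right)=:\hat W_N^{(k)}(l),
\]
whose right-hand side is a sum of \emph{independent} mean-zero random variables; the subtraction of $s_\theta^{(k)}(l)$ is automatic because $\sum_i u_i^2=1$, and it is precisely this centering that will generate the $-2s(l)^2$-type contributions to $\Sigma$. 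Since $|\tilde u|^2\to1$ in probability, Slutsky's theorem reduces the claim to showing $(\hat W_N(l),\hat W_N'(l))\overset d\to\mathcal N(0,\Sigma)$.

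Next I would establish this joint convergence by the Cram\'er--Wold device. For fixed $a,b\in\mathbb R$ write $a\hat W_N(l)+b\hat W_N'(l)=\tfrac1{\sqrt N}\sum_{i=1}^N\xi_i\,c_{i,N}$ with $c_{i,N}=a\big(\tfrac1{l-\theta_{i/N}}-s_\theta(l)\big)+b\big(\tfrac{-1}{(l-\theta_{i/N})^2}-s_\theta'(l)\big)$. Because $l>\sqrt2$ while $\theta_{i/N}\in[-\sqrt2,\sqrt2]$, the coefficients $c_{i,N}$ are uniformly bounded, so $\tfrac1{N^{3/2}}\sum_i\mathbb E|\xi_i c_{i,N}|^3=O(N^{-1/2})\to0$ and the Lyapunov condition holds (in the degenerate case where the limiting variance vanishes, the linear combination tends to $0$ in $L^2$, which is consistent). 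The variance equals $\tfrac2N\sum_i c_{i,N}^2$; expanding the square, using $s_\theta^{(j)}(l)\to s^{(j)}(l)$ and $\tfrac1N\sum_i(l-\theta_{i/N})^{-m}\to\int(l-x)^{-m}\mu_{\mathrm{sc}}(dx)$ from Lemma~\ref{lem: sums}, and rewriting these moments via \eqref{eq: k-th derivative of s_mu(l)} as $\int(l-x)^{-2}\mu_{\mathrm{sc}}(dx)=-s'(l)$, $\int(l-x)^{-3}\mu_{\mathrm{sc}}(dx)=\tfrac12s''(l)$, $\int(l-x)^{-4}\mu_{\mathrm{sc}}(dx)=-\tfrac16s'''(l)$, one checks that $\tfrac2N\sum_i c_{i,N}^2\to(a,b)\,\Sigma\,(a,b)^{T}$ with $\Sigma$ as in the statement: the $s_\theta(l)s_\theta'(l)$ terms in the off-diagonal cancel, and what remains reproduces exactly the entries $-2s'(l)-2s(l)^2$, $-s''(l)-2s(l)s'(l)$, and $-\tfrac13 s'''(l)-2s'(l)^2$. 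Hence $a\hat W_N(l)+b\hat W_N'(l)\overset d\to\mathcal N(0,(a,b)\,\Sigma\,(a,b)^T)$ for all $a,b$, so $(\hat W_N(l),\hat W_N'(l))\overset d\to\mathcal N(0,\Sigma)$, and Slutsky's theorem then gives \eqref{eq:A Y joint conv}.

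I do not expect a serious obstacle here: the summands are bounded with all moments finite, so the CLT step is essentially routine. The one place requiring genuine care is the covariance computation, where the $s_\theta^{(j)}(l)$-centering must be tracked consistently through all the cross terms---this centering encodes the constraint $|u|=1$ and is responsible for all the $s(l)$-dependent corrections appearing in $\Sigma$.
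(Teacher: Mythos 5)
Your proposal is correct and follows essentially the same route as the paper: construct $u$ from i.i.d.\ Gaussians, pass to the centered i.i.d.-sum $\hat W_N^{(k)}=|\tilde u|^2 W_N^{(k)}$ (the paper's $\tilde W_N^{(k)}$ in \eqref{eq: WN in tilde u form}), apply Cram\'er--Wold plus the Lyapunov CLT, compute the limiting covariances via Lemma~\ref{lem: sums} and \eqref{eq: k-th derivative of s_mu(l)}, and finish with Slutsky using $|\tilde u|^2\to1$. The only cosmetic difference is that the paper records the covariances via the closed-form $-2\tfrac{k!k'!}{(k+k'+1)!}s^{(k+k'+1)}(l)-2s^{(k)}(l)s^{(k')}(l)$ while you expand the three entries of $\Sigma$ directly.
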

\begin{proof}
    Define
    \begin{equation*}
        \tilde{W}_{N}^{(k)}(l)
        =
        \frac{1}{\sqrt{N}}\sum_{i=1}^{N}(N\tilde{u}_{i}^{2}-1)
        \left(\frac{k!(-1)^k}{(l-\theta_{i/N})^{k+1}}-\HNk{k}{\theta}{l}\right).
    \end{equation*}
    Then
    \begin{align*}
        \mathbb{E}\left[\tilde{W}^{(k)}_{N}(l)\tilde{W}^{(k')}_{N}(l)\right]
        \numberthis\label{eq: tilde WNk variance}
        &=
        \frac{2}{N}\sum_{i=1}^N 
        \left(\frac{k!(-1)^k}{(l-\theta_{i/N})^{k+1}}-\HNk{k}{\theta}{l}\right)
        \left(\frac{{k'}!(-1)^{k'}}{(l-\theta_{i/N})^{{k'}+1}}-\HNk{{k'}}{\theta}{l}\right)
        \\
        &=
        2\left(\frac{1}{N}\sum_{i=1}^N \frac{k! k'! (-1)^{k+k'}}{(l-\theta_{i/N})^{k+k'+2}}
        -\HNk{k}{\theta}{l} \HNk{k'}{\theta}{l}\right)
        \\ 
        &\stackrel{\eqref{eq: sum class loc int}}{\longrightarrow}
        - 2\frac{k! k'!}{(k+k'+1)!}s^{(k+k'+1)}(l)- 2s^{(k)}(l)s^{(k')}(l).
    \end{align*}
    Note that for all $t=(t_1,t_2)\in\mathbb{R}^2$
    \begin{equation*}
        \mathbb{E}\left[
            t_1 \tilde{W}_{N}(l) + t_2 \tilde{W}'_{N}(l) 
        \right] \longrightarrow 0 ,
    \end{equation*}
    and by \eqref{eq: tilde WNk variance}
    \begin{equation*}
        \mathbb{E}\left[
            \left(t_1 \tilde{W}_{N}(l) + t_2 \tilde{W}'_{N}(l) \right)^2
        \right] \longrightarrow t^T \Sigma t.
    \end{equation*}    
    Therefore 
    \begin{equation*}
        t_1 \tilde{W}_{N}(l) + t_2 \tilde{W}'_{N}(l)
        \stackrel{d}{\longrightarrow}
        t_1 U(l) + t_2 U'(l)
        \sim
        \mathcal{N}\left(0, t^T\Sigma t\right)
    \end{equation*}
    by Lyapunov's CLT 
    (see Lindeberg's theorem \cite[Theorem 7.3.1 and Lyapunov's condition p. 307-309]{probability_theory}; note that $\sum_{i=1}^NE[|(N\tilde{u}^2_i-1) (t_1/(l-\theta_{i/N})-t_2/(l-\theta_{i/N})) |^{3}] = {{O}}(N)$ while ${\rm{Var}}(t_1 \tilde{W}_{N}(l) + t_2 \tilde{W}'_{N}(l) )^{3/2} = (\sum_{i=1}^N 2 (t_1/(l-\theta_{i/N})-t_2/(l-\theta_{i/N}))^2)^{3/2} = {{O}}(N^{3/2})$, so Lyapunov's condition is satisfied).
    \\
    By \eqref{eq: WN in tilde u form} and Slutzky's theorem thus also
    \begin{equation*}
        t_1 {W}_{N}(l) + t_2 {W}'_{N}(l)
        \stackrel{d}{\longrightarrow}
        t_1 U(l) + t_2 U'(l)
        \sim
        \mathcal{N}\left(0, t^T\Sigma t\right)
    \end{equation*}
    for all $t\in\mathbb{R}^3$.
    By the Cramér–Wold theorem \cite[Corollary 6.5]{kallenberg} one obtains
    the joint convergence \eqref{eq:A Y joint conv}.
\end{proof}
\bigskip 
We also compute the distributional limit of $\Lambda_N$.

\bigskip 
\begin{lemma}\label{lem: gaussian error term}
    For any $l > \sqrt{2}$
    $$
        \Lambda_N(l)
        \stackrel{d}{\longrightarrow} 
        \mathcal{N}\left(\frac{l-\sqrt{l^2-2}}{2(l^2-2)}, \frac{1}{(l^2-2)^2} \right)
    $$
    as $N\to\infty$.
\end{lemma}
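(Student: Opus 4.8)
The plan is to deduce the statement directly from Lemma~\ref{lem: ev fluctuation} applied to the test function $w(x)=\frac{1}{l-x}$, which is $C^\infty$ (hence $C^1$) on a neighborhood $[-\sqrt2-\varepsilon,\sqrt2+\varepsilon]$ for any $0<\varepsilon<l-\sqrt2$, since $l>\sqrt2$. By the definition \eqref{eq:stieltjes_def} of the Stieltjes transform, $s(l)=s_{\mu_{\rm sc}}(l)=\int_{-\sqrt2}^{\sqrt2}\frac{1}{l-x}\mu_{\rm sc}(dx)=\int w\,d\mu_{\rm sc}$, so
$\Lambda_N(l)=\sum_{i=1}^N\frac{1}{l-\lambda_i}-Ns(l)=\sum_{i=1}^N w(\lambda_i)-N\int w\,d\mu_{\rm sc}$, and Lemma~\ref{lem: ev fluctuation} gives $\Lambda_N(l)\overset{d}{\to}\mathcal{N}(m(w),v(w))$. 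It then remains to evaluate $m(w)$ and $v(w)$.

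For the mean I will use $w(\sqrt2)+w(-\sqrt2)=\frac{1}{l-\sqrt2}+\frac{1}{l+\sqrt2}=\frac{2l}{l^2-2}$ together with the elementary identity $\frac{1}{\pi}\int_{-\sqrt2}^{\sqrt2}\frac{dx}{(l-x)\sqrt{2-x^2}}=\frac{1}{\sqrt{l^2-2}}$ (this is the Stieltjes transform of the measure $\frac{1}{\pi\sqrt{2-x^2}}dx$ on $[-\sqrt2,\sqrt2]$, obtained by rescaling the standard arcsine identity $\frac{1}{\pi}\int_{-1}^1\frac{dx}{(l-x)\sqrt{1-x^2}}=\frac{1}{\sqrt{l^2-1}}$). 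Substituting into the formula for $m(w)$ in Lemma~\ref{lem: ev fluctuation} gives
$m(w)=\frac{l}{2(l^2-2)}-\frac{1}{2\sqrt{l^2-2}}=\frac{l-\sqrt{l^2-2}}{2(l^2-2)}$, as claimed.

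For the variance the key simplification is $\frac{w(x)-w(y)}{x-y}=\frac{1}{(l-x)(l-y)}$, so $\left(\frac{w(x)-w(y)}{x-y}\right)^2=\frac{1}{(l-x)^2(l-y)^2}$. Using the algebraic identity $2-xy=(2-l^2)+l(l-x)+l(l-y)-(l-x)(l-y)$, the double integral in $v(w)$ splits into a sum of products of the one-dimensional integrals $I_1=\int_{-\sqrt2}^{\sqrt2}\frac{dx}{(l-x)\sqrt{2-x^2}}=\frac{\pi}{\sqrt{l^2-2}}$ and $I_2=\int_{-\sqrt2}^{\sqrt2}\frac{dx}{(l-x)^2\sqrt{2-x^2}}=-\tfrac{d}{dl}I_1=\frac{\pi l}{(l^2-2)^{3/2}}$, namely $\frac{1}{2\pi^2}\big((2-l^2)I_2^2+2lI_1I_2-I_1^2\big)$. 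Collecting terms over the common denominator $(l^2-2)^2$ yields $v(w)=\frac{1}{(l^2-2)^2}$, matching the claim.

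The argument is thus essentially a computation; there is no substantial obstacle beyond correctly evaluating the auxiliary integrals $I_1,I_2$ and carrying out the algebraic bookkeeping in $v(w)$. The only point requiring a modicum of care is checking that $w$ meets the regularity hypothesis of Lemma~\ref{lem: ev fluctuation} on a neighborhood of $[-\sqrt2,\sqrt2]$, which holds precisely because $l>\sqrt2$.
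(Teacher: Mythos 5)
Your proposal is correct and follows essentially the same route as the paper: apply Lemma~\ref{lem: ev fluctuation} to $w(x)=\tfrac{1}{l-x}$ and then evaluate $m(w)$ and $v(w)$; the mean computation is identical. The only difference is in how you organize the variance integral: the paper evaluates the inner $x$-integral first (an iterated integral computation using the auxiliary identities \eqref{eq: preliminary integration by parts}--\eqref{eq: preliminary k=2 integral}), whereas you use the algebraic identity $2-xy=(2-l^2)+l(l-x)+l(l-y)-(l-x)(l-y)$ to split the double integral immediately into products of the one-dimensional integrals $I_1,I_2$ — a clean piece of bookkeeping that I verified gives $\frac{1}{2\pi^2}\bigl((2-l^2)I_2^2+2lI_1I_2-I_1^2\bigr)=\frac{1}{(l^2-2)^2}$, but both routes reduce to the same basic arcsine-type integrals.
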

\begin{proof}
By Lemma \ref{lem: ev fluctuation} the random variable $\Lambda_N$ converges 
in law to a normal distribution with mean
\begin{equation}\label{eq: mean}
            m(w) =  \frac{\frac{1}{l-\sqrt{2}} + \frac{1}{l+\sqrt{2}}}{4} -
                    \frac{1}{2\pi} \int_{-\sqrt{2}}^{\sqrt{2}} 
                     \frac{1}{(l-x)\sqrt{2-x^2}} dx  
\end{equation}
    and variance
\begin{equation}\label{eq: variance}
        v(w) =      \frac{1}{2\pi^2}
                    \int_{-\sqrt{2}}^{\sqrt{2}}\int_{-\sqrt{2}}^{\sqrt{2}}
                    \left(\frac{\frac{1}{l-x} - \frac{1}{l-y}}{x-y}\right)^2
                    \frac{2 - x y}{\sqrt{2-x^2}\sqrt{2-y^2}}
                    dx dy
\end{equation}
    with $w(x) = \tfrac{1}{l-x}$.
    It only remains to compute the integrals in \eqref{eq: mean}-\eqref{eq: variance}.
    
    First, note that for any $k\in\mathbb{N}$ by integration by parts
    \begin{equation}\label{eq: preliminary integration by parts}
        \int_{-\sqrt{2}}^{\sqrt{2}}
            \frac{1}{(l-x)^k} \frac{x}{\pi\sqrt{2-x^2}}dx
        =
        \int_{-\sqrt{2}}^{\sqrt{2}}
            \frac{k}{(l-x)^{k+1}} \mu_{\text{sc}}(dx)
        =
        \frac{(-1)^{k}}{(k-1)!} \INk{k}{l}
    \end{equation}
    and also
    \begin{equation}\label{eq: preliminary k=1 integral}
    \begin{array}{rcl}
            \int_{-\sqrt{2}}^{\sqrt{2}}
            \frac{1}{l-x} \frac{1}{\pi\sqrt{2-x^2}}dx
             & =  &
            \frac{1}{l}
            \int_{-\sqrt{2}}^{\sqrt{2}}
            \left(\frac{x}{l-x} + 1 \right) \frac{1}{\pi} \frac{1}{\sqrt{2-x^2}}dx 
            \\
            & \stackrel{\eqref{eq: preliminary integration by parts}}{=} &
             \frac{1}{l}\left({-\INk{1}{l}} + 1\right)
             \\
            &\stackrel{\eqref{eq: useful integrals}}{=}&
            \frac{1}{\sqrt{l^2-2}}
    \end{array}
    \end{equation}
    as well as
    \begin{equation}\label{eq: preliminary k=2 integral}
    \begin{array}{rcl}
        \int_{-\sqrt{2}}^{\sqrt{2}}
            \frac{1}{(l-x)^2} \frac{1}{\pi\sqrt{2-x^2}}dx
        & = &
        \frac{1}{l}\int_{-\sqrt{2}}^{\sqrt{2}}
            \frac{1}{l-x}\left(\frac{x}{l-x} + 1 \right) \frac{1}{\pi\sqrt{2-x^2}}dx
        \\
        &\stackrel{\eqref{eq: preliminary integration by parts},\eqref{eq: preliminary k=1 integral}}{=} &
        \frac{1}{l}\left(\INk{2}{l} + 
                        \frac{1}{\sqrt{l^2-2}}
                    \right)
        \\
        &\stackrel{\eqref{eq: useful integrals}}{=}&
        \frac{l}{(l^2-2)^{\frac{3}{2}}}
        .
    \end{array}
    \end{equation}
    Therefore the expectation of the limiting distribution is
    \begin{align*}
        &
        m(w)
        \stackrel{\eqref{eq: preliminary k=1 integral}}{=} 
         \frac{l}{2(l^2-2)} - \frac{1}{2 \sqrt{l^2-2}}
         =
        \frac{l - \sqrt{l^2-2}}{2(l^2-2)}.
        \numberthis\label{eq: gaussianlimit expectation}
    \end{align*}
    The variance on the other hand is given by
    \begin{align*}
        &
        \frac{1}{2\pi^2}
        \int_{-\sqrt{2}}^{\sqrt{2}}\int_{-\sqrt{2}}^{\sqrt{2}}
                \frac{1}{(l-x)^2(l-y)^2}
                \frac{2 - x y}{\sqrt{2-x^2}\sqrt{2-y^2}}
                dx dy
        \\
        = & 
        \frac{1}{2\pi}\int_{-\sqrt{2}}^{\sqrt{2}}
        \frac{1}{(l-y)^2 \sqrt{2-y^2}}
        \int_{-\sqrt{2}}^{\sqrt{2}}
        \frac{1}{(l-x)^2}
        \frac{2 - x y}{\pi\sqrt{2-x^2}}
        dx
        dy
        ,
        \numberthis\label{eq: gaussianlimit variance}
    \end{align*}
    where the inner integral is by \eqref{eq: preliminary integration by parts} and \eqref{eq: preliminary k=2 integral}
    \begin{equation}\label{eq: gaussianlimit variance inner integral}
        \frac{2l}{(l^2-2)^{\frac{3}{2}}} - 2 y \INk{2}{l}
        \stackrel{\eqref{eq: useful integrals}}{=}
        \frac{2(l-y)}{(l^2-2)^{\frac{3}{2}}}.
    \end{equation}
    Therefore the variance is
    \begin{align*}
        \frac{1}{\pi}\int_{-\sqrt{2}}^{\sqrt{2}}
        \frac{1}{(l-y)^2 \sqrt{2-y^2}}
        \frac{l-y}{(l^2-2)^{\frac{3}{2}}}
        dy
        = 
        \frac{1}{(l^2-2)^{\frac{3}{2}}} \
        \int_{-\sqrt{2}}^{\sqrt{2}}
        \frac{1}{l-y} \frac{1}{\pi \sqrt{2-y^2}} dy
        \stackrel{\eqref{eq: preliminary k=1 integral}}{=} \frac{1}{(l^2-2)^2}.
    \end{align*}
\end{proof}

\subsection{Derivation of main fluctuation results}

Now we are ready to prove Theorem \ref{thm: sphere} (b) and Theorem \ref{thm: ball} (b).
Before giving the proof, we state the following simplified versions of \eqref{eq: useful integrals} using
\eqref{eq: l hat def}:
\begin{equation}\label{eq: useful integrals 2}
    \INk{k}{\hat{l}(\alpha)} = \left\{ \begin{array}{l@{\quad}lll}
\hat{l}(\alpha)-\sqrt{\hat{l}(\alpha)^{2}-2} & = & \sqrt{2(1-\alpha^{2})} & \text{ for }k=0,\\
-\frac{\hat{l}-\sqrt{\hat{l}(\alpha)^{2}-2}}{\sqrt{\hat{l}(\alpha)^{2}-2}} & = & -\frac{2(1-\alpha^{2})}{\alpha^{2}} & \text{ for }k=1,\\
\frac{2}{(\hat{l}(\alpha)^{2}-2)^{\frac{3}{2}}} & = & \frac{2(2(1-\alpha^{2}))^{\frac{3}{2}}}{\alpha^{6}} & \text{ for }k=2,\\
-\frac{\hat{6l}(\alpha)}{(\hat{l}(\alpha)^{2}-2)^{\frac{5}{2}}} & = & 
-\frac{24(2-\alpha^{2})(1-\alpha^{2})^{2}}{\alpha^{10}} 
& \text{ for }k=3.
\end{array}\right.
\end{equation}
Using this with $\alpha = \hat{\alpha}$
\begin{equation}\label{eq:I(hatl)etc}
\IN{\hat{l}}=\hat{z},\quad\INd{\hat{l}}=-\frac{\hat{z}^{2}}{\hat{\alpha}^{2}},\quad\INdd{\hat{l}}=2\frac{\hat{z}^{3}}{\hat{\alpha}^{6}},
\quad\INddd{\hat{l}}=-\frac{6(2-\alpha^{2}) \hat{z}^4}{\hat{\alpha}^{10}},
\quad \text{ where } \hat{z}=\sqrt{2\left(1-\hat{\alpha}^2\right)}.
\end{equation}    
\bigskip

\begin{proof}[Proof of Theorem \ref{thm: sphere} (b)]
    Applying Lemma \ref{lem: L_N in terms of h} and Proposition \ref{prop:max_fluct_gen} with 
    \begin{equation}
    h(\alpha,l,g)=f(\alpha)+\beta\left(l-\frac{\alpha^{2}}{g}\right),\label{eq:sphere_h}
    \end{equation}
    we obtain
    \begin{equation}\label{eq: apply proposition to LN}
        \frac{1}{N} L_N = 
        E_{1}+\frac{1}{\sqrt{N}}E_{2}W_{N}+\frac{1}{N}F_{N}+o_{\mathbb{P}}\left(\frac{1}{N}\right).
    \end{equation}
    Note that $$U^{(k)}_N - W^{(k)}_N \stackrel{\eqref{eq: U_N-W_N=R_N}}{=} \frac{1}{\sqrt{N}}R_N^{(k)}(l) = o_{\mathbb{P}}(N^{-1/2})$$ by Lemma \ref{lem:s_lambda_u_exp_rest_term}.
    It follows that $(U_N,U_N')$ and $(W_N,W_N')$ have the same limit, and that we can swap all $W_N$ for $U_N$ and $W_N'$ for $U'_N$ in \eqref{eq: apply proposition to LN} at the cost of a negligible error.
    
    The remainder of the proof will revolve around computing $E_{1},E_{2},E_{3},J,L,W,K,G$ of Proposition \ref{prop:max_fluct_gen}. Note
    first that
    \[
    E_{1}=\mathcal{B}(\hat{\alpha})\quad\text{ and }\quad J=\frac{1}{\mathcal{B}''(\hat{\alpha})}.
    \]
    Furthermore for the $h$ in (\ref{eq:sphere_h}) we obtain with $\hat{z}=\sqrt{2(1-\hat{\alpha}^{2})}$
    \begin{equation*}
        E_{2}=h_{g}=\frac{\beta\hat{\alpha}^{2}}{s(\hat{l})^{2}}
        =\frac{\beta\hat{\alpha}^{2}}{\hat{z}^{2}}
        = \kappa
        ,
        \quad
        E_{3}=h_{gg}=-\frac{2\beta\hat{\alpha}^{2}}{s(\hat{l})^{3}}
        = -\frac{2\beta\hat{\alpha}^{2}}{\hat{z}^{3}}
    \end{equation*}
    as well as
    \begin{equation*}
    \begin{array}{rclcrcl}
         h_{l,\alpha}&=& \frac{2\beta\hat{\alpha}s'(\hat{l})}{s(\hat{l})^{2}} = -\frac{2\beta}{\hat{\alpha}},
        &\quad& 
        h_{ll} &= &\beta\hat{\alpha}^{2}\left(\frac{s''(\hat{l})}{s(\hat{l})^2}-\frac{2 s'(\hat{l})^2}{s(\hat{l})^3}\right) 
        = \beta \frac{\hat{z}^{3}}{\hat{\alpha}^{4}},
        \\
        h_{l,g}&=&-\frac{2\beta\hat{\alpha}^{2} s'(\hat{l})}{s(\hat{l})^3} 
        = \frac{2\beta}{\hat{z}},
        &\quad&
        h_{\alpha,g}&=&\frac{2\beta\hat{\alpha}}{s(\hat{l})^2}
        = \frac{2\beta\hat{\alpha}}{\hat{z}^2},
    \end{array}
    \end{equation*}
    which gives
    \begin{equation*}
        L=\left(\begin{matrix}h_{\alpha,g} & 0\end{matrix}\right)
        = \beta\begin{pmatrix}
            \frac{2\hat{\alpha}}{\hat{z}^2} & 0
        \end{pmatrix}
        \in\mathbb{R}^{1\times2},
        \quad
        w=\left(\begin{matrix}h_{l,g}\\
        E_{2}
        \end{matrix}\right)=\left(\begin{matrix}h_{l,g}\\
        h_{g}
        \end{matrix}\right)
        = \frac{\beta}{\hat{z}}\begin{pmatrix}
            2\\
            \frac{\hat{\alpha}^{2}}{\hat{z}}
        \end{pmatrix}
        \in\mathbb{R}^{2},
    \end{equation*}
    \begin{equation*}
        \begin{array}{rcl}
             K&=&L-\frac{h_{l,\alpha}w^{T}}{h_{ll}}
            =
            \beta
            \begin{pmatrix}
            \frac{2\hat{\alpha}}{\hat{z}^2} & 0
            \end{pmatrix}
            +
            \frac{2\beta\hat{\alpha}^3}{\hat{z}^4}
            \begin{pmatrix}
            2&
            \frac{\hat{\alpha}^{2}}{\hat{z}}
            \end{pmatrix}
            =
            \frac{2\beta\hat{\alpha}}{\hat{z}^4}
        \begin{pmatrix}
            2 
            &
            \frac{\hat{\alpha}^{4}}{\hat{z}}
        \end{pmatrix}
        \in\mathbb{R}^{1\times2}
        \end{array}
    \end{equation*}
    \[
    H=\frac{K^{T}K}{\mathcal{B}''(\hat{\alpha})}
    = \frac{8 \beta^2 \hat{\alpha}^2}{\hat{z}^8 \mathcal{B}''(\hat{\alpha})}
    \begin{pmatrix}
            2
            &
            \frac{\hat{\alpha}^4}{\hat{z}}
            \\
            \frac{\hat{\alpha}^4}{\hat{z}}
            &
            \frac{\hat{\alpha}^{8}}{2\hat{z}^2}
    \end{pmatrix}
    \in\mathbb{R}^{2\times2},
    \]
    \[
    G=H-\left(\begin{matrix}E_{3} & 0\\
    0 & 0
    \end{matrix}\right)
    = \frac{8 \beta^2 \hat{\alpha}^2}{\hat{z}^8 \mathcal{B}''(\hat{\alpha})}
    \begin{pmatrix}
            2
            &
            \frac{\hat{\alpha}^4}{\hat{z}}
            \\
            \frac{\hat{\alpha}^4}{\hat{z}}
            &
            \frac{\hat{\alpha}^{8}}{2\hat{z}^2}
    \end{pmatrix}
    +
    \frac{2\beta\hat{\alpha}^{2}}{\hat{z}^{3}}
    \begin{pmatrix}
            1
            &
            0
            \\
            0
            &
            0
    \end{pmatrix}
    ,
    \]
    and finally
    \[
        F_{N}=E_{2}\Lambda_{N} - \frac{1}{2}\left(\begin{matrix}U_{N}\\
        U_{N}^{'}
        \end{matrix}\right)^{T}G\left(\begin{matrix}U_{N}\\
        U_{N}^{'}
        \end{matrix}\right).
    \]
    This proves \eqref{eq: theorem1-equation}.

    The joint convergence in law of $W_{N},W'_{N},\Lambda_N$ follows from Lemma \ref{lem:A Y joint conv for general w}, Lemma \ref{lem: gaussian error term} and since $\Lambda_N$ is independent from $(W_{N},W'_{N})$ for all $N$.
    Note that using \eqref{eq: useful integrals 2} the matrix $\Sigma$ can be simplified to
    \begin{equation*}
        \Sigma =
        \begin{pmatrix}
        \tfrac{4 (1-\hat{\alpha}^2)^2}{\hat{\alpha}^2} & 
        -\tfrac{4 \sqrt{2} \sqrt{1-\hat{\alpha}^2}^5 (1+\hat{\alpha}^2)}{\hat{\alpha}^6}  \\
        -\tfrac{4 \sqrt{2} \sqrt{1-\hat{\alpha}^2}^5 (1+\hat{\alpha}^2)}{\hat{\alpha}^6} & 
        \frac{8(1-\hat{\alpha}^2)^3(2+\hat{\alpha}^2+\hat{\alpha}^4)}{\hat{\alpha}^{10}} 
        \end{pmatrix},
    \end{equation*} 
    while the limiting distribution of $\Lambda_N$ is given by
    \begin{equation}\label{eq: Lambda_N distribution}
        \mathcal{N}\left( \tfrac{\hat{l}(\hat{\alpha})-\sqrt{\hat{l}(\hat{\alpha})^2-2}}{2(\hat{l}(\hat{\alpha})^2-2)}, \tfrac{1}{(\hat{l}(\hat{\alpha})^2-2)^2}  \right)
        \stackrel{\eqref{eq: l hat def}}{=}
        \mathcal{N}\left(2\tfrac{\sqrt{2} (1-\hat{\alpha}^2)^{\frac{3}{2}}}{\hat{\alpha}^4}
            , \tfrac{4(1-\hat{\alpha}^2)^2}{\hat{\alpha}^8}\right)
        .
    \end{equation}
\end{proof}
\bigskip

\begin{proof}[Proof of Theorem \ref{thm: ball} (b)]
As in the previous proof we apply Lemma \ref{lem: L_N in terms of h} and Proposition \ref{prop:max_fluct_gen},
this time with 
\begin{equation}
h((\alpha,r),l,g)=f(\alpha r)+ g(r) + \beta r^{2}\left(l-\frac{\alpha^{2}}{g}\right),\label{eq:h_ball}
\end{equation}
and also use that $U^{(k)}_N- W^{(k)}_N = o_{\mathbb{P}}(N^{-1/2})$ to exchange $W_N,W_N'$ for $U_N, U_N'$, yielding
\begin{align*}
    \frac{1}{N}\tilde{L}_N
    =
    E_1 + \frac{1}{\sqrt{N}} E_2 W_N + \frac{1}{N}F_N + o_{\mathbb{P}}\left(\frac{1}{N}\right).
\end{align*}
Now
\[
E_{1}=\tilde{\mathcal{B}}(\hat{\alpha},\hat{r})\quad\text{ and }\quad J=\left(\nabla^{2}\tilde{\mathcal{B}}((\hat{\alpha},\hat{r}))\right)^{-1},
\]
and for the $h$ in (\ref{eq:h_ball}) with $\hat{z}=\sqrt{2(1-\hat{\alpha}^2)}$
\begin{equation*}
    E_{2}=h_{g}=\beta\frac{\hat{r}^{2}\hat{\alpha}^{2}}{s(\hat{l})^{2}}=\beta\frac{\hat{r}^{2}\hat{\alpha}^{2}}{\hat{z}^{2}}
    = \tilde{\kappa},
    \quad 
    E_{3}=h_{gg}=-2\beta\frac{\hat{r}^{2}\hat{\alpha}^{2}}{s(\hat{l})^{3}}=-2\beta\frac{\hat{r}^{2}\hat{\alpha}^{2}}{\hat{z}^{3}},
\end{equation*}
\[
h_{l,y}=\left(\begin{matrix}2\beta \hat{r}^{2}\frac{\hat{\alpha}}{s(l)^{2}}s'(l)\\
2\beta \hat{r}\left(1+\frac{\hat{\alpha}^{2}}{s(l)^{2}}s'(l)\right)
\end{matrix}\right)=\left(\begin{matrix}2\beta \hat{r}^{2}\frac{\hat{\alpha}}{\hat{z}^{2}}s'(l)\\
2\beta \hat{r}\left(1+\frac{\hat{\alpha}^{2}}{\hat{z}^{2}}s'(l)\right)
\end{matrix}\right)=\left(\begin{matrix}-2\beta \hat{r}^{2}\frac{\hat{\alpha}}{\hat{z}^{2}}\frac{\hat{z}^{2}}{\hat{\alpha}^{2}}\\
2\beta \hat{r}\left(1-\frac{\hat{\alpha}^{2}}{\hat{z}^{2}}\frac{\hat{z}^{2}}{\hat{\alpha}^{2}}\right)
\end{matrix}\right)=\left(\begin{matrix}-\frac{2\beta \hat{r}^{2}}{\hat{\alpha}}\\
0
\end{matrix}\right),
\]
\[
h_{ll}=-2\beta \hat{r}^{2}\frac{\hat{\alpha}^{2}}{s(l)^{3}}s'(l)^{2}+\beta \hat{r}^{2}\frac{\hat{\alpha}^{2}}{s(l)^{2}}s''(l)=
\beta\hat{r}^2 \left(\frac{2\hat{z}}{\hat{\alpha}^{4}}-\frac{2\hat{z}}{\hat{\alpha}^{2}}\right)
=
\beta \frac{\hat{r}^2\hat{z}^3}{\hat{\alpha}^{4}},
\]
\begin{equation*}
    h_{l,g}=-\frac{2\beta\hat{\alpha}^{2}\hat{r}^{2} s'(\hat{l})}{s(\hat{l})^3} = \frac{2\beta\hat{r}^{2}}{\hat{z}}
    , \quad 
    h_{\alpha,g}=\frac{2\beta\hat{\alpha}\hat{r}^{2}}{s(\hat{l})^2}
    =\frac{2\beta\hat{\alpha}\hat{r}^{2}}{\hat{z}^2}
    , \quad 
    h_{r,g}=\frac{2\beta\hat{\alpha}^{2}\hat{r}}{s(\hat{l})^2}
    =\frac{2\beta\hat{\alpha}^{2}\hat{r}}{\hat{z}^2},
\end{equation*}
which gives
\begin{equation*}
    L=\left(\begin{matrix}h_{y,g} & 0\end{matrix}\right)
    = 
    \frac{2\beta\hat{\alpha}\hat{r}}{\hat{z}^2}
    \begin{pmatrix}
        \hat{r}
        & 0 \\
        \hat{\alpha}
        & 0
    \end{pmatrix}
    \in\mathbb{R}^{2\times2},
    \quad 
    w=\left(\begin{matrix}h_{l,g}\\
    h_{g}
    \end{matrix}\right)
    =
    \frac{\beta\hat{r}^2}{\hat{z}}
    \begin{pmatrix}
        2
        \\
        \frac{\hat{\alpha}^2}{\hat{z}}
    \end{pmatrix}
    \in\mathbb{R}^{2}
\end{equation*}
\begin{equation*}
    K=L-\frac{h_{l,y}w^{T}}{h_{ll}}
    =
    L-
    \frac{\hat{\alpha}^{4}}{2\beta\hat{r}^2\hat{z}^3}
    \frac{-2\beta^2\hat{r}^4}{\hat{z}}
    \begin{pmatrix}
        \frac{2}{\hat{\alpha}}
        &
        \frac{\hat{\alpha}}{\hat{z}}
        \\
        0 & 0
    \end{pmatrix}
    = 
    \frac{2\beta\hat{r}\hat{\alpha}}{\hat{z}^2}
    \begin{pmatrix}
        \frac{2\hat{r}}{\hat{z}^2}
        &  \frac{\hat{r}\hat{\alpha}^4}{\hat{z}^3}
        \\ \hat{\alpha}
        & 0
    \end{pmatrix}
    \in\mathbb{R}^{2\times2}.
\end{equation*}
Furthermore we obtain
\begin{equation*}
    H=K^{T}\left(\nabla^{2}\mathcal{B}((\hat{\alpha},\hat{r}))\right)^{-1}K,
    \quad 
    G = 
    H-\left(\begin{matrix}E_{3} & 0\\
    0 & 0
    \end{matrix}\right)
    =
    K^{T}\left(\nabla^{2}\mathcal{B}((\hat{\alpha},\hat{r}))\right)^{-1}K
    +
    \begin{pmatrix}
        2\beta\frac{\hat{r}^{2}\hat{\alpha}^{2}}{\hat{z}^{3}}
        & 0 \\ 0 & 0
    \end{pmatrix},
\end{equation*}
and finally
\[
F_{N}=E_{2}\Lambda_{N}
-\frac{1}{2}\left(\begin{matrix}W_{N}\\
W_{N}^{'}
\end{matrix}\right)^{T}G\left(\begin{matrix}W_{N}\\
W_{N}^{'}
\end{matrix}\right).
\]
This proves \eqref{eq: ball result equation}.
\end{proof}
\bigskip

\begin{remark}\label{rem: alternative theorems}
    Theorem \ref{thm: sphere} (b) and Theorem \ref{thm: ball} (b) were stated in terms of the sums $U_N,U'_N$ over the random vector $u$ with weakly dependent but not independent entries. It may be more natural to write the result instead in terms of sums of truly independent summands. This can be done if one constructs $u$ from i.i.d. $\tilde{u}_1,...,\tilde{u}_N$ as we did in the proofs of Lemma \ref{lem:U_N_UB} and Lemma \ref{lem:A Y joint conv for general w}. If we define
    \begin{align*}
        X_N^{(k)} & =
        \frac{1}{\sqrt{N}}\sum_{i=1}^{N}(N\tilde{u}_{i}^{2}-1)
        \left(\frac{k!(-1)^k}{(\hat{l}-\theta_{i/N})^{k+1}}-\INk{k}{l}\right) \label{eq:X}\numberthis
        \\
        Y_N &= \frac{1}{\sqrt{N}}\sum_{i=1}^{N}(N\tilde{u}_{i}^{2}-1)
    \end{align*}
    one can verify that
    \begin{equation}\label{eq:WX}
        W^{(k)}_N = X^{(k)}_N - \frac{1}{\sqrt{N}} X^{(k)}_N Y_N + o_{\mathbb{P}}(N^{-1/2}).
    \end{equation}
    Theorem \ref{thm: sphere} (b) can then be reformulated as
    \begin{equation}\label{eq: theorem1-equation alt}
        L_N -
            N \mathcal{B}(\hat{\alpha}) 
            - \sqrt{N}\kappa X_N
            - \left( \kappa\Lambda_N - \kappa X_N Y_N 
                -\frac{1}{2}\begin{pmatrix}X_{N}\\
                X_{N}^{'}
                \end{pmatrix}^{T}G\begin{pmatrix}X_{N}\\
                X_{N}^{'}
                \end{pmatrix}
                \right)
                \overset{\mathbb{P}}{\longrightarrow}0,
    \end{equation}
    where the random variables satisfy
    $$
        (X_N, X'_N, Y_{N}, \Lambda_{N})
        \stackrel{d}{\longrightarrow}
        (X,X',Y,\Lambda),
    $$
    where with $\hat{z} = \sqrt{2(1-\hat{\alpha}^2)}$
    \begin{alignat*}{3}
        X \sim \mathcal{N}\left(0, \frac{\hat{z}^4}{\hat{\alpha}^2}\right)
        , \qquad
        & 
        X' \sim \mathcal{N}\left(0, \frac{\hat{z}^6(2+\hat{\alpha}^2+\hat{\alpha}^4)}{\hat{\alpha}^{10}}\right)
        , 
        \numberthis\label{eq: theorem1-variables alt}
        \\
        Y \sim \mathcal{N}\left(0, 2 \right)
        , \qquad
        &
        \Lambda \sim \mathcal{N}\left(\frac{\hat{z}^3}{2\hat{\alpha}^4}
            , \frac{\hat{z}^4}{\hat{\alpha}^8}\right)
        , 
    \end{alignat*}
    with $(X,X')$, $Y$ and $\Lambda$ mutually independent and
    \begin{align*}
        {\rm{Cov}}(X,X') & = 
            -\frac{\hat{z}^5 (1+\hat{\alpha}^2)}{\hat{\alpha}^6}.
    \end{align*}
    The constant $\kappa$ and matrix $G$ are the same as before. Comparing the estimate \eqref{eq: theorem1-equation} in terms of $U_N,U_N'$ and \eqref{eq: theorem1-equation alt} one sees that the extra term $\kappa X_N Y_N $ of order one appears, which arises from the $N^{-1/2}$ correction in \eqref{eq:WX}. Note furthermore that \eqref{eq: theorem1-equation alt} would remain true if one defined $X^{(k)}_N$ with the random eigenvalues $\lambda_i$ instead of deterministic classical locations $\theta_{i/N}$ in \eqref{eq:X}.
    
    Similarly Theorem \ref{thm: ball} (b) can be formulated as
    \begin{equation}\label{eq: ball result equation alt}
        \tilde{L}_N -
        \tilde{\mathcal{B}}(\hat{\alpha},\hat{r}) 
        -
        \sqrt{N} \kappa X_N
        - \left( \kappa \Lambda_N - \kappa X_N Y_N 
        - \frac{1}{2} 
            \begin{pmatrix}X_{N}\\
                X_{N}^{'}
                \end{pmatrix}^{T}\tilde{G}\begin{pmatrix}X_{N}\\
                X_{N}^{'}
                \end{pmatrix}
        \right)
       \overset{\mathbb{P}}{\longrightarrow} 0,
    \end{equation} 
    where $X_N, X_N, Y_N, \Lambda_N$ are as in \eqref{eq: theorem1-equation alt}.
\end{remark}

\section{Examples: Subleading order}\label{section: example fluctuations}
We showed in Section \ref{section: example leading order} that for $f(x) = h x^k$ and $\beta < \beta_c(k,h)$ (see \eqref{eq: beta critical}) the function $\mathcal{B}(\alpha)$ has a unique maximizer in $[0,1]$. Theorem \ref{thm: sphere} (b) requires also that $\mathcal{B}''(\hat{\alpha})<0$, which the next lemma shows is always satisfied.

\begin{lemma}\label{lem: monomial B double prime}
    Let $h\in\mathbb{R}^+$, $k\in\mathbb{N}$, $f(x) = h x^k$. If $\beta < \beta_c(k,h)$ then all global maximizers
    $\hat{\alpha} \in \argmax_{\alpha\in(-1,1)} \mathcal{B}(\alpha)$
    satisfy $\mathcal{B}''(\hat{\alpha}) < 0$.
\end{lemma}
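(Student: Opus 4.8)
The plan is to combine the explicit description of the global maximizers of $\mathcal{B}$ given in Lemma \ref{lem: monomial maxima} with the formula for $\mathcal{B}''$ at a critical point that already appears in its proof. First I would record that every global maximizer $\hat{\alpha}$ of $\mathcal{B}$ over $[-1,1]$ lies in the open interval $(-1,1)$ (as in the proof of Lemma \ref{lem: monomial maxima}, the endpoints cannot be maximizers because $|\mathcal{B}'(\alpha)|\to\infty$ as $\alpha\to\pm1$), and hence is a critical point; moreover, by Lemma \ref{lem: monomial maxima}, when $\beta<\beta_c(k,h)$ such a maximizer is nonzero in each of the cases $k=1$, $k=2$, $k\ge3$.

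Next I would dispatch $k=1$ on its own: there $\mathcal{B}''(\alpha)=-\sqrt{2}\beta(1-\alpha^2)^{-3/2}<0$ for all $\alpha\in(-1,1)$, so the claim is immediate (and $\beta_c(1,h)=\infty$). For $k\ge2$ I would use that at a nonzero critical point the equation $\mathcal{B}'(\hat{\alpha})=hk\hat{\alpha}^{k-1}-\sqrt{2}\beta\hat{\alpha}(1-\hat{\alpha}^2)^{-1/2}=0$ yields $hk\hat{\alpha}^{k-2}=\sqrt{2}\beta(1-\hat{\alpha}^2)^{-1/2}$; substituting this into $\mathcal{B}''(\alpha)=hk(k-1)\alpha^{k-2}-\sqrt{2}\beta(1-\alpha^2)^{-3/2}$ gives, exactly as in the proof of Lemma \ref{lem: monomial maxima},
\[
\mathcal{B}''(\hat{\alpha})=\frac{\sqrt{2}\beta}{(1-\hat{\alpha}^2)^{3/2}}\bigl((k-1)(1-\hat{\alpha}^2)-1\bigr),
\]
which is negative precisely when $\hat{\alpha}^2>\tfrac{k-2}{k-1}$.

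It then remains to verify $\hat{\alpha}^2>\tfrac{k-2}{k-1}$ for the relevant maximizers. For $k=2$ this reads $\hat{\alpha}^2>0$, which holds since $\hat{\alpha}\ne0$. For $k\ge3$, the inequality $\beta_c(k,h)<\tilde{\beta}_c(k,h)$ recorded in \eqref{eq: betac tilde def} shows that $\beta<\beta_c(k,h)$ implies $\beta<\tilde{\beta}_c(k,h)$, and then Lemma \ref{lem: monomial maxima} places the positive global maximizer $\hat{\alpha}$ in $\bigl(\sqrt{(k-2)/(k-1)},1\bigr)$, giving $\hat{\alpha}^2>\tfrac{k-2}{k-1}$. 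Finally, when $k$ is even the remaining global maximizer is $-\hat{\alpha}$, and $\mathcal{B}''(-\hat{\alpha})=\mathcal{B}''(\hat{\alpha})$ since for even $k$ the function $\mathcal{B}''$ depends on $\alpha$ only through $\alpha^2$ (both $\alpha^{k-2}$ and $(1-\alpha^2)^{-3/2}$ are even). This covers all cases. I do not expect any genuine obstacle; the only points needing care are the case bookkeeping and invoking $\beta_c<\tilde{\beta}_c$ to ensure the maximizer lands inside the interval where $\mathcal{B}''<0$.
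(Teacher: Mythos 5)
Your proof is correct, and it takes a somewhat streamlined route compared to the paper's for the key case $k\ge 3$. You both arrive at the same identity $\mathcal{B}''(\hat{\alpha})=\tfrac{\sqrt{2}\beta}{(1-\hat{\alpha}^2)^{3/2}}\bigl((k-1)(1-\hat{\alpha}^2)-1\bigr)$ at a nonzero critical point, so the question reduces to showing $\hat{\alpha}^2>\tfrac{k-2}{k-1}$. For $k\ge 3$ the paper argues by contradiction with a third-derivative computation: if $\mathcal{B}''(\hat{\alpha})=0$ then $\hat{\alpha}^2=\tfrac{k-2}{k-1}$, and then $\mathcal{B}'''(\hat{\alpha})<0$, forcing a saddle rather than a maximum. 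You instead read off strict containment directly from Lemma~\ref{lem: monomial maxima}, which already asserts that when $\beta<\tilde{\beta}_c(k,h)$ the nonzero local maximizer is the unique root of \eqref{eq: alpha eq} lying in the open interval $\bigl(\sqrt{(k-2)/(k-1)},1\bigr)$, and you combine this with the inequality $\beta_c<\tilde{\beta}_c$ (stated in \eqref{eq: betac tilde def}). This avoids redoing the third-derivative calculation and makes the dependence on Lemma~\ref{lem: monomial maxima} explicit, which is cleaner. Your handling of $k=2$ (where the threshold becomes $\hat{\alpha}^2>0$, automatic from $\hat{\alpha}\ne 0$) is also a slight simplification over the paper's direct evaluation via $\hat{\alpha}^2=1-\beta^2/(2h^2)$. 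The only implicit dependency to be aware of is that $\beta_c(k,h)<\tilde{\beta}_c(k,h)$ for $k\ge 3$, which the paper asserts but does not separately verify; it does hold (it reduces to $k^{k-2}<(k-1)^{k-1}$), but it is worth noting that your argument, unlike the paper's, genuinely uses it.
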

\begin{proof}
    By Lemma \ref{lem: monomial maxima} there is a unique maximizer $\hat{\alpha} \in (0,1)$ for $\beta < \beta_c(k,h)$. Note that we must have  $\mathcal{B}''(\hat{\alpha}) \le 0$, so we only have to prove that $\mathcal{B}''(\hat{\alpha}) \neq 0$.
    In the case $k=1$ we have $\mathcal{B}''(\alpha) = -\frac{\sqrt{2}\beta}{(1-\alpha^2)^{\frac{3}{2}}} < 0$ for all $\alpha \in (-1,1)$. In the case $k=2$ we have by Lemma \ref{lem: monomial maxima} that $\hat{\alpha}^2 = 1-\frac{\beta^2}{2h^2}$ and thus
    $$
        \mathcal{B}''(\hat{\alpha}) 
        = 2h - \frac{\sqrt{2}\beta}{\left(1-\left(1-\frac{\beta^2}{2h^2}\right)\right)^{\frac{3}{2}}}
        = \frac{2h}{\beta^2}(\beta^2 - 2h^2) < 0
    $$
   for all $\beta < \beta_c(2) = \sqrt{2} h$. In the case $k\ge 3$ 
    note that for any critical $\alpha \in (0,1)$
   \begin{equation}
       \mathcal{B}''(\alpha) 
       \stackrel{\eqref{eq: alpha eq}}{=}
       (k-1) \frac{\sqrt{2}\beta}{\sqrt{1-\alpha^2}} - \frac{\sqrt{2}\beta}{(1-\alpha^2)^{\frac{3}{2}}}
       =
       \frac{\sqrt{2}\beta}{\sqrt{1-\alpha^2}} \left(k-1 - \frac{1}{1-\alpha^2}\right),
   \end{equation}
   which can only be equal to zero if $\alpha^2 = \frac{k-2}{k-1}$. Thus, it remains to show that $\alpha = \sqrt{\frac{k-2}{k-1}}$ is not the global maximizer of $\mathcal{B}$. 
   Now suppose we have
   \begin{equation}
       \mathcal{B}'\left(\sqrt{\tfrac{k-2}{k-1}}\right) = \mathcal{B}''\left(\sqrt{\tfrac{k-2}{k-1}}\right) = 0,
   \end{equation}
   and note that for any critical $\alpha \in (0,1)$ the third derivative is
   \begin{equation}
   \begin{array}{rcl}
       \mathcal{B}'''(\alpha) 
       &=& 
       h k (k-1) (k-2) \alpha^{k-3} - 3 \sqrt{2} \beta \frac{\alpha}{(1-\alpha^2)^{\frac{5}{2}}}
       \\
       &\stackrel{\eqref{eq: alpha eq}}{=}&
       \frac{\sqrt{2}\beta \alpha}{\sqrt{1-\alpha^2}}
       \left(\frac{(k-1)(k-2)}{\alpha^2} - \frac{3}{(1-\alpha^2)^2}\right).
    \end{array}   
   \end{equation}
   Then for $\alpha = \sqrt{\tfrac{k-2}{k-1}}$
   \begin{equation}
       \mathcal{B}'''\left(\sqrt{\tfrac{k-2}{k-1}}\right) = 
       \sqrt{2(k-2)}\beta
       \left((k-1)^2 - 3 (k-1)^2\right) = - 2 \sqrt{2(k-2)}\beta(k-1)^2  < 0,
   \end{equation}
   which means that a critical $\alpha = \sqrt{\tfrac{k-2}{k-1}}$ is a saddle point and not a  maximizer.
\end{proof}

\bigskip
From Lemma \ref{lem: monomial maxima} and Lemma \ref{lem: monomial B double prime} it follows that one can apply Theorem \ref{thm: sphere} (b) for all $f(x) = h x^k$ whenever $\beta < \beta_c(k)$. In the linear and quadratic case one can obtain the following more explicit results.

\bigskip

\begin{corollary}\label{cor: linear fluctutation}
    Let $h \in \mathbb{R}\setminus\{0\}$ and $f(x) = h x$. 
    Then
    $$
        L_N 
        -
            N \sqrt{h^2 + 2\beta^2}
            - \sqrt{N}\kappa U_N
            - \left( \kappa\Lambda_N 
                -\frac{1}{2}\begin{pmatrix}U_{N}\\
                U_{N}^{'}
                \end{pmatrix}^{T}G\begin{pmatrix}U_{N}\\
                U_{N}^{'}
                \end{pmatrix}
                \right)
        \stackrel{\mathbb{P}}{\longrightarrow} 0
    $$
    with $\kappa$ and $G$ given by
    \begin{align*}
        \kappa & = \frac{h^2}{4\beta}, &
        G_{11} & = -\frac{h^4\sqrt{h^2+2\beta^2}}{8\beta^4}
        , &
        \\
        G_{12} = G_{21} & = -\frac{h^6}{32\beta^5}
        , &
        G_{22} & = \frac{h^{10}}{2^7 \beta^6 (h^2+2\beta^2)^{\frac{3}{2}}}
        , &
    \end{align*}
    and the joint convergence
     \begin{align*}
        U_{N} & \rightarrow U \sim \mathcal{N}\left(   0 ,  \tfrac{16 \beta^4}{h^2 (h^2 + 2\beta^2)}  \right), & 
        U'_{N} & \rightarrow U' \sim \mathcal{N}\left(  0  ,  \tfrac{2^7 \beta^6 \left(4\beta^4+5\beta^2 h^2 +                                                                    2h^4\right)}{h^{10}}  \right), &
        \\& &
        \Lambda_N & \rightarrow \Lambda \sim \mathcal{N}\left(  \tfrac{4\beta^3 \sqrt{h^2+2\beta^2}}{h^4}  ,  \tfrac{16\beta^4 (h^2 + 2\beta^2)^2}{h^8}  \right), &
    \end{align*}
    in distribution, where $(U,U')$ and $\Lambda$ are independent and
    \begin{equation}
        \text{Cov}(U,U') = -\frac{2^6 \beta^5 (h^2 + \beta^2)}{h^6 \sqrt{h^2+2\beta^2}}.
    \end{equation}
\end{corollary}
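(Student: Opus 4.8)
The plan is to obtain the corollary as a direct specialization of Theorem~\ref{thm: sphere}~(b) to $f(x)=hx$; the only work is then to verify the hypotheses of that theorem and to simplify its conclusions algebraically in this case.

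First I would check the hypotheses. The function $f(x)=hx$ is in $C^3([-1,1])$, and the critical point equation $\mathcal{B}'(\alpha)=h-\sqrt2\beta\,\alpha/\sqrt{1-\alpha^2}=0$ has the single solution
$$\hat\alpha=\frac{h}{\sqrt{h^2+2\beta^2}},$$
which, since $\mathcal{B}'(\alpha)\to\mp\infty$ as $\alpha\to\pm1$, is the unique local and global maximizer of $\mathcal{B}$. For $h>0$ this is exactly Lemma~\ref{lem: monomial maxima} with $k=1$ (valid for all $\beta>0$ as $\beta_c(1,h)=\infty$ by \eqref{eq: beta critical}), and for $h<0$ it follows from the same elementary computation; in either case $\mathcal{B}(\hat\alpha)=\sqrt{h^2+2\beta^2}$. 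Since $h\ne0$ we have $\hat\alpha\ne0$, and from $\mathcal{B}''(\alpha)=-\sqrt2\beta(1-\alpha^2)^{-3/2}$ (equivalently Lemma~\ref{lem: monomial B double prime}) we get $\mathcal{B}''(\hat\alpha)<0$; the maximizer being unique, the ``pair of maximizers'' clause of the theorem is not needed. Hence Theorem~\ref{thm: sphere}~(b) applies and gives \eqref{eq: theorem1-equation}.

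Next I would record the auxiliary data. From $1-\hat\alpha^2=\tfrac{2\beta^2}{h^2+2\beta^2}$ one obtains
$$\hat\alpha^2=\frac{h^2}{h^2+2\beta^2},\qquad \hat z^2=2(1-\hat\alpha^2)=\frac{4\beta^2}{h^2+2\beta^2},\qquad \mathcal{B}''(\hat\alpha)=-\frac{(h^2+2\beta^2)^{3/2}}{2\beta^2},$$
and then substitute these into the formulas of Theorem~\ref{thm: sphere}~(b): into $\kappa=\beta\hat\alpha^2/\hat z^2$, which collapses to $\kappa=h^2/(4\beta)$; into the displayed expression for $G$ in terms of $\hat\alpha,\hat z,\mathcal{B}''(\hat\alpha)$, which yields the stated entries $G_{11}$, $G_{12}=G_{21}$, $G_{22}$; and into the variances, means and covariance of \eqref{eq: theorem1-variables}, namely $\hat z^4/\hat\alpha^2$ for $U$, $\hat z^6(2+\hat\alpha^2+\hat\alpha^4)/\hat\alpha^{10}$ for $U'$, the pair $\bigl(\hat z^3/(2\hat\alpha^4),\,\hat z^4/\hat\alpha^8\bigr)$ for $\Lambda$, and $-\hat z^5(1+\hat\alpha^2)/\hat\alpha^6$ for $\mathrm{Cov}(U,U')$, which give the claimed limiting laws. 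The joint convergence and the independence of $(U,U')$ from $\Lambda$ are inherited verbatim from the theorem.

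There is no analytic obstacle here; the only labour is the bookkeeping of rational expressions in $h$ and $\beta$, which is why I would state only the simplified final values and leave the intermediate steps to direct computation. The mildly error-prone step is the substitution into $G$, where the two summands of the theorem's formula must be combined carefully; a convenient sanity check is the repeated cancellation $-(h^2+2\beta^2)+2\beta^2=-h^2$ and the fact that the off-diagonal entry of $G$ carries the common factor $\hat\alpha^4/\hat z$ coming from the first summand only.
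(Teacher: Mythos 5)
Your approach is exactly the one the paper intends: Corollary~\ref{cor: linear fluctutation} is stated without a separate proof and is introduced by a remark that Theorem~\ref{thm: sphere}~(b) applies to all monomials $f(x)=hx^k$ with $\beta<\beta_c(k,h)$; specializing the general formulas at $\hat\alpha=h/\sqrt{h^2+2\beta^2}$, $\hat z=2\beta/\sqrt{h^2+2\beta^2}$, $\mathcal{B}''(\hat\alpha)=-(h^2+2\beta^2)^{3/2}/(2\beta^2)$ is all that is required. The hypothesis check (uniqueness of $\hat\alpha$, $\hat\alpha\neq0$, $\mathcal{B}''(\hat\alpha)<0$, handling $h<0$ by symmetry) and all of the algebraic simplifications you describe for $\kappa$, the limiting laws of $U,U',\Lambda$, and $\mathrm{Cov}(U,U')$ are correct.

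One small caveat worth flagging: your statement that substitution into the theorem's formula for $G$ ``yields the stated entries'' does not literally hold for $G_{22}$. The general expression is
\[
G_{22}=\frac{4\beta^2\hat\alpha^{10}}{\hat z^{10}\,\mathcal{B}''(\hat\alpha)},
\]
and since $\mathcal{B}''(\hat\alpha)<0$ this is negative; carrying out the substitution gives $G_{22}=-\,h^{10}/\bigl(2^7\beta^6(h^2+2\beta^2)^{3/2}\bigr)$, whereas the corollary as printed has a $+$ sign. This is consistent with the analogous quadratic case (Corollary~\ref{cor: quadratic fluctutation}), where the printed $G_{22}$ is negative, so the $+$ sign in the linear case appears to be a typographical slip in the paper rather than a defect in your method. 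If you perform the bookkeeping yourself rather than matching the printed value, you should obtain the minus sign.
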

\bigskip

\begin{remark}\label{rem:CS17_remark}
    Note that it follows from Corollary \ref{cor: linear fluctutation} that
    \begin{equation}
        \frac{1}{\sqrt{N}}\left(L_N - \sqrt{h^2+2\beta^2} N\right) \longrightarrow \mathcal{N}\left(0, \frac{\beta^2 h^2}{h^2+2\beta^2}\right),
    \end{equation}
    which coincides with the results from \cite{chen2017parisi}.
        To see this let $\gamma_2 = \beta$, $\gamma_p = 0$ for $p>2$ and 
        \begin{equation}
            \xi(s) = 
                    \beta^2 s^2
        \end{equation}
        in \cite[Theorem 5]{chen2017parisi}.
        By \cite[Proposition 1]{chen2017parisi} we then have
        \begin{equation}
            L_0 = \frac{1}{\sqrt{\xi'(1) + h^2}} = \frac{1}{\sqrt{2 \beta^2 + h^2}},
        \end{equation}
        and by \cite[Theorem 3]{chen2017parisi} the function $u_t : (0,1)\to\mathbb{R}$ is the solution of
        \begin{equation}
            L_0^2 (t \xi'(u_t) + h^2) = u_t
            \quad \Leftrightarrow \quad 
            u_t (2\beta^2 + h^2) = t 2 \beta^2 u_t + h^2,
        \end{equation}
        which is
        \begin{equation}
            u_t = \frac{h^2}{2 \beta^2 (1-t) + h^2}.
        \end{equation}
        This in turn gives us by \cite[Theorem 5]{chen2017parisi} that
        \begin{equation}
            \sqrt{N} \left(L_N - \sqrt{2\beta^2 + h^2}\right)
            \longrightarrow 
            \mathcal{N}_N(0, \chi)
        \end{equation}
        with
        \begin{equation}
            \chi = \int_0^1 \xi(u_t) dt
                 = \int_0^1 2 \beta^2 \left(\frac{h^2}{2 \beta^2 (1-t) + h^2}\right)^2 dt
                 = \frac{\beta^2 h^2}{2\beta^2 + h^2}.
        \end{equation}
\end{remark}
\bigskip

\begin{corollary}\label{cor: quadratic fluctutation}
    Let $h \in \mathbb{R}^+$ and $f(x) = h x^2$. If $\beta < \frac{h}{\sqrt{2}}$ then 
    $$
        L_N - 
        \frac{2h^2 + \beta^2}{2 h}
        - \sqrt{N}{\kappa} U_N
            - \left( {\kappa}\Lambda_N 
                -\frac{1}{2}\begin{pmatrix}U_{N}\\
                U_{N}^{'}
                \end{pmatrix}^{T}{G}\begin{pmatrix}U_{N}\\
                U_{N}^{'}
                \end{pmatrix}
                \right)
        \stackrel{\mathbb{P}}{\longrightarrow} 0
    $$
    with constants
    \begin{align*}
        \tilde{\kappa} & = 
                    \frac{2h^2-\beta^2}{2\beta}, &
        G_{11} & = -\frac{h(4h^4-2h^2\beta^2+\beta^4)}{\beta^4}
        , &
        \\
        G_{12} = G_{21} & = -\frac{h^2(2h^2-\beta^2)}{2\beta^5}
        , &
        G_{22} & = -\frac{(2h^2-\beta^2)^4}{16 h \beta^6}
        , &
    \end{align*}
    and the joint convergence in law
     \begin{align*}
        U_N & \rightarrow U \sim \mathcal{N}\left(   0 , \tfrac{2\beta^4}{h^2(2h^2 - \beta^2)}  \right), & 
        U'_N & \rightarrow U' \sim \mathcal{N}\left(  0  ,  \tfrac{8\beta^6 (16h^4 - 6\beta^2h^2 + \beta^4)}{(2h^2-\beta^2)^5}  \right), &
        \\ & &
        \Lambda_N & \rightarrow \Lambda \sim \mathcal{N}\left(  \tfrac{2 h \beta^3}{(2h^2-\beta^2)^2}  ,  \tfrac{16 h^4 \beta^4}{(2h^2-\beta^2)^4}  \right), &
    \end{align*}
    where $(U,U')$ and $\Lambda$ are independent and
    \begin{equation}
        \text{Cov}(U,U') = -\frac{4 \beta^5 (4h^2 - \beta^2)}{h (2h^2-\beta^2)^3} .
    \end{equation}
\end{corollary}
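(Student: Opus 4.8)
The strategy is to apply Theorem \ref{thm: sphere} (b) to $f(x)=hx^{2}$ and then evaluate every constant explicitly. First I would check the hypotheses. By Lemma \ref{lem: monomial maxima}, for $\beta<\beta_{c}(2,h)=\sqrt{2}h$ the function $\mathcal{B}(\alpha)$ has exactly the pair of global maximizers $\pm\hat{\alpha}$ with $\hat{\alpha}=\sqrt{1-\tfrac{\beta^{2}}{2h^{2}}}\neq0$, and by Lemma \ref{lem: monomial B double prime} one has $\mathcal{B}''(\hat{\alpha})<0$. Since $f$ is even, $\mathcal{B}$ is even, so $\mathcal{B}(\hat{\alpha})=\mathcal{B}(-\hat{\alpha})$ and $\mathcal{B}''(\hat{\alpha})=\mathcal{B}''(-\hat{\alpha})$, which places us under the final ``pair of maximizers'' clause of Theorem \ref{thm: sphere} (b). (The corollary's hypothesis $\beta<h/\sqrt{2}$ in particular implies $\beta<\sqrt{2}h$, so the theorem applies.) The conclusion of Theorem \ref{thm: sphere} (b) is precisely the asserted convergence in probability to zero, with the joint convergence in law and the independence of $(U,U')$ from $\Lambda$ inherited verbatim; it therefore only remains to evaluate $\kappa$, the matrix $G$, the three variances, the mean of $\Lambda$, and $\mathrm{Cov}(U,U')$ for this choice of $f$.

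Next I would record the basic quantities. From $\hat{\alpha}^{2}=\tfrac{2h^{2}-\beta^{2}}{2h^{2}}$ one gets $1-\hat{\alpha}^{2}=\tfrac{\beta^{2}}{2h^{2}}$, hence $\hat{z}=\sqrt{2(1-\hat{\alpha}^{2})}=\beta/h$, and from the case $k=2$ of the proof of Lemma \ref{lem: monomial B double prime} one has $\mathcal{B}''(\hat{\alpha})=\tfrac{2h}{\beta^{2}}(\beta^{2}-2h^{2})=-\tfrac{2h(2h^{2}-\beta^{2})}{\beta^{2}}$.

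Then I would substitute these into the formulas from the statement of Theorem \ref{thm: sphere} (b): $\kappa=\beta\hat{\alpha}^{2}/\hat{z}^{2}$, the explicit $2\times2$ matrix $G$ built from $\hat{\alpha},\hat{z},\mathcal{B}''(\hat{\alpha})$, the variances $\hat{z}^{4}/\hat{\alpha}^{2}$ for $U$ and $\hat{z}^{6}(2+\hat{\alpha}^{2}+\hat{\alpha}^{4})/\hat{\alpha}^{10}$ for $U'$, the mean $\hat{z}^{3}/(2\hat{\alpha}^{4})$ and variance $\hat{z}^{4}/\hat{\alpha}^{8}$ of $\Lambda$, and $\mathrm{Cov}(U,U')=-\hat{z}^{5}(1+\hat{\alpha}^{2})/\hat{\alpha}^{6}$. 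A direct computation then yields $\kappa=\tfrac{2h^{2}-\beta^{2}}{2\beta}$, $\mathrm{Var}(U)=\tfrac{2\beta^{4}}{h^{2}(2h^{2}-\beta^{2})}$, $\mathrm{Var}(\Lambda)=\tfrac{16h^{4}\beta^{4}}{(2h^{2}-\beta^{2})^{4}}$, and so on, matching the stated values; for instance $G_{11}=\beta\big(\tfrac{16\beta\hat{\alpha}^{2}}{\hat{z}^{8}\mathcal{B}''(\hat{\alpha})}+\tfrac{2\hat{\alpha}^{2}}{\hat{z}^{3}}\big)$ collapses to $-\tfrac{h(4h^{4}-2h^{2}\beta^{2}+\beta^{4})}{\beta^{4}}$.

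Since the argument is a straightforward specialization of an already-proven theorem, there is no genuine obstacle; the only care needed is in the algebraic simplification of the entries $G_{22}$ and $\mathrm{Var}(U')$, which involve eighth- and tenth-order powers of $\hat{\alpha}$ and $\hat{z}$, and in correctly invoking the ``pair of maximizers'' version of Theorem \ref{thm: sphere} (b) rather than the unique-maximizer version.
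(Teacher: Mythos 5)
Your proposal is correct and follows the same route as the paper's implicit argument: the hypotheses of the pair-of-maximizers clause of Theorem \ref{thm: sphere}~(b) are checked via Lemma \ref{lem: monomial maxima} (case $k=2$) and Lemma \ref{lem: monomial B double prime}, and one then substitutes $\hat\alpha^{2}=(2h^{2}-\beta^{2})/(2h^{2})$, $\hat z=\beta/h$, and $\mathcal{B}''(\hat\alpha)=-2h(2h^{2}-\beta^{2})/\beta^{2}$ into the theorem's general formulas, which reproduces $\kappa$, the variances, the mean of $\Lambda$, and the covariance exactly as stated. One small point worth flagging as you carry out the algebra you gloss over: evaluating $G_{12}=\beta\cdot\tfrac{8\beta\hat\alpha^{2}}{\hat z^{8}\mathcal{B}''(\hat\alpha)}\cdot\tfrac{\hat\alpha^{4}}{\hat z}$ yields $-\tfrac{h^{2}(2h^{2}-\beta^{2})^{2}}{2\beta^{5}}$, suggesting the corollary's printed $G_{12}$ is missing a square on $(2h^{2}-\beta^{2})$ --- an apparent typo in the statement rather than a defect of your method.
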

\bigskip
Note that it follows from Corollary \ref{cor: quadratic fluctutation} that
\begin{equation}
    \frac{1}{\sqrt{N}}\left(L_N - \frac{2h^2 + \beta^2}{2 h} N\right) \longrightarrow \mathcal{N}\left(0, \frac{\beta^2(2h^2-\beta^2)}{2h^2}\right).
\end{equation}

\bigskip 
Recall
\begin{equation}
    \tilde{\mathcal{B}}(\alpha,r) = f(r \alpha) + \sqrt{2}\beta r^2 \sqrt{1-\alpha^2}.
\end{equation}
The next lemma will show that the remaining requirements for Theorem \ref{thm: ball} (b) are also satisfied for monomial $f$ with $h > h_c(k,\beta)$.
\bigskip

\begin{lemma}\label{lem: second derivative on ball}
    Let $k \in \mathbb{N}$, $\beta>0$, $h > h_c(k,\beta)$.
    Then there is a unique maximizer $(\hat{\alpha},\hat{r}) \in (-1,1)\times\rm{Plef}(\beta)^{\mathrm{o}}$ of 
        $\tilde{\mathcal{B}}(\alpha,r) + g(r)$
    and
    \begin{equation}
        \nabla^2
        \tilde{\mathcal{B}}(\hat{\alpha},\hat{r})
        \quad\text{ is negative definite.}
    \end{equation}
\end{lemma}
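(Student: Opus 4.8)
The plan is to first read off the uniqueness and interiority of the maximizer from results already established, and then establish negative \emph{definiteness} of the Hessian by excluding a zero eigenvalue. Write $\Phi(\alpha,r)=\tilde{\mathcal{B}}(\alpha,r)+g(r)=f(r\alpha)+\sqrt{2}\beta r^{2}\sqrt{1-\alpha^{2}}+g(r)$; this is the function whose Hessian enters Theorem \ref{thm: ball} (b). That there is a unique maximizer $(\hat\alpha,\hat r)$ --- for even $k\ge 4$ the symmetric pair $(\pm\hat\alpha,\hat r)$, at which the Hessian agrees since $\Phi(-\alpha,r)=\Phi(\alpha,r)$ --- with $\hat\alpha\neq 0$ and $\hat r\in{\rm Plef}(\beta)^{\mathrm{o}}$ is exactly what Lemma \ref{lem: k=1 maximum on ball} ($k=1$, $h_{c}=0$), Lemma \ref{lem: k=2 maximum on ball} (the regime $h>1/2$, $\beta<\sqrt{2}h$, i.e.\ $h>h_{c}(2,\beta)$, where that lemma produces an interior maximizer with $\hat\alpha\neq 0$) and Lemma \ref{lem: k>=3 main lemma} ($k\ge3$, $h>h_{c}(k,\beta)$; there $\hat r$ is the \emph{larger} root of \eqref{eq: r equation}) provide. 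Being an interior global maximizer, $(\hat\alpha,\hat r)$ gives $\nabla^{2}\Phi(\hat\alpha,\hat r)\preceq 0$, so the remaining task is to show $\partial_{\alpha\alpha}\Phi(\hat\alpha,\hat r)<0$ and $\det\nabla^{2}\Phi(\hat\alpha,\hat r)>0$.

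First I would show $\partial_{\alpha\alpha}\Phi(\hat\alpha,\hat r)<0$. For $k=1$ this is automatic, since $\partial_{\alpha\alpha}\Phi=-\sqrt{2}\beta r^{2}(1-\alpha^{2})^{-3/2}$. For $k=2$ it follows by evaluating $\partial_{\alpha\alpha}\Phi(\hat\alpha,\hat r)=\hat r^{2}\big(2h-\sqrt{2}\beta(1-\hat\alpha^{2})^{-3/2}\big)$ at $\hat\alpha^{2}=1-\beta^{2}/(2h^{2})$ and using $\beta<\sqrt{2}h$. For $k\ge3$ I would repeat the third-derivative argument of Lemma \ref{lem: monomial B double prime}: at the critical point $\partial_{\alpha\alpha}\Phi(\hat\alpha,\hat r)=\sqrt{2}\beta\hat r^{2}(1-\hat\alpha^{2})^{-1/2}\big(k-1-(1-\hat\alpha^{2})^{-1}\big)$ can vanish only if $\hat\alpha^{2}=(k-2)/(k-1)$, but at that value $\partial_{\alpha\alpha\alpha}\Phi(\hat\alpha,\hat r)=-2(k-1)(k-2)\tfrac{\sqrt{2}\beta\hat r^{2}}{\hat\alpha}(1-\hat\alpha^{2})^{-1/2}<0$, so $\alpha\mapsto\Phi(\alpha,\hat r)$ would have a strictly decreasing inflection at $\hat\alpha$, contradicting global maximality. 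This argument simultaneously records $\hat\alpha^{2}>(k-2)/(k-1)$, i.e.\ $(k-1)\hat\alpha^{2}-(k-2)>0$ (as $\hat\alpha$ is the larger positive $\alpha$-stationary point of $\Phi(\cdot,\hat r)$).

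With $\partial_{\alpha\alpha}\Phi(\hat\alpha,\hat r)<0$ in hand, Lemma \ref{lem:use_imp_func_thm} applied to $(r,\alpha)\mapsto-\Phi(\alpha,r)$ makes the $\alpha$-maximizer $\alpha^{*}(r)$ a $C^{2}$ function near $\hat r$ with $\alpha^{*}(\hat r)=\hat\alpha$, gives $\Psi(r):=\Phi(\alpha^{*}(r),r)$ a local maximum at $\hat r$, and via \eqref{eq: hessian of t} yields the Schur-complement identity $\det\nabla^{2}\Phi(\hat\alpha,\hat r)=\partial_{\alpha\alpha}\Phi(\hat\alpha,\hat r)\cdot\Psi''(\hat r)$. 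So it remains to prove $\Psi''(\hat r)<0$. For $k=1,2$ this is immediate: Lemmas \ref{lem: k=1 maximum on ball} and \ref{lem: k=2 maximum on ball} identify $\Psi(r)=\mathscr{B}(r^{2})$ with $\mathscr{B}$ \emph{strictly} concave on $[q_{P},1)$, so $\Psi''(\hat r)=4\hat r^{2}\mathscr{B}''(\hat r^{2})<0$.

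The delicate case is $k\ge3$, where I would compute $\Psi''(\hat r)$ by hand. From the envelope identity $\Psi'(r)=\partial_{r}\Phi(\alpha^{*}(r),r)$ and the $\alpha$-stationarity relation $hk\,\alpha^{k-2}r^{k-2}\sqrt{1-\alpha^{2}}=\sqrt{2}\beta$ (with $\alpha=\alpha^{*}(r)$) one simplifies $\Psi'(r)=\sqrt{2}\beta r\,\tfrac{2-\alpha^{*}(r)^{2}}{\sqrt{1-\alpha^{*}(r)^{2}}}+g'(r)$; differentiating once more (using $\tfrac{d}{d\alpha}\tfrac{2-\alpha^{2}}{\sqrt{1-\alpha^{2}}}=\tfrac{\alpha^{3}}{(1-\alpha^{2})^{3/2}}$ and log-differentiating the stationarity relation for $\tfrac{d}{dr}\alpha^{*}(r)^{2}$) and specializing at $\hat r$, where in addition $1-\hat\alpha^{2}=2\beta^{2}(1-\hat r^{2})^{2}$, yields after simplification
\[
  \Psi''(\hat r)=\frac{\hat\alpha^{2}}{(1-\hat r^{2})^{2}\big((k-1)\hat\alpha^{2}-(k-2)\big)}\Big((k-2)\hat\alpha^{2}+\hat r^{2}\big(2(k-2)-(3k-4)\hat\alpha^{2}\big)\Big).
\]
By the previous step the prefactor is positive, so the sign of $\Psi''(\hat r)$ is that of the bracket. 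I would then note that the identical rearrangement applied to $(\log T)'(q)$ --- with $T$ as in \eqref{def: T(q)} and $\hat q:=\hat r^{2}$ --- shows this bracket to be a positive multiple of $T'(\hat q)$. Since $h>h_{c}(k,\beta)$ forces $T(q)=1/(hk)$ to have two solutions of which $\hat q$ is the larger, $\hat q$ lies strictly to the right of the unique critical point of $T$, so $T'(\hat q)<0$; hence $\Psi''(\hat r)<0$, $\det\nabla^{2}\Phi(\hat\alpha,\hat r)>0$, and, together with $\partial_{\alpha\alpha}\Phi(\hat\alpha,\hat r)<0$, the matrix $\nabla^{2}\Phi(\hat\alpha,\hat r)$ is negative definite. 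The main obstacle is precisely this last computation: carrying out the two-variable envelope/Hessian calculation carefully and recognizing its output as a multiple of $T'(\hat q)$, so that the fact (already contained in Lemma \ref{lem: k>=3 main lemma}) that $\hat q$ is the \emph{larger} root of \eqref{eq: r equation} can be invoked to fix the sign.
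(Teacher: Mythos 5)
Your proof is correct and closely parallels the paper's, but it packages the determinant calculation differently. Both proofs cite the same uniqueness lemmas (Lemmas \ref{lem: k=1 maximum on ball}, \ref{lem: k=2 maximum on ball}, \ref{lem: k>=3 main lemma}), use the identical third-derivative argument to rule out $\partial_{\alpha\alpha}=0$ for $k\ge 3$, and ultimately rest on the same key fact that $\hat q=\hat r^2$ is the \emph{larger} root of \eqref{eq: r equation}, hence $T'(\hat q)<0$. The difference is in how you reach $\det\nabla^2>0$: the paper computes the determinant directly at the critical point (yielding a negative prefactor times $\zeta_{k,\beta}(\hat q)$), whereas you factor it via the Schur-complement/envelope identity $\det\nabla^2\Phi(\hat\alpha,\hat r)=\partial_{\alpha\alpha}\Phi(\hat\alpha,\hat r)\cdot\Psi''(\hat r)$ with $\Psi(r)=\Phi(\alpha^*(r),r)$, reducing everything to the concavity of the one-variable reduced function. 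I verified that your formula for $\Psi''(\hat r)$ agrees (your bracket is exactly $\zeta_{k,\beta}(\hat q)$), and your prefactor is positive because the prior step gives $(k-1)\hat\alpha^2-(k-2)>0$. Your route buys a cleaner treatment of $k=1,2$, where $\Psi''<0$ follows directly from the strict concavity of $\mathscr B$ already established in Lemmas \ref{lem: k=1 maximum on ball} and \ref{lem: k=2 maximum on ball}, bypassing the paper's separate determinant computation in $(\alpha,q)$ coordinates; it also avoids the paper's incidental appeal to $\partial_{rr}\le 0$ for the trace. For $k\ge3$ the amount of algebra is comparable, and one does have to carry out the envelope differentiation carefully, but the organizing idea --- that negative definiteness in two variables is equivalent to $\partial_{\alpha\alpha}<0$ together with strict concavity of the reduced profile $\Psi$ --- is a legitimate and somewhat more structural packaging of the same content.
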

\begin{proof}
    We know from Lemma \ref{lem: k=1 maximum on ball}, Lemma \ref{lem: k=2 maximum on ball} and Lemma \ref{lem: k>=3 main lemma} that there is a unique $(\hat{\alpha},\hat{r}) \in (0,1)\times\rm{Plef}(\beta)^{\mathrm{o}}$.
    We will show the negative definiteness of $\nabla^2 \tilde{\mathcal{B}}(\hat{\alpha},\hat{r})$ by showing that the determinant is positive while the trace is negative.
    \\
    \textbf{Trace:} Let us first look at
    $\nabla^2\tilde{\mathcal{B}}(\hat{\alpha},\hat{r}) = \tilde{\mathcal{B}}''(\hat{\alpha},\hat{r})$.
    Since $(\hat{\alpha},\hat{r})$ is a maximizer it must hold $\tilde{\mathcal{B}}''(\hat{\alpha},\hat{r})\le 0$.
    We have
    \begin{equation*}
        \partial_{\alpha\alpha}\tilde{\mathcal{B}}(\alpha,r)
        =
        r^2 f''(r \alpha) - \frac{\sqrt{2}\beta r^2}{(1-\alpha^2)^{\frac{3}{2}}},
    \end{equation*}
    which is negative for $k=1$ for all $(\alpha,r)$, while for $k \ge 2$ the critical point equation implies
    \begin{equation}\label{eq: critical point equation}
        h k (\hat{r}\hat{\alpha})^{k-2} = \frac{\sqrt{2}\beta}{\sqrt{1-\hat{\alpha}^2}}
    \end{equation}
    and thus 
    \begin{equation*}
    \begin{array}{rcl}
        \partial_{\alpha\alpha} \tilde{\mathcal{B}}(\hat{\alpha},\hat{r})
        & = &
        h k (k-1) \hat{r}^k \hat{\alpha}^{k-2} - \frac{\sqrt{2}\beta \hat{r}^2}{(1-\hat{\alpha}^2)^{\frac{3}{2}}}
        \\
        & \stackrel{\eqref{eq: critical point equation}}{=} &
        (k-1) \hat{r}^2 \frac{\sqrt{2}\beta}{\sqrt{1-\hat{\alpha}^2}}
        - \frac{\sqrt{2}\beta \hat{r}^2}{(1-\hat{\alpha}^2)^{\frac{3}{2}}}
        \\
        &=&
        \frac{\sqrt{2}\beta \hat{r}^2}{(1-\hat{\alpha}^2)^{\frac{3}{2}}}
        \left((k-1) (1-\hat{\alpha}^2) - 1\right).
    \end{array}
    \end{equation*}
    This is obviously negative for $k=2$, while for $k=3$ it can be zero if 
    $\hat{\alpha}^2 = \frac{k-2}{k-1}$. But if we had $\hat{\alpha}^2 = \frac{k-2}{k-1}$ we would obtain
    \begin{equation*}
    \begin{array}{rcl}
         \tilde{\mathcal{B}}'''(\hat{\alpha},\hat{r})
         &=&
         h k (k-1) (k-2) \hat{r}^k \hat{\alpha}^{k-3} - \frac{3\sqrt{2}\beta \hat{r}^2\hat{\alpha}}{(1-\hat{\alpha}^2)^{\frac{5}{2}}}
         \\
         &\stackrel{\eqref{eq: critical point equation}}{=}&
         (k-1)(k-2) \hat{r}^2\hat{\alpha}^{-1} \frac{\sqrt{2}\beta}{\sqrt{1-\hat{\alpha}^2}}
         - \frac{3\sqrt{2}\beta \hat{r}^2\hat{\alpha}}{(1-\hat{\alpha}^2)^{\frac{5}{2}}}
         \\
         &=&
         \frac{\sqrt{2}\beta \hat{r}^2}{\hat{\alpha}(1-\hat{\alpha}^2)^{\frac{5}{2}}}
         \left(
            (k-1)(k-2)(1-\hat{\alpha}^2)^2 - 3\hat{\alpha}^2
         \right)
         \\
         &=&
         - \frac{2(k-2)}{k-1} < 0,
    \end{array}
    \end{equation*}
    which would make this a saddle point and not a maximum, so it must hold that $\partial_{\alpha\alpha}\tilde{\mathcal{B}}(\alpha,r) < 0$. Since we also have $\partial_{r r}\tilde{\mathcal{B}}(\alpha,r) \le 0$ the trace is negative.
    \\
    \textbf{Determinant:} 
    The Hessian of $\tilde{\mathcal{B}}$ is given by
    \begin{equation*}
        \nabla^2\tilde{\mathcal{B}}(\alpha,r)
        =
        \begin{pmatrix}
            h k (k-1) r^{k}\alpha^{k-2} - \sqrt{2}\beta\frac{r^2}{(1-\alpha^2)^{\frac{3}{2}}}
            & 
            h k^2 (r \alpha)^{k-1} - 2\sqrt{2}\beta \frac{r\alpha}{\sqrt{1-\alpha^2}}
            \\
            h k^2 (r \alpha)^{k-1} - 2\sqrt{2}\beta \frac{r\alpha}{\sqrt{1-\alpha^2}}
            & 
            h k (k-1) r^{k-2}\alpha^k + 2\sqrt{2}\beta\sqrt{1-\alpha^2} + g''(r)
        \end{pmatrix}
        .
    \end{equation*}
    Using \eqref{eq: critical point equation} it follows that if $\alpha, r$ are critical points that
    \begin{equation*}
    \begin{array}{rcl}
        \nabla^2\tilde{\mathcal{B}}(\alpha,r)
        &=&
        \begin{pmatrix}
            (k-1)r^2 \frac{\sqrt{2}\beta}{\sqrt{1-\alpha^2}} - \sqrt{2}\beta\frac{r^2}{(1-\alpha^2)^{\frac{3}{2}}}
            & 
            k (r \alpha) \frac{\sqrt{2}\beta}{\sqrt{1-\alpha^2}} - 2\sqrt{2}\beta \frac{r\alpha}{\sqrt{1-\alpha^2}}
            \\
            k (r \alpha) \frac{\sqrt{2}\beta}{\sqrt{1-\alpha^2}} - 2\sqrt{2}\beta \frac{r\alpha}{\sqrt{1-\alpha^2}}
            & 
           (k-1) \alpha^2 \frac{\sqrt{2}\beta}{\sqrt{1-\alpha^2}} + 2\sqrt{2}\beta\sqrt{1-\alpha^2} + g''(r)
        \end{pmatrix},
        \\
        &=&
        \frac{\sqrt{2}\beta}{\sqrt{1-\alpha^2}}
        \begin{pmatrix}
            r^2 \left((k-1) - \frac{1}{1-\alpha^2}\right)
            &
            (k-2) r \alpha
            \\
            (k-2) r \alpha
            &
            (k-1) \alpha^2 + 2(1-\alpha^2) + \frac{g''(r)\sqrt{1-\alpha^2}}{\sqrt{2}\beta}
        \end{pmatrix}.
    \end{array}
    \end{equation*}
    For $k=2$
    \begin{equation*}
    \begin{array}{rcl}
        \det \nabla^2 \tilde{\mathcal{B}}(\alpha,r)
        &=&
        \frac{2\beta^2}{1-\alpha^2}
        \left(
            \left(\alpha^2 + 2(1-\alpha^2) + \frac{g''(r)\sqrt{1-\alpha^2}}{\sqrt{2}\beta}\right)
            \left(
            r^2 \left(1 - \frac{1}{1-\alpha^2}\right)
            \right)
        \right)
    \end{array}
    \end{equation*}
    Since $\beta < \sqrt{2}h$ and we have $\hat{\alpha}^2 = 1-\frac{\beta^2}{2h^2}$ (by Lemma \ref{lem: k=2 maximum on ball}) it holds that
    $$
            r^2 \left(1 - \frac{1}{1-\alpha^2}\right)
             = r^2 \left(1-\frac{2h^2}{\beta^2}\right)< 0,
    $$
    and since also $h > \frac{1}{2}$ as well as $\hat{r}^2 = 1-\frac{1}{2h}$ 
    $$
        \alpha^2 + 2(1-\alpha^2) + \frac{g''(r)\sqrt{1-\alpha^2}}{\sqrt{2}\beta}
        =
        \frac{(2h - 1)(\beta^2 - 2h^2)}{h^2} < 0.
    $$
    Therefore $\det \nabla^2 \tilde{\mathcal{B}}(\alpha,r) > 0$.
    \\
    For $k\ge 3$ using \eqref{eq: k>=3 alpha solution} we can write the Hessian of $\tilde{\mathcal{B}}$ at critical points $(\alpha,r)$ as
    \begin{equation}\label{eq: determinant hessian}
    \begin{array}{rcl}
        \nabla^2\tilde{\mathcal{B}}(\alpha,r)
        &=&
        \frac{1}{1-r^2}
        \begin{pmatrix}
        r^2 \left(k-1-\frac{1}{2\beta^2(1-r^2)^2}\right)
        & 
        (k-2) r \sqrt{1-2\beta^2(1-r^2)^2}
        \\
        (k-2) r \sqrt{1-2\beta^2(1-r^2)^2}
        & 
        \frac{(k(1-r^2)-2)(1-2\beta^2(1-r^2)^2)}{1-r^2}
        \end{pmatrix},
    \end{array}
    \end{equation}
    where the determinant is given by
    \begin{equation}\label{eq: determinant k>=3}
    \begin{array}{rcl}
        \det\left(\nabla^2\tilde{\mathcal{B}}(\alpha,r)\right)
        &=&
        \frac{1}{(1-r^2)^2}
        \frac{-r^2(1-2\beta^2(1-r^2)^2(-2 + k(1-r^2) + 2\beta^2(1-r^2)^2(2-4r^2+k(3r^2-1))))}{2\beta^2(1-r^2)^3}
        \\
        &=&
        \underbrace{-\tfrac{r^2(1-2\beta^2(1-r^2)^2)}{2\beta^2(1-r^2)^5}}_{< 0} \zeta_{k,\beta}(r^2),
    \end{array}
    \end{equation}
    where
    $$
        \zeta_{k,\beta}(q) = -2 + k (1-q) + 2 \beta^2 (1-q)^2 (2-4q + k(3q-1)).
    $$
    Recall $T(q)$ from \eqref{def: T(q)}, which is a non-negative function with $T(q_P)=T(1)=0$. We showed in Lemma \ref{lem: k>=3 main lemma} that $T(q)$ has exactly one critical point, and that $T(q) = \frac{1}{hk}$ has two solutions $q_1 < q_2$, where $\hat{q_2} = \hat{r}^2$. Thus we have $T'(q_1) > 0$ and $T'(q_2) = T'(\hat{r}^2) < 0$. Since
    $$
        T'(q) = 
        \underbrace{\frac{\left((1-2b^2(1-q)^2)q\right)^{\frac{k-4}{2}}}{2}}_{ > 0}
        \underbrace{
        \left(
            -2 + k (1-q) + 2 \beta^2 (1-q)^2 (2-4q + k(3q-1))
        \right)
        }_{=\zeta_{k,\beta}(q)},
    $$
    this must mean that $\zeta_{k,\beta}(\hat{r}^2) < 0$ and thus
    $$
        \det\left(\nabla^2\tilde{\mathcal{B}}(\hat{\alpha},\hat{r})\right) \stackrel{\eqref{eq: determinant k>=3}}{>} 0.
    $$
    For $k=1$ let us substitute $q$ for $r^2$, i.e. instead of $\tilde{\mathcal{B}}$ consider
    $$
        \mathscr{B}(\alpha,q) = h \sqrt{q}\alpha + \sqrt{2}\beta q \sqrt{1-\alpha^2} 
                            + \frac{\beta^2}{2}(1-q)^2 + \frac{1}{2}\log(1-q),
    $$
    where the Hessian is 
    \begin{equation}\label{eq: determinant hessian in q}
    \begin{array}{rcl}
        \nabla^2\mathscr{B}(\alpha,q)
        &=&
        \begin{pmatrix}
        -\frac{\sqrt{2}\beta q}{(1-\alpha^2)^{\frac{3}{2}}}
        & 
        -\frac{\sqrt{2}\beta\alpha}{\sqrt{1-\alpha^2}} + \frac{h}{2\sqrt{q}}
        \\
        -\frac{\sqrt{2}\beta\alpha}{\sqrt{1-\alpha^2}} + \frac{h}{2\sqrt{q}}
        & 
        \beta^2 - \frac{1}{2(1-q)^2} - \frac{h \alpha}{4 q^{\frac{3}{2}}}
        \end{pmatrix}.
    \end{array}
    \end{equation}
    Since for fixed $q$ the maximizing $\alpha(q)$ is $\frac{h}{\sqrt{h^2+2\beta^2q}}$ the determinant of $\nabla^2\mathscr{L}(q,\alpha(q))$ at the maximizer is given by
    $$
        \det\nabla^2\mathscr{B}(\alpha(q),q)
        =
        \frac{
            2\sqrt{q} (h^2+2\beta^2 q)^{\frac{3}{2}}
            \left(
                \frac{1}{2(1-q)^2}-\beta^2
            \right)
                + \frac{h^4}{2q}
        }{4 q \beta^2}.
    $$
    Since $q \ge 1 - \frac{1}{\sqrt{2}\beta}$ we have that $\det\nabla^2\mathscr{B}(q,\alpha(q)) > 0$, and therefore $\det\nabla^2\tilde{\mathcal{B}}(\hat{\alpha},\hat{r}) > 0$.
\end{proof}

\bigskip 
Lemmas \ref{lem: k=1 maximum on ball} - \ref{lem: k>=3 main lemma} together with Lemma \ref{lem: second derivative on ball} show that we can apply Theorem \ref{thm: ball} (b) for monomials $f(x) = h x^k$ and $\beta > 0$ whenever $h > h_c(k,\beta)$.
\bigskip

\medskip
\printbibliography
\end{document}